\documentclass{compositio}
\usepackage{amsfonts}
\usepackage{amsmath,amsthm}
\usepackage{graphics}
\usepackage{amssymb}
\usepackage[all]{xy}

\newcommand{\NZ}{\mathbb{N}} 
\newcommand{\CZ}{{\mathbb{C}}} 
\newcommand{\C}{\mathbb{C}_{\infty}} 
\newcommand{\CS}{{\mathbb{C}_{\infty}^*}}
\newcommand{\AZ}{\mathbb{A}} 
\newcommand{\PZ}{\mathbb{P}} 

\newcommand{\FZ}{{\mathbb{F}}}
\newcommand{\FC}{F_{\infty}} 
\newcommand{\id}{{\mathbb{I}}}
\newcommand{\pp}{\mathfrak{p}} 
\newcommand{\mm}{\mathfrak{m}} 
\newcommand{\nn}{\mathfrak{n}}
\newcommand{\qq}{\mathfrak{q}} 
\newcommand{\Gal}{\mathrm{Gal}}

\newcommand{\Hom}{\mathrm{Hom}}
\newcommand{\End}{\mathrm{End}}
\newcommand{\SL}{\mathrm{SL}}
\newcommand{\sep}{\mathrm{sep}}

\newcommand{\incl}{\iota_{F,\,b}^{F'}}
\newcommand{\inclz}{\iota_{F,\,b'}^{F''}}
\newcommand{\incla}{\iota_{F,\,b_1}^{F'}}
\newcommand{\inclb}{\iota_{F,\,b_2}^{F''}}
\newcommand{\level}{{\cal K}'}
\newcommand{\Mat}{\mathrm{Mat}} 

\newcommand{\Aut}{\mathrm{Aut}}
\renewcommand{\id}{\mathrm{id}}
\def\GL{{\mathop{\rm GL}\nolimits}}
\let\phi\varphi

\newtheorem{theorem}{Theorem}[subsection]
\newtheorem{theorem*}{Theorem}
\newtheorem{satz}[theorem]{Proposition}
\newtheorem{lemma}[theorem]{Lemma}
\newtheorem{korollar}[theorem]{Corollary}
\newtheorem{definition}[theorem]{Definition}
\newtheorem{conjecture}[theorem]{Conjecture}
\newtheorem{conjecture*}[theorem*]{Conjecture}
\newtheorem{definition*}[theorem*]{Definition}

\numberwithin{equation}{subsection}

\renewenvironment{enumerate}{\begin{list}{\arabic{ctr}.}{\usecounter{ctr}\setlength{\topsep}{3mm}
  \setlength{\leftmargin}{12mm}\setlength{\labelsep}{3mm}\setlength{\labelwidth}{10mm}}}{\end{list}}
  
\newenvironment{properties}{\begin{list}{(\roman{ctr})}{\usecounter{ctr}\setlength{\topsep}{0mm}
  \setlength{\leftmargin}{12mm}\setlength{\labelsep}{5mm}\setlength{\labelwidth}{10mm}}}{\end{list}}

\newenvironment{propertiesabc}{\begin{list}{(\alph{ctr})}{\usecounter{ctr}\setlength{\topsep}{0mm}
  \setlength{\leftmargin}{12mm}\setlength{\labelsep}{5mm}\setlength{\labelwidth}{10mm}}}{\end{list}}

\begin{document}
\newcounter{ctr}

\title{The Andr\'e-Oort Conjecture for \\Drinfeld Modular Varieties}

\author{Patrik Hubschmid}
\email{patrik.hubschmid@iwr.uni-heidelberg.de}
\address{Interdisciplinary Center for Scientific Computing\\Universit\"at Heidelberg\\Im Neuenheimer Feld 368\\69120 Heidelberg\\Germany}

\classification{11G09, 14G35}
\keywords{Drinfeld modular varieties, Andr\'e-Oort conjecture}
\thanks{The work presented in this paper has been partially funded by
  the SNSF, Switzerland (project no. 117638).}
\begin{abstract}
We consider the analogue of the Andr\'e-Oort conjecture for Drinfeld
modular varieties which was formulated by Breuer. We prove this
analogue for special points with separable reflex field over the base
field by adapting methods which were used by Klingler and Yafaev to
prove the Andr\'e-Oort conjecture under the generalized Riemann
hypothesis in the classical case. Our result extends results of Breuer
showing the correctness of the analogue for special points lying in a
curve and for special points having a certain behaviour at a fixed set
of primes.
\end{abstract}
\maketitle

\section*{Introduction}
\subsection*{The Andr\'e-Oort conjecture}
The Andr\'e-Oort conjecture asserts that every irreducible component
of the Zariski closure of a set of special points in a Shimura variety
is a special subvariety. There has been remarkable progress on this
conjecture recently.

In the late 1990's, Edixhoven proved the conjecture for Hilbert
modular surfaces and
products of modular curves assuming the generalized Riemann hypothesis
(GRH) in~\cite{Ed1} and~\cite{Ed2}. Both proofs exploit the Galois
action on special points and use geometric properties of Hecke
correspondences. In the special case of a product of two modular curves,
Andr\'e~\cite{An} gave a proof without assuming GRH.

These methods were extended in~\cite{EdYa} by Edixhoven and Yafaev to
prove the conjecture for curves in general Shimura varieties 
containing infinitely many special
points all lying in the same Hecke orbit. Subsequently,
Yafaev~\cite{Ya} also proved the conjecture for general curves
assuming GRH.

Recently, Klingler, Ullmo and Yafaev have announced a proof of the full
Andr\'e-Oort conjecture assuming GRH, see~\cite{KlYa}
and~\cite{UlYa}. Their methods use a combination of the methods of
Edixhoven and Yafaev and equidistribution results of Clozel
and Ullmo~\cite{ClUl} established by methods from ergodic theory.

For a more detailed exposition of results 
concerning the Andr\'e-Oort
conjecture for Shimura varieties, we refer to the survey 
article of Noot~\cite{No}.

\subsection*{Drinfeld modular varieties}
\emph{Drinfeld modular varieties} are a natural analogue of Shimura
varieties in the function field case. They can be interpreted as
moduli spaces for Drinfeld $A$-modules over a global function 
field $F$ of a given rank~$r$ with ${\cal K}$-level structure, 
where $A$ is the ring of elements of $F$ that are regular outside 
of a fixed place $\infty$ and ${\cal K} \subset \GL_r(\AZ_F^f)$ is a
compact open subgroup with $\AZ_F^f$ the ring of 
adeles of $F$ outside $\infty$.



One can define \emph{special subvarieties} of a Drinfeld
modular variety~$S = S_{F,\cal K}^r$ parametrising Drinfeld 
$A$-modules of rank $r$ 
in analogy to the case of Shimura varieties. For each finite 
extension $F'$ of $F$ of degree $\frac{r}{r'}$ with only one place 
above $\infty$ and integral closure $A'$ of $A$ in $F'$, the restriction of
Drinfeld $A'$-modules to $A$ gives a morphism from the
moduli space of Drinfeld $A'$-modules of rank~$r'$ (with a certain
level structure) to $S$ defined over $F'$. These morphisms are
analogues of morphisms induced by a Shimura
subdatum. A special subvariety~$V$ is defined to be a geometrically irreducible
component of a Hecke translate of the image of such a morphism. 
A \emph{special point} is a special subvariety of dimension~$0$.

In fact, we can interpret each special subvariety~$V$ as a geometrically 
irreducible component of a \emph{Drinfeld modular subvariety} which is 
the union of Galois conjugates of $V$ over the corresponding
extension~$F'$ of $F$. A Drinfeld modular
subvariety~$X$ is the image of the composition of an above morphism 
defined by the restriction of 
Drinfeld~$A'$-modules to $A$ with a morphism given 
by a Hecke correspondence. Such a composition, called \emph{inclusion
morphism}, is associated to an extension $F' / F$ of the above type 
and an $\AZ_F^f$-linear isomorphism 
$b: (\AZ_F^f)^r \stackrel{\sim}{\rightarrow} (\AZ_{F'}^f)^{r'}$ encoding the
involved Hecke correspondence. We say that $F'$ is the \emph{reflex field} 
of $X$ and its geometrically irreducible components.

In~\cite{Pi}, Pink constructs the \emph{Satake
  compactification}~$\overline{S}_{F,\cal K}^r$ of a Drinfeld
modular variety~$S_{F,\cal K}^r$. It is characterized up to unique isomorphism
by a certain universal property. If ${\cal K}$ is sufficiently small 
in a certain sense, there is a natural ample invertible
sheaf~${\cal L}_{F,\cal K}^r$ on $\overline{S}_{F,\cal K}^r$. This
allows us to define the \emph{degree} of a subvariety of $S_{F,\cal K}^r$ 
as the degree of its Zariski closure in $\overline{S}_{F,\cal K}^r$ 
with respect to~${\cal L}_{F,\cal K}^r$. The degree of a subvariety can be seen 
as a measure for the ``complexity'' of the subvariety.

\subsection*{Andr\'e-Oort Conjecture for Drinfeld modular varieties}
The following analogue of the Andr\'e-Oort conjecture was formulated
by Breuer in~\cite{Br2}:

\begin{conjecture*}\label{conj*:AO}
Let $S = S_{F,\cal K}^r$ be a Drinfeld modular variety and $\Sigma$ 
a set of special points in $S$. Then each 
irreducible component over~$\C$ of the Zariski closure of $\Sigma$ 
is a special subvariety of~$S$.
\end{conjecture*}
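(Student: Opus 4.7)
The plan is to transport the Edixhoven--Yafaev--Klingler--Yafaev strategy from the characteristic-zero Shimura setting to the Drinfeld modular context. Fix an irreducible component $Z$ over $\C$ of the Zariski closure of $\Sigma$ in $S = S_{F,\cal K}^r$, and argue by induction on $\dim Z$ that $Z$ is special. The case $\dim Z = 0$ is trivial, so assume $\dim Z \ge 1$ and that every component of smaller dimension behaves as predicted. I would first replace $S$ by the smallest Drinfeld modular subvariety containing $Z$, so that $Z$ becomes Hodge-generic; the goal is then $Z = S$. Restricting $\Sigma$ to $Z \cap \Sigma$ and passing to a subsequence, I may further assume the reflex fields $F'_n$ of an infinite sequence $(s_n)$ of special points in $Z$ satisfy $[F'_n : F] \to \infty$.

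Second, for each $s_n$ I would analyse the $\Gal(F^{\sep}/F)$-orbit via the reciprocity law for Drinfeld $A$-modules with complex multiplication by the integral closure of $A$ in $F'_n$. Under a separability hypothesis on $F'_n / F$, this gives a transitive action of a ray class group of $F'_n$, and a function-field analogue of the Brauer--Siegel estimate should yield a lower bound of the form
\[
\#\bigl(\Gal(F^{\sep}/F)\cdot s_n\bigr) \;\gg\; (\mathrm{disc}\,F'_n)^{\delta}
\]
for some $\delta > 0$ independent of $n$. Since $\Sigma$ is defined over $F$, all Galois conjugates of $s_n$ lie in $\Sigma$, hence in finitely many Galois conjugates of $Z$; after replacing $Z$ by a suitable conjugate, a positive proportion of them lies in $Z$ itself.

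Third, for each $n$ I would pick a prime $\pp_n$ of $A$ of controlled norm that splits in $F'_n$ in a prescribed way, and consider the Hecke correspondence $T_{\pp_n}$ on $S$. The chosen splitting ensures that $T_{\pp_n}$ permutes a large subset of the Galois conjugates of $s_n$ lying in $Z$, so that $Z \cap T_{\pp_n}(Z)$ contains many such points. Using the ample sheaf ${\cal L}_{F,\cal K}^r$ on $\overline{S}_{F,\cal K}^r$, one gets a polynomial upper bound in $N(\pp_n)$ for the degree of $T_{\pp_n}(Z)$, while the Galois orbit of $s_n$ contained in $Z\cap T_{\pp_n}(Z)$ grows strictly faster with $N(\pp_n)$ under a careful choice. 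This forces $T_{\pp_n}(Z)\subseteq Z$, hence equality by degree comparison. An Edixhoven--Yafaev style argument then upgrades stability under sufficiently many Hecke operators to the conclusion that $Z$ is a Drinfeld modular subvariety, and Hodge genericity yields $Z = S$.

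The main obstacle will be step three, namely the quantitative interplay between the Galois lower bound and the degree upper bounds on $T_{\pp_n}(Z)$ inside $\overline{S}_{F,\cal K}^r$. The function-field Brauer--Siegel input is precisely where the separable reflex field hypothesis is used; sharpening it is presumably what would be needed to drop that hypothesis. I would not expect the Clozel--Ullmo equidistribution input of the classical proof to have a direct analogue in this setting, so the argument should stay closer to the pure Hecke-correspondence approach of Edixhoven--Yafaev, buttressed by the degree estimates made possible by Pink's Satake compactification.
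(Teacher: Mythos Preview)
First, a framing issue: the statement you are attempting is the full conjecture, which the paper does \emph{not} prove; only the case where all reflex fields are separable over $F$ is established. Your sketch implicitly acknowledges this, but you misidentify where separability enters. It is not needed for a Brauer--Siegel type lower bound on Galois orbits---the class-number lower bound (Proposition~\ref{prop:Clg}) holds unconditionally. Separability is needed in order to find a prime $\pp$ of $F$ with a place $\pp'$ of $F'$ above it of local degree~$1$; in a purely inseparable extension every prime ramifies, and without such a $\pp'$ one cannot construct a Hecke element $g_\pp$ that simultaneously (a) fixes the Drinfeld modular subvariety $X$ and (b) satisfies the unboundedness condition needed for the geometric criterion (Theorem~\ref{th:geomcrit}). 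The paper's discussion of the inseparable case makes this explicit.

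Two further concrete problems. First, the reflex field of every special point in $S_{F,\cal K}^r$ has degree exactly $r$ over $F$, so your reduction to ``$[F'_n:F]\to\infty$'' is impossible; what goes to infinity is the predegree $D(X_n)=|\mathrm{Cl}(F'_n)|\cdot i(X_n)$, and the unboundedness argument (Theorem~\ref{th:Dunbounded}) is rather different from a Brauer--Siegel estimate. Second, and more seriously, the step ``this forces $T_{\pp_n}(Z)\subseteq Z$'' does not go through for $\dim Z>1$: a large supply of points in $Z\cap T_{\pp_n}Z$ only tells you this intersection has a component $Y$ of positive dimension containing $X$, not that $Y=Z$. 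The paper's key structural innovation (following Klingler--Yafaev) is an induction on $s=\dim Z-\dim X$ in Theorem~\ref{th:indsep}: one replaces $Z$ by such a $Y$, checks that the degree inequality $\deg X > |k(\pp)|^{(r-1)(2^{s'}-1)}\deg(Y)^{2^{s'}}$ persists for $s'<s$, and iterates; only when one eventually lands in the case $Y=Z$ does one obtain $Z\subset T_gZ$ and apply the geometric criterion. That criterion in turn requires the strong condition that $k_1 g_\pp k_2$ have unbounded image in $\mathrm{PGL}_r(F_\pp)$ for \emph{all} $k_1,k_2\in{\cal K}_\pp$, which forces ${\cal K}_\pp$ to be non-maximal and necessitates passing to a finer level (Proposition~\ref{prop:goodprime}). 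Your sketch elides both the induction and this level change, and without them the argument does not close.
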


Breuer~\cite{Br2} proved this analogue in two cases. Firstly, when the 
Zariski closure of $\Sigma$ is a curve, and secondly when all special
points in~$\Sigma$ have a certain behaviour at a fixed set of primes. 
Earlier in~\cite{Br1}, he proved an analogue of the
Andr\'e-Oort conjecture for products of modular curves in odd
characteristic. These proofs use an adaptation of the
methods of Edixhoven and Yafaev in~\cite{EdYa}, \cite{Ed2}
and~\cite{Ya}. The results are unconditional because GRH holds over
function fields.

In this thesis, we extend the arguments of Breuer using an adaptation 
of the methods
of Klingler and Yafaev in~\cite{KlYa}. Our main result is the
following theorem:

\begin{theorem*} \label{th*:main}
Conjecture~\ref{conj*:AO} is true if the reflex fields of all special points
in~$\Sigma$ are separable over~$F$.
\end{theorem*}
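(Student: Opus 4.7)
The plan is to adapt the Klingler--Yafaev strategy to the Drinfeld setting, arguing by induction on $\dim Z$ where $Z$ is an irreducible component over $\C$ of the Zariski closure of $\Sigma$. I assume for contradiction that $Z$ is not a special subvariety. By replacing $S$ by the smallest Drinfeld modular subvariety containing $Z$, and applying the induction hypothesis to components of intersections with proper Drinfeld modular subvarieties that contain infinitely many points of $\Sigma$, I reduce to the case in which $Z$ is ``Hodge-generic'' in its ambient space, i.e.\ not contained in any proper Drinfeld modular subvariety. Relabelling, I may assume $Z \subset S = S_{F,\mathcal{K}}^r$ is Hodge-generic, still contains a Zariski-dense set of special points with separable reflex fields, and satisfies $0 < \dim Z < \dim S$.

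The first main ingredient is a lower bound of the form $N(s) \geq c \cdot d(s)^\delta$ for the size $N(s)$ of the Galois orbit of a special point $s \in Z \cap \Sigma$, where $d(s)$ is a suitable discriminant invariant of the endomorphism $A$-order of the Drinfeld module corresponding to $s$ and $c, \delta > 0$ depend only on $S$. The separability hypothesis on the reflex field of $s$ is essential here: it lets me apply class field theory in its usual form together with the analogue of Deuring's reciprocity theorem for Drinfeld modules (due to Breuer and others) to obtain such a bound unconditionally. Since GRH is a theorem over function fields, no conditional hypothesis is needed.

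Next I would choose primes $\pp$ of $A$ of controlled, polynomially bounded norm with prescribed splitting behaviour in the reflex field of $s$; Chebotarev density over $F$ applied to a suitable abelian extension guarantees the existence of such $\pp$. The Hecke correspondence $T_\pp$ has degree polynomial in $|\pp|$ with respect to $\mathcal{L}_{F,\mathcal{K}}^r$, so the degree of $T_\pp Z$ is polynomial in $|\pp|$ as well. On the other hand, a Galois--Hecke compatibility at $\pp$ places the entire Galois orbit of $s$ inside $T_\pp Z \cap Z$, and by the previous step this orbit is much larger than $\deg(T_\pp Z)$ once $|\pp|$ exceeds an effective threshold depending on $d(s)$; this forces $T_\pp Z \cap Z$ to contain an irreducible component of $Z$. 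Letting $d(s) \to \infty$ along $\Sigma \cap Z$ yields an infinite family of Hecke correspondences under which $Z$ is essentially stable.

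The heart of the argument, and the step I expect to be the main obstacle, is the geometric criterion: a Hodge-generic irreducible subvariety of $S$ that is stabilised up to an irreducible component by infinitely many Hecke correspondences as above must equal $S$. In Klingler--Yafaev this rests on Nori-type monodromy results inside reductive $\QZ$-groups. I would replace it by an argument exploiting the rigid-analytic $\GL_r$-uniformisation of $S_{F,\mathcal{K}}^r$ by Drinfeld symmetric spaces: Hecke stability forces the analytic monodromy of a smooth Zariski-open part of $Z$ to contain an open subgroup of $\GL_{r'}(F'_\infty)$ attached to some extension $F'/F$ of degree $r/r'$, and separability of $F'/F$ matches this monodromy with that of a Drinfeld modular subvariety of the expected dimension. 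Combined with Hodge-genericity this forces $\dim Z = \dim S$, contradicting $\dim Z < \dim S$ and completing the induction.
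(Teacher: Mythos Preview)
Your broad strategy matches the paper's (adapt Klingler--Yafaev; reduce to a Hodge-generic $Z$; find a Hecke correspondence under which $Z$ is stable; apply a geometric criterion), but the paper organises the argument differently, and you misidentify where the separability hypothesis actually enters.

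Rather than tracking Galois orbits of individual special points, the paper packages each special subvariety together with all its $\Gal(F^{\sep}/F)$-conjugates into a single \emph{Drinfeld modular subvariety} $X$ and works with $\deg X$ with respect to the natural ample sheaf on the Satake compactification. This single quantity controls both the intrinsic degree of a component and the number of Galois conjugates, so the unboundedness statement (Theorem~\ref{th*:unbounded}) replaces your separate orbit-size lower bound and makes any equidistribution input unnecessary. The main induction is then on $s = \dim Z - \dim X$ (Theorem~\ref{th*:induction}), not on $\dim Z$. The geometric criterion (Theorem~\ref{th*:geom}) also differs from yours: it needs $Z \subset T_{g_{\pp}} Z$ for a \emph{single} $g_{\pp}$ satisfying a strong unboundedness condition in $\mathrm{PGL}_r(F_{\pp})$ uniformly over all ${\cal K}_{\pp}$-translates $k_1 g_{\pp} k_2$, and its proof uses $\pp$-adic \'etale monodromy --- openness of the image of the arithmetic fundamental group of a smooth open piece of $Z$ in $\GL_r(F_{\pp})$, via Pink's theorem on Galois representations --- together with Zariski density of $T_g+T_{g^{-1}}$-orbits, not the $\infty$-adic rigid-analytic monodromy you sketch.

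The genuine gap in your plan is the role of separability. In the paper the degree-unboundedness holds for all Drinfeld modular subvarieties regardless of separability; class-field-theoretic orbit bounds are not the bottleneck. Separability of $F'/F$ is needed instead for the \emph{existence of a good prime}: condition~(c) of Definition~\ref{def*:goodprime} asks for a prime $\pp'$ of $F'$ over $\pp$ of local degree~$1$, and when $F'/F$ is inseparable every prime ramifies, so no such $\pp'$ exists. Without it one cannot build a Hecke correspondence satisfying the uniform unboundedness condition~(ii) of Theorem~\ref{th*:induction}; the paper's discussion of the inseparable case makes this explicit. So your claim that separability enters through Deuring-style reciprocity and orbit-size estimates mislocates the obstruction: the step that actually consumes the hypothesis is the construction of $g_{\pp}$ with the required $\mathrm{PGL}_r(F_{\pp})$-unboundedness, which is where your sketch is vaguest.
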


Since the reflex field of a
special point in a Drinfeld modular variety~$S_{F,\cal K}^r$ is of 
degree~$r$ over $F$, special
points with inseparable reflex field over $F$ can only occur if $r$ is
divisible by $p = \mathrm{char}(F)$. So Theorem~\ref{th*:main} implies

\begin{theorem*}
Conjecture~\ref{conj*:AO} is true if $r$ is not a multiple of $p = 
\mathrm{char}(F)$.
\end{theorem*}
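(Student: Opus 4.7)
The plan is to derive this corollary directly from Theorem~\ref{th*:main} by showing that the separability hypothesis on reflex fields is automatic when $p \nmid r$. The observation already recorded in the excerpt is that the reflex field of a special point of $S_{F,\cal K}^r$ is a degree-$r$ extension of $F$, so what remains is purely a statement of field theory: a finite extension of $F$ of degree coprime to $p = \mathrm{char}(F)$ is necessarily separable.

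The key input is the standard decomposition $[L:F] = [L:F]_{\mathrm{sep}} \cdot [L:F]_{\mathrm{insep}}$ for any finite extension $L/F$, together with the fact that $[L:F]_{\mathrm{insep}}$ is a power of $p$. If $[L:F] = r$ is not divisible by $p$, then $[L:F]_{\mathrm{insep}} = 1$, i.e.\ $L/F$ is separable. Applied to each reflex field $F'$ arising from a special point of~$\Sigma$ (where $[F':F] = r$), this forces $F'/F$ to be separable.

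With this in hand, the conclusion is immediate: every special point in $\Sigma$ has separable reflex field over $F$, so the hypotheses of Theorem~\ref{th*:main} are met and each irreducible component over $\C$ of the Zariski closure of $\Sigma$ is a special subvariety of $S$. The only non-trivial point to verify is the claim that a special point has reflex field of degree exactly $r$ over $F$; this comes from the description of inclusion morphisms in the excerpt, specialised to $r' = 1$ (a special point being a Drinfeld modular subvariety of dimension~$0$, obtained from a rank-$1$ moduli problem over $F'$), which gives $[F':F] = r/r' = r$. Hence the whole deduction consists of one line of field theory and a single invocation of the main theorem, with no genuine obstacle.
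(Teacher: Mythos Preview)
Your proposal is correct and follows precisely the paper's own argument: the paper simply notes that the reflex field of a special point has degree~$r$ over~$F$, so if $p \nmid r$ the extension is automatically separable and Theorem~\ref{th*:main} applies. You have merely spelled out the one line of field theory (that the inseparable degree is a power of~$p$) more explicitly.
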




\subsection*{Sketch of the proof of Theorem~\ref{th*:main}}
\subsubsection*{First reductions}
We need to show that a geometrically irreducible subvariety~$Z$ of
$S_{F,\cal K}^r$ containing a Zariski dense subset of special points 
with separable reflex field over $F$ is a special subvariety. An 
induction argument shows that it is enough to show the following 
crucial statement:
\begin{theorem*} \label{th*:reduced}
Let $\Sigma$ be a set of Drinfeld modular subvarieties of $S$ of 
dimension~$d$ whose union is Zariski dense in a 
subvariety $Z \subset S$ of dimension~$> d$ 
which is defined and irreducible over $F$. Then, for
almost all $X \in \Sigma$, there is a Drinfeld modular subvariety $X'$ of $S$
with $X \subsetneq X' \subset Z$.
\end{theorem*}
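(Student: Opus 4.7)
The plan is to adapt the strategy of Klingler and Yafaev to the Drinfeld setting, exploiting the fact that GRH holds unconditionally over function fields. The key mechanism is to combine a Galois-theoretic lower bound on the size of the Galois orbit of each $X \in \Sigma$ with a geometric upper bound on the degree of $T_{\pp}(X) \cap Z$ when the Hecke translate $T_{\pp}(X)$ is not entirely contained in $Z$, for a carefully chosen place $\pp$ of $F$.

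First, I would analyse the Galois action. Fix $X \in \Sigma$ with reflex field $F_X/F$, which by the reduction from the main theorem may be assumed separable. Since $Z$ is defined over $F$, every $\Gal(F^{\sep}/F)$-conjugate of $X$ is again a Drinfeld modular subvariety contained in $Z$. Combining the Drinfeld analogue of Shimura's reciprocity law with class field theory for $F_X$ and an effective Chebotarev-type estimate, unconditional in the function field setting, should yield a lower bound for the cardinality of this Galois orbit of the shape ``positive power of the complexity of $X$'' (where complexity is measured in terms of the discriminant data defining $X$).

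Next, for each $X$ I would select a place $\pp$ of $F$, unramified in $F_X$, that splits completely with respect to an auxiliary structure attached to $X$ (the function field analogue of the ``totally split primes'' in Klingler and Yafaev), with $\deg \pp$ bounded by a polylogarithmic function of the complexity of $X$; existence of such $\pp$ is again supplied by effective Chebotarev. The Hecke correspondence $T_{\pp}$ then decomposes $T_{\pp}(X)$ into finitely many irreducible components, each itself a Drinfeld modular subvariety of dimension $d$, and the splitting condition ensures that a substantial number of these components are Galois conjugates of $X$ and therefore already lie in $Z$.

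Finally, I would establish the dichotomy: either $T_{\pp}(X) \subset Z$, or $T_{\pp}(X) \cap Z$ is a proper closed subset of $T_{\pp}(X)$ whose degree in the Satake compactification $\overline{S}_{F,\cal K}^r$ with respect to ${\cal L}_{F,\cal K}^r$ is bounded by a B\'ezout-type estimate polynomial in $\deg Z$ and in the norm of $\pp$. Because $\deg \pp$ was chosen polylogarithmically small, this upper bound grows strictly slower than the Galois lower bound of step two, which becomes contradictory for $X$ of sufficiently large complexity; hence $T_{\pp}(X) \subset Z$ for almost all $X \in \Sigma$, and the required $X'$ is obtained as the smallest Drinfeld modular subvariety containing $X$ together with a component of $T_{\pp}(X)$ distinct from $X$. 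The main obstacle is this last step: transporting the B\'ezout-type intersection estimates of Klingler and Yafaev to $\overline{S}_{F,\cal K}^r$ requires precise control of how ${\cal L}_{F,\cal K}^r$ interacts with Hecke correspondences and with the boundary strata from Pink's construction. The separability hypothesis is used decisively in the Galois step, since inseparable contributions to $F_X$ would shrink the Galois orbit below what the comparison with the geometric bound can tolerate.
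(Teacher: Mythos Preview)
Your proposal has a genuine gap in the final step, and it misses the central mechanism the paper uses.

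The decisive problem is the sentence ``the required $X'$ is obtained as the smallest Drinfeld modular subvariety containing $X$ together with a component of $T_{\pp}(X)$ distinct from $X$.'' Even granting $T_{\pp}(X) \subset Z$, there is no reason for this smallest Drinfeld modular subvariety to lie in $Z$: the variety $Z$ is \emph{not} assumed special, so it need not contain the Drinfeld modular subvariety generated by two of its Drinfeld modular subvarieties. This is exactly the obstacle that prevents the Edixhoven-type argument (which your proposal is modelled on) from generalising beyond the curve case; the whole point of the Klingler--Yafaev strategy is to circumvent it.

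The paper does something structurally different. It does not study $T_{g}(X) \cap Z$ but rather $Z \cap T_{g}Z$, using a Hecke element $g$ coming from the centraliser of $X$ so that $X \subset T_g X \subset T_g Z$, hence $X \subset Z \cap T_g Z$. One then compares $\deg X$ with the B\'ezout bound $\deg(Z \cap T_g Z) \leq (\deg T_g)\,(\deg Z)^2$; when $\deg X$ is large enough, $X$ cannot be a union of irreducible components of $Z \cap T_g Z$, so it sits inside a strictly larger $F$-irreducible component $Y \subset Z \cap T_g Z$. Now there is a dichotomy: either $Y \subsetneq Z$, in which case one replaces $Z$ by $Y$ and \emph{inducts on} $s = \dim Z - \dim X$ (after possibly passing to a smaller ambient Drinfeld modular variety if $Y$ fails to be Hodge-generic); or $Y = Z$, which forces $Z \subset T_g Z$, and then a separate geometric criterion (Theorem~\ref{th*:geom}, proved via Zariski density of Hecke orbits together with an openness-of-monodromy result of Pink) yields $Z = S$, so $X' = S$ works. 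The delicate choice of $g$ must simultaneously satisfy the centraliser condition, an unboundedness condition in $\mathrm{PGL}_r(F_{\pp})$ needed for the geometric criterion, and a bound on $\deg T_g$; this is where the good-prime machinery, effective \v{C}ebotarev, and separability are actually used. Your Galois-orbit lower bound is morally present, but in the paper it is packaged as the unboundedness of the predegree $D(X)$ (Theorem~\ref{th*:unbounded}) rather than as a direct count of conjugates inside~$Z$.
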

In~\cite{KlYa}, Klingler and Yafaev perform the same induction, however they
work with special subvarieties instead of certain unions of their
Galois conjugates (Drinfeld modular subvarieties in our case).

In the proof of this statement, we can assume w.l.o.g. that

\begin{itemize}
\item
${\cal K}$ is sufficiently small such that the degree of
subvarieties of $S = S_{F,\cal K}^r$ is defined, and

\item
$Z$ is \emph{Hodge generic}, i.e., no geometrically
irreducible component of $Z$ is contained in a proper Drinfeld modular
subvariety of $S$.
\end{itemize}

\subsubsection*{Degree of Drinfeld modular subvarieties}
We give a classification of the Drinfeld modular subvarieties
of~$S$ and then use it to show the following unboundedness result:

\begin{theorem*} \label{th*:unbounded}
If $\Sigma$ is an infinite set of Drinfeld modular subvarieties of $S$,
then $\deg X$ is unbounded as $X$ varies over $\Sigma$.
\end{theorem*}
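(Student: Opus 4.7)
My plan is to exploit the parametrization of Drinfeld modular subvarieties introduced above: every $X \in \Sigma$ is the image of an inclusion morphism $\incl$ associated to a pair $(F',b)$, where $F'/F$ is a finite extension of degree $r/r'$ with a unique place above $\infty$, and $b:(\AZ_F^f)^r \stackrel{\sim}{\to} (\AZ_{F'}^f)^{r'}$ is an $\AZ_F^f$-linear isomorphism. The strategy is to show that only finitely many such pairs (up to the equivalence yielding the same image) can give rise to a modular subvariety of degree below any fixed bound $B$.

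First I would establish a lower bound for $\deg X$ in terms of invariants of the pair $(F',b)$. The inclusion morphism factors as a restriction morphism from a moduli space~$S_{F',\cal K'}^{r'}$ into $S_{F,\cal K}^r$, composed with a Hecke correspondence determined by~$b$. Using the universal property of the Satake compactification and the fact that $\mathcal L^r_{F,\cal K}$ pulls back to a power of the corresponding ample sheaf on the source, together with the projection formula, one gets an expression of the form $\deg X = c(F',r') \cdot n(b)$, where $c(F',r')$ depends only on the extension and $n(b)$ is an index of the lattice in~$(\AZ_{F'}^f)^{r'}$ cut out by~$b$ inside the standard one. Both factors are positive integers bounded below by a quantity tending to infinity with the arithmetic complexity of the pair.

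Next I would finish by a two-step finiteness argument. Since $[F':F]$ divides~$r$, the extensions appearing in $\Sigma$ have bounded degree; the function field analogue of Hermite's theorem (there are only finitely many extensions of a global function field of bounded degree with prescribed ramification, in particular a single place above~$\infty$) then guarantees that only finitely many fields $F'$ occur among the $X \in \Sigma$ with $\deg X \leq B$. Consequently, for an infinite $\Sigma$ with bounded degrees, some fixed $F'$ must support infinitely many inequivalent inclusion morphisms of degree $\leq B$. Using the degree formula, bounding $n(b)$ forces $b$ to range over a finite union of $\GL_{r'}(\widehat{A'})$-$\GL_r(\widehat{A})$-double cosets (with $\widehat{A}$ the profinite completion), which is a finite set; this contradicts the infinitude of~$\Sigma$.

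The main obstacle I anticipate is pinning down the degree formula precisely enough to see that $n(b)$ really measures the lattice index of~$b$, and that bounding this index confines~$b$ to finitely many double cosets: this amounts to identifying the pullback of $\mathcal L^r_{F,\cal K}$ along the restriction morphism and computing the induced degree of the Hecke correspondence on $\overline{S}_{F,\cal K}^r$. Once this intersection-theoretic input is in place, the finiteness statement and the Hermite-type argument combine to give the unboundedness claim.
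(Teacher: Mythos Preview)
Your proposal contains a genuine gap in the first finiteness step. You invoke a ``function field analogue of Hermite's theorem'' asserting that there are only finitely many extensions $F'/F$ of bounded degree with a single place above~$\infty$. This is false: over any global function field $F$ there are infinitely many such extensions of any fixed degree. For instance, if $F=\FZ_q(T)$ and $\infty$ is the usual place at infinity, then $F'=F(\sqrt{D})$ has a unique place above~$\infty$ for every monic squarefree polynomial $D$ of odd degree, and there are infinitely many of these. Having one place above~$\infty$ constrains the splitting behaviour at~$\infty$ but says nothing about ramification at finite primes, so it does not bound the discriminant or genus of~$F'$.

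The paper circumvents this by making the class number of~$F'$ an explicit part of the degree estimate: one shows $\deg X \geq C\cdot D(X)$ with $D(X)=|\mathrm{Cl}(F')|\cdot i(X)$, where the factor $|\mathrm{Cl}(F')|$ arises from the number of irreducible components of $S_{F',\cal K'}^{r'}$. A lower bound for $|\mathrm{Cl}(F')|$ in terms of the genus $g(F')$ then forces bounded-degree subvarieties to have bounded genus, and only at that point does a Hermite-type finiteness apply (finitely many $F'$ of bounded degree and bounded genus). Your proposed product formula $\deg X = c(F',r')\cdot n(b)$ does not isolate this class-number contribution, so even if you pinned it down you would still need the genus/class-number input to make the argument go through. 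Your second step, bounding the number of inequivalent $b$'s for fixed~$F'$ via a lattice-index computation, is in line with what the paper does and is the more routine part.
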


Note that, for a special subvariety~$V$ which is a geometrically irreducible
component of a Drinfeld modular subvariety~$X$, the union of the Galois
conjugates of $V$ over its reflex field is equal to $X$. Therefore,
$\deg X$ measures both the degree of~$V$ and the number of 
Galois conjugates of $V$. So our
unboundedness statement tells us that it is not possible that
in an infinite family of special subvarieties $V$, the degrees and
the number of Galois conjugates of $V$ are both bounded. Since we can
exclude this case, we
only need an adaptation of the Galois-theoretic and geometric methods
in~\cite{KlYa} and do not need equidistribution results as in~\cite{ClUl}.

\subsubsection*{Geometric criterion}
We deduce a geometric criterion
for $Z$ being equal to~$S$. It is a key ingredient of our proof 
of Theorem~\ref{th*:reduced} and says that $Z$ is equal to the
whole of $S$ provided that $Z$ is contained in a suitable Hecke
translate~$T_{g_{\pp}}Z$ of itself. A similar
geometric criterion appears in the proof of Klingler and Yafaev in the
classical case.

\begin{theorem*} \label{th*:geom}
Suppose that ${\cal K} = {\cal K}_{\pp} \times {\cal
  K}^{(\pp)}$ with ${\cal K}_{\pp} \subset \GL_r(F_{\pp})$ and assume
that $Z \subset T_{g_{\pp}}Z$ for some $g_{\pp} \in
\GL_r(F_{\pp})$ and $Z$ Hodge-generic and irreducible over $F$. If, for
all $k_1,k_2 \in {\cal K}_{\pp}$, the cyclic subgroup of
$\mathrm{PGL}_r(F_{\pp})$ generated by the image of $k_1 \cdot g_{\pp}
\cdot k_2$ is unbounded, then $Z = S$.
\end{theorem*}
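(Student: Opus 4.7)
\medskip
\noindent
\textbf{Proof sketch.} The plan is to lift the situation to the analytic uniformization of $S$, translate the Hecke-theoretic containment into a group-theoretic stability property of an analytic lift of $Z$, and then use the unboundedness hypothesis together with Hodge-genericity to force the lift to be all of Drinfeld's symmetric space.

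\medskip
\noindent
\textbf{Step 1 (Uniformize).} Over $\C$ we have
\[
S(\C)=\GL_r(F)\,\backslash\,\bigl(\Omega^r\times\GL_r(\AZ_F^f)/{\cal K}\bigr),
\]
where $\Omega^r$ is Drinfeld's symmetric space. Writing ${\cal K}={\cal K}_{\pp}\times{\cal K}^{(\pp)}$, the Hecke correspondence $T_{g_\pp}$ is the one induced by right-multiplication by the double coset ${\cal K}_\pp g_\pp {\cal K}_\pp$ on the $F_\pp$-factor of $\GL_r(\AZ_F^f)$. Fix a geometrically irreducible component $Z_0$ of $Z$, a point $h\in\GL_r(\AZ_F^f)$ representing the connected component of $S(\C)$ meeting $Z_0$, and an analytic irreducible component $\tilde Z_0\subset\Omega^r$ of the preimage of $Z_0$.

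\medskip
\noindent
\textbf{Step 2 (Containment yields stabilizers).} The inclusion $Z\subset T_{g_\pp}Z$, combined with irreducibility of $Z$ and a dimension count on each branch of the correspondence, forces the following: for suitable $k_1,k_2\in{\cal K}_\pp$ there is an element $\gamma\in\GL_r(F)$ such that $h^{-1}\gamma h$ has $\pp$-component in $k_1g_\pp k_2$ (up to ${\cal K}_\pp$) and trivial away-from-$\pp$ component (modulo ${\cal K}^{(\pp)}$), with $\gamma\cdot\tilde Z_0=\tilde Z_0$ as analytic subsets of $\Omega^r$. Iterating the inclusion $Z\subset T_{g_\pp}Z$ and following the branch that realises the containment, this procedure produces, for every $n\geq 1$ and every choice of representatives, an element stabilising $\tilde Z_0$ whose $\pp$-component is an arbitrary word of length $n$ in the set $\{k_1g_\pp k_2:k_1,k_2\in{\cal K}_\pp\}$. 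Let $H\subset\mathrm{PGL}_r(F_\pp)$ denote the subgroup so generated.

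\medskip
\noindent
\textbf{Step 3 (Rigidity on $\Omega^r$).} By the hypothesis of the theorem each $k_1g_\pp k_2$ has unbounded cyclic image in $\mathrm{PGL}_r(F_\pp)$, so $H$ is unbounded. The Zariski closure $\overline H\subset\mathrm{PGL}_{r,F_\pp}$ is therefore a non-trivial reductive (in particular non-parabolic) subgroup. The analytic subset $\tilde Z_0\subset\Omega^r$ is $H(F_\pp)$-stable, hence (by rigid-analytic continuation on $\Omega^r$) it is $\overline H(F_\pp)$-stable. Hodge-genericity of $Z$ forbids $\tilde Z_0$ from being contained in any proper $F$-rational sub-object of $\Omega^r$ (any such would descend to a proper Drinfeld modular subvariety containing $Z_0$), and an unbounded, Zariski-connected subgroup of $\mathrm{PGL}_r(F_\pp)$ together with ${\cal K}_\pp$ cannot stabilise a proper analytic subset of $\Omega^r$ without forcing such a descent. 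Hence $\tilde Z_0=\Omega^r$ and $Z=S$.

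\medskip
\noindent
\textbf{Main obstacle.} The crux is Step~2: one must turn the \emph{set-theoretic} containment $Z\subset T_{g_\pp}Z$ into the \emph{strict} equality $\gamma\cdot\tilde Z_0=\tilde Z_0$ and, when iterating, keep track of the fact that at each step one is free to choose any pair $(k_1,k_2)\in{\cal K}_\pp\times{\cal K}_\pp$ indexing the branch of the correspondence used. It is precisely this freedom that matches the hypothesis, which is phrased uniformly over all such pairs. Once this bookkeeping is carried out cleanly, Step~3 is the comparatively routine analytic-rigidity argument for $\Omega^r$, well adapted from the complex-analytic one used in the Klingler--Yafaev proof.
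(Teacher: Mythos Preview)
Your sketch has a fundamental conceptual gap in Step~3 and a secondary but real gap in Step~2.

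\textbf{Step 3 does not make sense as written.} Drinfeld's space $\Omega^r$ lives inside $\PZ^{r-1}(\C_\infty)$ and carries an action of $\GL_r(F_\infty)$ (hence of $\GL_r(F)$ via the embedding at $\infty$); it has no action of $\GL_r(F_\pp)$ for $\pp\neq\infty$. Your elements $\gamma\in\GL_r(F)$ do stabilise $\tilde Z_0$, but they do so through their image in $\GL_r(F_\infty)$, not through their $\pp$-component. Passing to the group $H\subset\mathrm{PGL}_r(F_\pp)$ generated by the $\pp$-components and asserting that $\tilde Z_0$ is $H(F_\pp)$-stable is a non sequitur: there is no map $H\to\mathrm{Aut}(\Omega^r)$. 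Consequently the ``rigidity on $\Omega^r$'' argument, the appeal to the Zariski closure $\overline{H}$, and the whole mechanism by which unboundedness at $\pp$ is supposed to force $\tilde Z_0=\Omega^r$ collapse. The unboundedness hypothesis at $\pp$ enters the paper's argument in a completely different way, namely through the Zariski density of $T_g+T_{g^{-1}}$-orbits on $S(\C)$ (Theorem~\ref{th:heckedensity}), which in turn relies on strong approximation for $\mathrm{SL}_r$ and Pink's normal-subgroup theorem, not on any action on $\Omega^r$.

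\textbf{Step 2 overclaims twice.} First, $Z$ is only $F$-irreducible, so $Z\subset T_{g_\pp}Z$ gives $Z_0\subset T_{g_\pp}\,\sigma(Z_0)$ for some $\sigma\in\Gal(F^{\sep}/F)$, not for $Z_0$ itself; you must untwist this Galois action (the paper does so by iterating to a power where $\sigma^n$ fixes the field of definition). Second, and more importantly, you do \emph{not} get to choose the pair $(k_1,k_2)$: the containment hands you one specific branch $h_\pp=k_1g_\pp k_2$, and iterating only produces powers of that same $h_\pp$. The hypothesis is quantified over all $k_1,k_2$ precisely because you cannot predict which pair will occur; it guarantees that whichever $h_\pp$ you receive has unbounded image. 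Your claim that iteration produces ``arbitrary words'' in all such elements is unfounded.

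\textbf{What the paper actually does.} Hodge-genericity is used not as a containment obstruction but via the monodromy theorem of Breuer--Pink (ultimately Pink's open-image theorem): the arithmetic $\pi_1$ of the smooth locus of $Z_0$ has open image in $\GL_r(F_\pp)$. This yields an open subgroup ${\cal K}'_\pp\subset{\cal K}_\pp$ and a lift $Z'_1$ of $Z_0$ to level ${\cal K}'_\pp\times{\cal K}^{(\pp)}$ with the key property that $T_{h_\pp}Z'_1$ is $E$-irreducible for every $h_\pp\in\GL_r(F_\pp)$. From $Z\subset T_{g_\pp}Z$ one then extracts a single $h_\pp=k_i^{-1}g_\pp k_j$ and a Galois element $\sigma$ with $Z'_1=\sigma(T_{h_\pp}Z'_1)$; iterating kills $\sigma$ and gives $Z'_1=T_{h_\pp^n}Z'_1=T_{h_\pp^{-n}}Z'_1$. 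Now the unboundedness of $\langle h_\pp^n\rangle$ in $\mathrm{PGL}_r(F_\pp)$ feeds into the Hecke-orbit density result (Proposition~\ref{prop:geomcrit_irred}), forcing $Z'_1$ to be a full irreducible component.
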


The proof of this theorem is based on two results:

\begin{properties}
\item Zariski density

We define the $T_{h_{\pp}} + T_{h_{\pp}^{-1}}$-orbit of a geometric point $x \in
S(\C)$ to be the smallest subset of $S(\C)$ containing $x$ 
which is invariant under
$T_{h_{\pp}}$ and $T_{h_{\pp}^{-1}}$. We show
that the $T_{h_{\pp}} + T_{h_{\pp}^{-1}}$-orbit of an arbitrary point 
$x \in S(\C)$ is Zariski dense
in the geometrically irreducible component of $S$ containing $x$ provided
that $h_{\pp} \in \GL_r(F_{\pp})$ is chosen such that the cyclic subgroup of
$\mathrm{PGL}_r(F_{\pp})$ generated by the image of $h_{\pp}$ is unbounded.

\item 
A result of Pink~\cite[Theorem 0.1]{Pi3} on the Galois representations 
associated to Drinfeld modules implies that the image of the
arithmetic \'etale fundamental group of a geometrically irreducible
component of $Z$ is open in $\GL_r(F_{\pp})$, see Theorem 4
in~\cite{BrPi}. Here we need our assumption that $Z$
is Hodge-generic.
\end{properties}

\subsubsection*{Induction}
Our final step of the proof of Theorem~\ref{th*:reduced} consists of
an induction which uses a Hecke correspondence with specific
properties. By induction we show the following statement:

\begin{theorem*} \label{th*:induction}
Let $X$ be a Drinfeld modular subvariety of $S$ associated to $F' / F$ and 
$b: (\AZ_F^f)^r \stackrel{\sim}{\rightarrow} (\AZ_{F'}^f)^{r'}$ and
assume that $X$ is contained in a Hodge-generic subvariety 
$Z \subset S$ which is irreducible over $F$.

Suppose that $T_{g_{\pp}}$ is a Hecke correspondence localized at a
prime~$\pp$ with the following properties:
\begin{properties}
\item
$g_{\pp}$ is defined by some $g'_{\pp'} \in \GL_{r'}(F'_{\pp'})$ where $\pp'$
is a prime of $F'$ lying over~$\pp$, i.e., 
$g_{\pp} = b^{-1} \circ g'_{\pp'} \circ b$.

\item
$g_{\pp}$ satisfies the unboundedness condition in
Theorem~\ref{th*:geom}, i.e., $\cal K = {\cal K}_{\pp} \times {\cal K}^{(\pp)}$ 
with ${\cal K}_{\pp} \subset \GL_r(F_{\pp})$ and,
for all $k_1,k_2 \in {\cal K}_{\pp}$,
the cyclic subgroup of $\mathrm{PGL}_r(F_{\pp})$ generated by the 
image of $k_1 \cdot g_{\pp} \cdot k_2$ is unbounded,

\item
If $\iota: S' \rightarrow S$ is an inclusion morphism with $X \subset \iota(S')$,
then the Hecke correspondence $T'$ on $S'$ defined by $g'_{\pp'}$
satisfies (ii) and $\deg T' = \deg T_{g_{\pp}}$,

\item
$\deg X > \deg(T_{g_{\pp}})^{2^s-1} \cdot (\deg Z)^{2^s}$ for $s := \dim Z - \dim X$.
\end{properties}
Then there is a Drinfeld modular subvariety $X'$ of $S$ with $X
\subsetneq X' \subset Z$.
\end{theorem*}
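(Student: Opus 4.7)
The plan is to induct on $s = \dim Z - \dim X$. The crucial preliminary observation is that $T_{g_{\pp}}$ preserves $X$: since $g_{\pp} = b^{-1} g'_{\pp'} b$ by (i) and $X$ arises as the image under an inclusion morphism $\iota\colon S' \to S$ of the entire moduli space $S'$, one computes $T_{g_{\pp}}(X) = \iota(T_{g'_{\pp'}}(S')) = \iota(S') = X$, so $X \subset Z \cap T_{g_{\pp}}Z$ throughout the argument.

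The inductive step rests on a dichotomy. \emph{Case (a):} $Z \subset T_{g_{\pp}}Z$. By (ii) and Theorem~\ref{th*:geom}, applied to $Z$ which is Hodge-generic and irreducible over $F$, this forces $Z = S$, so $X' := S$ satisfies the conclusion since $\dim X < \dim Z = \dim S$. \emph{Case (b):} $Y := Z \cap T_{g_{\pp}}Z \subsetneq Z$. Pick an $F$-irreducible component $Z_1$ of $Y$ containing $X$. A B\'ezout-type estimate in the Satake compactification $\overline{S}$ against the ample sheaf ${\cal L}_{F,\cal K}^r$ gives
\[\deg Z_1 \leq \deg Y \leq \deg Z \cdot \deg(T_{g_{\pp}}Z) \leq \deg T_{g_{\pp}}\cdot (\deg Z)^2.\]
If $\dim Z_1 = \dim X$, then $Z_1 = X$, so $\deg X \leq \deg T_{g_{\pp}}\cdot (\deg Z)^2 \leq (\deg T_{g_{\pp}})^{2^s-1}(\deg Z)^{2^s}$ for $s \geq 1$, contradicting (iv). This already settles the base case $s=1$, in which $\dim Z_1 < \dim Z$ leaves no room for $\dim Z_1 > \dim X$.

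For $s > 1$ with $\dim Z_1 > \dim X$ one would apply the induction hypothesis to $(Z_1, X, T_{g_{\pp}})$, with $s' := \dim Z_1 - \dim X \leq s - 1$. Conditions (i)--(iii) depend only on $T_{g_{\pp}}$ and $X$, not on $Z$, so they carry over verbatim. The new instance of (iv) follows from
\[(\deg T_{g_{\pp}})^{2^{s'}-1}(\deg Z_1)^{2^{s'}} \leq (\deg T_{g_{\pp}})^{2^{s-1}-1}\bigl(\deg T_{g_{\pp}}\cdot (\deg Z)^2\bigr)^{2^{s-1}} = (\deg T_{g_{\pp}})^{2^s-1}(\deg Z)^{2^s} < \deg X,\]
which is precisely why the exponent $2^s$ was placed in (iv): it absorbs one B\'ezout step, namely a squaring in $\deg Z$ together with a multiplication by $\deg T_{g_{\pp}}$.

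The main obstacle is that $Z_1$ need not be Hodge-generic in $S$, which is required to invoke either Theorem~\ref{th*:geom} or the induction hypothesis in $S$. The remedy is to let $X''$ be the smallest Drinfeld modular subvariety of $S$ containing $Z_1$; one has $X \subsetneq X''$, for otherwise $Z_1 \subset X$ would contradict $\dim Z_1 > \dim X$. Writing $X'' = \iota''(S'')$ for the corresponding inclusion morphism and pulling back $X$, $Z_1$ and $T_{g_{\pp}}$ to $S''$, the pullback $\tilde Z_1$ is Hodge-generic in $S''$ by the minimality of $X''$. Condition (iii) is tailored to exactly this situation: it guarantees that the induced Hecke correspondence on $S''$ still satisfies (ii) and has the same degree, and (iii) itself descends through composition of inclusion morphisms. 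A secondary induction on $r$ (equivalently on $\dim S$) then applies the theorem inside $S''$, producing $\tilde X' \supsetneq \tilde X$ with $\tilde X' \subset \tilde Z_1$, whose image $X' := \iota''(\tilde X')$ is a Drinfeld modular subvariety of $S$ with $X \subsetneq X' \subset Z_1 \subset Z$. The most delicate piece of the proof will be to track how degrees, measured against the appropriate ${\cal L}$-sheaves on the different Satake compactifications, transform under the inclusion morphism, so that the bound (iv) really descends faithfully to $S''$.
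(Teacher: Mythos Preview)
Your overall strategy --- $X\subset Z\cap T_{g_{\pp}}Z$, B\'ezout, the dichotomy $Z\subset T_{g_{\pp}}Z$ versus $Z\cap T_{g_{\pp}}Z\subsetneq Z$, and descent to a smaller ambient Drinfeld modular variety when Hodge-genericity is lost --- is exactly what the paper does. But there is a genuine gap in how you pass to $S''$.

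You write ``let $X''$ be the smallest Drinfeld modular subvariety of $S$ containing $Z_1$''. This is not well-posed when $Z_1$ is $F$-irreducible. A Drinfeld modular subvariety is only $F''$-irreducible for its own reflex field $F''$, and the $\Gal(F^{\sep}/F)$-conjugates of the minimal Drinfeld modular subvariety containing one $\C$-component of $Z_1$ will typically be distinct Drinfeld modular subvarieties containing the other $\C$-components. Their union is not a Drinfeld modular subvariety, and the smallest Drinfeld modular subvariety containing all of $Z_1$ may well be $S$ itself, which kills the argument. Relatedly, even if you found such an $X''=\iota''(S'')$, the pullback $\tilde Z_1=(\iota'')^{-1}(Z_1)$ need not be $F''$-irreducible, since $\iota''$ is only defined over $F''$ and $F$-irreducibility of $Z_1$ does not descend. (A smaller slip: when $\dim Z_1=\dim X$ you cannot conclude $Z_1=X$, only that the $\C$-components of $X$ are components of $Z_1$; the degree inequality you need still holds.)

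The paper's fix is to work with a single $\C$-component. One picks a $\C$-component $Z'$ of $Z_1$ containing a $\C$-component $V$ of $X$, takes the minimal Drinfeld modular subvariety $X''=\iota''(S'')$ containing $Z'$ (this \emph{is} well-defined), and observes via Proposition~\ref{prop:sub_contain} that the reflex field $F''$ of $X''$ satisfies $F''\subset F'$. Then $F'$-irreducibility of $X$ gives $X=\Gal(F^{\sep}/F')\cdot V\subset\Gal(F^{\sep}/F'')\cdot Z'=:Z''\subset X''$, and one descends with the $F''$-irreducible $Z''$ rather than with $Z_1$. The paper also organizes the induction differently: it proves the statement simultaneously for Hodge-generic and general $Z$, alternating between ``general for $s<n$ $\Rightarrow$ Hodge-generic for $s=n$'' (the B\'ezout step) and ``Hodge-generic for $s=n$ $\Rightarrow$ general for $s=n$'' (the descent to $S''$, with the \emph{same} $s$). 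This avoids your secondary induction on $r$. As for the degree tracking you flag at the end: it is handled cleanly by Proposition~\ref{prop:degree}~(ii), which gives $\deg\iota''(Y)=\deg Y$ exactly for any subvariety $Y\subset S''$ once ${\cal K}$ is amply small, together with $\deg Z''\le\deg Z$.
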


We perform an induction over $s := \dim Z - \dim
X$. Property (i) implies that $X \subset T_{g_{\pp}}X$, in particular we therefore
have
\[ X \subset Z \cap T_{g_{\pp}}Z. \]
The lower bound (iv) for $\deg X$ now says that $X$ cannot be a union
of geometrically irreducible components of $Z \cap
T_{g_{\pp}}Z$. Therefore we find an irreducible component~$Z'$ over $F$ of
$Z \cap T_{g_{\pp}}Z$ with $X \subset Z'$ and $\dim Z' > \dim
X$. There are two cases:

If $Z' = Z$, we have $Z \subset T_{g_{\pp}}Z$ and conclude by
Theorem~\ref{th*:geom} that $Z = S$, so the conclusion of 
Theorem~\ref{th*:reduced} is true with $X' = S$.

If $Z' \subsetneq Z$, then $\dim Z' < \dim Z$ because $Z$ is
irreducible over $F$. We replace $Z$ by $Z'$ and apply
the induction hypothesis. In this step, it is possible that $Z'$ is not 
Hodge-generic any more. In this case, we replace $S$ by a
smaller Drinfeld modular variety~$S'$ and show that (i)-(iv) are
still valid in $S'$ using our property (iii).

\subsubsection*{Choice of a suitable Hecke correspondence}
To finish the proof of Theorem~\ref{th*:reduced}, by Theorem~\ref{th*:induction}
we need to show that, for almost
all $X \in \Sigma$, there is a Hecke correspondence $T_{g_{\pp}}$
localized at a prime~$\pp$ with the properties (i)-(iv). To construct
such a $T_{g_{\pp}}$ for a $X \in \Sigma$, we need the 
prime~$\pp$ to satisfy specific conditions under which we call 
the prime \emph{good} for $X$:

\begin{definition*} \label{def*:goodprime}
Let $X$ be a Drinfeld modular subvariety of $S_{F,\cal K}^r$
associated to $F' / F$ and $b: (\AZ_F^f)^r \stackrel{\sim}{\rightarrow}
(\AZ_{F'}^f)^{r'}$. A prime~$\pp$ of $F$ is called \emph{good} 
for $X \subset S_{F,\cal K}^r$ if there is an $s_{\pp} \in
\GL_r(F_{\pp})$ such that the following holds for the $A_{\pp}$-lattice
$\Lambda_{\pp} := s_{\pp} \cdot A_{\pp}^r$:

\begin{propertiesabc}
\item
${\cal K} = {\cal K}_{\pp} \times {\cal K}^{(\pp)}$ where ${\cal
  K}_{\pp} = s_{\pp}{\cal K}(\pp)s_{\pp}^{-1}$ for the principal
congruence subgroup ${\cal K}(\pp)$ of $\GL_r(A_{\pp})$, 

\item
$b_{\pp}(\Lambda_{\pp})$ is an $A' \otimes_A A_{\pp}$-submodule of $(A'
\otimes_A A_{\pp})^{r'}$, 

\item
there exists a prime~$\pp'$ of $F'$ above $\pp$ with local degree~$1$ over~$F$.
\end{propertiesabc}
\end{definition*}

\begin{theorem*} \label{th*:Hecke}
If $\pp$ is a good prime for a Drinfeld modular subvariety $X \subset
S_{F,\cal K}^r$, then there is a Hecke correspondence $T_{g_{\pp}}$
localized at $\pp$ satisfying (i)-(iii) from
Theorem~\ref{th*:induction} with
\[ \deg T_{g_{\pp}} = |k(\pp)|^{r-1}, \]
where $k(\pp)$ denotes the residue field of $\pp$.
\end{theorem*}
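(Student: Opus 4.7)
The plan is to construct $g_{\pp}$ concretely from the good prime data, with $g'_{\pp'}$ chosen as $\mathrm{diag}(\pi_{\pp'}, 1, \ldots, 1)$ in an adapted basis, and then to verify (i)--(iii) and the degree formula by reducing everything to a computation with the standard principal congruence subgroup $\cal K(\pp)$ of $\GL_r(A_{\pp})$.

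By (b), the lattice $b_{\pp}(\Lambda_{\pp})$ decomposes as $\prod_{\qq' \mid \pp} \Lambda_{\qq'}$ with each $\Lambda_{\qq'}$ an $A'_{\qq'}$-lattice in $(F'_{\qq'})^{r'}$, and by (c) the factor $\Lambda_{\pp'}$ is a free $A_{\pp}$-module of rank $r'$ because $F'_{\pp'} = F_{\pp}$. I pick an $A'_{\pp'}$-basis of $\Lambda_{\pp'}$, take $g'_{\pp'}$ to be the matrix $\mathrm{diag}(\pi_{\pp'}, 1, \ldots, 1)$ in this basis, extend it to act as the identity on each $\Lambda_{\qq'}$ with $\qq' \ne \pp'$, and set $g_{\pp} := b^{-1} \circ g'_{\pp'} \circ b \in \GL_r(F_{\pp})$. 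Property (i) is immediate, and after conjugation by $s_{\pp}$ the element $h := s_{\pp}^{-1} g_{\pp} s_{\pp}$ lies in $\Mat_r(A_{\pp})$ with characteristic polynomial $(X - \pi_{\pp'})(X - 1)^{r-1}$ while $s_{\pp}^{-1} \cal K_{\pp} s_{\pp} = \cal K(\pp)$. Since $\cal K(\pp)$ is normal in $\GL_r(A_{\pp})$, the index $[\cal K_{\pp} : \cal K_{\pp} \cap g_{\pp}^{-1} \cal K_{\pp} g_{\pp}]$ reduces to $[\cal K(\pp) : \cal K(\pp) \cap \delta^{-1} \cal K(\pp) \delta]$ with $\delta := \mathrm{diag}(\pi_{\pp}, 1, \ldots, 1)$, and an entry-wise computation evaluates this to $|k(\pp)|^{r-1}$.

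For (ii), given any $k_1, k_2 \in \cal K_{\pp}$, set $k'_i := s_{\pp}^{-1} k_i s_{\pp} \in \cal K(\pp)$; since $k'_i \equiv I \pmod{\pi_{\pp}}$ we have $k'_1 h k'_2 \equiv h \pmod{\pi_{\pp}}$, so the characteristic polynomial of $k'_1 h k'_2$ reduces modulo $\pi_{\pp}$ to $X(X - 1)^{r-1}$. The Newton polygon of this characteristic polynomial is then forced to consist of one segment of slope $-1$ and length $1$ (from $v_{\pp}(\det) = 1$) together with one segment of slope $0$ and length $r-1$ (from the coefficient of $X$ reducing to the unit $(-1)^{r-1}$ modulo $\pi_{\pp}$). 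Thus $k_1 g_{\pp} k_2$ has $r - 1$ eigenvalues of valuation $0$ and one eigenvalue of valuation $1$, so its image in $\mathrm{PGL}_r(F_{\pp})$ generates an unbounded cyclic subgroup.

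For (iii), I apply the same construction to $X$ viewed as a Drinfeld modular subvariety of $S' = S_{F'', \cal K''}^{r''}$ at the prime $\pp''$ of $F''$ lying below $\pp'$. The local-degree-one hypothesis (c) descends to $\pp'/\pp''$, since the multiplicativity $[F'_{\pp'} : F_{\pp}] = [F'_{\pp'} : F''_{\pp''}] \cdot [F''_{\pp''} : F_{\pp}]$ forces both local degrees to equal one, and (a) and (b) descend from the compatibility of $\iota$ with the level structures and the lattice decompositions. The Newton polygon argument above then yields the unboundedness condition (ii) on $S'$ for the Hecke correspondence $T' = T_{g_{\pp''}}$ attached to $g_{\pp''} := (b')^{-1} \circ g'_{\pp'} \circ b'$, and the parallel coset computation produces the required equality of degrees. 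The main delicate step of the proof is precisely this descent of the good prime data under a general inclusion morphism $\iota$, since one has to keep careful track of how the principal-congruence structure of $\cal K_{\pp}$ and the resulting coset indices propagate through the compatibility constraints between $\cal K$ and $\cal K''$ imposed by $\iota$.
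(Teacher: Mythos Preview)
Your construction and verification of (i), (ii), and the degree formula are essentially the paper's own proof (Theorem~7.2.2): the paper also takes $g'_{\pp'}=\mathrm{diag}(\pi_{\pp'},1,\dots,1)$ in an $A'_{\pp'}$-basis of $\Lambda'_{\pp'}$, conjugates to reduce to ${\cal K}_{\pp}={\cal K}(\pp)$ and $g_{\pp}=\mathrm{diag}(\pi_{\pp},1,\dots,1)$, and then computes the index and the Newton polygon. Your unboundedness argument is actually a little cleaner than the paper's: the paper computes $v_{\pp}(a_0)$ and $v_{\pp}(a_{r-1})$ for the characteristic polynomial of $(k_1g_{\pp}k_2)^{-1}$ by an explicit entry-by-entry expansion of the trace, whereas you simply use $k'_i\equiv I\pmod{\pi_{\pp}}$ to get the whole characteristic polynomial of $k'_1hk'_2$ modulo $\pi_{\pp}$ at once. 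In your degree reduction you implicitly use that $h$ lies in the same $\GL_r(A_{\pp})$-double coset as $\delta$ (this follows from $v_{\pp}(\det h)=1$, hence elementary divisors $1,\dots,1,\pi_{\pp}$); the paper avoids this by choosing $s_{\pp}$ so that $s_{\pp}^{-1}g_{\pp}s_{\pp}$ is \emph{literally} $\delta$, but your route is fine once made explicit.

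One caveat on (iii): the ``parallel coset computation'' on $S'=S^{r''}_{F'',{\cal K}''}$ gives $\deg T' = |k(\pp'')|^{\,r''-1}$, not $|k(\pp)|^{\,r-1}$. Since $k(\pp'')=k(\pp)$ but $r''<r$, you only get $\deg T'\le \deg T_{g_{\pp}}$, not equality. This is also how the paper actually uses it: the formulation of (iii) in the introduction is a sketch, and in the body (Proposition~7.1.2 together with the proof of Theorem~8.2.1) one only needs that $\pp''$ is again good with $k(\pp'')=k(\pp)$ and $r''<r$, so that the degree assumption (iv) persists after descent. Your descent of the good-prime data is exactly Proposition~7.1.2, so apart from replacing ``equality of degrees'' by ``$\deg T'=|k(\pp)|^{\,r''-1}\le \deg T_{g_{\pp}}$'' your argument matches the paper.
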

We show this theorem by defining
\[ g_{\pp} :=
s_{\pp}\mathrm{diag}(\pi_{\pp}^{-1},1,\ldots,1)s_{\pp}^{-1} \]
for a uniformizer $\pi_{\pp}$ at $\pp$. In the proof, it is crucial that ${\cal
  K}_{\pp}$ is \emph{not} a maximal compact subgroup of
$\GL_r(F_{\pp})$, which is guaranteed by condition (a) in the 
definition of good prime. Otherwise we are not able to satisfy 
the unboundedness condition (ii). 

However, (a) is a very strict
condition on the prime~$\pp$: For a fixed level~$\cal K$ it can only
be satisfied at most at a finite set of primes because $\cal K$ is
maximal compact at almost all primes. Since (b) and (c) are both
satisfied only for an infinite set of primes of density smaller
than one, for a fixed level~$\cal K$, in general we cannot find a
prime~$\pp$ satisfying (a)-(c). We get rid of this problem by 
starting with a prime~$\pp$
for which there is an $s_{\pp} \in \GL_r(F_{\pp})$ such that

\emph{(a')}\ \ \ ${\cal K} = s_{\pp}\GL_r(A_{\pp})s_{\pp}^{-1} \times {\cal K}^{(\pp)}$

and also (b) and (c) are satisfied. With an effective version 
of \v{C}ebotarev's theorem which relies on the correctness of GRH 
for function fields we can show that such a prime satisfying
an upper bound for $|k(\pp)|$ exists provided that $\deg X$ is large enough. 

In this situation we
consider the Drinfeld modular variety $\tilde{S} :=
S_{F,\tilde{\cal K}}^r$ with $\tilde{\cal K} = s_{\pp}{\cal
  K}(\pp)s_{\pp}^{-1} \times {\cal K}^{(\pp)}$
which is a finite cover of $S = S_{F,\cal K}^r$. The conditions
(a)-(c) from Definition~\ref{def*:goodprime} are 
satisfied for some Drinfeld modular subvariety $\tilde{X}$ of 
$\tilde{S}$ lying over $X$, i.e., $\pp$ is a good prime for $\tilde{X}
\subset \tilde{S}$. By Theorem~\ref{th*:Hecke}, we then find a 
Hecke correspondence~$T_{g_{\pp}}$ on $\tilde{S}$
localized at~$\pp$ satisfying (i)-(iv) from
Theorem~\ref{th*:induction} for $\tilde{X} \subset
\tilde{S}$ where (iv) is ensured by the above upper bound for
$|k(\pp)|$.

Since $\deg X$ is unbounded as $X$ ranges over $\Sigma$ by
Theorem~\ref{th*:unbounded}, this works for almost all $X \in
\Sigma$. For these $X$ Theorem~\ref{th*:induction} gives a 
Drinfeld modular subvariety~$\tilde{X'}$ of $\tilde{S}$ 
with $\tilde{X} \subsetneq \tilde{X'} \subsetneq \tilde{Z}$. 
The image $X' \subset S$ of $\tilde{X'}$ under the covering map 
$\tilde{S} \rightarrow S$ then satisfies the conclusion of 
Theorem~\ref{th*:reduced}. 

\subsection*{Difficulties in the inseparable case}
Unfortunately, the above methods do not work in the inseparable case,
i.e., if $\Sigma$ in Theorem~\ref{th*:reduced} contains Drinfeld
modular subvarieties of $S$ with inseparable reflex field. This is
caused by the fact that every prime ramifies in an inseparable field
extension. Therefore, for a Drinfeld modular subvariety with
inseparable reflex field, there is no prime for which condition (c) in
Definition~\ref{def*:goodprime} is satisfied. So we cannot apply
Theorem~\ref{th*:Hecke} to find a Hecke correspondence satisfying
(i)-(iii) from Theorem~\ref{th*:induction}.

Also other approaches to find such Hecke correspondences fail. For
example, if $X$ is a Drinfeld modular subvariety of dimension~$0$ 
with purely inseparable reflex field~$F' / F$ and $\pp$ any prime of
$F$, then a Hecke correspondence $T_{g_{\pp}}$ localized at $\pp$ 
satisfying (i) of Theorem~\ref{th*:induction} does not satisfy the
unboundedness condition~(ii) in Theorem~\ref{th*:induction}: 
Indeed, in this case there is exactly one prime $\pp'$ of $F'$ above
$\pp$ with ramification index~$r$ and, if $\pi_{\pp'}
\in F'_{\pp'}$ is a uniformizer, then
$1,\pi_{\pp'},\ldots,\pi_{\pp'}^{r-1}$ is an $F_{\pp}$-basis of
$F'_{\pp'}$. Therefore, if $g_{\pp} \in \GL_r(F_{\pp})$ is 
defined by $g'_{\pp'} = \pi_{\pp'}^k \in
\GL_1(F'_{\pp'})$ as in (i) of Theorem~\ref{th*:induction}, then $g_{\pp}$ is a conjugate of the matrix
\[ \left( \begin{array}{cccc} 
&&& \pi_{\pp} \\ 1 &&& \\ & \ddots && \\ &&1& \end{array}\right)^k \in \GL_r(F_{\pp}) \]
for $\pi_{\pp} := \pi_{\pp'}^r$. Its $r$-th power is a scalar matrix,
hence the cyclic subgroup of $\mathrm{PGL}_r(F_{\pp})$ generated by
the image of $g_{\pp}$ is bounded and we cannot apply our geometric
criterion (Theorem~\ref{th*:geom}) for the Hecke correspondence $T_{g_{\pp}}$.

\subsection*{Organization of the paper}
After discussing preliminaries in Section~\ref{ch:preliminaries}, we 
define \emph{Drinfeld modular varieties}
for arbitrary level ${\cal K} \subset \GL_r(\AZ_F^f)$ as quotients of
fine moduli schemes of Drinfeld modules in Section~\ref{ch:DMV}.

In Section~\ref{ch:morphisms}, we first define \emph{projection morphisms}
and \emph{Hecke correspondences} on Drinfeld modular varieties. Then we
define \emph{inclusion morphisms} of Drinfeld modular
varieties which allow us to
define \emph{Drinfeld modular subvarieties} and \emph{special
  subvarieties} of a Drinfeld modular variety~$S$. 
Subsequently, we show various properties of these morphisms, we give a 
classification of the Drinfeld modular subvarieties of~$S$ and 
describe the Galois action on the sets of Drinfeld modular
subvarieties and irreducible components of $S$.

In Section~\ref{ch:degree}, we define the \emph{degree} of subvarieties of a
Drinfeld modular variety using the Satake compactification constructed
in~\cite{Pi} and discuss some of its properties. We then show our
unboundedness statement for the degree of Drinfeld modular subvarieties 
(Theorem~\ref{th*:unbounded}).

The next two sections are devoted to the proof of our geometric
criterion for being a Drinfeld modular
subvariety (Theorem~\ref{th*:geom}). 
Section~\ref{ch:density} deals with Zariski density of
$T_g + T_{g^{-1}}$-orbits and in Section~\ref{ch:geom} we give the
proof of the actual criterion.

In Section~\ref{ch:choice}, we first define \emph{good primes} for
Drinfeld modular subvarieties. We then explain, for a fixed
Drinfeld modular subvariety, how we can find a suitable Hecke 
correspondence at a good prime as in Theorem~\ref{th*:Hecke}. The 
last subsection of Section~\ref{ch:choice}
is devoted to find a good prime~$\pp$ satisfying an upper bound 
for $|k(\pp)|$ for a given Drinfeld modular subvariety after passing
to a finite cover of $S$.

In Section~\ref{ch:induction}, we finally conclude the proof of 
Theorem~\ref{th*:reduced} by proving Theorem~\ref{th*:induction} and
applying the results of the previous sections. Here we also explain why
Theorem~\ref{th*:reduced} implies our main result (Theorem~\ref{th*:main}).

\section{Preliminaries} \label{ch:preliminaries}
\subsection{Notation and conventions}
\label{sec:conv}

The following notation and conventions will be used throughout this paper:

\begin{itemize}
\item
$\FZ_q$ denotes a fixed finite field with $q$ elements.
\item
For an $\FZ_q$-algebra $R$, we denote by $R\{\tau\}$ 
the ring of non-commutative polynomials in
the variable $\tau$ with coefficients in $R$ and the commutator rule
$\tau \lambda = \lambda^q \tau$ for $\lambda \in R$.
\item
$F$ always denotes a global function field of characteristic $p$ with field of
constants~$\FZ_q$ and $\infty$ a fixed place of $F$.
\item
For a pair $(F,\infty)$, we use the following notation:

\begin{tabular}{ll}
$A$ & ring of elements of $F$ regular outside $\infty$ \\
$F_{\pp}$ & completion of $F$ at a place~$\pp$ \\
$A_{\pp}$ & discrete valuation ring of $F_{\pp}$ \\
$k(\pp)$ & residue field of $\pp$\\
$\C$ & completion of an algebraic closure of $F_{\infty}$ \\
$\AZ_F^f$ & ring of finite adeles of $F$ (i.e., adeles outside $\infty$)\\
$\AZ_F^{f,\,\pp}$ & ring of finite adeles of $F$ outside $\pp$ (i.e., adeles outside $\pp$ and $\infty$)\\
$\hat{A}$ & profinite completion $\prod_{\pp \neq \infty}A_{\pp}$ of $A$ \\
$\mathrm{Cl}(F)$ & class group of $A$ \\
\end{tabular}
\item
A place $\pp \neq \infty$ of $F$ is said to be a \emph{prime} of
$F$. We
identify a prime $\pp$ of~$F$ with a prime ideal of $A$.

\item
For a place $\pp$ and a finite extension $F'$ of $F$ , we set $F'_{\pp} := F' \otimes_F
F_{\pp}$ and $A'_{\pp} := A' \otimes_A A_{\pp}$. 
We identify $F'_{\pp}$ with $\prod_{\pp' |
  \pp}{F'_{\pp'}}$ and $A'_{\pp}$ with $\prod_{\pp' |
  \pp}{A'_{\pp'}}$ via the canonical isomorphisms. For a second finite extension $F''$ of $F$, 
we use the analogous conventions and notations.

\item
For a subfield $K \subset \C$ we denote by $K^{\sep}$ the separable
and by $\overline{K}$ the algebraic closure of $K$ in $\C$. Each
$K$-automorphism of $K^{\sep}$ has a unique continuation to a
$K$-automorphism of $\overline{K}$. Therefore, we can and do identify
the absolute Galois group $G_K := \Gal(K^{\sep} / K)$ with the
automorphism group $\mathrm{Aut}_K(\overline{K})$.
\end{itemize}

For the formulation of algebro-geometric results, we use the following
conventions:

\begin{itemize}
\item
Unless otherwise stated, {\it variety} means a reduced separated scheme of
finite type over~$\C$ and {\it subvariety} means a reduced closed
subscheme of a variety. We identify the set $X(\C)$ of $\C$-valued points
of a variety~$X$ with the set of its closed points.

\item
For a subfield $K \subset \C$, a variety $X$ together with a
scheme~$X_0$ of finite type over $K$ and an isomorphism of schemes
$\alpha_X: X_{0,\,\C} \stackrel{\sim}{\rightarrow} X$ is called a {\it
variety over~$K$}. We often write $X$ in place of $(X, X_0, \alpha_X)$
and identify $X_{0,\,\C}$ with $X$ via $\alpha_X$ if this leads
to no confusion. Such a variety $X$ is called \emph{$K$-irreducible} if
$X_0$ is irreducible.
Note that a variety over $K$ is also a 
variety over~$K'$ if $K \subset K' \subset \C$.

\item
Let $X'=X'_{0,\,\C}$ and $X = X_{0,\,\C}$ be two varieties over $K$. A
morphism $X' \rightarrow X$ is said to be {\it defined over $K$} if it
is the base extension to~$\C$ of a morphism $X'_0 \rightarrow X_0$ of schemes 
over~$K$.

\item
For a variety $X$ over $K$, a subvariety $X'
  \hookrightarrow X$ is said to be \emph{defined over $K$} if $X'$ 
is a variety over $K$ and the closed
immersion $X' \hookrightarrow X$ is defined over $K$.


\item
For a variety $X = X_{0,\,\C}$ over $K$ and a subfield $K' \subset \C$
containing $K$, we denote by $X(K')$ the set of $K'$-valued points of
$X_0$. Note that $X(K')$ is naturally a subset of the set of closed points of
$X$, in fact it is equal to the set of closed points of $X$ defined over
$K'$, see, e.g., p. 26 of~\cite{Bo}.

\item
The \emph{degree} of a finite surjective morphism $X \rightarrow Y$ of 
irreducible varieties is defined to be the degree of the 
extension of the function fields $\C(X) / \C(Y)$. We say that a finite
surjective morphism $f: X \rightarrow Y$ of (not necessarily irreducible)
varieties is of degree~$d$ if for each irreducible component $Z$ of $Y$
\begin{equation} \label{eq:finitedegree}
\sum_{\text{irr. components}\ X_i\ \text{of}\ f^{-1}(Z)}
\deg(f|X_i: X_i \rightarrow Z) = d. \end{equation}
\end{itemize}



\subsection{Galois action on subvarieties}
\label{sec:galois}
Let $X = X_{0,\,\C}$ be a variety over $K \subset \C$. Then there is
a natural action of the absolute Galois group~$G_K$ on $X_{0,\,\overline{K}}$ which
induces an action of~$G_K$ on the set of subvarieties of $X$ which are defined 
over~$\overline{K}$.
\begin{satz}\label{prop:galois}
A subvariety of $X$ which is defined over $\overline{K}$ is already 
defined over $K$ if and only if it is defined
over $K^{\sep}$ and $G_K$-stable.
\end{satz}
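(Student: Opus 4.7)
The forward direction is immediate from the functoriality of base change: if $Y$ corresponds to a closed subscheme $Y_0 \hookrightarrow X_0$ defined over $K$, then its base change to $K^{\sep}$ exhibits $Y$ as defined over $K^{\sep}$, and $Y_{0,\overline{K}} \subset X_{0,\overline{K}}$ is fixed by every $\sigma \in G_K$ because $\sigma$ acts only on the second factor of the fibre product $X_0 \times_{\Spec K} \Spec \overline{K}$.

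For the converse direction, the plan is to apply Galois descent along the Galois extension $K^{\sep}/K$. By the assumption that $Y$ is defined over $K^{\sep}$, write $Y = Y_{K^{\sep}} \times_{K^{\sep}} \C$ for a closed subscheme $Y_{K^{\sep}} \hookrightarrow X_{0,K^{\sep}}$. The $G_K$-stability of $Y$ as a subvariety, in the sense of Section~\ref{sec:galois}, translates into $G_K$-stability of $Y_{K^{\sep}}$ inside $X_{0,K^{\sep}}$ for the natural $G_K$-action on $\Spec K^{\sep}$. The goal is then to descend $Y_{K^{\sep}}$ to a closed subscheme $Y_0 \hookrightarrow X_0$ with $Y_{0,K^{\sep}} = Y_{K^{\sep}}$; base-changing $Y_0$ further to $\C$ via $\alpha_X$ will recover $Y$.

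To carry out the descent I would first use that $X_0$ is of finite type over $K$, so $Y_{K^{\sep}}$ is cut out by finitely many equations and is therefore already defined over some finite subextension; enlarging, choose a finite Galois extension $L/K$ inside $K^{\sep}$ together with a closed subscheme $Y_L \hookrightarrow X_{0,L}$ satisfying $Y_{L,K^{\sep}} = Y_{K^{\sep}}$. The $G_K$-stability of $Y_{K^{\sep}}$ then factors through the finite quotient $\Gal(L/K)$ and forces $Y_L$ to be stable under that group acting via the second factor of $X_{0,L}$. Classical finite Galois descent for closed subschemes under the faithfully flat extension $L/K$ (equivalently, descent of the defining quasi-coherent ideal sheaf) now produces a closed subscheme $Y_0 \hookrightarrow X_0$ with $Y_{0,L} = Y_L$, as desired.

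The only real technical point is verifying that the $G_K$-action on the subvariety $Y \subset X$ specified in Section~\ref{sec:galois} is compatible with the natural Galois actions on the schemes $X_{0,K^{\sep}}$ and $X_{0,L}$, so that stability of $Y$ really translates into stability of $Y_L$ at the finite level. This amounts to unwinding the identification $Y = Y_{K^{\sep}} \times_{K^{\sep}} \C$ and checking that a $K$-automorphism of $\overline{K}$ restricts compatibly to $K^{\sep}$ and to $L$; once this bookkeeping is in place, the reduction to a finite subextension is only a convenience and the descent step itself is standard.
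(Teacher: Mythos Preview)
Your argument is correct; it is the standard Galois descent proof of this statement. The paper, however, does not argue at all: its proof is a one-line citation of Theorem~AG.14.4 in Borel's \emph{Linear Algebraic Groups}. What you have written is essentially a self-contained unpacking of that reference --- reducing to a finite Galois subextension and invoking faithfully flat descent for closed subschemes --- so the two approaches agree in substance, with yours supplying the details the paper outsources.
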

\begin{proof} 
This follows from Theorem AG. 14.4 in~\cite{Bo}.
\end{proof}

\begin{satz}\label{prop:Firred}
Let $X = X_{0,\,\C}$ be a variety over $K \subset \C$. Then the 
irreducible components
of $X$ are defined over $K^{\sep}$. The absolute Galois group acts transitively
on the set of irreducible components of $X$ if and only
if $X$ is $K$-irreducible.
\end{satz}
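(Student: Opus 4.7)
The plan is to prove the two assertions separately, reducing the second to the first via Proposition~\ref{prop:galois}. For the first assertion, I would factor the base change from $K$ to $\C$ through the intermediate scheme $X_{0,K^{\sep}}$. Since $\overline{K}/K^{\sep}$ is purely inseparable, the morphism $X_{0,\overline{K}} \to X_{0,K^{\sep}}$ is a universal homeomorphism. Because $X = X_{0,\C}$ is reduced by the definition of a variety, $X_0$ is geometrically reduced over $K$; in particular, both $X_{0,K^{\sep}}$ and $X_{0,\overline{K}}$ are reduced. The universal homeomorphism then puts the irreducible components of $X_{0,K^{\sep}}$, taken with their reduced (hence integral) subscheme structure, in bijection with those of $X_{0,\overline{K}}$. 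Since $\overline{K}$ is algebraically closed, every integral $\overline{K}$-scheme of finite type is geometrically integral, so these components remain integral after the further base change to $\C$ and yield precisely the irreducible components of $X$. Each irreducible component of $X$ is therefore the base change to $\C$ of an integral closed subscheme of $X_{0,K^{\sep}}$, and so is defined over $K^{\sep}$.

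For the easier implication of the second assertion, suppose $X_0$ is reducible and write $X_{0,\mathrm{red}} = X_0^{(1)} \cup X_0^{(2)}$ as a union of two proper closed subsets. Their preimages $X^{(i)}$ in $X$ under the projection $X \to X_0$ are proper closed $G_K$-stable subsets of $X$ whose union is $X$. Each irreducible component of $X$ is contained in some $X^{(i)}$, so the set of irreducible components contained in $X^{(1)}$ is nonempty (else $X \subset X^{(2)}$), proper (else $X \subset X^{(1)}$), and $G_K$-invariant, so the action cannot be transitive.

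Conversely, assume $X_0$ is irreducible and suppose for contradiction that the irreducible components of $X$ decompose into $G_K$-orbits $O_1, \dots, O_k$ with $k \ge 2$. For each $j$, the union $Z_j := \bigcup_{Y \in O_j} Y$ is closed and $G_K$-stable, and, as a finite union of subvarieties defined over $K^{\sep}$ (by the first assertion), is itself defined over $K^{\sep}$. Proposition~\ref{prop:galois} then implies that each $Z_j$ is defined over $K$, i.e.\ $Z_j = Z_{j,0} \otimes_K \C$ for a closed subscheme $Z_{j,0} \subset X_0$. Since $\bigcup_j Z_j = X$, the closed subsets $Z_{j,0}$ cover $X_0$, and the irreducibility of $X_0$ forces some $Z_{j_0,0}$ to equal $X_{0,\mathrm{red}}$ topologically; but then $Z_{j_0} = X$ (as $X$ is reduced), so $O_{j_0}$ already contains every irreducible component of $X$, contradicting $k \ge 2$. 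The main obstacle is the first assertion, where the purely inseparable nature of $\overline{K}/K^{\sep}$ must be carefully exploited to descend scheme structure; once that descent is in hand, the equivalence follows cleanly from Proposition~\ref{prop:galois} and the dichotomy above.
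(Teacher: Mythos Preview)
Your proof is correct and follows essentially the same route as the paper's own (very terse) proof: the paper cites G\"ortz--Wedhorn, Corollary~5.56(2), for the first assertion, and your universal-homeomorphism/geometric-integrality argument is precisely what underlies that reference; for the second assertion the paper simply says it is a direct consequence of Proposition~\ref{prop:galois}, and your orbit-union argument spells out exactly that deduction.
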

\begin{proof} Corollary 5.56 (2) in~\cite{GoWe} implies that the
irreducible components of $X$ are defined over $K^{\sep}$. The second statement
is a direct consequence of Proposition~\ref{prop:galois}.
\end{proof}

\section{Drinfeld modular varieties}
\label{ch:DMV}
\subsection{Analytic description and modular interpretation}
We consider the following datum:

\begin{itemize}
\item
A global function field $F$ together with a fixed place $\infty$,
\item
a positive integer $r$, called \emph{rank}, and
\item
a compact open subgroup $\cal K$ of $\GL_r(\AZ_F^f)$, called \emph{level}.
\end{itemize}

We define \emph{Drinfeld's upper half-space} over $F$ of
dimension~$r-1$ by
\[ \Omega_F^r := \PZ^{r-1}(\CZ_{\infty}) \setminus \{
F_{\infty}\textrm{-rational hyperplanes} \}. \]
\begin{satz}  \label{satz_Omegar}
The points of Drinfeld's upper half-space $\Omega_F^r$ are in bijective
correspondence with the set of injective $F_{\infty}$-linear maps
$F_{\infty}^r \hookrightarrow \CZ_{\infty}$ up to multiplication by a
constant in $\CS$ via the assignment
\[ [\omega_1 : \cdots : \omega_r] \longmapsto [(a_1,\ldots,a_r) \mapsto
a_1\omega_1 + \cdots + a_r\omega_r] .\]
\end{satz}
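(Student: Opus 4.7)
The plan is to verify that the stated map is well-defined, then exhibit an explicit inverse by reading off homogeneous coordinates from the linear map evaluated on the standard basis.

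First, I would check well-definedness of the forward direction. Given a point $[\omega_1:\cdots:\omega_r] \in \PZ^{r-1}(\C)$, the formula $\phi_\omega:(a_1,\ldots,a_r) \mapsto a_1\omega_1 + \cdots + a_r\omega_r$ defines an $F_\infty$-linear map $F_\infty^r \to \C$, and replacing the representative $(\omega_i)$ by $(\lambda\omega_i)$ with $\lambda \in \CS$ replaces $\phi_\omega$ by $\lambda\phi_\omega$. Hence $\phi_\omega$ is a well-defined element of $\Hom_{F_\infty}(F_\infty^r, \C) / \CS$.

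Next, I would translate the condition defining $\Omega_F^r$ into the injectivity condition on $\phi_\omega$. By definition an $F_\infty$-rational hyperplane in $\PZ^{r-1}(\C)$ is the zero locus of a nonzero linear form $\sum a_i X_i$ with $(a_1,\ldots,a_r) \in F_\infty^r \setminus \{0\}$. The point $[\omega_1:\cdots:\omega_r]$ lies on such a hyperplane precisely when $\sum a_i \omega_i = 0$, that is, when $(a_i) \in \ker \phi_\omega$. Therefore avoiding every $F_\infty$-rational hyperplane is equivalent to $\ker \phi_\omega = 0$, i.e.\ to injectivity of $\phi_\omega$. This shows the forward assignment lands inside the intended target.

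Finally, I would construct the inverse. Given an injective $F_\infty$-linear map $\phi: F_\infty^r \hookrightarrow \C$, set $\omega_i := \phi(e_i)$, where $e_1,\ldots,e_r$ is the standard $F_\infty$-basis of $F_\infty^r$. Injectivity of $\phi$ implies that the $\omega_i$ are $F_\infty$-linearly independent in $\C$, so in particular they are not all zero, and the point $[\omega_1:\cdots:\omega_r] \in \PZ^{r-1}(\C)$ is well-defined and independent of the representative $\phi$ modulo $\CS$. Unwinding definitions, $\phi$ is recovered as $\phi_\omega$, and conversely the homogeneous coordinates of a point are recovered as the images of the $e_i$ under $\phi_\omega$; so the two assignments are mutually inverse. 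Since there is essentially no obstacle here beyond keeping the scaling ambiguity on both sides in sync, the proof is just this chain of verifications.
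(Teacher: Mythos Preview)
Your proof is correct and follows essentially the same route as the paper: both identify $F_\infty$-linear maps $F_\infty^r \to \C$ with tuples $(\omega_1,\ldots,\omega_r)\in\C^r$, translate avoidance of $F_\infty$-rational hyperplanes into $F_\infty$-linear independence of the $\omega_i$ (i.e.\ injectivity of the associated linear map), and then pass to the quotient by $\CS$ on both sides. Your explicit construction of the inverse via evaluation on the standard basis is a minor elaboration of what the paper states more tersely as ``the canonical bijection.''
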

\begin{proof} We have the canonical bijection
\[  \begin{array}{rcl}
   \C^r & \longrightarrow & \{\FC\textrm{-linear maps }F_{\infty}^r
\rightarrow \CZ_{\infty}\} \\
   (\omega_1,\ldots,\omega_r) & \longmapsto & (a_1,\ldots,a_r) \mapsto
a_1\omega_1 + \cdots + a_r\omega_r
   \end{array}. \]
The $\FC$-linear map $(a_1,\ldots,a_r) \mapsto a_1\omega_1 + 
\cdots + a_r\omega_r$
is injective if and only if $\omega_1,\ldots,\omega_r$ are
$\FC$-linearly independent, i.e., if and only if $(\omega_1,\ldots,\omega_r)$
does not lie in a $\FC$-rational hyperplane. Hence, factoring out the
action of $\CS$ on both sides, we get the desired bijection of
Drinfeld's upper half-space with the set of injective $F_{\infty}$-linear maps
$F_{\infty}^r \hookrightarrow \CZ_{\infty}$ up to multiplication by a
constant in $\CS$.
\end{proof}

In the following, we use the identification given by
Proposition~\ref{satz_Omegar} and denote the element of $\Omega_F^r$ associated
to an injective $F_{\infty}$-linear map $\omega: F_{\infty}^r \hookrightarrow
\CZ_{\infty}$ by $\overline{\omega}$.

Using this notation, one sees that $\GL_r(F)$ acts on $\Omega_F^r$ from the
left by 
\begin{equation} T \cdot \overline{\omega} := \overline{\omega \circ
    T^{-1}}
\end{equation} 
for $T \in \GL_r(F)$ considered as automorphism of $F_{\infty}^r$. 

\textbf{Remark:} This action can also be described regarding $\Omega_F^r$ as
a subset of $\PZ^{r-1}(\C)$. A short calculation shows that, for
$\omega = [\omega_1 : \cdots : \omega_r] \in \Omega_F^r \subset
\PZ^{r-1}(\C)$ and $T \in \GL_r(F)$ with $T^{-1} = (s_{ij})$, we have
\begin{equation} \label{eq:actionGLrF}
T \cdot \omega = [s_{11}\omega_1 + \cdots + s_{r1}\omega_r : \cdots :
s_{1r}\omega_1 + \cdots + s_{rr}\omega_r].   
\end{equation}
In other words, the action of a $T \in \GL_r(F)$ on $\Omega_F^r
\subset \PZ^{r-1}(\C)$ is the restriction to $\Omega_F^r$ of the 
natural action of $(T^{-1})^T \in \GL_r(\C)$ on $\PZ^{r-1}(\C)$.

\begin{theorem} \label{th:Dmodulischeme}
There is a normal affine variety $S_{F,\cal K}^r$ of dimension $r-1$ over
$F$ together with an isomorphism
\begin{equation} \label{eq:rigid_iso}
 S_{F,\cal K}^r(\C) \cong \GL_r(F) \setminus (\Omega_F^r \times
\GL_r(\AZ_F^f) / \cal K) 
\end{equation}
of rigid-analytic spaces, where $\GL_r(\AZ_F^f) / \cal K$ is viewed as
a discrete set.
\end{theorem}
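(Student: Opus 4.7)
My plan is to carry out the standard two-step construction: first build a fine moduli scheme at a sufficiently small auxiliary level $\cal K'$, then descend to $\cal K$ by taking a finite quotient. Accordingly, I would pick a compact open subgroup $\cal K' \subset \cal K$ that is normal of finite index in $\cal K$ and \emph{fine} in Drinfeld's sense, i.e.\ contained in a principal congruence subgroup $\cal K(I)$ whose level ideal $I \subset A$ is divisible by at least two distinct primes; this forces every rank-$r$ Drinfeld $A$-module with $\cal K'$-level structure to have only the trivial automorphism.

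For such $\cal K'$, I would invoke Drinfeld's theorem that there exists a smooth affine fine moduli scheme $M_{F,\cal K'}^r$ of relative dimension $r-1$ over $\Spec A$ parametrising rank-$r$ Drinfeld $A$-modules with $\cal K'$-level structure; its generic fiber $S_{F,\cal K'}^r$ is then a smooth affine $F$-variety of dimension $r-1$. For the analytic uniformization at this fine level, I would use that a Drinfeld $A$-module of rank $r$ over $\C$ is uniquely (up to isomorphism) determined by a discrete projective $A$-submodule $\Lambda \subset \C$ of rank $r$, or equivalently by an injective $F_\infty$-linear map $\omega: F_\infty^r \hookrightarrow \C$ modulo $\CS$ and modulo change of $A$-basis of $F^r$; Proposition~\ref{satz_Omegar} identifies the resulting set of orbits with $\GL_r(F) \setminus \Omega_F^r$. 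A $\cal K'$-level structure then corresponds, through the adelic Tate module, to an element of $\GL_r(\AZ_F^f)/\cal K'$, with the residual change-of-basis ambiguity producing precisely the diagonal $\GL_r(F)$-action, yielding
\[ S_{F,\cal K'}^r(\C) \;\cong\; \GL_r(F) \setminus \bigl(\Omega_F^r \times \GL_r(\AZ_F^f)/\cal K'\bigr) \]
as rigid-analytic spaces.

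To pass to the general level $\cal K$, I would let the finite group $\Gamma := \cal K/\cal K'$ act on $M_{F,\cal K'}^r$ by right translation on the level structure (i.e.\ on the factor $\GL_r(\AZ_F^f)/\cal K'$), and set $S_{F,\cal K}^r := M_{F,\cal K'}^r/\Gamma$. Since $M_{F,\cal K'}^r$ is affine, this categorical quotient exists as an affine $F$-scheme of finite type, computed on rings as $\Gamma$-invariants; it is normal because $M_{F,\cal K'}^r$ is smooth and $\Gamma$ acts by finite-order automorphisms, and its dimension is again $r-1$. The rigid-analytic uniformization descends to
\[ S_{F,\cal K}^r(\C) \;\cong\; \GL_r(F) \setminus \bigl(\Omega_F^r \times \GL_r(\AZ_F^f)/\cal K\bigr), \]
because passing to $\Gamma$-orbits on $\GL_r(\AZ_F^f)/\cal K'$ produces $\GL_r(\AZ_F^f)/\cal K$. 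Independence of the auxiliary choice of $\cal K'$ then follows by comparing any two admissible choices through their intersection.

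The main obstacle is the fine-level case: both Drinfeld's representability theorem for the moduli problem over $\Spec A$ and the analytic uniformization of rank-$r$ Drinfeld modules over $\C$ in terms of lattices are substantial inputs, and the rigid-analytic matching between the algebraic moduli functor and its analytic counterpart at finite level requires some care. Once those are in hand, the descent to arbitrary $\cal K$ and the analytic identification are essentially formal bookkeeping.
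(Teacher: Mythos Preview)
Your outline is correct and follows essentially the same strategy as the paper: construct the variety at a small auxiliary level where Drinfeld's moduli problem is representable, then pass to general $\cal K$ by a finite group quotient, checking normality and the rigid-analytic identification along the way.

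There are two organizational differences worth noting. First, the paper works exclusively with \emph{principal} congruence subgroups ${\cal K}(I)$ at the fine-level stage (and observes that over $\Spec F$, as opposed to $\Spec A$, a single proper ideal $I$ already suffices for rigidity), whereas you invoke representability directly for an arbitrary fine $\cal K'$. Drinfeld's original theorem is stated for ${\cal K}(I)$-level; representability for a general small $\cal K'$ is usually \emph{deduced} from that case (either as an \'etale cover of $S_{F,{\cal K}(I)}^r$ or via the adelic Tate-module formulation), so your step~2 implicitly contains the paper's steps~(i)--(ii). Second, the paper treats the case ${\cal K}\not\subset\GL_r(\hat{A})$ by conjugating $\cal K$ into $\GL_r(\hat{A})$ and \emph{defining} $S_{F,\cal K}^r := S_{F,g{\cal K}g^{-1}}^r$, then checking this is well-defined up to $F$-isomorphism using the morphisms $\pi_g$. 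Your approach absorbs this by choosing $\cal K'$ inside ${\cal K}\cap\GL_r(\hat{A})$ and taking its normal core in $\cal K$; this works, but you should remark that such a $\cal K'$ exists (since $\GL_r(\hat{A})$ is open, ${\cal K}\cap\GL_r(\hat{A})$ has finite index in $\cal K$, and one can pass to the normal core). Both routes give the same object; the paper's version has the advantage of simultaneously setting up the projection morphisms $\pi_g$ needed later, while yours is more direct for the bare existence statement.
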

\textbf{Remarks:}
\begin{itemize}
\item
In the proof, we define a variety $S_{F,\cal K}^r$ over $F$ together
with a rigid-analytic isomorphism of the form~\eqref{eq:rigid_iso} up
to isomorphism over $F$. This variety is
called the \emph{Drinfeld modular variety} associated to the datum
$(F,r,\cal K)$. We will
identify its $\C$-valued points with double cosets in $\GL_r(F) \setminus
(\Omega_F^r \times \GL_r(\AZ_F^f) / \cal K)$ via the 
rigid-analytic isomorphism given in the proof.

\item
Later (Corollary~\ref{cor:nonsingular}), we will show that $S_{F,\cal K}^r$
is a non-singular variety if ${\cal K}$ is sufficiently small in a certain
sense.
\end{itemize}

\begin{proof}
The proof consists of several steps:

\begin{properties}
\item
We use Drinfeld's construction of Drinfeld moduli schemes in~\cite{Dr1}
to define $S_{F,{\cal K}}^r$ and a rigid-analytic
isomorphism of the form~\eqref{eq:rigid_iso} for ${\cal K} = {\cal
  K}(I) \subset \GL_r(\hat{A})$ a
principal congruence subgroup modulo a proper ideal $I$ of $A$.

\item
For $g \in \GL_r(\AZ_F^f) \cap \Mat_r(\hat{A})$ and proper ideals $I,\,J$ of $A$ with 
$J\hat{A}^r \subset gI\hat{A}^r$, we define
morphisms
\[ \pi_g: S_{F,{\cal K}(J)}^r \longrightarrow S_{F,{\cal K}(I)}^r, \]
which are defined over $F$ and satisfy the compatibility relation
\[ \pi_g \circ \pi_{g'} = \pi_{gg'}. \]
In particular, these morphisms define an action of $\GL_r(\hat{A})$ on
$S_{F,{\cal K}(I)}^r$.

\item
We use this action to extend the definition in (i) to all compact open
subgroups ${\cal K} \subset \GL_r(\hat{A})$.

\item
We extend the definition in (ii) to get morphisms
\[ \pi_g: S_{F,{\cal K}'}^r \longrightarrow S_{F,{\cal K}}^r \]
for arbitrary ${\cal K},\,{\cal K'} \subset
\GL_r(\hat{A})$ and $g \in \GL_r(\AZ_F^f)$ 
with ${\cal K}' \subset g^{-1}{\cal K}g$.

\item
We define $S_{F,{\cal K}}^r$ and a rigid-analytic isomorphism~$\beta$ of the 
form~\eqref{eq:rigid_iso} for arbitrary
levels ${\cal K} \subset \GL_r(\AZ_F^f)$. We use the morphisms $\pi_g$
from (iv) to show the well-definedness of $(S_{F,{\cal K}}^r,\,\beta)$
up to isomorphism over $F$.
\end{properties}

\textbf{Step (i):} Recall that a \emph{Drinfeld $A$-module of rank $r$} over an
$F$-scheme $S$ is a line bundle $\cal L$ over $S$ together with a ring
homomorphism $\phi$ from $A$ to the ring~$\mathrm{End}_{\FZ_q}(\cal
L)$ of $\FZ_q$-linear endomorphisms of $\cal L$ (as a group scheme over $S$)
such that, over any trivializing affine open subset 
$\mathrm{Spec}(B) \subset S$,
the homomorphism~$\phi$ is given by
\[ \phi: \begin{array}{rcl}
 A & \longrightarrow & \mathrm{End}_{\FZ_q}(\mathbb{G}_{a,\mathrm{Spec}(B)}) =
 B\{\tau\} \\
 a & \longmapsto & \phi_a = \sum_{i=0}^{m(a)} b_i(a) \tau^i 
\end{array} \]
where $\tau$ denotes the $q$-power Frobenius and, for all $a \in A$,
\begin{propertiesabc}
\item
$q^{m(a)} = |A/(a)|^r$,

\item
$b_{m(a)}(a) \in B^*$,

\item
$b_0(a) = \gamma(a)$ where $\gamma$ is the ring homomorphism $F
\rightarrow B$ corresponding to the morphism of affine schemes 
$\mathrm{Spec}(B) \hookrightarrow S \rightarrow \mathrm{Spec}(F)$.
\end{propertiesabc}
For a proper ideal $I$ of $A$, an \emph{$I$-level structure} on a Drinfeld
module ${\cal L} / S$ of rank~$r$ is an $A$-linear isomorphism of group schemes
over $S$
\[ \alpha: \underline{(I^{-1} / A)^r} \longrightarrow {\cal L}_I :=  \bigcap_{a \in I} \ker ({\cal L}
\stackrel{a}{\rightarrow} {\cal L}), \]
where $\underline{(I^{-1} / A)^r}$ denotes
the constant group scheme over $S$ with fibers $(I^{-1} / A)^r$.

\textbf{Remark:} In general, one can also define Drinfeld $A$-modules
together with level structures over $A$-schemes 
instead of $F$-schemes. In this case, one
uses a different definition of $I$-level structure to deal smoothly
with the fibers over $\pp \in \mathrm{Spec}(A)$ dividing $I$, see,
e.g., Section I.6 in~\cite{DeHu}.


By Section 5 of~\cite{Dr1}, the functor
\[ {\cal F}_{F,I}^r:\ \ \ \begin{array}{rcl}
\text{$F$-schemes} & \longrightarrow & \text{Sets}
\\
S & \longmapsto & \{\text{Isomorphism classes of Drinfeld}\
A\text{-modules}\\
&& \text{of rank}\ r\
\text{over}\ S\ \text{with}\ I\text{-level structure}\}
\end{array} \]
is representable by a nonsingular affine scheme of finite type over
$F$ of dimension~$r-1$. Note that, in \cite{Dr1}, it is actually shown that the
corresponding functor from the category of schemes over
$\mathrm{Spec}\ A$ to the category of sets is representable if $I$ is
contained in two distinct maximal ideals of $A$. The argument in the
proof shows that it is enough that $I$ is a proper ideal of $A$ if we
work with schemes over $\mathrm{Spec}\ F$.

By our conventions in Subsection~\ref{sec:conv}, the base 
extension to $\C$ of the above representing scheme is a non-singular
variety of dimension~$r-1$ defined over $F$.
We denote it by $S_{F,{\cal K}(I)}^r$, where ${\cal K}(I)$ denotes the
principal congruence subgroup modulo $I$. By Proposition 6.6 in~\cite{Dr1}, 
there is a natural isomorphism 
\begin{equation} \label{eq:rigid_iso2}
 S_{F,{\cal K}(I)}^r(\C) \cong \GL_r(F) \setminus (\Omega_F^r \times
 \GL_r(\AZ_F^f) / {\cal K}(I)) 
\end{equation}
of rigid-analytic spaces. Under this isomorphism, the equivalence class of an element
$(\overline{\omega},h) \in \Omega_F^r \times \GL_r(\AZ_F^f)$ is mapped
to the $\C$-valued point of $S_{F,{\cal K}(I)}^r$ corresponding to
the Drinfeld module over $\C$ associated to the lattice
\[ \Lambda := \omega(F^r \cap h \hat{A}^r) \]
with $I$-level structure given by the composition of the isomorphisms
\[ (I^{-1}/A)^r \stackrel{h}{\longrightarrow} 
I^{-1} \cdot (F^r \cap h \hat{A}^r) / (F^r \cap h\hat{A}^r)
\stackrel{\omega}{\longrightarrow} I^{-1}\cdot \Lambda / \Lambda, \]
where the first isomorphism is given by the multiplication by $h$ on
$(\AZ_F^f)^r$ via the natural identifications
\begin{eqnarray*}
(I^{-1} / A)^r & \cong & I^{-1}\hat{A}^r / \hat{A}^r \\
I^{-1} \cdot (F^r \cap h \hat{A}^r) / (F^r \cap h\hat{A}^r) & \cong &
I^{-1} \cdot h\hat{A}^r / h\hat{A}^r
\end{eqnarray*}
by the inclusion maps. For a detailed survey of this modular interpretation, 
we refer to the explanations in Section II.5 in~\cite{DeHu}.

\textbf{Step (ii):} Let $I,\,J$ be proper ideals of $A$ and $g \in
\GL_r(\AZ_F^f) \cap \Mat_r(\hat{A}^r)$ such that $J\hat{A}^r \subset
gI\hat{A}^r$. For such a datum, we construct a morphism of functors
\[ {\cal F}_{F,J}^r \longrightarrow {\cal F}_{F,I}^r. \]
The given $g \in \GL_r(\AZ_F^f)$ with matrix entries in $\hat{A}$
induces a surjective endomorphism of $(\AZ_F^f)^r / \hat{A}^r$ with
kernel $g^{-1}\hat{A}^r / \hat{A}^r$. Since there is a natural
isomorphism $(F / A)^r \cong (\AZ_F^f / \hat{A})^r$ induced by the
inclusion maps, we therefore get a surjective homomorphism of $A$-modules
\[ (F / A)^r  \stackrel{g}{\longrightarrow} (F / A)^r. \]
The kernel $U := \ker g$ of this homomorphism is contained in $(J^{-1}
/ A)^r$ because we have $g^{-1}\hat{A}^r \subset J^{-1}I\hat{A}^r
\subset J^{-1}\hat{A}^r$ by our assumption $J\hat{A}^r \subset
gI\hat{A}^r$.

For any Drinfeld module $\cal{L}$ over an $F$-scheme $S$ with $J$-level
structure $\alpha: \underline{(J^{-1} / A)^r}
\stackrel{\sim}{\rightarrow} {\cal L}_J$, the image
of $\underline{U} \subset \underline{(J^{-1} / A)^r}$ under $\alpha$
is a finite $A$-invariant subgroup scheme of ${\cal L}$ over
$S$. Hence, the quotient ${\cal L}' := {\cal L} /
\alpha(\underline{U})$ is also a Drinfeld $A$-module over
$S$ and contains the finite subgroup scheme ${\cal L}_J /
\alpha(\underline{U})$. Since $g(J^{-1} / A)^r \cong (J^{-1} / A)^r /
U$, there is a unique $A$-linear isomorphism~$\alpha'$ of group schemes over
$S$ such that the diagram
\[ \xymatrix{ \underline{(J^{-1} / A)^r} \ar[r]^{\ \ \ \ \sim}_{\ \ \
    \ \alpha}
  \ar[d]^g & {\cal L}_J\ar[d]^{\pi} \\
\underline{g(J^{-1} / A)^r} \ar[r]^{\ \sim}_{\ \alpha'} & {\cal L}_J
/ \alpha(\underline{U}) }
\]
commutes, where $\pi: {\cal L}_J \rightarrow {\cal L}_J /
\alpha(\underline{U})$ is the canonical
projection. By the assumption $J\hat{A}^r \subset
gI\hat{A}^r$, we have $(I^{-1} / A)^r \subset g(J^{-1} /
A)^r$. Restricting the isomorphism~$\alpha'$ to the $I$-torsion gives therefore an $I$-level structure
\[ \underline{(I^{-1} / A)^r} \stackrel{\sim}{\longrightarrow} {\cal
  L}'_I \]
of ${\cal L}'$. 


The assignment $({\cal L},\,\alpha) \rightarrow
({\cal L}',\,\alpha'|_{\underline{(I^{-1} / A)^r}})$ induces a morphism of
functors ${\cal F}_{F,J}^r \rightarrow {\cal F}_{F,I}^r$ and therefore
a morphism $\pi_g: S_{F,{\cal K}(J)}^r \rightarrow S_{F,{\cal K}(I)}^r$
defined over $F$.

A simple verification shows that $\pi_g$ is given by
\begin{equation} \label{eq:pig} 
[(\omega, h)]  \longmapsto  [(\omega,
hg^{-1})]. \end{equation}
on $\C$-valued points identified with double cosets via the isomorphisms~\eqref{eq:rigid_iso2}.

This description implies that we have the relation
\[ \pi_g \circ \pi_{g'} = \pi_{gg'} \]
for two such morphisms
\begin{eqnarray*}
\pi_g: S_{F,{\cal K}(I')}^r & \longrightarrow & S_{F,{\cal K}(I)}^r,  \\
\pi_{g'}: S_{F,{\cal K}(I'')}^r & \longrightarrow & S_{F,{\cal K}(I')}^r.
\end{eqnarray*}
In particular, we have an action of $\GL_r(\hat{A})$ on $S_{F,{\cal
    K}(I)}^r$ by morphisms defined over~$F$ and hence also on
isomorphism classes of Drinfeld $A$-modules with $I$-level structure.


\textbf{Step (iii):} Using the action of $\GL_r(\hat{A})$ on $S_{F,{\cal
    K}(I)}^r$ by the morphisms $\pi_g$, we define,
for a compact open subgroup ${\cal K} \subset
\GL_r(\hat{A})$,
\[ S_{F,\cal K}^r := S_{F,{\cal K}(I)}^r / {\cal K}, \] 
where ${\cal K}(I)$ is a principal congruence subgroup contained in
$\cal K$. Since ${\cal K}(I)$ acts trivially on $S_{F,{\cal K}(I)}^r$,
this quotient can be viewed as a quotient under the action of the
finite group ${\cal K} / {\cal K}(I)$ by morphisms defined over $F$. 
Hence, it is an affine variety 
defined over $F$ of dimension $r-1 = \dim S_{F,{\cal K}(I)}^r$ 
which is normal because $S_{F,{\cal K}(I)}^r$ is normal (see, e.g., 
Section III.12 in~\cite{Se}). By the description~\eqref{eq:pig} of 
the above action on $\C$-valued points, the rigid-analytic
isomorphism~\eqref{eq:rigid_iso2} induces one of the form
\begin{equation} \label{eq:rigid_iso3}
\beta_I: (S_{F,{\cal K}(I)}^r / {\cal K})(\C) \cong \GL_r(F) \setminus (\Omega_F^r \times
\GL_r(\AZ_F^f) / \cal K). \end{equation}

It remains to show that, up to $F$-isomorphism, $(S_{F,{\cal K}(I)}^r
/ {\cal K},\,\beta_I)$ is independent of the choice of~$I$. For this 
note that, for two ideals $I$, $J$ with $I
\subset J$, the functors
\begin{eqnarray*}
S & \longmapsto & {\cal F}_{F,I}^r(S) / {\cal K}(J),\\
S & \longmapsto & {\cal F}_{F,J}^r(S)
\end{eqnarray*}
are isomorphic, where the quotient is taken with respect to the action
of $\GL_r(\hat{A})$ on ${\cal F}_{F,I}^r(S)$. The isomorphism is given
by restricting $I$-level structures to $(J^{-1} / A)^r$.

Therefore, we have a natural isomorphism
\[ S_{F,{\cal K}(I)}^r / {\cal K}(J) \cong S_{F,{\cal K}(J)}^r \]
defined over $F$, which is compatible with the  
isomorphisms~\eqref{eq:rigid_iso3} and~\eqref{eq:rigid_iso2}.

So for two ideals $J,\,I$ with ${\cal K}(I)
\subset {\cal K}$ and ${\cal K}(J) \subset {\cal K}$ we have
\[ S_{F,{\cal K}(I)}^r / {\cal K} \cong S_{F,{\cal K}(I \cap J)}^r /
{\cal K} \cong S_{F,{\cal K}(J)}^r / {\cal K}, \]
and these isomorphisms are compatible with the 
isomorphisms~\eqref{eq:rigid_iso3}. Therefore, we can well-define
$S_{F,\cal K}^r$ up to isomorphism over $F$ by $S_{F,{\cal K}(I)}^r$
together with the rigid-analytic isomorphism~$\beta_I$.
 
\textbf{Step (iv):} Let $g \in \GL_r(\AZ_F^f) \cap \Mat_r(\hat{A})$ and ${\cal K}, {\cal
  K}' \subset \GL_r(\hat{A})$ with ${\cal K}' \subset g^{-1}{\cal
  K}g$ be given. Choose proper ideals $I$ and $J$ of $A$ such that
${\cal K}(I) \subset {\cal K}$, ${\cal K}(J) \subset {\cal K}'$ and
$J\hat{A}^r \subset gI\hat{A}^r$. Then, by Step (iii),
\begin{eqnarray*}
S_{F,{\cal K}'}^r & := & S_{F,{\cal K}(J)}^r / {\cal K}', \\
S_{F,{\cal K}}^r & := & S_{F,{\cal K}(I)}^r / {\cal K}
\end{eqnarray*}
and, by Step (ii), there is a morphism
\[ \pi_g: S_{F,{\cal K}(J)}^r \longrightarrow S_{F,{\cal K}(I)}^r. \]
Since $g{\cal K}'g^{-1} \subset {\cal K}$,
for each $k' \in {\cal K}'$, there is a $k \in {\cal K}$ such that
$gk' = kg$ and
\[ \pi_g \circ \pi_{k'} = \pi_k \circ \pi_g \]
as morphisms $S_{F,{\cal K}(J)}^r \longrightarrow S_{F,{\cal
    K}(I)}^r$. So the composition of $\pi_g$ with the canonical
projection $S_{F,{\cal K}(I)}^r \rightarrow S_{F,{\cal K}}^r$ is
${\cal K}'$-invariant and induces therefore a morphism $\pi_g: S_{F,{\cal
    K}'}^r \rightarrow S_{F,{\cal K}}^r$
such that the diagram
\[ \xymatrix{S_{F,{\cal K}(J)}^r \ar[r]^{\pi_g} \ar[d] & S_{F,{\cal
      K}(I)}^r \ar[d] \\
S_{F,{\cal K}'}^r \ar[r]^{\pi_g} & S_{F,{\cal K}}^r } \]
commutes, where the vertical maps are the canonical
projections. By~\eqref{eq:pig}, using the identifications $S_{F,\cal
  K}^r(\C)$ and $S_{F,{\cal K}'}^r(\C)$ with double coset spaces given 
by~\eqref{eq:rigid_iso3}, this morphism $\pi_g: S_{F,{\cal
    K}'}^r \rightarrow S_{F,{\cal K}}^r$ is given by
\begin{equation} \label{eq:descrC}
 [(\omega, h)] \longmapsto [(\omega, hg^{-1})] \end{equation}
on $\C$-valued points. Therefore, we have defined $\pi_g$
independently of the choice of $I$ and $J$ if all matrix entries of
$g$ lie in $\hat{A}$.
 
If $g \in \GL_r(\AZ_F^f)$ is arbitrary, there is a $\lambda
\in A \setminus \{0\}$ such that $\lambda \cdot g \in \GL_r(\AZ_F^f)
\cap \Mat_r(\hat{A})$. We then define $\pi_g := \pi_{\lambda \cdot
  g}$. This morphism is independent of the choice of $\lambda$ because
we have
\[ [(\omega, h(\lambda g)^{-1})] = [(\omega, hg^{-1})] \]
in $S_{F,\cal K}^r(\C)$ for all $\lambda \in A \setminus \{0\}$ and
$[(\omega, h)] \in S_{F,{\cal K}'}^r(\C)$. In particular, $\pi_g$ is
still described by~\eqref{eq:descrC} on $\C$-valued points.

The latter implies the relation
\begin{equation} \label{eq:pig_action}
   \pi_g \circ \pi_{g'} = \pi_{gg'} 
\end{equation}
for two such morphisms $\pi_g: S_{F,{\cal K}'}^r \rightarrow
S_{F,{\cal K}}^r$ and $\pi_{g'}: S_{F,{\cal K}''}^r \rightarrow
S_{F,{\cal K}'}^r$.

\textbf{Step (v):} For an arbitrary compact open subgroup
${\cal K} \subset \GL_r(\AZ_F^f)$, we choose a $g \in \GL_r(\AZ_F^f)$
such that $g{\cal K}g^{-1} \subset \GL_r(\hat{A})$. 
The composition of the rigid-analytic
isomorphism~\eqref{eq:rigid_iso3}
\[ S_{F,g{\cal K}g^{-1}}^r(\C) \cong \GL_r(F) \setminus
(\Omega_F^r \times \GL_r(\AZ_F^f) / g{\cal K}g^{-1}) \]
and $[(\overline{\omega},h)] \mapsto
[(\overline{\omega},hg)]$ gives a rigid-analytic isomorphism
\[ \beta_g: S_{F,g{\cal K}g^{-1}}^r(\C) \cong \GL_r(F) \setminus
(\Omega_F^r \times \GL_r(\AZ_F^f) / {\cal K}). \] 
For another $g' \in \GL_r(\AZ_F^f)$ with 
$g'{\cal K}g'^{-1} \subset \GL_r(\hat{A})$, the diagram
\[ \xymatrix{ S^r_{F,g{\cal K}g^{-1}}(\C) \ar[dr]^{\beta_g}_{\sim} 
\ar[dd]^{\pi_{g'g^{-1}}}_{\sim}& \\
           &  \GL_r(F) \setminus (\Omega_F^r \times
\GL_r(\AZ_F^f) / {\cal K}) \\
S^r_{F,g'{\cal K}g'^{-1}}(\C) \ar[ur]^{\beta_{g'}}_{\sim} } \] 
commutes. By the relation~\eqref{eq:pig_action}, the \
vertical arrow $\pi_{g'g^{-1}}$ is  
an isomorphism over $F$ with inverse $\pi_{gg'^{-1}}$.

Therefore, we can well-define $S_{F,\cal K}^r$ up to $F$-isomorphism
 as $S_{F,g{\cal K}g^{-1}}^r$ together with the rigid-analytic
isomorphism~$\beta_g$. Since we have seen in Step (iii) that
$S_{F,g{\cal K}g^{-1}}^r$ is a normal affine variety of
dimension~$r-1$ defined over $F$, the same holds for $S_{F,\cal K}^r$.
\end{proof}

\begin{satz} \label{theorem_irrcomp}
Let $C$ be a set of representatives in $\GL_r(\AZ_F^f)$ for $\GL_r(F) \setminus \GL_r(\AZ_F^f) / {\cal K}$, and set $\Gamma_g
  := g{\cal K}g^{-1} \cap \GL_r(F)$ for $g \in C$. Then the map
\begin{eqnarray*} 
  \coprod_{g \in C} \Gamma_g \setminus \Omega_F^r & \longrightarrow & \GL_r(F)
  \setminus (\Omega_F^r \times \GL_r(\AZ_F^f) / {\cal K}) \\{}
  [\overline{\omega}]_g & \longmapsto & [(\overline{\omega},\,g)]
\end{eqnarray*}
is a rigid analytic isomorphism which maps for each $g \in C$ the quotient
space $\Gamma_g \setminus \Omega_F^r$ to the $\C$-valued points of an
irreducible component~$Y_g$ of $S_{F,\cal K}^r$ over $\C$.
\end{satz}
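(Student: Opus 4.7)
The plan is to explicitly decompose the double quotient according to the action on the discrete factor $\GL_r(\AZ_F^f)/{\cal K}$, and then to identify each piece with an irreducible component using the normality of $S_{F,\cal K}^r$.

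First I would verify that the given map is a bijection. For surjectivity, given any $(\overline{\omega}, h) \in \Omega_F^r \times \GL_r(\AZ_F^f)$, I write $h = Sgk$ with $S \in \GL_r(F)$, $g \in C$ and $k \in {\cal K}$; then $[(\overline{\omega}, h)] = [(S^{-1} \cdot \overline{\omega}, g)]$ under the diagonal $\GL_r(F)$-action, which lies in the image. For injectivity, suppose $[(\overline{\omega}_1, g_1)] = [(\overline{\omega}_2, g_2)]$ with $g_1, g_2 \in C$. Then there are $T \in \GL_r(F)$ and $k \in {\cal K}$ with $T \cdot \overline{\omega}_1 = \overline{\omega}_2$ and $Tg_1 k = g_2$. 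The second relation places $g_1$ and $g_2$ in the same $\GL_r(F)$-${\cal K}$-double coset, forcing $g_1 = g_2$ by the choice of $C$; the first then gives $T = g_1 k^{-1} g_1^{-1} \in g_1{\cal K}g_1^{-1} \cap \GL_r(F) = \Gamma_{g_1}$, so $\overline{\omega}_1$ and $\overline{\omega}_2$ define the same class in $\Gamma_{g_1} \setminus \Omega_F^r$.

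Next, because $\GL_r(\AZ_F^f)/{\cal K}$ carries the discrete topology, the $\GL_r(F)$-orbits in $\Omega_F^r \times \GL_r(\AZ_F^f)/{\cal K}$ are indexed by the double cosets in $\GL_r(F)\setminus\GL_r(\AZ_F^f)/{\cal K}$, and the stabilizer of $g{\cal K}$ is precisely $\Gamma_g$. Restricted to the single orbit of $g{\cal K}$, the quotient is therefore $\Gamma_g \setminus \Omega_F^r$. This yields the disjoint union decomposition and shows that the bijection above is a rigid analytic isomorphism, piece by piece.

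Finally, to identify each $\Gamma_g \setminus \Omega_F^r$ with the set of $\C$-valued points of an irreducible component $Y_g$ of $S_{F,\cal K}^r$ over $\C$: Drinfeld's upper half-space $\Omega_F^r$, as the complement of a union of $F_\infty$-rational hyperplanes in $\PZ^{r-1}(\C)$, is connected as a rigid analytic space, and so is its quotient by the discrete group $\Gamma_g$. Since $S_{F,\cal K}^r$ is normal by Theorem~\ref{th:Dmodulischeme}, its geometric irreducible components coincide with its geometric connected components, and these correspond to the connected components of its rigid analytification by the standard rigid GAGA comparison for affine varieties of finite type. Hence the image of each $\Gamma_g \setminus \Omega_F^r$ is exactly the set of $\C$-points of one irreducible component $Y_g$. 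The only slightly delicate point is this last equivalence between algebraic and rigid-analytic connected components, which is where I would be most careful; everything else is pure bookkeeping.
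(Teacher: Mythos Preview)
Your proposal is correct and follows essentially the same route as the paper: explicit verification of bijectivity, discreteness of $\GL_r(\AZ_F^f)/{\cal K}$ for the rigid-analytic isomorphism, connectedness of $\Omega_F^r$, and normality of $S_{F,\cal K}^r$ to pass from connected to irreducible components. The only cosmetic difference is that the paper invokes Conrad's result (Theorem~2.3.1 in~\cite{Co}) comparing irreducible components of a variety with those of its rigid analytification, and cites~\cite{Ko} for the connectedness of $\Omega_F^r$, whereas you phrase the comparison in terms of connected components via GAGA; both amount to the same thing and your caution about that step is well placed.
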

This theorem implies that the irreducible components of $S_{F,\cal
  K}^r$ over $\C$ are disjoint and that $C$ 
is in bijective correspondence with the
set of irreducible components of $S_{F,\cal K}^r$ over $\C$ where $g
\in C$ corresponds to the irreducible component $Y_g$ with $Y_g(\C) \cong
\Gamma_g \setminus \Omega_F^r$ via the isomorphism given in the theorem. 

\begin{proof}A direct calculation shows that the considered map is
  well-defined and bijective. Since $\GL_r(\AZ_F^f) / {\cal K}$ is
  viewed as a discrete set, the map
is also an isomorphism of rigid analytic spaces. 

Therefore, it only remains to show that the quotient spaces $\Gamma_g \setminus
\Omega_F^r,\ g \in C,$ are irreducible as rigid-analytic spaces because the
irreducible components of the rigid analytification of $S_{F,\cal K}^r$
coincide with the rigid analytification of the irreducible components
of $S_{F,\cal K}^r$ (see, e.g., Theorem 2.3.1 in~\cite{Co}). Since 
$S_{F,\cal K}^r$ is a normal variety and therefore its rigid
analytification a normal rigid analytic space, this is equivalent to the
connectedness of the quotient spaces $\Gamma_g \setminus \Omega_F^r$. 
The latter follows because $\Omega_F^r$ is a connected rigid-analytic
space by Theorem 2.4 in~\cite{Ko}. 
\end{proof} 

\begin{definition} \label{def_end}
For a $\C$-valued point $p = [(\overline{\omega},h)] \in 
S(\C)$ of a Drinfeld modular variety $S = S_{F,{\cal K}}^r$ 
with $h \in \GL_r(\AZ_F^f)$ and $\overline{\omega} \in
\Omega_F^r$ associated to $\omega: F_{\infty}^r \hookrightarrow \C$,
the elements of
\[ \End(p) := \{ u \in \C: u \cdot \omega(F^r) \subset
\omega(F^r) \} \]
are called \emph{endomorphisms} of $p$.
\end{definition}

Note that $\End(p)$ is well-defined because the homothety class of 
$\omega(F^r) \subset \C$ does not depend on the chosen 
representatives $\omega$ and $h$.

\textbf{Remark:} If ${\cal K} = {\cal K}(I)$ and $p \in S_{F,{\cal
    K}(I)}^r(\C)$ is corresponding to the Drinfeld module $\phi$ over
$\C$ associated to the lattice $\Lambda \subset \C$, then $\omega(F^r)
= F \cdot \Lambda$, and therefore
\[ \End(p) = F \cdot \End(\phi) \]
for the endomorphism ring $\End(\phi) \subset \C$ of $\phi$.

\begin{lemma} \label{lemma:end1}
The set $\End(p)$ of endomorphisms of $p$ is a field extension 
of~$F$ contained in $\C$ of finite degree dividing $r$ with only place above $\infty$.
\end{lemma}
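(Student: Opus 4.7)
The plan is to show that $E := \End(p)$ is a finite-dimensional $F$-subalgebra of $\C$, then derive the field structure, the divisibility, and finally the uniqueness of the place above $\infty$.

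First, since $\omega$ is $F_{\infty}$-linear and, for $a \in F$, one has $a \cdot \omega(F^r) = \omega(aF^r) \subset \omega(F^r)$, we get $F \subset E$. The assignment sending $u$ to multiplication-by-$u$ embeds $E$ as an $F$-subalgebra of $\End_F(\omega(F^r)) \cong \Mat_r(F)$; injectivity uses that $\C$ is a field and $\omega(F^r) \neq 0$. Hence $E$ is a finite-dimensional commutative $F$-algebra which is an integral domain (as a subring of $\C$), so multiplication by any nonzero element is an injective $F$-linear endomorphism of $E$ and therefore an isomorphism, making $E$ a field. Then $\omega(F^r)$ is an $E$-vector space, necessarily free of some rank $n$, and comparing $F$-dimensions gives $r = n \cdot [E:F]$, so $[E:F]$ divides $r$.

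For the claim about places above $\infty$, I would first observe that $F$ is dense in $F_{\infty}$, hence $F^r$ is dense in $F_{\infty}^r$, and by continuity of $\omega$ the subspace $\omega(F^r)$ is dense in $\omega(F_{\infty}^r)$. Multiplication by each $u \in E$ is continuous on $\C$, so this action preserves the closed subspace $\omega(F_{\infty}^r)$. Combined with the $F_{\infty}$-linear structure, which also corresponds to multiplication inside $\C$ since $\omega$ is $F_{\infty}$-linear and $F_{\infty} \subset \C$, we obtain an action of $E \otimes_F F_{\infty}$ on $\omega(F_{\infty}^r)$ factoring through the multiplication map $\mu: E \otimes_F F_{\infty} \to \C$.

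Extending scalars turns the free $E$-module $\omega(F^r) \cong E^n$ into a free $E \otimes_F F_{\infty}$-module $\omega(F_{\infty}^r) \cong (E \otimes_F F_{\infty})^n$ of rank $n$ (the $F_{\infty}$-dimensions agree on both sides and the canonical map is surjective, hence bijective); consequently the action of $E \otimes_F F_{\infty}$ on $\omega(F_{\infty}^r)$ is faithful, and so $\mu$ is injective. But $E \otimes_F F_{\infty} \cong \prod_{v \mid \infty} E_v$ is a product of fields, and a product of more than one field cannot inject into the field $\C$; therefore there is a unique place of $E$ above $\infty$. I expect the main obstacle to be this last step, whose crucial input is the identification of $\omega(F_{\infty}^r)$ as a free $E \otimes_F F_{\infty}$-module obtained by scalar extension from $\omega(F^r)$, since that is what forces the action, and therefore $\mu$, to be faithful.
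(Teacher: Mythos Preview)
Your argument is correct and entirely self-contained, whereas the paper simply appeals to Proposition~4.7.17 in Goss together with the commutativity of the endomorphism ring of a Drinfeld module in generic characteristic (via the remark just before the lemma that $\End(p) = F \cdot \End(\phi)$). So the two routes are genuinely different: the paper passes through the classical structure theory of endomorphisms of Drinfeld modules over $\C$, while you work directly with the $F$-vector space $\omega(F^r) \subset \C$ and its $F_\infty$-completion. Your approach has the advantage of avoiding any reference to Drinfeld modules at all---it uses only that $\omega$ is an injective $F_\infty$-linear map---and it makes the divisibility $[\End(p):F] \mid r$ transparent.

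Two small remarks. First, when you invoke ``the closed subspace $\omega(F_\infty^r)$'', you are using that a finite-dimensional $F_\infty$-subspace of the topological $F_\infty$-vector space $\C$ is automatically closed; this is standard over complete valued fields but worth stating. Second, the isomorphism $E \otimes_F F_\infty \cong \prod_{v\mid\infty} E_v$ as a product of \emph{fields} is only immediate when $E/F$ is separable; in general the local factors could a priori carry nilpotents. This does not affect your conclusion: you have already shown that $\mu$ embeds $E \otimes_F F_\infty$ into the field $\C$, so $E \otimes_F F_\infty$ is a domain, hence (being Artinian) a field, and the bijection between places of $E$ above $\infty$ and maximal ideals of $E \otimes_F F_\infty$ then gives uniqueness. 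You might rephrase that last sentence to make this order of deductions explicit.
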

\begin{proof} This follows from the argumentation in the proof of
Proposition 4.7.17 in~\cite{Go} noting that the endomorphism ring of a Drinfeld module
in generic characteristic is commutative.
\end{proof}

\begin{lemma} \label{lemma_end}
Each irreducible component~$X$ of a Drinfeld modular variety   
$S_{F,\cal K}^r$ over $\C$ contains a point $p \in
X(\C)$ with $\End(p) = F$.
\end{lemma}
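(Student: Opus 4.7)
Since Lemma~\ref{lemma:end1} forces $\End(p)=F$ in the case $r=1$, I assume $r\geq 2$, so $\dim X = r-1 \geq 1$. By Proposition~\ref{theorem_irrcomp}, the component $X$ is identified analytically with $\Gamma_g\setminus\Omega_F^r$ for a suitable $g$, so the task is to produce an injective $F_\infty$-linear $\omega:F_\infty^r\hookrightarrow\C$ whose image $\omega(F^r)\subset\C$ has no multiplier in $\C\setminus F$. The plan is a standard very-general-point argument: I will show that the locus of $\omega$ with extra endomorphisms is contained in a countable union of proper Zariski-closed subvarieties of $X$, and hence does not exhaust $X(\C)$.

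By Lemma~\ref{lemma:end1}, whenever $\End(p)\supsetneq F$, the ring $\End(p)$ is a finite field extension $F'/F$ with $d:=[F':F]\mid r$, $d>1$, contained in the algebraic closure $\overline{F}\subset\C$ and having only one place above~$\infty$. Since $\overline{F}$ is countable, the collection $\mathcal{F}$ of such subfields $F'\subset\C$ is countable. For each $F'\in\mathcal{F}$ I set
\[ E_{F'} := \{\, p\in X(\C) : F'\subset\End(p)\,\}. \]
The claim is that $E_{F'}$ is contained in a countable union of proper closed subvarieties of $X$, each of dimension at most $r/d-1$. Granting this, the total exceptional locus $\bigcup_{F'\in\mathcal{F}}E_{F'}$ is again a countable union of proper closed subvarieties of $X$; intersecting with a general irreducible curve $C\subset X$ makes each summand finite, so the uncountable set $C(\C)$ cannot be covered, and any point in the complement gives the desired $p$ with $\End(p)=F$.

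To establish the claim, I would fix $F'\in\mathcal{F}$ of degree $d$ and write $F'=F(u)$ for some generator $u$. Unpacking Definition~\ref{def_end}, the condition $F'\subset\End(p)$ is equivalent to $\omega(F^r)$ being stable under multiplication by $F'$, which in turn is equivalent to the existence of an $F$-algebra embedding $\rho:F'\hookrightarrow\Mat_r(F)=\End_F(F^r)$ satisfying $\omega\circ\rho(u)=u\cdot\omega$. Setting $M:=\rho(u)\in\Mat_r(F)$, this last equation says precisely that the row vector $(\omega_1,\ldots,\omega_r)$ is a left eigenvector of $M$ with eigenvalue $u$, a $\C$-linear condition whose solution space has $\C$-dimension at most $r/d$: indeed, the minimal polynomial of $u$ over $F$ has degree $d$ and divides the characteristic polynomial of $M\in\Mat_r(F)$, so $u$ occurs among the eigenvalues of $M$ with multiplicity at most $r/d$. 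The corresponding locus in $\PZ^{r-1}(\C)$, intersected with $\Omega_F^r$ and descended to $X$, is a proper closed subvariety of dimension at most $r/d-1<r-1$; taking the union over the countably many admissible $M\in\Mat_r(F)$ yields the claim. The main obstacle is exactly this algebraicity and dimension-bound step, and an alternative route that I would probably prefer is to identify $E_{F'}$ with (a Hecke translate of) the image of a Drinfeld modular subvariety coming from Drinfeld $A'$-modules of rank $r/d$ for $A'$ the integral closure of $A$ in $F'$, which gives the same bound by a cleaner moduli-theoretic argument that is made precise in the later sections on inclusion morphisms.
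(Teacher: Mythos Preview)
Your counting approach is sound and is genuinely different from the paper's, but the gap you flag is real: the image in $X\cong\Gamma_g\backslash\Omega_F^r$ of a $\C$-linear eigenspace in $\PZ^{r-1}(\C)$ is closed only rigid-analytically, and there is no a priori reason for it to be Zariski closed in the algebraic variety~$X$, so the curve-intersection step does not go through as written. Your moduli-theoretic alternative does repair this: for each $F'\in\mathcal F$ the locus $E_{F'}$ is exactly the union of the images of the inclusion morphisms $\iota_{F,\,b}^{F'}$ as $b$ varies, and each such image is Zariski closed in $S$ of dimension $r/d-1<r-1$ by Theorem~\ref{th_inclusion}; since there are only countably many such Drinfeld modular subvarieties, the standard fact that an irreducible variety over an uncountable algebraically closed field is not a countable union of proper closed subvarieties finishes the argument. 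This is logically non-circular (Theorem~\ref{th_inclusion} does not rely on Lemma~\ref{lemma_end}), but it does forward-reference Section~\ref{ch:morphisms}.

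By contrast, the paper's proof is a direct two-line construction that bypasses the enumeration entirely. One takes $\omega_1=1$ and $\omega_2=\xi_2,\ldots,\omega_r=\xi_r$ with $\xi_2,\ldots,\xi_r\in\C$ algebraically independent over~$F$ (possible since $\C$ has infinite transcendence degree over the countable field~$F$), and then chooses $h$ so that $p=[(\overline{\omega},h)]$ lands in the given component. Since $1\in\omega(F^r)$, every $u\in\End(p)$ already lies in $\omega(F^r)$; since $\End(p)/F$ is finite (Lemma~\ref{lemma:end1}), every such $u$ is algebraic over~$F$; and the only elements of $F+F\xi_2+\cdots+F\xi_r$ algebraic over~$F$ are those in~$F$. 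Hence $\End(p)=F$. Both arguments ultimately rest on the uncountability of~$\C$, but the paper exploits it once, upstream, to produce a single explicit good $\omega$, whereas your route uses it downstream via a Baire-type exhaustion that then needs the extra algebraicity input.
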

\begin{proof} Choose $\overline{\omega} \in \Omega_F^r$ such that
$\omega(F^r) = F \oplus F \cdot \xi_2 \oplus \cdots \oplus F \cdot
\xi_r$ with $\xi_2,\ldots,\xi_r \in \C$ algebraically independent over
$F$. This is possible because $\C$ as uncountable field is of infinite
transcendence degree over the countable field~$F$.

Now choose $h \in \GL_r(\AZ_F^f)$ such that $p :=
[(\overline{\omega},h)] \in X(\C)$ (use the description of the
irreducible components of $S_{F,\cal K}^r$ over $\C$ given in 
Proposition~\ref{theorem_irrcomp}).
Since $1 \in \omega(F^r)$, we have on the one hand $\End(p) \subseteq
\omega(F^r)$. On the other hand, all elements of
$\End(p)$ are algebraic over $F$ because the extension
$\End(p) / F$ is finite. But by the choice of
$\xi_2,\ldots,\xi_r$, every element of $\omega(F^r)$ which is
algebraic over $F$ lies in $F$. Hence, $\End(p) = F$.
\end{proof}


\subsection{Rank one case} \label{sec:rank1}
In the case $r = 1$ the variety $S_{F,\cal K}^r$ is
zero-dimensional and defined over $F$ 
for any compact open subgroup ${\cal K} \subset
\GL_1(\AZ_F^f) = (\AZ_F^f)^*$. Hence, $S_{F,\cal K}^1$ consists only of finitely
many closed points and it can be set-theoretically identified with
$S_{F,\cal K}^1(\C)$. By Proposition~\ref{prop:Firred}, the closed
points are all defined over $F^{\sep}$ and the absolute Galois group
$\Gal(F^{\sep}/F)$ acts on $S_{F,\cal K}^1$.

Drinfeld's upper half-space~$\Omega_F^1$ 
just consists of one point. Therefore, we have
\[ S_{F,\cal K}^1 = F^* \setminus (\AZ_F^f)^*\, /\, \cal K \]
as a set. Since $(\AZ_F^f)^*$ is abelian, this set can be identified
with the abelian group $(\AZ_F^f)^*\, /\,(F^* \cdot {\cal K})$.

Since $F^* \cdot {\cal K}$ is a closed subgroup of finite index of
$(\AZ_F^f)^*$, by class field theory, there is a finite 
abelian extension $H / F$ totally split at $\infty$ such that 
the Artin map
\[ \psi_{H / F}: (\AZ_F^f)^* \longrightarrow \Gal(H / F) \]
induces an isomorphism $(\AZ_F^f)^*\, /\,(F^* \cdot {\cal K}) 
\cong \Gal(H / F)$. In particular we have
\[ |S_{F,\cal K}^1| = [H : F]. \] 

\begin{theorem} \label{th:CFT}
If $\psi_{H / F}(g) = \sigma|_H$ for a $g \in (\AZ_F^f)^*$ and a
$\sigma \in \Gal(F^{\sep} / F)$, then the action of~$\sigma$ on 
$S_{F,\cal K}^1 = F^* \setminus (\AZ_F^f)^*\, /\, 
\cal K$ is given by multiplication with~$g^{-1}$.
\end{theorem}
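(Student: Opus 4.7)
The plan is to reduce to a principal congruence subgroup and then appeal to Drinfeld's explicit class field theory for rank one Drinfeld modules. I choose a proper ideal $I \subset A$ with ${\cal K}(I) \subset {\cal K}$, so that by Step~(iii) of the proof of Theorem~\ref{th:Dmodulischeme} the canonical projection
\[ \pi: S_{F,{\cal K}(I)}^1 \longrightarrow S_{F,{\cal K}}^1 \]
is defined over $F$ and hence $\Gal(F^{\sep}/F)$-equivariant. On $\C$-points it is the obvious surjection $F^*\setminus(\AZ_F^f)^*/{\cal K}(I) \twoheadrightarrow F^*\setminus(\AZ_F^f)^*/{\cal K}$, and this surjection intertwines multiplication by $g^{-1}$ on source and target. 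It therefore suffices to prove the statement for ${\cal K} = {\cal K}(I)$.

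In that case, by Step~(i) of the same proof, a class $[h] \in F^*\setminus(\AZ_F^f)^*/{\cal K}(I)$ corresponds to the pair $(\phi_h,\alpha_h)$ consisting of the rank one Drinfeld $A$-module $\phi_h$ over $\C$ attached to the invertible $A$-lattice $\Lambda_h := \omega(F \cap h\hat A) \subset \C$ (with $\omega$ the unique element of $\Omega_F^1$), together with the $I$-level structure $\alpha_h$ induced by multiplication by $h$. I would then invoke Drinfeld's main theorem in the rank one case (Section~8 of~\cite{Dr1}; equivalently Hayes' explicit class field theory for $F$): the compositum of the fields of moduli of all such pairs is precisely the finite abelian extension $H$ of $F$ attached by class field theory to the open subgroup $F^*\cdot{\cal K}(I) \subset (\AZ_F^f)^*$, and for any $g \in (\AZ_F^f)^*$ with $\psi_{H/F}(g) = \sigma|_H$ the Galois element $\sigma$ carries $(\phi_h,\alpha_h)$ to a pair isomorphic to $(\phi_{hg^{-1}},\alpha_{hg^{-1}})$. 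Under the modular bijection this is exactly the asserted multiplication by $g^{-1}$.

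The main obstacle is the bookkeeping of the normalization: one has to verify that the sign convention used to define the Artin map $\psi_{H/F}$ is compatible with the convention used in Step~(i) of the proof of Theorem~\ref{th:Dmodulischeme} to attach a Drinfeld module and its $I$-level structure to the double coset $[h]$, so that the action indeed comes out as $g^{-1}$ rather than $g$. This is essentially the content of Drinfeld's reciprocity law in the rank one case, where the action on the lattice side is multiplication by the fractional ideal attached to $g^{-1}$, and the action on the $I$-torsion is dictated by the Artin symbol. Since the maps $\psi_{H_I/F}$ for nested ideals $I$ are compatible, the resulting formula passes unambiguously through the quotient to give the theorem for the original level ${\cal K}$.
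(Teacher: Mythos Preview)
Your proposal is correct and takes essentially the same approach as the paper: both reduce to Drinfeld's reciprocity law in Section~8 of~\cite{Dr1}, and the sign check you flag is exactly the remark the paper makes (the action of $g$ via $\pi_g$ is multiplication by $g^{-1}$). Your explicit reduction to a principal congruence level ${\cal K}(I)$ is a reasonable extra step that the paper leaves implicit, but the mathematical content is identical.
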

\begin{proof}
This follows from Theorem 1 in Section 8 of Drinfeld's
article~\cite{Dr1}. Note that in this article the
action of an element $g \in (\AZ_F^f)^*$ on $S_{F,\cal K}^1 = 
F^* \setminus (\AZ_F^f)^*\, /\, \cal K$ is given by the 
morphism~$\pi_g$, which is given by multiplication with $g^{-1}$.
\end{proof}

\begin{korollar} \label{cor:transgal}
The absolute Galois group $\Gal (F^{\sep} / F)$ acts transitively 
on~$S_{F,\cal K}^1$.\hfill$\qedsymbol$
\end{korollar}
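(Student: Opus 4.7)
The plan is to combine the explicit description of the Galois action on $S^1_{F,\cal K}$ from Theorem~\ref{th:CFT} with the surjectivity of the Artin map and the obvious transitivity of translation.

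Recall from the discussion preceding Theorem~\ref{th:CFT} that $S^1_{F,\cal K}$ carries the structure of an abelian group, namely $(\AZ_F^f)^*/(F^*\cdot{\cal K})$, and that this group is identified with $\Gal(H/F)$ via the Artin map $\psi_{H/F}$. In particular, the action of $(\AZ_F^f)^*$ on $S^1_{F,\cal K}$ by multiplication is transitive. I would first record this: starting from the trivial class $[1] \in S^1_{F,\cal K}$, every other class $[g] \in S^1_{F,\cal K}$ is obtained as $[g \cdot 1] = [g]$, so translation by elements of $(\AZ_F^f)^*$ acts transitively.

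Next, given any class $[g] \in S^1_{F,\cal K}$, I would produce a Galois element mapping $[1]$ to $[g]$. By Theorem~\ref{th:CFT}, the action of $\sigma \in \Gal(F^{\sep}/F)$ on $S^1_{F,\cal K}$ is multiplication by $h^{-1}$ for any $h \in (\AZ_F^f)^*$ with $\psi_{H/F}(h) = \sigma|_H$. Since $H/F$ is a finite abelian extension and the Artin map $\psi_{H/F}$ is surjective, the element $\psi_{H/F}(g^{-1}) \in \Gal(H/F)$ lifts (via the restriction map $\Gal(F^{\sep}/F) \twoheadrightarrow \Gal(H/F)$, which is surjective because $H \subset F^{\sep}$) to some $\sigma \in \Gal(F^{\sep}/F)$. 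Then $\sigma \cdot [1] = [g^{-1} \cdot 1]^{-1}$-style computation, or more directly, $\sigma$ acts by multiplication by $g$, sending $[1]$ to $[g]$.

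The argument is essentially a two-line chase once Theorem~\ref{th:CFT} is in hand; there is no real obstacle. The only subtlety worth mentioning is that one must use both the surjectivity of $\psi_{H/F}$ (to realize every element of $\Gal(H/F)$ by an idele) and the surjectivity of the restriction $\Gal(F^{\sep}/F) \to \Gal(H/F)$ (to lift it back to an absolute Galois element), but both are standard.
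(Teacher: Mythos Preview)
Your proposal is correct and follows exactly the route the paper intends: the corollary is stated in the paper with only a \qedsymbol, meaning it is meant as an immediate consequence of Theorem~\ref{th:CFT} together with the identification $S^1_{F,\cal K}\cong(\AZ_F^f)^*/(F^*\cdot{\cal K})\cong\Gal(H/F)$, and your argument simply unpacks this by observing that the Galois action is translation on this group, hence transitive. The only cosmetic remark is that the phrase ``$[g^{-1}\cdot 1]^{-1}$-style computation'' is garbled and should be deleted; the clean sentence that follows it already says exactly what you need.
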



\section{Morphisms and Drinfeld modular subvarieties} 
\label{ch:morphisms}

\subsection{Projection morphisms and Hecke correspondences} \label{subsec:Hecke}
Let $S_{F,\cal K}^r$ be a fixed Drinfeld modular variety. For each
  $g \in \GL_r(\AZ_F^f)$ and all compact open subgroups ${\cal{K'}} \subset g^{-1}{\cal
  K}g$ of $\GL_r(\AZ_F^f)$, we have a well-defined map
\begin{equation} \label{eq:pig2}
 \begin{array}{rcl} S_{F,{\cal K'}}^r(\C) & \rightarrow & S_{F,{\cal
      K}}^r(\C) \\{}
      [(\overline{\omega},h)] & \mapsto & [(\overline{\omega}, hg^{-1})]. 
   \end{array} \end{equation}

\begin{theorem} \label{th:proj}
This map is induced by a unique finite morphism $\pi_g: 
S_{F,{\cal K'}}^r \rightarrow S_{F,{\cal K}}^r$ defined over $F$.
\end{theorem}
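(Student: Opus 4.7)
The plan is to reduce the theorem to the case already handled in Step (iv) of the proof of Theorem~\ref{th:Dmodulischeme} via conjugation into $\GL_r(\hat{A})$. I would first choose $h, h' \in \GL_r(\AZ_F^f)$ such that ${\cal K}_0 := h {\cal K} h^{-1}$ and ${\cal K}_0' := h' {\cal K}' h'^{-1}$ are contained in $\GL_r(\hat{A})$. By Step (v), these choices realize $S_{F,{\cal K}}^r$ and $S_{F,{\cal K}'}^r$ as $S_{F,{\cal K}_0}^r$ and $S_{F,{\cal K}_0'}^r$ equipped with rigid-analytic isomorphisms $\beta_h$ and $\beta_{h'}$ onto the double coset spaces of~\eqref{eq:rigid_iso}. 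Setting $g_0 := h g h'^{-1}$, the containment ${\cal K}' \subset g^{-1} {\cal K} g$ transforms into ${\cal K}_0' \subset g_0^{-1} {\cal K}_0 g_0$, which is exactly the hypothesis of Step (iv). I would take $\pi_g$ to be the resulting $F$-morphism $\pi_{g_0} : S_{F,{\cal K}_0'}^r \to S_{F,{\cal K}_0}^r$.

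Next I would verify the effect on $\C$-points. Chasing $[(\overline{\omega}, k)]$ through $\beta_{h'}^{-1}$, then through $\pi_{g_0}$ (which by~\eqref{eq:descrC} right-multiplies by $g_0^{-1}$), and finally through $\beta_h$ (right-multiplication by $h$) yields $[(\overline{\omega}, k h'^{-1} g_0^{-1} h)] = [(\overline{\omega}, k g^{-1})]$ after the cancellation $h'^{-1} g_0^{-1} h = g^{-1}$, matching~\eqref{eq:pig2}. Independence of the choice of $h, h'$ follows from the composition rule~\eqref{eq:pig_action} applied to the compatibility diagram of Step (v): for any alternative pair $(h_1, h_1')$ with $g_1 := h_1 g h_1'^{-1}$, both legs of the resulting square simplify to $\pi_{h_1 g h'^{-1}}$, giving the required compatibility. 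Uniqueness of $\pi_g$ is then automatic, since a morphism of varieties is determined by its action on $\C$-points and being defined over $F$ is an intrinsic property of a given morphism.

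The main technical step, which I expect to be the chief obstacle, is finiteness. To establish it, I would fit $\pi_{g_0}$ into the commutative square of Step (iv) with the principal congruence cover morphism $\pi_{g_0} : S_{F,{\cal K}(J)}^r \to S_{F,{\cal K}(I)}^r$. That upstairs morphism, built in Step (ii) from the moduli-functor operation of quotienting a Drinfeld module by a finite flat subgroup scheme, is a morphism of irreducible affine varieties of common dimension $r - 1$ which is surjective with finite fibers; by standard properness of moduli spaces of Drinfeld modules together with Zariski's main theorem, it is finite. Composing with the finite quotient $S_{F,{\cal K}(I)}^r \to S_{F,{\cal K}_0}^r$ shows that the composition $S_{F,{\cal K}(J)}^r \to S_{F,{\cal K}_0}^r$ is finite. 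Finiteness of $\pi_{g_0} : S_{F,{\cal K}_0'}^r \to S_{F,{\cal K}_0}^r$ then follows because $\mathcal{O}(S_{F,{\cal K}_0'}^r)$ embeds as the ${\cal K}_0'/{\cal K}(J)$-invariant subring of $\mathcal{O}(S_{F,{\cal K}(J)}^r)$, and a submodule of a finitely generated module over a Noetherian ring is finitely generated. This gives finiteness of $\pi_{g_0}$, and therefore of $\pi_g$.
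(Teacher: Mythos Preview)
Your construction of $\pi_g$ by conjugating into $\GL_r(\hat A)$ and invoking Step~(iv) is the same idea as the paper's, and your verification on $\C$-points, independence of choices, and uniqueness are all fine. The divergence is in the finiteness argument, and there your route is both longer and contains a gap.

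The paper makes a sharper choice of conjugator: it takes a \emph{single} element $s$ with $s{\cal K}'s^{-1}\subset s g^{-1}{\cal K} g\, s^{-1}\subset \GL_r(\hat A)$. In your notation this amounts to choosing $h'=s$ and $h=sg^{-1}$, so that $g_0=hgh'^{-1}=1$. Under the Step~(v) identifications, $\pi_g$ then becomes \emph{literally} a canonical projection $\pi_1$ between two levels inside $\GL_r(\hat A)$, and finiteness of $\pi_1$ is immediate: for ${\cal K}'={\cal K}(I)$ it is the quotient map $S^r_{F,{\cal K}(I)}\to S^r_{F,{\cal K}(I)}/{\cal K}$ by a finite group, and a short commutative-triangle argument handles intermediate ${\cal K}'$.

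By allowing two unrelated conjugators you land on a general $\pi_{g_0}$ and are forced to prove finiteness of the Step~(ii) morphism $\pi_{g_0}\colon S^r_{F,{\cal K}(J)}\to S^r_{F,{\cal K}(I)}$ directly. Your justification for this step does not work: Drinfeld moduli varieties are \emph{affine}, so there is no ``standard properness of moduli spaces'' to invoke, and Zariski's main theorem only factors a quasi-finite morphism as an open immersion followed by a finite map---it does not give finiteness without an additional properness input for the morphism itself. The statement is true, but establishing it along your lines would require an independent properness argument (e.g.\ via compactifications, which in this paper are only introduced later and rely on the present theorem). Your final Noetherian-submodule step is correct once the composite $S^r_{F,{\cal K}(J)}\to S^r_{F,{\cal K}_0}$ is known to be finite, but note that this composite is already a $\pi_1$ (quotient by ${\cal K}_0/{\cal K}(J)$), so you never actually needed the problematic upstairs $\pi_{g_0}$ at all; recognising this is exactly the paper's shortcut.
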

\begin{proof}In the case that ${\cal K}$ and
  ${\cal K}'$ are contained in $\GL_r(\hat{A})$, we 
already showed the existence of a morphism $\pi_g$ which is described
  by~\eqref{eq:pig2} on $\C$-valued points in step (iv) of the 
proof of Theorem~\ref{th:Dmodulischeme}. If ${\cal K}$ and
  ${\cal K}'$ are arbitrary with ${\cal{K'}} \subset g^{-1}{\cal
  K}g$, there is an $s \in \GL_r(\AZ_F^f)$ with
\[ s{\cal K}'s^{-1} \subset sg^{-1}{\cal K}gs^{-1} \subset
\GL_r(\hat{A}). \]
By our definition in the proof of Theorem~\ref{th:Dmodulischeme},
we have $S_{F,{\cal K}'}^r = S_{F,s{\cal K}'s^{-1}}^r$, where under
the identifications of $\C$-valued points introduced in step (v) of the proof 
of Theorem~\ref{th:Dmodulischeme}
\[ [(\omega,h)] \in S_{F,{\cal K}'}^r(\C) \longleftrightarrow  
 [(\omega,hs^{-1})] \in S_{F,s{\cal K}'s^{-1}}^r(\C). \]
Similarly, we have $S_{F,{\cal K}}^r = S_{F,sg^{-1}{\cal K}gs^{-1}}^r$
with
\[ [(\omega,h)] \in S_{F,\cal K}^r(\C) \longleftrightarrow 
[(\omega,hgs^{-1})] \in S_{F,sg^{-1}{\cal K}gs^{-1}}^r(\C). \]
Using these identifications, we can define the morphism 
$\pi_g: S_{F,{\cal K}'}^r \rightarrow S_{F,{\cal K}}^r$ as 
$\pi_1: S_{F,s{\cal K}'s^{-1}}^r \rightarrow S_{F,sg^{-1}{\cal
    K}gs^{-1}}^r(\C)$. Since the latter morphism~$\pi_1$ is given by
$[(\omega,h)] \mapsto [(\omega, h)]$ on $\C$-valued points, 
by the above identifications $\pi_g$ is indeed
described by~\eqref{eq:pig2} on $\C$-valued points. So we have shown
the existence of the morphism~$\pi_g$ defined over $F$. It is uniquely 
determined by~\eqref{eq:pig2} because $\C$ is algebraically closed.

It remains to show finiteness of the morphism~$\pi_g$. By the above 
definition of a general morphism~$\pi_g$, it is enough to show it
for morphisms of the form $\pi_1: S_{F,{\cal K}'}^r \rightarrow
S_{F,{\cal K}}^r$ with ${\cal K}' \subset {\cal K} \subset
\GL_r(\hat{A})$.

We first assume that ${\cal K}' = {\cal K}(I)$ is a principal
congruence subgroup. Then $\pi_1$ is the canonical projection
\[ S_{F,{\cal K}(I)}^r \rightarrow S_{F,{\cal K}(I)}^r / {\cal K} \]
by the construction in the proof of
Theorem~\ref{th:Dmodulischeme}. Since ${\cal K}(I) \subset {\cal K}$
acts trivially on $S_{F,{\cal K}(I)}^r$, the quotient $S_{F,{\cal
    K}(I)}^r / {\cal K}$ can be viewed as a quotient under the action
of the finite group ${\cal K} / {\cal K}(I)$. Therefore, $\pi_1$ is
finite.

For general subgroups ${\cal K}' \subset {\cal K} \subset
\GL_r(\hat{A})$, choose a proper ideal $I$ of $A$ with ${\cal K}(I)
\subset {\cal K}'$. Then we have the following 
commutative diagram of projection maps:
\[ \xymatrix{S_{F,{\cal K}(I)}^r \ar[dd]^{\pi_1} \ar[dr]^{\pi_1} & \\
             & S_{F,{\cal K}'}^r \ar[dl]_{\pi_1} \\
             S_{F,\cal K}^r & }\]
We have already shown that the morphisms $\pi_1: S_{F,{\cal K}(I)}^r
\rightarrow S_{F,{\cal K}'}^r$ and $\pi_1: S_{F,{\cal K}(I)}^r
\rightarrow S_{F,{\cal K}}^r$ are finite. Therefore $\pi_1:
S_{F,{\cal K}'}^r \rightarrow S_{F,{\cal K}}^r$ is also finite.
\end{proof}

In the following, we call the morphisms $\pi_g$ \emph{projection
  morphisms} of Drinfeld modular varieties. In the case $g = 1$ we
also call them \emph{canonical projections} of Drinfeld modular
varieties. For two elements $g,g' \in \GL_r(\AZ_F^f)$ and two subgroups ${\cal{K'}} \subset g^{-1}{\cal
  K}g$, ${\cal{K''}} \subset {g'}^{-1}{\cal
  K'}g'$, by the description on
$\C$-valued points, we have
\begin{equation} \label{eq:projmor}
 \pi_{gg'} = \pi_g \circ \pi_{g'}. \end{equation} 

\begin{definition} \label{def:suff_small}
A compact open subgroup ${\cal K} \subset \GL_r(\AZ_F^f)$ is called
\emph{amply small} if there is a proper ideal $I$ of $A$ and a $g \in
\GL_r(\AZ_F^f)$ such that $g{\cal K}g^{-1}$ is contained in the
principal congruence subgroup ${\cal K}(I) \subset \GL_r(\hat{A})$.
\end{definition}

\begin{satz} \label{prop:projetale}
Let ${\cal K} \subset \GL_r(\AZ_F^f)$ be amply small, $g \in
\GL_r(\AZ_F^f)$ and ${\cal K'} \subset g^{-1}{\cal K}g$. Then the finite 
morphism $\pi_g: S_{F,{\cal K'}}^r \rightarrow S_{F,{\cal K}}^r$ is
\'etale of degree $[g^{-1}{\cal K}g : {\cal K}']$. Furthermore, 
if ${\cal K'}$ is a normal subgroup of $g^{-1}{\cal K}g$, 
it is an \'etale Galois cover over $F$ with 
group $g^{-1}{\cal K}g / {\cal K'}$ where the automorphism of the
cover corresponding to a coset $[x] \in g^{-1}{\cal K}g / {\cal K'}$
is given by $\pi_x: S_{F,{\cal K'}}^r \rightarrow S_{F,{\cal K'}}^r$.
\end{satz}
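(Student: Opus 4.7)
The plan is to reduce $\pi_g$ first to a canonical projection, and then by conjugation to subgroups of $\GL_r(\hat{A})$ sitting inside a principal congruence subgroup; in this setting the crux is the freeness of an induced finite group action, read off from the analytic description.

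First I would factor $\pi_g = \pi_g \circ \pi_1$ as $S_{F,{\cal K}'}^r \xrightarrow{\pi_1} S_{F,g^{-1}{\cal K}g}^r \xrightarrow{\pi_g} S_{F,{\cal K}}^r$, the second arrow being an $F$-isomorphism (with inverse $\pi_{g^{-1}}$ by~\eqref{eq:pig_action}). Since $g^{-1}{\cal K}g$ is again amply small, Definition~\ref{def:suff_small} supplies $s \in \GL_r(\AZ_F^f)$ with $s(g^{-1}{\cal K}g)s^{-1} \subset {\cal K}(I)$ for a proper ideal $I$ of $A$, and Step (v) of the proof of Theorem~\ref{th:Dmodulischeme} identifies $\pi_1$ with the canonical projection for subgroups ${\cal K}' \subset {\cal K} \subset {\cal K}(I) \subset \GL_r(\hat{A})$. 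So it suffices to treat this case.

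Choose $J \subset I$ proper with ${\cal K}(J) \subset {\cal K}'$ and consider the tower
\[ S_{F,{\cal K}(J)}^r \xrightarrow{p'} S_{F,{\cal K}'}^r \xrightarrow{\pi_1} S_{F,{\cal K}}^r, \]
in which $\pi_1 \circ p'$ and $p'$ are the quotient morphisms for the actions of ${\cal K}/{\cal K}(J)$ and ${\cal K}'/{\cal K}(J)$ through the $\pi_k$. The main obstacle is to show that the action of ${\cal K}/{\cal K}(J)$ on $S_{F,{\cal K}(J)}^r$ is free. By~\eqref{eq:rigid_iso3}, if $[k]$ fixes $[(\overline{\omega},h)]$ then there exist $\gamma \in \GL_r(F)$ with $\gamma\overline{\omega} = \overline{\omega}$ and $k'' \in {\cal K}(J)$ satisfying $\gamma h k'' = hk^{-1}$. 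Since $\omega: F_{\infty}^r \hookrightarrow \C$ is injective and $F_{\infty}$-linear, fixing $\overline{\omega}$ forces $\gamma$ to be a scalar in $F^*$, and centrality then gives $k = \gamma^{-1}(k'')^{-1}$, so $\gamma \in F^* \cap \GL_r(\hat{A}) = A^* = \FZ_q^*$. Ample smallness yields ${\cal K} \cap F^* \subset {\cal K}(I) \cap F^* = \{a \in \FZ_q^* : a \equiv 1 \bmod I\} = \{1\}$ (using $I \cap \FZ_q = 0$ for proper $I$), whence $\gamma = 1$ and $[k] = 1$.

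Freeness turns $\pi_1 \circ p'$ (and $p'$) into étale Galois covers, so $\pi_1$ is étale of degree $[{\cal K}:{\cal K}'] = [g^{-1}{\cal K}g:{\cal K}']$. For the Galois assertion in the normal case, for $x \in g^{-1}{\cal K}g$ the morphism $\pi_x: S_{F,{\cal K}'}^r \to S_{F,{\cal K}'}^r$ is well-defined and satisfies $\pi_g \circ \pi_x = \pi_g$ (as $gxg^{-1} \in {\cal K}$ gives $[(\overline{\omega},hx^{-1}g^{-1})] = [(\overline{\omega},hg^{-1})]$ in $S_{F,{\cal K}}^r(\C)$), depending only on $[x] \in g^{-1}{\cal K}g/{\cal K}'$ since $\pi_{k'}=\mathrm{id}$ for $k' \in {\cal K}'$. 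The induced homomorphism $g^{-1}{\cal K}g/{\cal K}' \to \Aut(S_{F,{\cal K}'}^r/S_{F,{\cal K}}^r)$ is injective by the same scalar-stabilizer argument applied at a sufficiently generic point, and its source has order equal to the degree of the cover, so it is an isomorphism and the cover is Galois with the claimed group.
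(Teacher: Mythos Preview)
Your reduction to the case ${\cal K}'\subset{\cal K}\subset{\cal K}(I)\subset\GL_r(\hat A)$ and the overall strategy are sound, but the freeness argument contains a genuine gap: the claim that ``fixing $\overline{\omega}$ forces $\gamma$ to be a scalar in $F^*$'' is false. The stabilizer of $\overline{\omega}$ in $\GL_r(F)$ identifies, via $\omega\circ\gamma^{-1}=c\,\omega$, with $\End(p)^*$ in the sense of Definition~\ref{def_end}, and by Lemma~\ref{lemma:end1} this can be a proper field extension of $F$. Concretely, for $r=2$ take $\omega(e_1)=1$, $\omega(e_2)=\zeta\in\FZ_{q^2}\setminus\FZ_q$; then $\omega(F^2)=\FZ_{q^2}\cdot F$, and multiplication by $\zeta$ yields a non-scalar $\gamma\in\GL_2(F)$ with $\gamma\overline{\omega}=\overline{\omega}$. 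At such CM points your centrality step $h^{-1}\gamma h=\gamma$ is unavailable, so you cannot conclude $\gamma\in F^*\cap\GL_r(\hat A)$ or $\gamma=1$.

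The paper closes this gap differently: it works with the modular interpretation and invokes rigidity of Drinfeld modules with $I$-level structure (if $[k]$ fixes $(\phi,\alpha)$ then the resulting automorphism $c$ of $\phi$ fixes the restricted $I$-level structure, hence $c=1$). Your analytic approach can be repaired along equivalent lines: from $h^{-1}\gamma h\in{\cal K}(I)\subset\GL_r(\hat A)$ you get that $\gamma$ stabilizes $h\hat A^r$, hence $c$ stabilizes the lattice $\Lambda=\omega(F^r\cap h\hat A^r)$, so $c\in\Aut(\phi)$ and $\gamma$ has finite order prime to $p$; then for any $\pp\mid I$ the local component of ${\cal K}(I)$ is pro-$p$, so the prime-to-$p$ element $h_{\pp}^{-1}\gamma h_{\pp}$ must be trivial, whence $\gamma=1$. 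This is essentially the rigidity argument phrased adelically; note that you implicitly recognize the issue when you restrict to a ``sufficiently generic point'' for the injectivity in the Galois part, but freeness must hold at \emph{every} closed point.
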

\textbf{Remark:} In fact, the condition that some conjugate 
of ${\cal K}$ is contained in a principal congruence subgroup of
$\GL_r(\hat{A})$ could be weakened. Indeed it is enough that there is
a prime~$\pp$ such that the image of some conjugate of ${\cal K}$
in $\GL_r(A/\pp)$ is unipotent (cf. Proposition 1.5 in~\cite{Pi}).
\begin{proof}
Since ${\cal K}$ is amply small, there is an $h \in \GL_r(\AZ_F^f)$
and a proper ideal $I$ of $A$ such that $h^{-1}{\cal K}h \subset 
{\cal K}(I) \subset \GL_r(\hat{A})$. By the 
relation~\eqref{eq:projmor}, we have the commutative diagram
\[ \xymatrix{S_{F,h^{-1}g{\cal K}'g^{-1}h}^r \ar[r]^{\ \ \
    \pi_{g^{-1}h}}_{\ \ \ \ \sim} \ar[d]^{\pi_1}&
  S_{F,{\cal K}'}^r \ar[d]^{\pi_g} \\ S_{F,h^{-1}{\cal K}h}^r
  \ar[r]^{\pi_h}_{\sim} & S_{F,{\cal K}}^r } \]
where the horizontal morphisms are isomorphisms with $(\pi_h)^{-1} =
\pi_{h^{-1}}$ and $(\pi_{g^{-1}h})^{-1} = \pi_{h^{-1}g}$. Therefore, 
we can assume w.l.o.g. that $g = 1$ and ${\cal K'} \subset
{\cal K} \subset {\cal K}(I) \subset \GL_r(\hat{A})$.

\textbf{Case (i):} Let ${\cal K'}$ be a principal
congruence subgroup ${\cal K}(J)$ modulo a proper ideal $J$ of $A$,
i.e., ${\cal K'} = {\cal K}(J) \lhd {\cal K} \subset {\cal K}(I)$.

Then, by our definition in the proof of
Theorem~\ref{th:Dmodulischeme}, 
$\pi_1: S_{F,{\cal K}(J)}^r \rightarrow S_{F,{\cal K}}^r$ is the
canonical projection
\[ S_{F,{\cal K}(J)}^r \longrightarrow S_{F,{\cal K}(J)}^r / {\cal
  K}. \]
We show that ${\cal K} / {\cal K}(J)$ acts freely on the closed points
of $S_{F,{\cal K}(J)}^r$. This implies that
this projection is a finite \'etale morphism of degree~$[{\cal K} :
{\cal K}(J)]$ (see, e.g., Section II.7 in~\cite{Mu}). By the modular interpretation
of $S_{F,{\cal K}(J)}^r$ given in the proof of
Theorem~\ref{th:Dmodulischeme}, it is enough to show that the action of
${\cal K} / {\cal K}(J)$ on isomorphism classes of Drinfeld
$A$-modules over $\C$ together with $J$-level structure is
free. 

Indeed, assume that a coset $[k] \in {\cal K} / {\cal K}(J)$ stabilizes
the isomorphism class of the Drinfeld module $\phi$ over $\C$ associated to
a lattice $\Lambda \subset \C$ together with  
$J$-level structure $\alpha: (J^{-1} / A)^r 
\stackrel{\sim}{\rightarrow} J^{-1} \cdot \Lambda / \Lambda$. By our
definition of the action of $\GL_r(\hat{A})$ on Drinfeld modules with
$J$-level structure in the proof of Theorem~\ref{th:Dmodulischeme}, this means
that there is an automorphism~$c$ of $\phi$ under which the $J$-level structure $\alpha$ 
passes into $\alpha \circ k^{-1}$. Note that the restrictions of $\alpha$ and 
$\alpha \circ k^{-1}$ to $(I^{-1} / A)^r$ coincide because $k \in {\cal K}(I)$. 
Rigidity of Drinfeld modules with $I$-level structure (see, e.g., p. 30 in~\cite{Le})
therefore implies that $c$ is the identity. This is only possible if $k \in {\cal K}(J)$, i.e.
if $[k]$ is trivial in ${\cal K} / {\cal K}(J)$.

So we have shown that $\pi_1: S_{F,{\cal K}(J)}^r \rightarrow
S_{F,{\cal K}}^r = S_{F,{\cal K}(J)}^r / {\cal K}$ is an \'etale
cover of degree $[{\cal K} : {\cal K}(J)]$. The group 
${\cal K} / {\cal K}(J)$ injects into the
automorphism group over $F$ of this cover via $[k] \mapsto
\pi_k$. Since the degree of the cover is equal to $[{\cal K} : {\cal
  K}(J)]$ and $S_{F,{\cal K}(J)}^r$ is $F$-irreducible, the
automorphism group (over $F$) is therefore equal to ${\cal K} / {\cal K}(J)$. 
Furthermore, the cover is Galois because this group acts simply transitively 
on the geometric fibers.

\textbf{Case (ii):} Let ${\cal K}'$ be an arbitrary normal subgroup of
${\cal K}$, i.e., ${\cal K}' \lhd {\cal K} \subset {\cal K}(I)$.

Choose a proper ideal $J$ of $A$ such that 
${\cal K}(J) \subset {\cal K}'$ and note that
the diagram
\begin{equation} 
\xymatrix{S_{F,{\cal K}(J)}^r \ar[dr]^{\pi_1} \ar[dd]^{\pi_1} \\ & S_{F,{\cal
    K}'}^r = S_{F,{\cal K}(J)}^r / {\cal K}' \ar[dl]_{\pi_1} \\ 
S_{F,{\cal K}}^r = S_{F,{\cal K}(J)}^r / {\cal K} & }
\end{equation}
commutes. Since ${\cal K}'$ is normal in ${\cal K}$, the action of
${\cal K}$ on $S_{F,{\cal K}(J)}^r$ induces an action of ${\cal K} /
{\cal K}'$ on the quotient $S_{F,{\cal K}'}^r = S_{F,{\cal K}(J)}^r /
{\cal K}'$. By the commutativity of the diagram, 
the variety $S_{F,{\cal K}}^r$ is the quotient of $S_{F,{\cal K}'}^r$ 
under this action. Furthermore, this action is free on the closed
points of $S_{F,{\cal K}'}^r$ because ${\cal K}
/ {\cal K}(J)$ acts freely on the closed points of $S_{F,{\cal
    K}(J)}^r$. Therefore, we conclude by the same arguments as above
that $\pi_1: S_{F,{\cal K}'}^r \rightarrow S_{F,{\cal K}}^r$ is an
\'etale Galois cover of degree $[{\cal K} : {\cal K'}]$
with group ${\cal K} / {\cal K}'$ where the automorphism
of the cover corresponding to a coset $[k] \in {\cal K} / {\cal K}'$
is given by $\pi_k$. 

\textbf{Case (iii):} Let ${\cal K}'$ be an arbitrary subgroup of
${\cal K}$, i.e., ${\cal K}' \subset {\cal K} \subset {\cal K}(I)$.

As in case (ii) above, choose a proper ideal $J$ of $A$ such that ${\cal
  K}(J) \subset {\cal K}'$. The diagram above then also commutes and
$\pi_1: S_{F,{\cal K}(J)}^r \rightarrow S_{F,{\cal K}'}^r$ and $\pi_1:
S_{F,{\cal K}(J)}^r \rightarrow S_{F,{\cal K}}^r$ are surjective \'etale
morphisms by case (i). Furthermore, $S_{F,{\cal K}(J)}^r$ is a
non-singular variety as explained in step (i) of the proof of
Theorem~\ref{th:Dmodulischeme}.

Proposition 17.3.3.1 in EGA IV~\cite{EGAIV1} says that if $X \rightarrow Y$ is a flat,
surjective morphism of schemes and $X$ is regular, then $Y$ is also
regular. Therefore, $S_{F,{\cal K}}^r$ and $S_{F,{\cal K}'}^r$ are
both non-singular varieties. 

By Proposition 10.4 in~\cite{Ha}, a morphism $f: X \rightarrow Y$ 
of non-singular varieties of the same dimension
over an algebraically closed field is \'etale if and only if, for every
closed point $x \in X$, the induced map $T_x \rightarrow T_{f(x)}$ 
on Zariski tangent spaces is an isomorphism. We can apply this
criterion because $S_{F,{\cal K}(J)}^r$, $S_{F,{\cal K}}^r$ and
$S_{F,{\cal K}'}^r$ are all non-singular. Since the morphisms $\pi_1: S_{F,{\cal K}(J)}^r \rightarrow S_{F,{\cal K}'}^r$ and $\pi_1:
S_{F,{\cal K}(J)}^r \rightarrow S_{F,{\cal K}}^r$ are \'etale, the
commutativity of the above diagram therefore implies that
$\pi_1: S_{F,{\cal K}'}^r \rightarrow S_{F,{\cal K}}^r$ is \'etale and
finite of degree $[{\cal K} : {\cal K}(J)] / [{\cal K}' : {\cal K}(J)]
= [{\cal K}:{\cal K}']$. 
\end{proof}

\begin{korollar} \label{cor:nonsingular}
If ${\cal K} \subset \GL_r(\AZ_F^f)$ is amply small, then the
Drinfeld modular variety~$S_{F,\cal K}^r$ is non-singular.
\end{korollar}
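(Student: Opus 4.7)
The plan is to reduce to the principal congruence subgroup case, where non-singularity is already known from Drinfeld's construction, and then transfer non-singularity along an étale cover.

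First, since ${\cal K}$ is amply small, by Definition~3.1.3 there exist $g \in \GL_r(\AZ_F^f)$ and a proper ideal $I$ of $A$ with $g{\cal K}g^{-1} \subset {\cal K}(I) \subset \GL_r(\hat A)$. By the construction in step (v) of the proof of Theorem~\ref{th:Dmodulischeme}, the variety $S_{F,{\cal K}}^r$ is isomorphic over $F$ to $S_{F,g{\cal K}g^{-1}}^r$ via $\pi_g$, so we may assume without loss of generality that ${\cal K} \subset {\cal K}(I) \subset \GL_r(\hat A)$.

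Next, choose any proper ideal $J$ of $A$ with ${\cal K}(J) \subset {\cal K}$; such a $J$ exists because the principal congruence subgroups form a neighbourhood basis of the identity in $\GL_r(\hat A)$ and ${\cal K}$ is open. Step (i) of the proof of Theorem~\ref{th:Dmodulischeme} shows that $S_{F,{\cal K}(J)}^r$ is a non-singular variety, being the base extension to $\C$ of the fine moduli scheme representing ${\cal F}_{F,J}^r$. Applying Proposition~\ref{prop:projetale} to the inclusion ${\cal K}(J) \subset {\cal K}$ (which is allowed because ${\cal K}$ is amply small), the projection
\[ \pi_1: S_{F,{\cal K}(J)}^r \longrightarrow S_{F,{\cal K}}^r \]
is a finite étale, hence flat and surjective, morphism.

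Finally, we invoke Proposition~17.3.3.1 of EGA~IV, already used in case (iii) of the proof of Proposition~\ref{prop:projetale}: a scheme which is the target of a flat surjective morphism from a regular scheme is itself regular. Since $S_{F,{\cal K}(J)}^r$ is non-singular and $\pi_1$ is flat and surjective, $S_{F,{\cal K}}^r$ is non-singular, as desired. There is no real obstacle here; the corollary is essentially a direct repackaging of the reasoning already carried out in case (iii) of Proposition~\ref{prop:projetale}, isolated as a clean statement about $S_{F,{\cal K}}^r$ itself rather than about a projection morphism.
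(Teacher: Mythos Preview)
Your proof is correct and follows essentially the same approach as the paper: the paper's proof simply refers back to case~(iii) of the proof of Proposition~\ref{prop:projetale}, where exactly this argument---an \'etale cover by the non-singular $S_{F,{\cal K}(J)}^r$ together with EGA~IV, Proposition~17.3.3.1---is carried out. Your write-up just makes that cross-reference explicit.
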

\begin{proof} See case (iii) of the above proof of
Proposition~\ref{prop:projetale}.
\end{proof}

\begin{definition}[(Hecke correspondence)] \label{def:hecke}
For $g \in \GL_r(\AZ_F^f)$ and ${\cal K}_g := {\cal K} \cap
g^{-1}{\cal K}g$ the diagram
\[ \xymatrix{ & S_{F,{\cal K}_g}^r \ar[dl]_{\pi_1} \ar[dr]^{\pi_g} & \\
              S_{F,{\cal K}}^r & & S_{F,\cal K}^r }
\]
is called the \emph{Hecke correspondence} $T_g$ associated to $g$. For
subvarieties $Z \subset S_{F,\cal K}^r$ we define
\[ T_g(Z) := \pi_g ( \pi_1^{-1}(Z)). \]
\end{definition}
Note that $T_g(Z)$ is a subvariety of $S_{F,\cal K}^r$ for any 
subvariety $Z \subset S_{F,\cal K}^r$ because $\pi_g$ is finite and
hence proper. The integer
\[ \deg (T_g) := [{\cal K} : {\cal K} \cap g^{-1}{\cal K}g] \]
is called the \emph{degree} of the Hecke
correspondence~$T_g$. If ${\cal K}$ is amply small, by
Proposition~\ref{prop:projetale}, it is equal to $\deg \pi_1$.

\subsection{Inclusions of Drinfeld modular varieties} 

Let $S = S^r_{F,\cal K}$ be a given Drinfeld modular variety. We
consider the following datum:
\begin{itemize}
\item
A finite extension $F' \subset \CZ_{\infty}$ of $F$ of degree $r / r'$ 
for some integer $r' \geq 1$ with only one place~$\infty'$ lying over
 $\infty$,  and
\item
an $\AZ_F^f$-linear isomorphism $b: (\AZ_F^f)^r
\stackrel{\sim}{\rightarrow} (\AZ_{F'}^f)^{r'}$.
\end{itemize}
Note that the integral closure~$A'$ of $A$ in $F'$ is equal to the ring of elements of elements
of $F'$ regular away from $\infty'$ because $\infty'$ is the only place of $F'$
lying over $\infty$.

The above datum defines a subgroup
\[ {\cal K}' = (b{\cal K}b^{-1}) \cap \GL_{r'}(\AZ_{F'}^f) \]
of $\GL_{r'}(\AZ_{F'}^f)$.

\begin{lemma}\label{lemma:kprime}
The subgroup ${\cal K}'$ is compact and open in $\GL_{r'}(\AZ_{F'}^f)$. 
If ${\cal K}$ is amply small, it is also amply small.
\end{lemma}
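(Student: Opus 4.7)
The plan is to prove the three assertions separately, viewing $\GL_{r'}(\AZ_{F'}^f)$ as a closed subgroup of $\GL_{\AZ_F^f}((\AZ_{F'}^f)^{r'}) \cong \GL_r(\AZ_F^f)$ via restriction of scalars, where the second isomorphism is conjugation by $b$. The inclusion $\iota: \GL_{r'}(\AZ_{F'}^f) \hookrightarrow \GL_r(\AZ_F^f)$ is continuous, since the topology on $\AZ_{F'}^f$ as an $\AZ_F^f$-algebra agrees with its restricted-product topology, and closed, since commuting with the $\AZ_{F'}^f$-action is a closed condition. The image $b{\cal K}b^{-1}$ is a compact open subgroup of the target, and ${\cal K}' = \iota^{-1}(b{\cal K}b^{-1})$ is then compact (closed subset of a compact set) and open (preimage of an open set under a continuous map).

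For the amply small statement, suppose $g \in \GL_r(\AZ_F^f)$ and a proper ideal $I \subset A$ satisfy $g{\cal K}g^{-1} \subset {\cal K}(I)$. Reformulated, ${\cal K}$ stabilizes the $\hat A$-lattice $L := g^{-1}\hat A^r \subset (\AZ_F^f)^r$ and acts trivially on $L/IL$; hence $b{\cal K}b^{-1}$ stabilizes $b(L) \subset (\AZ_{F'}^f)^{r'}$ and acts trivially on $b(L)/Ib(L)$. The key construction is to pass to the $\hat A'$-saturation
\[ L' := \hat A' \cdot b(L). \]
Writing $b(L) = \sum_{i=1}^{r} \hat A\cdot x_i$ for finitely many $\hat A$-generators, we have $L' = \sum_{i=1}^{r} \hat A'\cdot x_i$, which is compact (finite sum of continuous images of the compact $\hat A'$) and open (as it contains $b(L)$), so it is an open compact $\hat A'$-submodule of $(\AZ_{F'}^f)^{r'}$.

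Since ${\cal K}' \subset \GL_{r'}(\AZ_{F'}^f)$ consists of $\hat A'$-linear automorphisms, for $k \in {\cal K}'$, $\alpha \in \hat A'$, $y \in b(L)$ the computation $(k-1)(\alpha y) = \alpha(k-1)(y) \in \alpha\cdot Ib(L) \subset IL'$ shows that the $\hat A$-equivariance of $k$ on $b(L)$ propagates to $k\cdot L' = L'$ and $(k-1)L' \subset IL'$. Set $I' := IA'$, which is a proper ideal of $A'$ by lying-over for the integral extension $A \subset A'$; since $I'L' = IL'$, this says ${\cal K}'$ stabilizes $L'$ and acts trivially on $L'/I'L'$. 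Finally, $L'_{\pp'} = (A'_{\pp'})^{r'}$ for almost all primes $\pp'$ of $F'$ (since $b$ sends $\hat A^r$ to $(\hat A')^{r'}$ at almost all primes of $F$), so $L' = g'(\hat A')^{r'}$ for some $g' \in \GL_{r'}(\AZ_{F'}^f)$, and conjugating by $g'^{-1}$ gives $g'^{-1}{\cal K}'g' \subset {\cal K}(I') \subset \GL_{r'}(\hat A')$, showing ${\cal K}'$ is amply small.

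The main obstacle is the construction of the $\hat A'$-lattice $L'$: the $\hat A$-lattice $b(L)$ inherited from ${\cal K}$ need not be $\hat A'$-stable, and a naive choice such as $b(L) \cap (\hat A')^{r'}$ need not even be a lattice. The $\hat A'$-saturation works precisely because ${\cal K}'$ is $\hat A'$-linear by construction, so any $\hat A$-equivariance for ${\cal K}'$ automatically extends to $\hat A'$-equivariance; the same mechanism explains why the modulus remains $IA'$ rather than a larger ideal.
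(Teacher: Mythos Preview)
Your proof is correct. For the amply small part, your argument is essentially identical to the paper's: both start from an $\hat A$-lattice witnessing that ${\cal K}$ is amply small, push it forward via $b$, take its $\hat{A'}$-saturation, and observe that the $\AZ_{F'}^f$-linearity of elements of ${\cal K}'$ forces them to stabilize the saturation and act trivially modulo $IA'$. The only cosmetic difference is that the paper justifies freeness of the saturated lattice by invoking that $\hat{A'}$ is a product of PIDs, whereas you argue via commensurability with $(\hat{A'})^{r'}$ at almost all primes; both are fine, though at the finitely many bad primes you are implicitly using the same PID fact.

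For the compact-open part, you take a different route: a purely topological argument via the closed embedding $\GL_{r'}(\AZ_{F'}^f)\hookrightarrow\GL_r(\AZ_F^f)$, so that ${\cal K}'$ is automatically the preimage of a compact open subgroup under a continuous closed embedding. The paper instead fixes an auxiliary $b'$ with $b'(\hat A^r)=\widehat{A'}^{r'}$ and shows directly that ${\cal K}'$ contains the principal congruence subgroup modulo $IA'$ with finite index. Your argument is cleaner; the paper's has the side benefit of identifying an explicit congruence subgroup inside ${\cal K}'$, which is convenient when this lemma is reused later (e.g.\ in the construction of inclusion morphisms).
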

\begin{proof}We fix an $\AZ_F^f$-linear isomorphism $b': (\AZ_F^f)^r
\stackrel{\sim}{\rightarrow} (\AZ_{F'}^f)^{r'}$ with $b'({\widehat{A}}^r) =
  \widehat{A'}^{r'}$ and set $g := b'^{-1} \circ b \in \GL_r(\AZ_F^f)$.
Since ${\cal K}$ is compact and open in $\GL_r(\AZ_F^f)$, there is a proper ideal~$I$
of $A$ such that the principal congruence subgroup~${\cal K}(I)$ is contained in $g{\cal K}g^{-1}$
with finite index. Therefore,
${\cal K}' = (b'g{\cal K}g^{-1}b'^{-1}) \cap \GL_{r'}(\AZ_{F'}^f)$ contains
the subgroup ${\cal K}'' = (b'{\cal K}(I)b'^{-1})\cap \GL_{r'}(\AZ_{F'}^f)$ with finite index. The latter subgroup
exactly consists of the elements of $\GL_{r'}(\AZ_{F'}^f)$ which stabilize $b'({\widehat{A}}^r) = \widehat{A'}^{r'}$ and
induce the identity on the quotient $\widehat{A'}^{r'} / I\cdot\widehat{A'}^{r'} \cong (A' /\, IA')^{r'}$. Hence, ${\cal K}''$
is the principal congruence subgroup modulo $IA'$ and ${\cal K}'$ is compact and open in $\GL_{r'}(\AZ_{F'}^f)$.

If ${\cal K}$ is
amply small, there is a proper ideal $I$ of $A$ and an $h \in
\GL_r(\AZ_F^f)$ such that
$h{\cal K}h^{-1} \subset {\cal K}(I)$. Therefore ${\cal K}'$ is contained in
the subgroup
\[ (bh^{-1}{\cal K}(I)hb^{-1}) \cap \GL_{r'}(\AZ_{F'}^f) \]
of $\GL_{r'}(\AZ_{F'}^f)$. This subgroup exactly consists of all 
elements of $\GL_{r'}(\AZ_{F'}^f)$ which stabilize $\Lambda := bh^{-1}(\hat{A}^r)
\subset (\AZ_{F'}^f)^{r'}$ and induce the identity on $\Lambda /
I \cdot \Lambda$. Since these elements are $\AZ_{F'}^f$-linear,
they also stabilize the $\widehat{A'}$-lattice $\Lambda' := \widehat{A'} \cdot
\Lambda$ and induce the identity on 
$\Lambda' / I \cdot \Lambda'$. Since $\Lambda'$ is a finitely
generated $\widehat{A'}$-submodule of $(\AZ_{F'}^f)^{r'}$ with
$\AZ_{F'}^f \cdot \Lambda' = (\AZ_{F'}^f)^{r'}$ and $\widehat{A'}$ is
a direct product of principal ideal domains, $\Lambda'$ is a free
$\widehat{A'}$-module of rank~$r'$. Hence, there is a $g' 
\in \GL_{r'}(\AZ_{F'}^f)$ such that $\Lambda' = g'\widehat{A'}^{r'}$
and
\[ {\cal K}' \subset (bh^{-1}{\cal K}(I)hb^{-1}) \cap \GL_{r'}(\AZ_{F'}^f)
\subset g'{\cal K}(I')g'^{-1} \]
for $I' := I A'$. This implies that ${\cal K}'$ is
amply small.
\end{proof}  

We choose an isomorphism
\[ \phi: F^r \stackrel{\sim}{\longrightarrow} {F'}^{r'} \]
of vector spaces over $F$. By scalar extension 
to $F_{\infty}$ and $\AZ_F^f$ it induces isomorphisms
\[ \begin{array}{rcl}
 F_{\infty}^r & \stackrel{\phi}{\longrightarrow} & {F'}_{\!\!\!\infty'}^{r'} \\
 (\AZ_F^f)^r & \stackrel{\phi}{\longrightarrow} & (\AZ_{F'}^f)^{r'}
\end{array},\]
which we also denote by $\phi$. We now define a morphism from the
lower-rank Drinfeld modular variety $S' = S_{F',\level}^{r'}$ into $S$:

\begin{theorem}\label{th_inclusion}
There is a finite morphism $\incl: S' \rightarrow S$ defined over $F'$
which on $\C$-valued points is given by the injective map
\begin{equation} 
\label{eq:incl2}
\begin{array}{ccc} S'(\C) &
  \longrightarrow & S(\C) \\{} 
  [(\overline{\omega'},h')] & \longmapsto & [(\overline{\omega' \circ
  \phi}, \phi^{-1} \circ h' \circ b)], \end{array} 
\end{equation}
where $\overline{\omega'} \in \Omega_{F'}^{r'}$ and 
$h' \in \GL_{r'}(\AZ_{F'}^f)$. The morphism $\incl$ is independent of
the choice of $\phi: F^r \stackrel{\sim}{\longrightarrow} {F'}^{r'}$.
\end{theorem}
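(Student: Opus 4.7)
My plan is to construct $\incl$ from the restriction-of-scalars functor for Drinfeld modules along $A\hookrightarrow A'$ at principal congruence levels, descend to $({\cal K},{\cal K}')$, and verify finiteness and independence of~$\phi$.

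I first check well-definedness and injectivity of~(\ref{eq:incl2}) on $\C$-points. Since $\phi$ is $F$-linear, it extends to an $F_\infty$-linear isomorphism $F_\infty^r\stackrel{\sim}{\to}(F'_{\infty'})^{r'}$, so if $\omega'$ is an injective $F'_{\infty'}$-linear map then $\omega'\circ\phi$ is injective and $F_\infty$-linear, giving an element of $\Omega_F^r$. The class in $S(\C)$ is unaffected by modifying $\bar\omega'$ via $T'\in\GL_{r'}(F')$, since $\phi^{-1}T'\phi\in\GL_r(F)$, or by modifying $h'$ via $k'\in{\cal K}'$, since $b^{-1}k'b\in{\cal K}$ by the definition of~${\cal K}'$. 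For injectivity, if two classes have the same image via an intertwining pair $(T,k)\in\GL_r(F)\times{\cal K}$, then $M:=\phi T^{-1}\phi^{-1}$ is an $F$-linear automorphism of $(F')^{r'}$, and the relation $\omega'_1=c\cdot\omega'_2\circ M$ between injective $F'_{\infty'}$-linear maps forces $M$ to commute with $F'_{\infty'}$-scalar multiplication, hence $M\in\GL_{r'}(F')$; the element $bkb^{-1}=h'^{-1}_2 M h'_1$ then lies in $\GL_{r'}(\AZ_{F'}^f)\cap b{\cal K}b^{-1}={\cal K}'$, and the two source classes coincide.

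To algebraize, I first reduce to the case $b(\hat A^r)=\widehat{A'}^{r'}$ by replacing $b$ with $bu$ and ${\cal K}$ with $u^{-1}{\cal K}u$ for a suitable $u\in\GL_r(\AZ_F^f)$; such $u$ exists because $b^{-1}(\widehat{A'}^{r'})$ is a locally free $\hat A$-lattice of rank $r=r'[F':F]$ in $(\AZ_F^f)^r$. This substitution leaves ${\cal K}'$ unchanged and, by Step~(v) of the proof of Theorem~\ref{th:Dmodulischeme}, gives an $F$-isomorphic target $S$. Now choose a proper ideal $I$ of $A$ so small that ${\cal K}(I)\subset{\cal K}$ and ${\cal K}(IA')\subset{\cal K}'$, and put $I':=IA'$. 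Restriction of scalars along $A\hookrightarrow A'$ sends a Drinfeld $A'$-module $({\cal L},\phi')$ of rank $r'$ over an $F'$-scheme $T$ to a Drinfeld $A$-module of rank $r$ over $T$ whose $I$-torsion coincides with the $I'$-torsion of $({\cal L},\phi')$; precomposing an $A'$-linear $I'$-level structure with the $A$-linear isomorphism $\underline{(I^{-1}/A)^r}\stackrel{\sim}{\to}\underline{(I'^{-1}/A')^{r'}}$ induced by $b$ yields an $I$-level structure. This natural transformation of moduli functors gives an $F'$-morphism $\iota_0\colon S_{F',{\cal K}(I')}^{r'}\to S_{F,{\cal K}(I)}^r$, and comparison with the analytic description in Step~(i) of the proof of Theorem~\ref{th:Dmodulischeme} shows that $\iota_0$ realizes~(\ref{eq:incl2}) at principal congruence level. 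The composition $\pi_1\circ\iota_0$ with the quotient $S_{F,{\cal K}(I)}^r\to S$ is ${\cal K}'$-invariant, since the action of $k'\in{\cal K}'$ at the source becomes the action of $b^{-1}k'^{-1}b\in{\cal K}$ in the image, trivial after projection to $S$; by the universal property of the quotient $S_{F',{\cal K}(I')}^{r'}\to S'$ this descends to the required $F'$-morphism $\incl\colon S'\to S$. For independence of $\phi$, the element $\phi_2\phi_1^{-1}$ is an $F$-linear automorphism of $(F')^{r'}$ and $M:=\phi_1^{-1}(\phi_2\phi_1^{-1})\phi_1\in\GL_r(F)$ intertwines the two resulting formulas via the $\GL_r(F)$-action on the target.

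The remaining obstacle is finiteness. Quasi-finiteness is immediate from injectivity of $\incl$ on $\C$-points. For properness I would extend the moduli-theoretic construction of $\iota_0$ to Pink's Satake compactifications~\cite{Pi} by applying the same restriction-of-scalars recipe to the universal objects entering their construction, and combine this with quasi-finiteness to conclude that $\incl$ is finite.
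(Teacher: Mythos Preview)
Your construction of $\incl$ via restriction of scalars at principal congruence level followed by descent to $({\cal K},{\cal K}')$, and your direct verifications of well-definedness, injectivity on $\C$-points, and independence of~$\phi$, are correct and essentially the paper's argument. Your reduction to $b(\hat A^r)=\widehat{A'}^{r'}$ by conjugating $(b,{\cal K})\mapsto(bu,u^{-1}{\cal K}u)$ is a minor variant of the paper's route, which instead keeps ${\cal K}$ fixed and introduces auxiliary data $g'\in\GL_{r'}(\AZ_{F'}^f)$, a second isomorphism $b'$ with $b'(\hat A^r)=\widehat{A'}^{r'}$, and $g=b^{-1}g'^{-1}b'$, then factors through $\pi_g$; the two reductions are equivalent. (One small wrinkle: after your conjugation ${\cal K}'$ need not sit inside $\GL_{r'}(\widehat{A'})$, so invoking ``the universal property of the quotient $S_{F',{\cal K}(I')}^{r'}\to S'$'' requires the same detour through a conjugate of ${\cal K}'$ that the paper makes explicit.)

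The genuine gap is finiteness. You correctly reduce to properness plus quasi-finiteness, but for properness you only gesture at extending $\iota_0$ to Pink's Satake compactifications ``by applying the same restriction-of-scalars recipe to the universal objects.'' The Satake compactification in~\cite{Pi} is characterized by a universal property, not built as a moduli space of degenerating objects, so restriction of scalars does not self-evidently extend; carrying this out is real work. The paper instead establishes properness of $\iota_0$ at principal congruence level by citing Breuer~\cite[Lemma~3.1 and Proposition~3.2]{Br2}, and then uses EGA~IV~\cite[Theorem~8.11.1]{EGAIV3} (proper, of finite presentation, with finite fibers $\Rightarrow$ finite). Your compactification strategy is not wrong in spirit---the paper later records that $\overline{\incl}$ exists (Proposition~\ref{prop:satake}(ii)), by invoking results from~\cite{Pi}---but as written it is a plan rather than a proof.
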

\begin{proof} 
%
\textbf{Case (i):} We first consider the case where $b({\widehat{A}}^r) =
  \widehat{A'}^{r'}$ and ${\cal K} = {\cal
    K}(I)$ is a principal congruence subgroup modulo a proper ideal
  $I$ of $A$. In this case, $\level = 
(b{\cal K}b^{-1}) \cap \GL_{r'}(\AZ_{F'}^f)$ is the principal
congruence subgroup modulo $I' := IA'$ (see the proof of Lemma~\ref{lemma:kprime})
and $b$ induces an $A$-linear isomorphism $(I^{-1} / A)^r
\stackrel{\sim}{\longrightarrow} (I'^{-1} / A')^{r'}$, which we again
denote by $b$. 
Therefore, for a Drinfeld~$A'$-module $({\cal
  L},\,\psi)$ of rank~$r'$ over an $F'$-scheme with $I'$-level structure
\[ \alpha: \underline{(I'^{-1} / A')^{r'}} \stackrel{\sim}{\longrightarrow} {\cal L}_{I'}, \]
the restriction $({\cal L},\,\psi|_A)$ to $A \subset A'$ is a Drinfeld
$A$-module of rank $r = r' \cdot [F' / F]$ over $S$ and the composition
\[ \underline{(I^{-1} / A)^r} \stackrel{b}{\longrightarrow}
\underline{(I'^{-1} / A')^{r'}} \stackrel{\alpha}{\longrightarrow} {\cal
  L}_{I} \]
is an $I$-level structure on $({\cal L},\,\psi|_A)$ (note that the
$I$-torsion subgroup scheme ${\cal L}_I$ of~${\cal L}$ 
coincides with the $I'$-torsion subgroup scheme ${\cal L}_{I'}$
because $I$ generates $I'$ as an ideal of $A'$). The assignment
\begin{equation} \label{eq:incl_modint} ({\cal L},\, \psi,\, \alpha) \longmapsto ({\cal L},\,\psi|_A,
\alpha \circ b) \end{equation}
 defines a
morphism of functors from ${\cal F}_{F',I'}^{r'}$ to the restriction
of ${\cal F}_{F,I}^r$ to the subcategory of $F'$-schemes (see step (i)
of the proof
of Theorem~\ref{th:Dmodulischeme} for the definition of these
functors). Therefore, we have a morphism
\[ \incl: S_{F',{\cal K}(I')}^{r'} \longrightarrow S_{F,{\cal K}(I)}^r \]
defined over $F'$. By Lemma 3.1 and Proposition 3.2 in~\cite{Br2}, it
is a proper morphism which is injective on $\C$-valued
points. Since $S_{F',{\cal K}(I')}^{r'}$ and $S_{F,{\cal K}(I)}^r$
are both affine schemes of finite type over $\C$, the morphism $\incl$
is therefore a proper morphism of finite presentation with finite
fibers. This implies that $\incl$ is finite by Theorem 8.11.1 
of EGA IV~\cite{EGAIV3}.

Using
\[ \omega'({F'}^{r'} \cap h'\widehat{A'}^{r'}) = (\omega'
\circ \phi)(F^r \cap (\phi^{-1}\circ h' \circ b)\widehat{A}^r) \]
one sees that $\incl$ is given by~\eqref{eq:incl2} on $\C$-valued points.

\textbf{Case (ii):} For $b: (\AZ_F^f)^r
\stackrel{\sim}{\rightarrow} (\AZ_{F'}^f)^{r'}$ and ${\cal K} \subset
\GL_r(\AZ_F^f)$ arbitrary, we choose 
\begin{itemize} 
\item a $g' \in \GL_{r'}(\AZ_{F'}^f)$ with $g'{\cal K}'g'^{-1} \subset
  \GL_{r'}(\widehat{A'})$,
\item
an $\AZ_F^f$-linear isomorphism $b': (\AZ_F^f)^r
\stackrel{\sim}{\rightarrow} (\AZ_{F'}^f)^{r'}$ with
$b'(\widehat{A}^r) = \widehat{A'}^{r'}$,
\item a proper ideal $I$ of $A$ with ${\cal K}(I) \subset g^{-1}{\cal K}g$,
  where $g := b^{-1} \circ
g'^{-1} \circ b' \in \GL_r(\AZ_F^f)$.
\end{itemize}
Then $g' \circ b = b' \circ g^{-1}$, hence
\[ g'{\cal K}'g'^{-1} = (b'g^{-1}{\cal K}gb'^{-1}) \cap \GL_{r'}(\AZ_{F'}^f)
\supset (b'{\cal K}(I)b'^{-1})\cap \GL_{r'}(\AZ_{F'}^f) = {\cal K}(I A'), \]
and by case (i) and Theorem~\ref{th:proj}, the composition of morphisms
\[ S_{F',{\cal K}(I A')}^{r'} \stackrel{\iota_{F,\,b'}^{F'}}{\longrightarrow}
S_{F,{\cal K}(I)}^r \stackrel{\pi_g}{\longrightarrow} S_{F,\cal
  K}^r \]
is defined and finite. Because of
\[ (g'{\cal K'}g'^{-1})b'g^{-1} = b'g^{-1}(b^{-1}{\cal K}'b) \subset
b'g^{-1}{\cal K} \]
this composition is invariant under the action of $g'{\cal K'}g'^{-1}$
on $S_{F',{\cal K}(I A')}^{r'}$. Hence, it induces a finite
morphism $f: S_{F',g'{\cal K}'g'^{-1}}^{r'} \rightarrow S_{F,\cal K}^r$
such that the diagram
\[ \xymatrix{ S_{F',{\cal K}(I A')}^{r'}
  \ar[r]^{\iota_{F,\,b'}^{F'}} \ar[d]^{\pi_1}&  S_{F,{\cal K}(I)}^r \ar[d]^{\pi_g} \\
S_{F',g'{\cal K}'g'^{-1}} \ar[r]^{\ \ \ \ f} & S_{F,\cal K}^r  } \]
commutes. We can now define $\incl := f \circ \pi_{g'}$, where
$\pi_{g'}: S_{F',{\cal K}'}^{r'} \rightarrow S_{F',g'{\cal
    K}'g'^{-1}}^{r'}$. For $[(\overline{\omega'},h')] \in S_{F',{\cal
    K}'}^{r'}(\C)$ we indeed have
\[ \incl([(\overline{\omega'},h')]) = [(\overline{\omega' \circ \phi},
\phi^{-1} \circ h'g'^{-1} \circ b' \circ g^{-1})] =
[(\overline{\omega' \circ \phi}, \phi^{-1} \circ h' \circ b)], \]
independently of the choice of $\phi: F^r \stackrel{\sim}{\rightarrow}
{F'}^{r'}$ and the representative $(\overline{\omega'},\,h') \in
\Omega_{F'}^{r'} \times \GL_{r'}(\AZ_{F'}^{r'})$. This also shows that
our definition of $\incl$ is independent of the choice of $g'$, $b'$
and $I$.



It remains only to prove that $\incl$ is injective on $\C$-valued
points, i.e., that the map~\eqref{eq:incl2} is injective. For this,
consider two 
elements $[(\overline{\omega'_1}, h'_1)],\,[(\overline{\omega'_2}, h'_2)]$ of
$S_{F',\level}^{r'}(\C)$ with $\overline{\omega'_1},\,\overline{\omega'_2} \in
\Omega_{F'}^{r'}$ associated to $\omega'_1, \omega'_2: {F'_{\infty'}}^{\!\!\!\!\!r'}
 \hookrightarrow \C$ and $h'_1,\, h'_2 \in \GL_{r'}(\AZ_{F'}^f)$ which
 are mapped to the same element of
$S(\C)$. This means that there exist $T \in \GL_r(F)$ and
$k \in {\cal K}$ such that
\begin{properties}
\item
$\overline{\omega'_1 \circ \phi \circ T^{-1}} = \overline{\omega'_2
  \circ \phi}$,
\item
 $T(\phi^{-1} \circ h'_1 \circ b)k =
  \phi^{-1} \circ h'_2 \circ b$. 
\end{properties}

By (i) there is a $\rho \in \CS$ such that the diagram
\[ \xymatrix{
F_{\infty}^r \ar[r]^{\phi}_{\sim} \ar[d]^T &
{F'_{\infty'}}^{\!\!\!\!\!r'} \ar@{^{(}->}[r]^{\omega'_1} & \C
\ar[d]^{\rho} \\
F_{\infty}^r \ar[r]^{\phi}_{\sim} & {F'_{\infty'}}^{\!\!\!\!\!r'}
\ar@{^{(}->}[r]^{\omega'_2} & \C
}
 \]
commutes. Since the maps $\omega'_1,\,\omega'_2,\,\rho$ are injective
and $F'$-linear, this implies that the $F$-linear automorphism $T' := \phi \circ T \circ
\phi^{-1}$ of ${F'}^{r'}$ is also $F'$-linear
and lies in $\GL_{r'}(F')$. Thus, we have $T'\cdot
\overline{\omega'_1} = \overline{\omega'_2}$, i.e., $\overline{\omega'_1}$ and
$\overline{\omega'_2}$ lie in the same $\GL_{r'}(F')$-orbit.

Equation (ii) implies that $T'h'_1(b \circ k \circ b^{-1}) 
= h'_2$ in $\GL_{r'}(\AZ_{F'}^f)$. Since $h'_1, h'_2$ and
$T'$ all lie in $\GL_{r'}(\AZ_{F'}^f)$, we conclude that
\[ b \circ k \circ
b^{-1} \in \level = (b{\cal K}b^{-1}) \cap \GL_{r'}(\AZ_{F'}^f), \]
i.e., $[(\overline{\omega'_1}, h'_1)]=[(\overline{\omega'_2}, h'_2)]$
in $S_{F',\level}^{r'}(\C)$.
\end{proof}

Since the morphism~$\incl: S' \rightarrow S$ is injective 
on $\C$-valued points, we call it an \emph{inclusion} of Drinfeld 
modular varieties (by a slight abuse of terminology). 
If ${\cal K} \subset \GL_r(\AZ_F^f)$ is amply small (in the
sense of Definition~\ref{def:suff_small}), we can show that it is in
fact a closed immersion:

\begin{satz} \label{prop:closedimm}
Let $\incl: S_{F',{\cal K}'}^{r'} \rightarrow S_{F,\cal K}^r$ be an
inclusion of Drinfeld modular varieties with ${\cal K} \subset
\GL_r(\AZ_F^f)$ amply small. Then $\incl$ is a closed immersion of varieties.
\end{satz}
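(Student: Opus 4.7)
The plan is to leverage what is already established: by Theorem~\ref{th_inclusion}, $\iota := \iota_{F,b}^{F'}$ is finite and injective on $\C$-valued points, and by Corollary~\ref{cor:nonsingular}, applied to ${\cal K}$ and, via Lemma~\ref{lemma:kprime}, to ${\cal K}'$, both the source $S_{F',{\cal K}'}^{r'}$ and target $S_{F,{\cal K}}^r$ are non-singular varieties. Recall that a finite morphism between noetherian schemes is a closed immersion if and only if it is a monomorphism, and that a finite, unramified morphism which is injective on closed points (with matching residue fields) is automatically such a monomorphism. Since the residue-field condition is automatic over $\C$, it suffices to establish that $\iota$ is unramified, i.e., that $\Omega^1_{S_{F',{\cal K}'}^{r'}/S_{F,{\cal K}}^r} = 0$.

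To prove unramifiedness, first reduce to Case (i) of the proof of Theorem~\ref{th_inclusion}. From that proof, $\iota = f \circ \pi_{g'}$ with $\pi_{g'}$ an isomorphism, and $f$ fits in a commutative square whose top arrow is the Case (i) inclusion $\iota_0 := \iota_{F,b'}^{F'}: S_{F',{\cal K}(IA')}^{r'} \rightarrow S_{F,{\cal K}(I)}^r$ and whose vertical arrows are the projection morphisms $\pi_1$ and $\pi_g$. Since ${\cal K}$, and hence all the relevant subgroups appearing, are amply small, both $\pi_1$ and $\pi_g$ are finite étale by Proposition~\ref{prop:projetale}. As unramifiedness is étale-local on both source and target, $f$ is unramified if and only if $\iota_0$ is.

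For $\iota_0$ itself, I would argue that it is in fact a closed immersion, using the moduli-theoretic description~\eqref{eq:incl_modint}: $\iota_0$ sends a Drinfeld $A'$-module of rank $r'$ with $I'$-level structure $(\mathcal{L}, \psi, \alpha)$ to the restriction-of-scalars Drinfeld $A$-module $(\mathcal{L}, \psi|_A, \alpha \circ b)$. By rigidity of Drinfeld modules with level structure (as exploited in Case (i) of the proof of Proposition~\ref{prop:projetale}), the extra $A'$-action $\psi$ is uniquely determined by $\psi|_A$ whenever it exists; and its existence is a closed condition, since, for generators $a_1, \ldots, a_n$ of $A'$ as an $A$-algebra, the coefficients of $\psi_{a_1}, \ldots, \psi_{a_n}$ must satisfy a finite collection of polynomial equations encoding the ring relations of $A'$ over $A$ together with compatibility with $\psi|_A$ and with $\alpha \circ b$. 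This identifies $S_{F',{\cal K}(I')}^{r'}$ with a closed subscheme of $S_{F,{\cal K}(I)}^r \times_F F'$, so $\iota_0$ is a closed immersion.

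The main obstacle is making the final step — representability of the restriction-of-scalars subfunctor by an honest closed subscheme — fully rigorous. A cleaner alternative would be to verify unramifiedness directly via the rigid-analytic picture: the map $\overline{\omega'} \mapsto \overline{\omega' \circ \phi}$ embeds $\Omega_{F'}^{r'}$ as a closed analytic subspace of $\Omega_F^r$ (the locus of $\overline{\omega}$ whose image $\omega(F_\infty^r) \subset \C$ is stable under the $F'_{\infty'}$-action induced by $\phi$), and its differential at every point is injective, giving unramifiedness of $\iota_0$; combined with the non-singularity and injectivity already in hand, this promotes $\iota$ to a closed immersion.
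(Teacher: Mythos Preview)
Your overall strategy coincides with the paper's: the paper also invokes the criterion ``proper + injective on $\C$-points + injective on tangent spaces $\Rightarrow$ closed immersion'' (Proposition 12.94 in~\cite{GoWe}), and your unramifiedness condition is equivalent to tangent-space injectivity in this setting. Your reduction from the general case to Case~(i) via the commutative square of Theorem~\ref{th_inclusion} and \'etaleness of the projection maps is exactly what the paper does in its Case~(ii).

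The divergence is in Case~(i). The paper does \emph{not} attempt to realize the restriction-of-scalars functor as a closed subfunctor, nor does it pass through the rigid-analytic picture. Instead it computes the tangent map of $\iota_0$ explicitly and algebraically: it identifies $T_x(S_{F',{\cal K}(I')}^{r'})$ with the space $D_{sr}(\phi)$ of strictly reduced derivations of the corresponding Drinfeld $A'$-module $\phi$ (and similarly at the target, with $\phi|_A$), and then uses the de Rham isomorphism $D_r(\phi) \cong \Hom_{A'}(\Lambda,\C)$ of Gekeler. Under these identifications the tangent map becomes the restriction of derivations from $A'$ to $A$, which factors through the obvious inclusion $\Hom_{A'}(\Lambda,\C) \hookrightarrow \Hom_A(\Lambda,\C)$, hence is injective. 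This is a clean, purely algebraic argument that sidesteps both of your proposed routes. Your approach~(a) runs into exactly the difficulty you flag: pinning down the closed-subscheme structure of ``$A'$-module structures extending a given $A$-module'' is delicate, and rigidity only gives uniqueness, not the scheme-theoretic closedness. Your approach~(b) is morally correct but transferring injectivity of the rigid-analytic differential to the algebraic tangent map would itself need justification (GAGA-type input for tangent spaces), whereas the derivation/de Rham route stays entirely within algebraic geometry.
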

Before giving the proof of Proposition~\ref{prop:closedimm}, we summarize the
description of the tangent spaces at the closed points 
of a Drinfeld modular variety
$S_{F,{\cal K}}^r$ with ${\cal K} = {\cal K}(I)$ for a proper ideal
$I$ of $A$ given in~\cite{Ge2}.

We use for $a \in A$ the notation
\[ \deg a := \log_q(|A / (a)|) \]
and denote by $\C\{\{\tau\}\}$ the ring of formal non-commutative power
series in the variable~$\tau$ with coefficients in $\C$ and the 
commutator rule $\tau \lambda = \lambda^q \tau$ for $\lambda \in \C$. 

\begin{definition}
Let $\phi: A \rightarrow \C \{\tau\}$ be a Drinfeld module over
$\C$ of rank~$r$. An $\FZ_q$-linear map 
$\eta: A \rightarrow \tau \C\{\tau\}$ is called a \emph{derivation} with
respect to $\phi$ if, for all $a,\,b \in A$, the derivation rule
\[ \eta_{ab} = a\eta_b + \eta_a \circ \phi_b \]
is satisfied. Such a derivation is called \emph{reduced} 
resp. \emph{strictly reduced} if
it satisfies $\deg_{\tau}\eta_a \leq r \cdot \deg a$
resp. $\deg_{\tau}\eta_a < r \cdot \deg a$ for all $a \in A$. The
space of reduced resp. strictly reduced derivations $A \rightarrow
\tau \C\{\tau\}$ with respect to $\phi$ is denoted by $D_r(\phi)$
resp. $D_{sr}(\phi)$.
\end{definition}

\begin{theorem}
Let $x$ be a $\C$-valued point of $S_{F,{\cal K}(I)}^r$
corresponding to a Drinfeld $A$-module $\phi$ with $I$-level structure
$\alpha$. Then there is a natural isomorphism
\begin{equation} \label{eq:sr}
T_x(S_{F,{\cal K}(I)}^r) \stackrel{\sim}{\longrightarrow}
D_{sr}(\phi) \end{equation}
of vector spaces over $\C$.
\end{theorem}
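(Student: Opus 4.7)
The plan is to use the representability of the functor ${\cal F}_{F,I}^r$ by $S_{F,{\cal K}(I)}^r$ established in step (i) of the proof of Theorem~\ref{th:Dmodulischeme}. By Yoneda, $T_x(S_{F,{\cal K}(I)}^r)$ is naturally identified with the fiber over $(\phi,\alpha)$ of the reduction map ${\cal F}_{F,I}^r(\C[\eps]/(\eps^2)) \to {\cal F}_{F,I}^r(\C)$, i.e., with the set of isomorphism classes of deformations $(\tilde\phi,\tilde\alpha)$ of $(\phi,\alpha)$ to $\C[\eps]/(\eps^2)$.

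First I would eliminate the level structure. Since we work in generic characteristic, $\phi[I]$ is finite \'etale over $\C$, so by rigidity of \'etale schemes under infinitesimal extensions, $\tilde\phi[I]$ is canonically $\phi[I]\times_{\C}\C[\eps]/(\eps^2)$ for any deformation $\tilde\phi$ of $\phi$. Hence $\alpha$ admits a unique lift $\tilde\alpha$ on each $\tilde\phi$, and any isomorphism of deformations $\tilde\phi_1\to\tilde\phi_2$ reducing to the identity automatically carries $\tilde\alpha_1$ to $\tilde\alpha_2$. The problem therefore reduces to classifying deformations of $\phi$ alone up to isomorphism.

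Next I would parametrize those by derivations. Writing $\tilde\phi_a = \phi_a + \eps\,\eta_a$ with $\eta_a \in \C\{\tau\}$ and using the relations $\tau\eps = \eps^q\tau = 0$ and $\phi_a\eps = \gamma(a)\eps$ in $(\C[\eps]/(\eps^2))\{\tau\}$, a direct calculation gives
\[ \tilde\phi_a\tilde\phi_b = \phi_a\phi_b + \eps\,(a\eta_b + \eta_a\circ\phi_b). \]
So $\tilde\phi$ is a ring homomorphism lifting $\phi$ precisely when $\eta$ is $\FZ_q$-linear, satisfies the derivation rule, and has $\eta_a \in \tau\C\{\tau\}$; the rank condition that $\deg_\tau\tilde\phi_a = r\deg a$ with unit leading coefficient amounts to $\deg_\tau\eta_a \leq r\deg a$, so deformations of $\phi$ correspond bijectively to $D_r(\phi)$. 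An infinitesimal automorphism of $\phi$ over $\C[\eps]/(\eps^2)$ lifting the identity has the form $u = 1+\eps v$ with $v\in\C$, and conjugation by $u$ sends $\eta$ to $\eta + \partial_v$, where $\partial_{v,a} := v\phi_a - \gamma(a)v$ is the inner derivation associated to $v$.

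The main obstacle I expect is showing that $D_r(\phi)$ modulo inner derivations is exactly $D_{sr}(\phi)$. For this I would prove that the derivation rule forces the top-coefficient ratio $v_a := c_{r\deg a}(a)/b_{r\deg a}(a)$ of any $\eta\in D_r(\phi)$ (where $c_i(a)$ is the coefficient of $\tau^i$ in $\eta_a$) to be independent of $a$ whenever $\deg a>0$: comparing the $\tau^{r\deg(ab)}$-coefficients on both sides of $\eta_{ab}=a\eta_b+\eta_a\circ\phi_b$ and using $\phi_{ab}=\phi_a\phi_b$ gives $v_{ab}=v_a$, and by symmetry also $v_{ab}=v_b$. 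The resulting well-defined $\C$-linear map $D_r(\phi)\to\C$ has kernel $D_{sr}(\phi)$ and is split by $v\mapsto\partial_v$, producing the canonical decomposition $D_r(\phi) = D_{sr}(\phi)\oplus\{\partial_v:v\in\C\}$ and hence the desired natural isomorphism $T_x(S_{F,{\cal K}(I)}^r)\cong D_{sr}(\phi)$.
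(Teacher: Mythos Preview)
Your argument is correct and is essentially a self-contained reconstruction of the proof that the paper outsources to Gekeler~\cite{Ge2}: the paper's own proof is just the citation ``This follows from the discussion in the proof of Theorem 6.11 in~\cite{Ge2} and the lemmata before this proof,'' and the description it gives immediately afterwards (unique strictly reduced representative in each deformation class) is exactly what your decomposition $D_r(\phi)=D_{sr}(\phi)\oplus\{\partial_v\}$ establishes. One small point of phrasing: when you say the problem ``reduces to classifying deformations of $\phi$ alone up to isomorphism,'' you mean isomorphisms reducing to the identity on the special fiber (which is the standard convention for deformations, and is forced anyway by rigidity of the level structure); you use this correctly in the next paragraph, so there is no actual gap.
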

\begin{proof}This follows from the discussion in the proof of Theorem
  6.11 in~\cite{Ge2} and the lemmata before this proof. 
\end{proof}

The isomorphism~\eqref{eq:sr}
 is given as follows: A tangent vector $\xi \in T_x(S_{F,{\cal
    K}(I)}^r)$ is an element of $S_{F,{\cal
    K}(I)}^r(\C[\varepsilon] / (\varepsilon^2))$ which projects to $x
\in S_{F,{\cal K}(I)}^r(\C)$ under the canonical projection
$\C[\varepsilon] / (\varepsilon^2) \rightarrow \C$. It corresponds to
the isomorphism class of a Drinfeld $A$-module over $\C[\varepsilon] /
(\varepsilon^2)$ with $I$-level structure which projects to $(\phi,
\alpha)$ under the canonical projection\linebreak $\C[\varepsilon] / 
(\varepsilon^2) \rightarrow \C$. There is a unique Drinfeld
$A$-module $\tilde{\phi}$ in this isomorphism class such that, 
for all $a \in A$,
\[ \tilde{\phi}_a = \phi_a + \varepsilon \cdot \eta_a \]
where $a \mapsto \eta_a$ is a strictly reduced derivation with
respect to $\phi$. The tangent vector $\xi$ is mapped to this strictly
reduced derivation under~\eqref{eq:sr}.

\begin{theorem}
Let $\phi$ be the Drinfeld $A$-module over $\C$ associated to an
$A$-lattice $\Lambda \subset \C$. Then there is a natural isomorphism
\begin{equation} \label{eq:derham}
 D_r(\phi) \stackrel{\sim}{\longrightarrow} \Hom_A(\Lambda, \C). 
\end{equation}
The $\C$-linear subspace $D_{sr}(\phi) \subset D_r(\phi)$ is mapped to a
subspace of $\Hom_A(\Lambda, \C)$ which is a complement of $\C \cdot
\id$, where $\id: \Lambda \hookrightarrow \C$ is the canonical inclusion.
\end{theorem}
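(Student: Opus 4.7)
The plan is to construct a map $\Phi: D_r(\phi) \to \Hom_A(\Lambda, \C)$ via the exponential function $e_\Lambda$ of $\phi$, show it is an isomorphism by a dimension count plus injectivity argument, and then locate the image of $D_{sr}(\phi)$ inside $\Hom_A(\Lambda, \C)$.

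Construction of $\Phi$: given $\eta \in D_r(\phi)$, set $\psi_a(z) := \eta_a(e_\Lambda(z))$, an entire $\FZ_q$-linear function with $\psi_a(0) = 0$. The derivation rule $\eta_{ab} = a\eta_b + \eta_a \phi_b$ combined with the functional equation $e_\Lambda(bz) = \phi_b(e_\Lambda(z))$ translates into the cocycle relation $\psi_{ab}(z) = a\psi_b(z) + \psi_a(bz)$. I would then solve $F(az) - aF(z) = \psi_a(z)$ for a unique entire $\FZ_q$-linear $F = \sum_{i\geq 1} d_i \tau^i \in \tau\C\{\{\tau\}\}$: comparing $\tau^i$-coefficients forces $d_i = [\psi_a]_i / (a^{q^i} - a)$ for any $a$ with $a^{q^i} \neq a$, and the cocycle condition ensures independence of the choice. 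Setting $\Phi(\eta) := F|_\Lambda$ produces an $A$-linear map, since $F(a\lambda) - aF(\lambda) = \psi_a(\lambda) = \eta_a(0) = 0$ for all $\lambda \in \Lambda$.

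Bijectivity of $\Phi$: both sides are $r$-dimensional over $\C$. For $D_r(\phi)$, a derivation is pinned down by its value on a finite $\FZ_q$-generating set of $A$ subject only to the derivation rule and the degree bound $\deg_\tau \eta_a \leq r\deg a$. For $\Hom_A(\Lambda, \C)$, one uses $\Lambda \otimes_A F \cong F^r$ to get $\Hom_A(\Lambda, \C) \cong \Hom_F(F^r, \C) \cong \C^r$. It therefore suffices to prove $\Phi$ injective. If $\Phi(\eta) = 0$, then the entire $\FZ_q$-linear $F$ vanishes on $\Lambda = \ker e_\Lambda$, so $F = G \circ e_\Lambda$ for a unique $G \in \tau\C\{\{\tau\}\}$ (using that $e_\Lambda$ descends to an $\FZ_q$-linear bijection $\C/\Lambda \cong \C$). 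Substituting back and using $e_\Lambda(az) = \phi_a(e_\Lambda(z))$ yields the identity $G\phi_a - aG = \eta_a$ in $\C\{\{\tau\}\}$, and the requirement that $\eta_a$ be a polynomial of $\tau$-degree at most $r\deg a$ for \emph{all} $a \in A$ forces $G = 0$ by a coefficient-by-coefficient inspection.

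Image of $D_{sr}(\phi)$: the inner derivation $\eta_a^{(1)} := \phi_a - a$ has $\deg_\tau \eta_a^{(1)} = r\deg a$, hence lies in $D_r \setminus D_{sr}$, so $\C \cdot \eta^{(1)}$ is a one-dimensional complement of $D_{sr}$ in $D_r$. Running the construction on $\eta^{(1)}$ gives $\psi_a(z) = e_\Lambda(az) - ae_\Lambda(z)$, which is solved by $F = e_\Lambda - z \in \tau\C\{\{\tau\}\}$; hence $\Phi(\eta^{(1)})(\lambda) = e_\Lambda(\lambda) - \lambda = -\lambda$, so $\Phi(\C\cdot\eta^{(1)}) = \C\cdot \id$. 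Since $\Phi$ is an isomorphism and $D_{sr}$ is a codimension-one subspace meeting $\C\cdot\eta^{(1)}$ trivially, its image is a complement of $\C\cdot \id$ in $\Hom_A(\Lambda, \C)$. The main obstacle is the injectivity step, where one must leverage the degree bound uniformly across all $a \in A$---crucially, that $A$ is not contained in any finite field so $a^{q^i} - a$ is nonzero for sufficiently many $a$---to conclude that $G\phi_a - aG$ being a polynomial of degree $\leq r\deg a$ forces $G$ itself to vanish.
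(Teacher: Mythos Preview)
The paper does not give its own proof; it cites Theorems~5.14 and~6.10 of Gekeler~\cite{Ge1} and then describes the map, which is precisely your $\Phi$. Your construction of $\Phi$, your verification that $F|_\Lambda$ is $A$-linear, and your computation $\Phi(\eta^{(1)}) = -\id$ (and hence that the image of $D_{sr}(\phi)$ is a complement of $\C\cdot\id$) all follow Gekeler's line and are correct.

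The gap is in the bijectivity step, in two places. First, your dimension count for $D_r(\phi)$ is not actually a count: the sentence ``a derivation is pinned down by its value on a finite $\FZ_q$-generating set of $A$ subject only to the derivation rule and the degree bound'' does not produce the number~$r$; in Gekeler's treatment the equality $\dim_\C D_r(\phi) = r$ is itself a substantive result, obtained by showing that every derivation is uniquely equivalent, modulo strictly inner ones, to a reduced derivation. Second, your injectivity argument is not finished. Writing $\phi_a = \sum_j b_j(a)\tau^j$, the condition $\deg_\tau(G\phi_a - aG) \leq r\deg a$ for a \emph{fixed} nonconstant $a$ only yields, for each $k > r\deg a$, the recursion
\[
g_k\,(a^{q^k}-a) \;=\; -\sum_{i=k-r\deg a}^{k-1} g_i\, b_{k-i}(a)^{q^i},
\]
which determines $g_k$ from the preceding $r\deg a$ coefficients rather than forcing them to vanish; there is an $(r\deg a)$-parameter family of power series $G$ satisfying the bound for this one $a$. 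One must genuinely combine the constraints across varying $a$ (or exploit the entirety of $F = G\circ e_\Lambda$), and that is exactly where the work in~\cite{Ge1} lies; the observation that $a^{q^i}\neq a$ for $a\notin\FZ_q$ is necessary but far from sufficient.
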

\begin{proof} See Theorem 5.14 in~\cite{Ge1} and Theorem 6.10
  in~\cite{Ge1}.
\end{proof}
The isomorphism~\eqref{eq:derham} is called \emph{de Rham
  isomorphism} and can be described as follows: Let $\eta$
be a reduced derivation with respect to $\phi$. Then, for all
non-constant $a \in A$, there is a unique solution 
$F_{\eta} \in \C\{\{\tau\}\}$ satisfying the difference equation
\begin{equation} \label{eq:diffeq}
 F_{\eta}(az) - aF_{\eta}(z) = \eta_a (e_{\Lambda}(z)) 
\end{equation}
where 
\[ e_{\Lambda}(z) = z \cdot \prod_{0 \neq \lambda \in \Lambda}(1 - z
/ \lambda) \]
denotes the exponential function associated to the lattice
$\Lambda$. This solution is independent of the choice of $a \in A$ and
defines an entire function $\C \rightarrow \C$ 
which restricts to an $A$-linear map $\Lambda \rightarrow \C$. The
reduced derivation $\eta$ is mapped to $F_{\eta}|_{\Lambda}$
under~\eqref{eq:derham}.

\begin{proof}[Proof of Proposition~\ref{prop:closedimm}] 
We use the following criterion given
in Proposition 12.94 of~\cite{GoWe}:

\emph{A proper morphism $f: X \rightarrow Y$ of varieties 
over an algebraically closed field~$K$ is a closed immersion if and
only if the map $X(K) \rightarrow Y(K)$ induced by $f$ is injective
and, for all $x \in X(K)$, the induced map on Zariski tangent spaces
$T_x(X) \rightarrow T_{f(x)}(Y)$ is injective.}

Since finite morphisms are proper, by Theorem~\ref{th_inclusion} we
already know that $\incl$ is proper and injective on $\C$-valued
points. We therefore only
have to show that, for all $x \in S_{F',{\cal K}'}^{r'}(\C)$, the
induced map on Zariski tangent spaces ${\incl}_*: T_x(S_{F',{\cal K}'}^{r'})
\rightarrow T_{\incl(x)}(S_{F,\cal K}^r)$ is injective.

\textbf{Case (i):} 
As in the proof of Theorem~\ref{th_inclusion}, we first consider the
case where $b(\hat{A}^r) = \widehat{A'}^{r'}$ and ${\cal K} = {\cal
  K}(I)$ is a principal congruence subgroup modulo a proper ideal~$I$
of $A$. In this case, we have ${\cal K}' = {\cal K}(I')$ with $I' := I A'$. 
We can therefore use the description of the tangent spaces
given above.

Let $x \in S_{F',{\cal K}(I')}^{r'}(\C)$ be a point corresponding to the
  Drinfeld $A'$-module $\phi$ associated to an $A'$-lattice $\Lambda \subset
  \C$ of rank~$r'$ with $I'$-level structure. Since we defined $\incl$
  by restricting Drinfeld $A'$-modules to Drinfeld $A$-modules, the point 
$\incl(x) \in S_{F,\cal K}^r(\C)$ corresponds to the Drinfeld
$A$-module $\phi|_A$ associated to the same $\Lambda \subset \C$ considered as
$A$-lattice of rank~$r$ with some $I$-level structure. 
We can therefore consider the following diagram
\[ \xymatrix{
T_x(S_{F',{\cal K}(I')}^{r'}) \ar[r]^{\ \ \ \ \ \eqref{eq:sr}}_{\ \ \ \ \ \sim}
\ar[d]^{{\incl}_*} & D_{sr}(\phi) \ar[d] \ar@{^{(}->}[r]^{\!\!\!\!\!\!\eqref{eq:derham}} & 
\Hom_{A'}(\Lambda, \C) \ar@{^{(}->}[d] \\
T_{\incl(x)}(S_{F,\cal K}^r) \ar[r]^{\ \ \eqref{eq:sr}}_{\ \ \sim} &
D_{sr}(\phi|_A) \ar@{^{(}->}[r]^{\!\!\!\!\!\eqref{eq:derham}} & \Hom_A(\Lambda, \C)
} \] 
where the vertical arrow in the middle denotes the restriction of
derivations from $A'$ to $A$ and the one at the right 
the canonical inclusion. The left square of the diagram commutes by
the definition of~\eqref{eq:sr} because ${\incl}_*$ has the modular 
interpretation of restricting Drinfeld $A'$-modules 
over $\C[\varepsilon] / (\varepsilon^2)$ to $A$. The right square also
commutes because the unique solution of~\eqref{eq:diffeq} is
independent of $a \in A'$ and $\Lambda$ as an $A'$-lattice
 has the same exponential function as $\Lambda$ as an $A$-lattice.

Hence, the diagram commutes and, since the right vertical arrow is an
injective map, also the other two are injective maps. In particular,
the induced map ${\incl}_*$ between tangent spaces is injective.

\textbf{Case (ii):} Let $b: (\AZ_F^f)^r \stackrel{\sim}{\rightarrow} 
(\AZ_{F'}^f)^{r'}$ be arbitrary and ${\cal K} \subset
\GL_{r}(\AZ_F^f)$ be an arbitrary amply small subgroup. Then,
by the construction in the proof of Theorem~\ref{th_inclusion}, there is 
\begin{itemize} 
\item a $g' \in \GL_{r'}(\AZ_{F'}^f)$ with $g'{\cal K}'g'^{-1} \subset
  \GL_{r'}(\widehat{A'})$,
\item
an $\AZ_F^f$-linear isomorphism $b': (\AZ_F^f)^r
\stackrel{\sim}{\rightarrow} (\AZ_{F'}^f)^{r'}$ with
$b'(\widehat{A}^r) = \widehat{A'}^{r'}$,
\item a proper ideal $I$ of $A$ with ${\cal K}(I) \subset g^{-1}{\cal K}g$,
  where $g := b^{-1} \circ
g'^{-1} \circ b' \in \GL_r(\AZ_F^f)$
\end{itemize}
such that the diagram
\[ \xymatrix{ S_{F',{\cal K}(I')}^{r'}
  \ar[r]^{\iota_{F,\,b'}^{F'}} \ar[d]^{\pi_{g'^{-1}}} & S_{F,{\cal
      K}(I)}^r \ar[d]^{\pi_g} \\
  S_{F',{\cal K}'}^{r'} \ar[r]^{\incl} & S_{F,\cal K}^r } \]
with $I':= I A'$ commutes. By Proposition~\ref{prop:projetale}
and Corollary~\ref{cor:nonsingular}, 
the projection maps $\pi_{g'^{-1}}$ and $\pi_g$ in
this diagram are \'etale morphisms between non-singular varieties
because ${\cal K}'$ and ${\cal K}$ are amply small. Hence, they
induce isomorphisms on tangent spaces of closed points (Proposition
10.4 in~\cite{Ha}). By case (i), the upper horizontal
arrow~$\iota_{F,\,b'}^{F'}$ induces injections on tangent spaces of 
closed points. Therefore, the commutativity of the diagram implies
that, for all $x \in S_{F',{\cal K}'}^{r'}(\C)$, the
induced map ${\incl}_*: T_x(S_{F',{\cal K}'}^{r'})
\rightarrow T_{\incl(x)}(S_{F,\cal K}^r)$ is injective.
\end{proof}
 
\begin{satz} \label{prop:twoincl}
Let $\incla: S_{F',\cal K'}^{r'} \rightarrow S$ and 
$\inclb: S_{F'',\cal K''}^{r''} \rightarrow S$ be two
inclusions of Drinfeld modular varieties with $F'' \subset F'$. Then
for a $\AZ_{F''}^f$-linear isomorphism
$c: (\AZ_{F''}^f)^{r''} \rightarrow (\AZ_{F'}^f)^{r'}$ with
\begin{equation} \label{eq:incl}
b_1 = c \circ b_2 \circ k  
\end{equation}
for some $k \in {\cal K}$, the diagram
\[ \xymatrix{ 
S_{F',\cal K'}^{r'} \ar[dd]_{\iota^{F'}_{F''\!,\,c}} \ar[dr]^{\incla} & \\
& S \\
S_{F'',\cal K''}^{r''} \ar[ur]_{\inclb} &
} \]
commutes.
\end{satz}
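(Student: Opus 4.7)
My plan is to verify the commutativity on $\C$-valued points using the explicit formula~\eqref{eq:incl2} from Theorem~\ref{th_inclusion}, then invoke separatedness/reducedness to upgrade this to equality of morphisms of varieties over $\C$. Since both sides of the claimed diagram are morphisms of varieties (over $\C$, or even over $F'$), and since a morphism between reduced separated schemes of finite type over an algebraically closed field is determined by its effect on closed points, pointwise verification suffices.

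First I would choose three auxiliary $F$-linear, resp.\ $F''$-linear, isomorphisms to describe the three morphisms. Namely, pick $\phi_2: F^r \stackrel{\sim}{\to} F''^{r''}$ which is $F$-linear and $\psi: F''^{r''} \stackrel{\sim}{\to} F'^{r'}$ which is $F''$-linear. A dimension count ($[F'':F]=r/r''$ and $[F':F'']=r''/r'$) shows that such maps exist. Then $\phi_1 := \psi \circ \phi_2: F^r \to F'^{r'}$ is $F$-linear and an isomorphism, so by the independence statement of Theorem~\ref{th_inclusion} we may compute $\incla$ using $\phi_1$, and $\inclb$ using $\phi_2$, and $\iota^{F'}_{F''\!,\,c}$ using $\psi$.

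Now take an arbitrary point $[(\overline{\omega'},h')] \in S_{F',\cal K'}^{r'}(\C)$. Applying $\iota^{F'}_{F''\!,\,c}$ gives $[(\overline{\omega' \circ \psi}, \psi^{-1} \circ h' \circ c)]$ in $S_{F'',\cal K''}^{r''}(\C)$, and then applying $\inclb$ yields
\[
[(\overline{\omega' \circ \psi \circ \phi_2},\ \phi_2^{-1} \circ \psi^{-1} \circ h' \circ c \circ b_2)] = [(\overline{\omega' \circ \phi_1},\ \phi_1^{-1} \circ h' \circ c \circ b_2)]
\]
in $S(\C)$. On the other hand, $\incla$ sends $[(\overline{\omega'},h')]$ to $[(\overline{\omega' \circ \phi_1}, \phi_1^{-1} \circ h' \circ b_1)]$. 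The two elements have identical first coordinate, and by the hypothesis $b_1 = c \circ b_2 \circ k$ with $k \in {\cal K}$, the second coordinates are related by
\[
\phi_1^{-1} \circ h' \circ b_1 = (\phi_1^{-1} \circ h' \circ c \circ b_2) \cdot k,
\]
so the two classes in $\GL_r(F) \setminus (\Omega_F^r \times \GL_r(\AZ_F^f) / {\cal K})$ coincide.

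The main thing to keep in mind is the direction of the various maps ($\iota^{F'}_{F''\!,\,c}$ is an inclusion viewing the $F'$-level variety inside the $F''$-level one, since $F''\subset F'$ forces $r''\geq r'$) and the fact that the choice of $\phi_1$, $\phi_2$, $\psi$ is essentially free thanks to the independence clause of Theorem~\ref{th_inclusion}; once one makes the compatible choice $\phi_1 = \psi \circ \phi_2$, the whole computation becomes a one-line manipulation using the relation~\eqref{eq:incl}. I expect no serious obstacle: the only delicate point is to remember that the equivalence relation defining $S(\C)$ as a double coset involves right multiplication by~${\cal K}$, so that the element $k$ disappears upon passing to double cosets.
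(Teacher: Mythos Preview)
Your approach is essentially the same as the paper's: the paper also proves commutativity by a direct computation on $\C$-valued points using the explicit formula~\eqref{eq:incl2}, and your choice of compatible isomorphisms $\phi_1=\psi\circ\phi_2$ is the natural way to make that computation clean.

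One point you should add: before computing, you need to check that the morphism $\iota^{F'}_{F''\!,\,c}$ in the diagram actually has source $S_{F',{\cal K}'}^{r'}$ and target $S_{F'',{\cal K}''}^{r''}$, i.e., that ${\cal K}' = (c{\cal K}''c^{-1}) \cap \GL_{r'}(\AZ_{F'}^f)$. The paper verifies this explicitly from~\eqref{eq:incl}: since $k\in{\cal K}$ one has $b_1{\cal K}b_1^{-1}=cb_2{\cal K}b_2^{-1}c^{-1}$, and because $c$ is $\AZ_{F''}^f$-linear we have $\GL_{r'}(\AZ_{F'}^f)\subset c\,\GL_{r''}(\AZ_{F''}^f)\,c^{-1}$, whence intersecting with $\GL_{r'}(\AZ_{F'}^f)$ gives the required equality. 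Without this check the vertical arrow of the diagram is not yet known to land in the correct variety.
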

\begin{proof} Note that we have
\[ {\cal K'} = (b{\cal K}b^{-1})
\cap \GL_{r'}(\AZ_{F'}^f) = (c{\cal
    K''}c^{-1}) \cap \GL_{r'}(\AZ_{F'}^f) \]
by the definition of $\cal K'$ and $\cal K''$ and
equation~\eqref{eq:incl}. Therefore, there is an inclusion 
$\iota^{F'}_{F''\!,\,c}: S_{F',\cal K'}^{r'} \rightarrow
S_{F'',\cal K''}^{r''}$.

The commutativity of the diagram follows by a direct calculation on
$\C$-valued points\linebreak using~\eqref{eq:incl2}.
\end{proof}

\subsection{Drinfeld modular subvarieties}

The image of an inclusion $\incl: S' \rightarrow S$ of Drinfeld 
modular varieties is a subvariety of $S$ because finite morphisms are proper.

\begin{definition}
A subvariety of $S$ of the form $X = \incl(S')$ for an
inclusion~$\incl$ is called a \emph{Drinfeld modular
  subvariety} of $S$. An irreducible component of a Drinfeld modular
subvariety over $\C$ is called a \emph{special subvariety} and a
special subvariety of dimension~$0$ a \emph{special point}.
\end{definition}

\begin{lemma} \label{lemma:changeK}
Let $\tilde{\cal K} \subset {\cal K}$ be an open subgroup and $\pi_1:
S_{F,\tilde{\cal K}}^r \rightarrow S_{F,\cal K}^r$ the corresponding
canonical projection. Then the following holds:
\begin{properties}
\item
For each Drinfeld modular subvariety $X'
\subset S_{F,\tilde{\cal K}}^r$, the image $\pi_1(X')$ is a Drinfeld modular
subvariety of $S_{F,\cal K}^r$. 

\item
For each Drinfeld modular subvariety $X = \incl(S_{F',\level}^{r'})
\subset S_{F,\cal K}^r$, the pre\-image $\pi_1^{-1}(X)$ is a finite
union of Drinfeld modular subvarieties of $S_{F,\tilde{\cal K}}^r$.
\end{properties}
\end{lemma}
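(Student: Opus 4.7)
The plan for both parts is to exploit the explicit descriptions of the projection morphisms and the inclusion morphisms on $\C$-valued points given by~\eqref{eq:pig2} and~\eqref{eq:incl2}. Since Drinfeld modular subvarieties are reduced closed subschemes of the ambient Drinfeld modular variety, it will be enough to verify the required identities on $\C$-valued points and then invoke closedness.

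For (i), I would write $X' = \iota_{F,b_1}^{F'}(S_{F',\tilde{\cal K}'}^{r'})$ with $\tilde{\cal K}' = (b_1 \tilde{\cal K} b_1^{-1}) \cap \GL_{r'}(\AZ_{F'}^f)$, set ${\cal K}' := (b_1 {\cal K} b_1^{-1}) \cap \GL_{r'}(\AZ_{F'}^f)$ (which contains $\tilde{\cal K}'$), and consider the associated inclusion $\iota_{F,b_1}^{F'}: S_{F',{\cal K}'}^{r'} \to S_{F,{\cal K}}^r$. A direct comparison of~\eqref{eq:pig2} and~\eqref{eq:incl2} on $\C$-points shows that the square
\[
\xymatrix{
S_{F',\tilde{\cal K}'}^{r'} \ar[r]^{\iota_{F,b_1}^{F'}} \ar[d]_{\pi_1} & S_{F,\tilde{\cal K}}^r \ar[d]^{\pi_1} \\
S_{F',{\cal K}'}^{r'} \ar[r]^{\iota_{F,b_1}^{F'}} & S_{F,{\cal K}}^r
}
\]
commutes as morphisms of varieties. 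The left-hand vertical map is surjective (it is finite and surjective on $\C$-valued points by~\eqref{eq:pig2}), so taking images gives $\pi_1(X') = \iota_{F,b_1}^{F'}(S_{F',{\cal K}'}^{r'})$, which is a Drinfeld modular subvariety of $S_{F,{\cal K}}^r$.

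For (ii), write $X = \iota_{F,b}^{F'}(S_{F',{\cal K}'}^{r'})$ and choose representatives $k_1,\ldots,k_n \in {\cal K}$ for the cosets of ${\cal K} / \tilde{\cal K}$. I would set $b_i := b \circ k_i$ and $\tilde{\cal K}_i' := (b_i \tilde{\cal K} b_i^{-1}) \cap \GL_{r'}(\AZ_{F'}^f)$, which is contained in ${\cal K}'$ since $k_i \tilde{\cal K} k_i^{-1} \subset {\cal K}$, and consider the inclusions $\iota_{F,b_i}^{F'}: S_{F',\tilde{\cal K}_i'}^{r'} \to S_{F,\tilde{\cal K}}^r$. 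The claim is
\[ \pi_1^{-1}(X) \;=\; \bigcup_{i=1}^{n} \iota_{F,b_i}^{F'}(S_{F',\tilde{\cal K}_i'}^{r'}). \]
The inclusion $\supset$ follows directly from~\eqref{eq:pig2} and~\eqref{eq:incl2} upon observing that $b_i \equiv b \pmod{{\cal K}}$. For $\subset$, given $p = [(\omega,h)] \in \pi_1^{-1}(X)(\C)$ with $\pi_1(p) = \iota_{F,b}^{F'}([(\omega',h')])$ for some $[(\omega',h')] \in S_{F',{\cal K}'}^{r'}(\C)$, unwinding the double-coset description of $S_{F,{\cal K}}^r(\C)$ gives $T \in \GL_r(F)$ and $k \in {\cal K}$ relating the two representatives. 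Writing $k = k_i \tilde{k}$ with $\tilde{k} \in \tilde{\cal K}$ then rewrites $p$ as $\iota_{F,b_i}^{F'}([(\omega',h')])$ inside $S_{F,\tilde{\cal K}}^r(\C)$, proving the claim on closed points. Since both sides are closed in $S_{F,\tilde{\cal K}}^r$, this upgrades to an equality of reduced closed subvarieties.

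The only real obstacle is bookkeeping: one must keep track of the various compact open subgroups ${\cal K}'$, $\tilde{\cal K}'$ and $\tilde{\cal K}_i'$, verify the inclusions among them, and check that each $b_i$ is an admissible inclusion datum in the sense of Theorem~\ref{th_inclusion} so that~\eqref{eq:incl2} applies to it. No input beyond~\eqref{eq:pig2}, \eqref{eq:incl2} and the definition of a Drinfeld modular subvariety is needed.
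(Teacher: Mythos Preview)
Your proposal is correct and follows essentially the same approach as the paper: for (i) you use the same commutative square relating the two inclusion morphisms (for the same datum $(F',b)$) via the two canonical projections, and for (ii) you use the same coset decomposition ${\cal K}/\tilde{\cal K}$ with inclusion data $b\circ k_i$ to write $\pi_1^{-1}(X)$ as the union of the images of the $\iota_{F,\,b\circ k_i}^{F'}$. The only difference is that you spell out the verification of $\pi_1^{-1}(X)=\bigcup_i \iota_{F,\,b_i}^{F'}(S_{F',\tilde{\cal K}_i'}^{r'})$ on $\C$-points more explicitly than the paper does.
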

\begin{proof} For (i), assume that $X'$ is the image of the
inclusion $\tilde{\iota}_{F,\,b}^{F'}: S_{F',\tilde{\cal
    K'}}^{r'} \rightarrow S_{F,\tilde{\cal
     K}}^r$ associated to the datum $(F',\,b)$ and consider the
 inclusion morphism $\incl: S_{F',\cal K'}^{r'} \rightarrow S_{F,{\cal K}}^r$
 associated to the same datum. The diagram   
\[ \xymatrix{ 
 S_{F',\tilde{\cal K'}}^{r'} \ar[d]^{\pi'_1}
 \ar[r]^{\tilde{\iota}_{F,\,b}^{F'}} & S_{F,\tilde{\cal
     K}}^r \ar[d]^{\pi_1}
 \\
 S_{F',\cal K'}^{r'} \ar[r]^{\incl} & S_{F,{\cal K}}^r
} \]
with $\pi'_1$ and $\pi_1$ the respective canonical projections 
commutes by definition of the inclusion morphisms. Hence, 
\[ \pi_1(X') = \incl(\pi_1'(S_{F',\tilde{\cal K'}}^{r'})) =
\incl(S_{F',\cal K'}^{r'}) \]
is a Drinfeld modular subvariety of $S_{F,{\cal K}}^r$.

For (ii), choose a set of representatives $k_1,\ldots,k_l \in {\cal K}$
for the left cosets ${\cal K} / \tilde{\cal K}$ and consider
the inclusion morphisms $\iota_{F,\,b \circ k_i}^{F'}:
S_{F',\tilde{{\cal K}'_i}}^{r'} \rightarrow S_{F,\tilde{\cal K}}^r$ associated
to $(F',\,b \circ k_i)$ for $i = 1,\ldots,l$. By the definition of the
inclusion morphisms we have
\[ \pi_1^{-1}(X) = \bigcup_{i=1}^l
\iota_{F,\,b \circ k_i}^{F'}(S_{F',\tilde{{\cal K}'_i}}^{r'}), \]
hence $\pi_1^{-1}(X)$ is a finite union of Drinfeld modular
subvarieties of $S_{F,\tilde{\cal K}}^r$.
\end{proof}

\begin{lemma} \label{lemma:end2}
For an inclusion $\incl: S' \rightarrow S$, we have
\[ \End(p') = \End(\incl(p')) \]
for all $p' \in S'(\C)$.
\end{lemma}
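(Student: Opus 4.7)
The plan is to unwind both sides of the claimed equality by using explicit representatives and the description of $\incl$ on $\C$-valued points given in Theorem~\ref{th_inclusion}. Since the definition of $\End(\cdot)$ only depends on the homothety class of $\omega(F^r)$ (as noted right after Definition~\ref{def_end}), I am free to pick any representative I like.

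Concretely, I would choose a representative $(\overline{\omega'}, h') \in \Omega_{F'}^{r'} \times \GL_{r'}(\AZ_{F'}^f)$ of $p'$, where $\omega': F'^{r'}_{\infty'} \hookrightarrow \C$ is $F'_{\infty'}$-linear and injective. By~\eqref{eq:incl2}, the image $\incl(p')$ is then represented by $(\overline{\omega' \circ \phi}, \phi^{-1} \circ h' \circ b)$, where $\phi: F^r \stackrel{\sim}{\to} F'^{r'}$ is the chosen $F$-linear isomorphism (extended to $F_\infty^r \to F'^{r'}_{\infty'}$ by scalar extension). Applying Definition~\ref{def_end} on each side, one obtains
\[ \End(p') = \{u \in \C : u \cdot \omega'(F'^{r'}) \subset \omega'(F'^{r'})\} \]
and
\[ \End(\incl(p')) = \{u \in \C : u \cdot (\omega' \circ \phi)(F^r) \subset (\omega' \circ \phi)(F^r)\}. \]

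The only remaining step is to observe that $\phi: F^r \to F'^{r'}$ is a bijection (it is an isomorphism of $F$-vector spaces), so $\phi(F^r) = F'^{r'}$ as subsets of $F'^{r'}_{\infty'}$. Consequently $(\omega' \circ \phi)(F^r) = \omega'(\phi(F^r)) = \omega'(F'^{r'})$, and the two defining conditions for membership in $\End(p')$ and $\End(\incl(p'))$ coincide verbatim.

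There is no real obstacle here: once the definitions are written side by side, the equality is an immediate consequence of the fact that $\phi$ is an $F$-linear bijection $F^r \to F'^{r'}$. The only point worth flagging is the well-definedness (independence of the choice of $(\overline{\omega'}, h')$ and of $\phi$), which is already guaranteed by the remark following Definition~\ref{def_end} and by the last assertion of Theorem~\ref{th_inclusion}.
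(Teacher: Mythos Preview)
Your proposal is correct and follows essentially the same approach as the paper: both write out $\End(p')$ and $\End(\incl(p'))$ explicitly using Definition~\ref{def_end} and the description~\eqref{eq:incl2} of $\incl$ on $\C$-valued points, and then observe that $\phi(F^r)=F'^{r'}$ makes the two defining conditions identical. Your version is slightly more detailed (you explicitly note $\phi$ is a bijection and flag the well-definedness), but the argument is the same.
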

\textbf{Remark:} This is an equality of subfields of $\C$ and not just
an abstract isomorphism of fields.
\begin{proof} This follows from our definitions because, for $p' = [(\overline{\omega'},h')] \in S'(\C)$, 
we have $\End(p') = \{ u \in \C : u \cdot 
\omega'({F'}^{r'}) \subset \omega'({F'}^{r'})\}$ and $\End(\incl(p')) = 
\{ u \in \C : u \cdot (\omega' \circ \phi)(F^r) \subset (\omega' \circ \phi)(F^r)\}$
for a chosen $F$-isomorphism $\phi: F^r \stackrel{\sim}{\rightarrow} {F'}^{r'}$.
\end{proof}



Now we give a criterion under which two Drinfeld modular subvarieties
are contained in each other.

\begin{satz} \label{prop:sub_contain}
Let $X' = \incla(S_{F',\level}^{r'})$ and $X'' =
\inclb(S_{F'',{\cal K}''}^{r''})$ be two Drinfeld modular subvarieties
of $S$. The following statements are equivalent:
\begin{properties}
\item
$X'$ is contained in $X''$.
\item
There is an irreducible component of $X'$ over $\C$ which is contained in~$X''$.
\item
$F'' \subset F'$ and there exist $k \in {\cal K}$ 
and an $\AZ_{F''}^f$-linear isomorphism
$c: (\AZ_{F''}^f)^{r''} \rightarrow (\AZ_{F'}^f)^{r'}$ such that $b_1
= c \circ b_2 \circ k$.
\end{properties}

\end{satz}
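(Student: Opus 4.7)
The plan is to prove the chain (i) $\Rightarrow$ (ii) $\Rightarrow$ (iii) $\Rightarrow$ (i). The implication (i) $\Rightarrow$ (ii) is immediate, and (iii) $\Rightarrow$ (i) is a direct consequence of Proposition~\ref{prop:twoincl}: the hypothesis $b_1 = c \circ b_2 \circ k$ produces an inclusion morphism $\iota^{F'}_{F'',\,c}: S_{F',{\cal K}'}^{r'} \rightarrow S_{F'',{\cal K}''}^{r''}$ with $\incla = \inclb \circ \iota^{F'}_{F'',\,c}$, so $X' \subset X''$. The substance of the proof is therefore (ii) $\Rightarrow$ (iii).

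Let $V$ be an irreducible component of $X'$ contained in $X''$. Since $\incla$ is finite, $V = \incla(Y')$ for some irreducible component $Y'$ of $S_{F',{\cal K}'}^{r'}$. By Lemma~\ref{lemma_end} I pick a point $p' \in Y'(\C)$ with $\End(p') = F'$. Writing $\incla(p') = \inclb(p'')$ (possible since $V \subset X''$) and applying Lemma~\ref{lemma:end2} on both sides yields $\End(p'') = \End(\incla(p')) = \End(p') = F'$. By Lemma~\ref{lemma:end1} applied in $S_{F'',{\cal K}''}^{r''}$, $\End(p'')$ is a field extension of $F''$ inside $\C$, forcing $F'' \subset F'$ and giving the first half of (iii).

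To produce the isomorphism $c$, fix $F$-linear bijections $\phi_1: F^r \stackrel{\sim}{\to} (F')^{r'}$ and $\phi_2: F^r \stackrel{\sim}{\to} (F'')^{r''}$ used in~\eqref{eq:incl2}, and write $p' = [(\overline{\omega'}, h')]$, $p'' = [(\overline{\omega''}, h'')]$. Translating $\incla(p') = \inclb(p'')$ via~\eqref{eq:incl2} produces $T \in \GL_r(F)$, $k \in {\cal K}$ and $\rho \in \CS$ such that $\omega' \circ \phi_1 \circ T^{-1} = \rho \cdot \omega'' \circ \phi_2$ and $T(\phi_1^{-1} h' b_1)k = \phi_2^{-1} h'' b_2$. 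Setting $S_0 := \phi_1 \circ T^{-1} \circ \phi_2^{-1}: (F'')^{r''} \to (F')^{r'}$, the second equation rearranges to $b_1 = (h'^{-1} \circ S_0 \circ h'') \circ b_2 \circ k^{-1}$, so the natural candidate is $c := h'^{-1} \circ S_0 \circ h''$, to be extended by scalars to finite adeles.

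The main obstacle is that $S_0$ is a priori only $F$-linear, whereas for $c$ to be an $\AZ_{F''}^f$-linear isomorphism (with target $(\AZ_{F'}^f)^{r'}$ viewed as an $\AZ_{F''}^f$-module through $F'' \hookrightarrow F'$), $S_0$ must be $F''$-linear. The key extra input is the endomorphism condition $\End(p'') = F'$: every $a \in F'$ stabilizes $\omega''((F'')^{r''})$ under multiplication in $\C$, and the $F'_{\infty'}$-linearity of $\omega'$ together with the relation $\omega' \circ S_0 = \rho \cdot \omega''$ forces $S_0$ to intertwine the $F'$-action on $(F'')^{r''}$ transported through $\omega''$ with the standard $F'$-action on $(F')^{r'}$; since $\omega''$ is $F''_{\infty''}$-linear, the transported action restricts to the standard $F''$-action, yielding $F''$-linearity of $S_0$. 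Extending scalars via the canonical identification $(F')^{r'} \otimes_{F''} \AZ_{F''}^f \cong (\AZ_{F'}^f)^{r'}$ then upgrades $S_0$, and hence $c$, to an $\AZ_{F''}^f$-linear isomorphism. The equation $b_1 = c \circ b_2 \circ k^{-1}$ with $k^{-1} \in {\cal K}$ then finishes the verification of (iii).
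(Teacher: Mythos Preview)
Your proof is correct and follows essentially the same route as the paper's: trivially (i)$\Rightarrow$(ii), Proposition~\ref{prop:twoincl} for (iii)$\Rightarrow$(i), and for (ii)$\Rightarrow$(iii) the choice of a point $p'$ with $\End(p') = F'$ via Lemma~\ref{lemma_end}, the endomorphism comparison to get $F'' \subset F'$, and the unwinding of the double-coset equality to produce $c$. One minor simplification: your $F''$-linearity argument for $S_0$ via the transported $F'$-action is more than you need---once $F'' \subset F'$ is established, $\omega'$, $\omega''$, and $\rho$ are all $F''$-linear and injective, so $\omega' \circ S_0 = \rho\,\omega''$ forces $S_0$ to be $F''$-linear directly (this is exactly the argument from the injectivity part of the proof of Theorem~\ref{th_inclusion}, which the paper cites).
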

\begin{proof} We write $S' = S_{F',\level}^{r'}$ and $S'' =
S_{F'',{\cal K}''}^{r''}$.

The implication (i) $\Rightarrow$ (ii) is trivial and (iii)
$\Rightarrow$ (i) follows from Proposition~\ref{prop:twoincl}.

For (ii) $\Rightarrow$ (iii) assume that $\incla(Y') \subset
\inclb(S'')$ for an irreducible component $Y'$ of
$S'$ over $\C$. By Lemma~\ref{lemma_end} 
there is a $p'=[(\overline{\omega'},h')] 
\in Y'(\C)$ with $\End(p') = F'$. Now let $\incla(p') = \inclb(p'')$
for a suitable $p''=[(\overline{\omega''},h'')] \in
S''(\C)$. Lemma~\ref{lemma:end1} and \ref{lemma:end2} yield
\[ F' = \End(p') = \End(\incla(p')) = \End(\inclb(p'')) = \End(p'')
\supset F''. \]

Because of $\incla(p') = \inclb(p'')$ we have
\[ [(\overline{\omega' \circ \phi_1},\phi_1^{-1} \circ h' \circ
b_1)] = [(\overline{\omega'' \circ \phi_2},\phi_2^{-1} \circ h'' \circ
b_2)] \]
for $F$-linear isomorphisms $\phi_1: F^r
\stackrel{\sim}{\rightarrow} {F'}^{r'}$ and $\phi_2: F^r
\stackrel{\sim}{\rightarrow} {F''}^{r''}$. Hence, 
there are $T \in \GL_r(F)$ and $k \in {\cal K}$ such that
\begin{enumerate}
\item
$\overline{\omega' \circ \phi_1} = \overline{\omega'' \circ \phi_2
  \circ T^{-1}}$,
\item
$\phi_1^{-1} \circ h' \circ b_1 = T(\phi_2^{-1} \circ h'' \circ b_2)k$. 
\end{enumerate}
Because of 1. and $F'' \subset F'$, one concludes as in the proof of 
Theorem~\ref{th_inclusion}
that the $F$-linear isomorphism $\psi := \phi_1 \circ T \circ 
\phi_2^{-1}: F''^{r''} \rightarrow F'^{r'}$ is $F''$-linear.

We set $c := b_1 \circ k^{-1} \circ b_2^{-1}: (\AZ_{F''}^f)^{r''}
\rightarrow (\AZ_{F'}^f)^{r'}$. By 2. this is equal to
\[ c = h'^{-1} \circ \phi_1 \circ T \circ \phi_2^{-1} \circ h'' 
= h'^{-1} \circ \psi \circ h''. \]
Since $\psi$ is $F''$-linear and $F'' \subset F'$ we conclude that $c$
is an $\AZ_{F''}^f$-linear isomorphism. Furthermore, we 
have $b_1 = c \circ b_2 \circ k$ by the definition of $c$,
which shows~(iii).
\end{proof}

\begin{korollar}\label{cor:sub_sub}
Let $X' = \inclz(S_{F'',\cal K''}^{r''})$ be a fixed Drinfeld modular
subvariety of~$S$. Then the assignment
\[ X \longmapsto \inclz(X) \]
is a bijection from the set of Drinfeld modular subvarieties of
$S_{F'',\cal K''}^{r''}$ to the set of Drinfeld modular subvarieties
of $S$ contained in $X'$.
\end{korollar}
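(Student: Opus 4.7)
My plan is to establish well-definedness, injectivity, and surjectivity of the assignment separately, in each case reducing to facts already at hand (Theorem~\ref{th_inclusion}, Proposition~\ref{prop:sub_contain}, and the formula~\eqref{eq:incl2} for inclusions on $\C$-valued points).

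For well-definedness, suppose $X = \iota_{F'',\,d}^{F'''}(S_{F''',\,\tilde{\cal K}}^{r'''})$ is a Drinfeld modular subvariety of $S_{F'',\,\cal K''}^{r''}$. Then $F'''/F$ is a finite extension with a unique place above $\infty$, and $d\circ b_2\colon(\AZ_F^f)^r\stackrel\sim\to(\AZ_{F'''}^f)^{r'''}$ is $\AZ_F^f$-linear. I will first check that the level attached to $(d\circ b_2)$ — namely $(d\circ b_2){\cal K}(d\circ b_2)^{-1}\cap\GL_{r'''}(\AZ_{F'''}^f)$ — coincides with $\tilde{\cal K}$; this follows from $\GL_{r'''}(\AZ_{F'''}^f)\subset d\GL_{r''}(\AZ_{F''}^f)d^{-1}$ (because $F''$-linear automorphisms become $F'''$-linear after conjugation). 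A direct computation on $\C$-valued points via~\eqref{eq:incl2} then shows $\inclz\circ\iota_{F'',\,d}^{F'''}=\iota_{F,\,d\circ b_2}^{F'''}$, so $\inclz(X)$ is itself the image of an inclusion morphism — hence a Drinfeld modular subvariety of $S$, and trivially contained in $X'=\inclz(S_{F'',\,\cal K''}^{r''})$.

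Injectivity is a soft point-set argument. Since $\inclz$ is injective on $\C$-valued points by Theorem~\ref{th_inclusion}, two reduced closed subvarieties $X_1,X_2\subset S_{F'',\,\cal K''}^{r''}$ with $\inclz(X_1)=\inclz(X_2)$ satisfy $X_1(\C)=X_2(\C)$; as our varieties are reduced over the algebraically closed field $\C$, this forces $X_1=X_2$ scheme-theoretically.

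The substantive step is surjectivity. Given a Drinfeld modular subvariety $Y\subset X'$, write $Y=\iota_{F,\,c}^{F_Y}(S_{F_Y,\,{\cal K}_Y}^{r_Y})$. I will apply Proposition~\ref{prop:sub_contain} to the inclusion $Y\subset X'=\inclz(S_{F'',\,\cal K''}^{r''})$: it yields $F''\subset F_Y$, an element $k\in\cal K$, and an $\AZ_{F''}^f$-linear isomorphism $d\colon(\AZ_{F''}^f)^{r''}\stackrel\sim\to(\AZ_{F_Y}^f)^{r_Y}$ with $c=d\circ b_2\circ k$. Then $F_Y/F''$ has a unique place above $\infty''$ (the unique place of $F_Y$ over $\infty$), so $(F_Y,d)$ defines a candidate inclusion $\iota_{F'',\,d}^{F_Y}$ into $S_{F'',\,\cal K''}^{r''}$, and I must verify its associated level equals ${\cal K}_Y$. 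The step where one has to be careful is the bookkeeping of level groups: using $k\in\cal K$ and $k{\cal K}k^{-1}=\cal K$, one sees $(d\circ b_2){\cal K}(d\circ b_2)^{-1}=c{\cal K}c^{-1}$, and intersecting with $\GL_{r_Y}(\AZ_{F_Y}^f)$ produces exactly ${\cal K}_Y$ on both sides (again using the containment $\GL_{r_Y}(\AZ_{F_Y}^f)\subset d\GL_{r''}(\AZ_{F''}^f)d^{-1}$). Setting $X:=\iota_{F'',\,d}^{F_Y}(S_{F_Y,\,{\cal K}_Y}^{r_Y})$, a further use of~\eqref{eq:incl2} (where the factor of $k\in\cal K$ is absorbed in the double coset) yields $\inclz(X)=Y$, completing surjectivity. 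The bulk of the work is therefore purely formal matching of levels and isomorphisms; the only nontrivial input is the criterion~\ref{prop:sub_contain}, and no new ideas beyond the explicit formula~\eqref{eq:incl2} are required.
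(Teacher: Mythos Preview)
Your proof is correct and follows essentially the same route as the paper: both use Proposition~\ref{prop:sub_contain} for surjectivity and the injectivity of $\inclz$ on $\C$-valued points for injectivity. The only cosmetic difference is that where the paper invokes Proposition~\ref{prop:twoincl} to identify $\inclz\circ\iota_{F'',\,d}^{F'''}$ with $\iota_{F,\,d\circ b_2}^{F'''}$ (and thereby handles the level-matching implicitly, since the proof of that proposition already records ${\cal K}' = (c{\cal K}''c^{-1})\cap\GL_{r'}(\AZ_{F'}^f)$), you unwrap this and verify both the composition and the equality of levels directly from~\eqref{eq:incl2}.
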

\begin{proof} Since $\inclz$ is injective on $\C$-valued points, it
is enough to show that
\begin{properties}
\item
$\inclz(X)$ is a Drinfeld modular subvariety of $S$ for each Drinfeld
modular subvariety $X$ of $S_{F'',\cal K''}^{r''}$,
\item
$(\inclz)^{-1}(X)$ is a Drinfeld modular subvariety of $S_{F'',\cal
  K''}^{r''}$ for every Drinfeld modular subvariety $X \subset X'$ of
$S$.
\end{properties}
For (i), let $X = \iota^{F'}_{F''\!,\,c}(S_{F',\level}^{r'})$ be a
Drinfeld modular subvariety of $S_{F'',\cal K''}^{r''}$. The 
map
\[ b := c \circ b': (\AZ_F^f)^r \rightarrow (\AZ_{F'}^f)^{r'} \]
is an $\AZ_F^f$-linear isomorphism, 
hence we can apply Proposition~\ref{prop:twoincl} 
to conclude that
\[ \inclz(X) = \inclz(\iota^{F'}_{F''\!,\,c}(S_{F',\level}^{r'})) =
\incl(S_{F',\level}^{r'}) \]
is a Drinfeld modular subvariety of $S_{F,\cal K}^r$.

For (ii), let $X = \incl(S_{F',\level}^{r'})$ be a Drinfeld modular
subvariety of $S$ which is contained in~$X'$. By 
Proposition~\ref{prop:sub_contain}, we have $F \subset F'' \subset F'$ and there
are an $\AZ_{F''}^f$-linear isomorphism $c: (\AZ_{F''}^f)^{r''}
\stackrel{\sim}{\rightarrow} (\AZ_{F'}^f)^{r'}$ and a $k \in {\cal K}$ such that
\[ b = c \circ b' \circ k. \]
By Proposition~\ref{prop:twoincl}, we have
\[ X = \incl(S_{F',\level}^{r'}) =
\inclz(\iota^{F'}_{F''\!,\,c}(S_{F',\level}^{r'})). \]
Since $\inclz$ is injective on $\C$-valued points, this implies that
$(\inclz)^{-1}(X) = \iota^{F'}_{F''\!,\,c}(S_{F',{\cal K'}}^{r'})$ is a Drinfeld
modular subvariety of $S_{F'',\cal K''}^{r''}$. 
\end{proof}

From Proposition~\ref{prop:sub_contain}, the following criterion for
equality of Drinfeld modular subvarieties follows:

\begin{korollar}\label{cor:sub_equal}
Let $X' = \incla(S_{F',\level}^{r'})$ and $X'' =
\inclb(S_{F'',{\cal K}''}^{r''})$ be two Drinfeld 
modular subvarieties of $S$. The following statements are equivalent:
\begin{properties}
\item
$X' = X''$.
\item
$X'$ and $X''$ have a common irreducible component over $\C$.
\item
$F' = F''$ (hence $r' = r''$) and there exist $s \in
\GL_{r'}(\AZ_{F'}^f)$ and $k \in {\cal K}$ such that $b_1
= s \circ b_2 \circ k$.
\end{properties}
In particular, each special subvariety of $S$ is an irreducible
component over $\C$ of a unique Drinfeld modular subvariety of $S$.\hfill $\qedsymbol$
\end{korollar}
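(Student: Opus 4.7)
My plan is to deduce Corollary~\ref{cor:sub_equal} directly from Proposition~\ref{prop:sub_contain}, which already characterises containment between Drinfeld modular subvarieties. The implication (i) $\Rightarrow$ (ii) is trivial, since if $X' = X''$ then any irreducible component of $X'$ over $\C$ is also an irreducible component of $X''$. For the implication (iii) $\Rightarrow$ (i), I would apply Proposition~\ref{prop:sub_contain} in each direction: the relation $b_1 = s \circ b_2 \circ k$ immediately gives $X' \subset X''$, and rewriting it as $b_2 = s^{-1} \circ b_1 \circ k^{-1}$, which still has the form required in part (iii) of that proposition (note that with $F' = F''$ the inverse $s^{-1}$ is automatically $\AZ_{F'}^f$-linear), yields the reverse inclusion $X'' \subset X'$.

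The substantive step is (ii) $\Rightarrow$ (iii). Assume $X'$ and $X''$ share an irreducible component $Y$ over $\C$. Then $Y$ is an irreducible component of $X'$ which is contained in $X''$, so (ii) $\Leftrightarrow$ (iii) of Proposition~\ref{prop:sub_contain} supplies $F'' \subset F'$ together with an $\AZ_{F''}^f$-linear isomorphism $c : (\AZ_{F''}^f)^{r''} \to (\AZ_{F'}^f)^{r'}$ and a $k \in {\cal K}$ with $b_1 = c \circ b_2 \circ k$. Reversing the roles of $X'$ and $X''$ and viewing the same component $Y$ as an irreducible component of $X''$ contained in $X'$ likewise yields $F' \subset F''$. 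Therefore $F' = F''$, so $r' = r''$, and the map $c$ becomes simply an element $s \in \GL_{r'}(\AZ_{F'}^f)$ satisfying $b_1 = s \circ b_2 \circ k$. No real obstacle arises here; the argument is essentially bookkeeping once Proposition~\ref{prop:sub_contain} is in hand.

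The concluding assertion about special subvarieties follows immediately from the equivalence (i) $\Leftrightarrow$ (ii): if a special subvariety $V$ of $S$ were an irreducible component over $\C$ of two Drinfeld modular subvarieties $X_1$ and $X_2$ of $S$, then $X_1$ and $X_2$ would share $V$ as a common irreducible component, forcing $X_1 = X_2$.
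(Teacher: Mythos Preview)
Your proposal is correct and takes essentially the same approach as the paper, which simply states that the corollary follows from Proposition~\ref{prop:sub_contain} and gives no further details. Your write-up supplies exactly the bookkeeping the paper omits.
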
 

\begin{korollar}
For a Drinfeld modular subvariety $X' \subset S$ there is a unique 
extension $F' \subset \C$ of $F$ and a unique conjugacy class $C$ of
compact open subgroups of $\GL_{r'}(\AZ_{F'}^f)$ with 
$r' = r / [F' / F]$ such that $F'' = F'$ and ${\cal K''} \in C$ for all
inclusions $\iota_{F,\,c}^{F''}: S_{F'',{\cal K''}}^{r''}
\rightarrow S$ with image $X'$.
\end{korollar}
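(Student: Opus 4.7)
The plan is to deduce this uniqueness statement directly from Corollary \ref{cor:sub_equal}, which already gives a sharp criterion for two Drinfeld modular subvarieties to coincide. Let me sketch how.

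Suppose we are given two inclusions $\iota_{F,\,c_1}^{F_1''}: S_{F_1'',{\cal K}_1''}^{r_1''} \rightarrow S$ and $\iota_{F,\,c_2}^{F_2''}: S_{F_2'',{\cal K}_2''}^{r_2''} \rightarrow S$ both having image equal to $X'$. Applying Corollary \ref{cor:sub_equal} to these two descriptions of $X'$, we obtain first that $F_1'' = F_2''$ (and hence $r_1'' = r_2''$); call this common field $F'$, so that $r' := r/[F':F]$ is also determined. This already establishes the uniqueness of the extension $F'$.

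Second, the same corollary provides elements $s \in \GL_{r'}(\AZ_{F'}^f)$ and $k \in {\cal K}$ such that $c_1 = s \circ c_2 \circ k$. The plan is now simply to plug this relation into the defining formula ${\cal K}_i'' = (c_i\, {\cal K}\, c_i^{-1}) \cap \GL_{r'}(\AZ_{F'}^f)$ from the construction preceding Theorem \ref{th_inclusion}. Using $k \in {\cal K}$ (so $k\,{\cal K}\,k^{-1} = {\cal K}$), this gives
\[ {\cal K}_1'' = (s\, c_2\, {\cal K}\, c_2^{-1}\, s^{-1}) \cap \GL_{r'}(\AZ_{F'}^f). \]
Since $s \in \GL_{r'}(\AZ_{F'}^f)$, conjugation by $s$ preserves the subgroup $\GL_{r'}(\AZ_{F'}^f)$ and commutes with intersection against it, so the right hand side equals $s \cdot {\cal K}_2'' \cdot s^{-1}$. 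Hence ${\cal K}_1''$ and ${\cal K}_2''$ lie in a common conjugacy class in $\GL_{r'}(\AZ_{F'}^f)$, and one defines $C$ to be this conjugacy class.

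There is essentially no obstacle here: the whole statement is a bookkeeping consequence of the classification of Drinfeld modular subvarieties given in Corollary \ref{cor:sub_equal}, together with the fact that $s \in \GL_{r'}(\AZ_{F'}^f)$ conjugates $\GL_{r'}(\AZ_{F'}^f)$ to itself. The only point worth being careful about is that the conjugating element arising from Corollary \ref{cor:sub_equal} really lies in $\GL_{r'}(\AZ_{F'}^f)$ (rather than just in $\GL_{r'}(\AZ_F^f)$), so that conjugation by it is an inner automorphism of $\GL_{r'}(\AZ_{F'}^f)$; this is guaranteed directly by the statement of that corollary.
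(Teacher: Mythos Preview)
Your proof is correct and follows essentially the same approach as the paper: both apply Corollary~\ref{cor:sub_equal} to obtain $F_1''=F_2''$ and the relation $c_1 = s \circ c_2 \circ k$ with $s \in \GL_{r'}(\AZ_{F'}^f)$, $k \in {\cal K}$, and then deduce that ${\cal K}_1'' = s\,{\cal K}_2''\,s^{-1}$. Your write-up is somewhat more explicit about why conjugation by $s$ commutes with the intersection against $\GL_{r'}(\AZ_{F'}^f)$, but the argument is the same.
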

\begin{proof} By definition, $X'$ is the image of some inclusion
$\incl: S_{F',\level}^{r'} \rightarrow S$. For any other
inclusion $\iota_{F,\,c}^{F''}: S_{F'',{\cal K''}}^{r''} 
\rightarrow S$ with image $X'$, Corollary~\ref{cor:sub_equal} implies that
$F''=F'$ and $b = s \circ c \circ k$ for
suitable $s \in \GL_{r'}(\AZ_{F'}^f)$ and $k \in {\cal K}$. The latter
implies $\level = s{\cal K}''s^{-1}$, i.e., ${\cal K''}$ lies in the
conjugacy class of $\level$ in $\GL_{r'}(\AZ_{F'}^f)$.  
\end{proof}

The preceding corollary allows us to make the following definition:
\begin{definition} \label{def:reflex_index}
For a Drinfeld modular subvariety $X' = \incl(S_{F',\level}^{r'})$ of
$S$, the extension $F' \subset \C$ of $F$ is called the
\emph{reflex field} of $X'$ and the index of $\level$ in a maximal
compact subgroup of $\GL_{r'}(\AZ_{F'}^f)$ is called the \emph{index} of
$X'$ and is denoted by~$i(X')$. Furthermore, the product
\[ D(X') := |\mathrm{Cl}(F')| \cdot i(X'), \]
where $\mathrm{Cl}(F')$ denotes the class group of $A' \subset F'$, is
called the \emph{predegree} of $X'$.
\end{definition}
By Corollary~\ref{cor:sub_equal}, each special subvariety of $S$ is an
irreducible component of a unique Drinfeld modular subvariety of
$S$. This allows us to define the reflex field of a special
subvariety:

\begin{definition}
For a special subvariety~$V$ of $S$ which is an irreducible component
of a Drinfeld modular subvariety $X'$ of $S$, the reflex field of $V$
is defined to be the reflex field of $X'$. 
\end{definition}

If ${\cal K} = \GL_r(\hat{A})$, Corollary~\ref{cor:sub_equal}
immediately implies the following characterization of the set of
Drinfeld modular subvarieties of $S$ with a given reflex field $F'$:
\begin{korollar} \label{prop:sub_lattice}
Assume that $S = S_{F,\cal K}^r$ with ${\cal K} = \GL_r(\hat{A})$ and let
$F' \subset \C$ be an extension of $F$ of degree $r / r'$ for some
integer $r' \geq 1$ with only one place $\infty'$ lying
over~$\infty$. Then the set of Drinfeld
modular subvarieties of $S$ with reflex field~$F'$ is in bijective 
correspondence with the set of orbits of the action of
$\GL_{r'}(\AZ_{F'}^f)$ on the set of free $\hat{A}$-submodules of rank $r$ of
$(\AZ_{F'}^f)^{r'}$ via the assignment
\[ \hspace{147pt}\incl(S') \longmapsto \GL_{r'}(\AZ_{F'}^f) \cdot 
b(\hat{A}^r).\hspace{147pt} \qed  \]
\end{korollar}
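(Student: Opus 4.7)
The plan is to use Corollary~\ref{cor:sub_equal} to match Drinfeld modular subvarieties with reflex field~$F'$ to $(\GL_{r'}(\AZ_{F'}^f) \times {\cal K})$-orbits of $\AZ_F^f$-linear isomorphisms $b: (\AZ_F^f)^r \to (\AZ_{F'}^f)^{r'}$, and then identify these orbits with $\GL_{r'}(\AZ_{F'}^f)$-orbits of free $\hat{A}$-submodules of rank~$r$.

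First, I will check that the assignment is well defined. If $X' = \incla(S_{F',{\cal K}'_1}^{r'}) = \inclb(S_{F',{\cal K}'_2}^{r'})$ are two presentations of the same Drinfeld modular subvariety with reflex field~$F'$, Corollary~\ref{cor:sub_equal} yields $s \in \GL_{r'}(\AZ_{F'}^f)$ and $k \in {\cal K} = \GL_r(\hat{A})$ with $b_1 = s \circ b_2 \circ k$. Since $k(\hat{A}^r) = \hat{A}^r$, applying both sides to $\hat{A}^r$ gives $b_1(\hat{A}^r) = s \cdot b_2(\hat{A}^r)$, so the $\GL_{r'}(\AZ_{F'}^f)$-orbit of $b(\hat{A}^r)$ does not depend on the chosen presentation.

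Next, I will establish injectivity: if $X' = \incla(S_{F',{\cal K}'_1}^{r'})$ and $X'' = \inclb(S_{F',{\cal K}'_2}^{r'})$ satisfy $\GL_{r'}(\AZ_{F'}^f) \cdot b_1(\hat{A}^r) = \GL_{r'}(\AZ_{F'}^f) \cdot b_2(\hat{A}^r)$, then there is an $s \in \GL_{r'}(\AZ_{F'}^f)$ with $b_1(\hat{A}^r) = s \cdot b_2(\hat{A}^r)$. The element $k := b_2^{-1} \circ s^{-1} \circ b_1 \in \GL_r(\AZ_F^f)$ then preserves the lattice $\hat{A}^r$, and is therefore in $\GL_r(\hat{A}) = {\cal K}$. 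Since $b_1 = s \circ b_2 \circ k$ with $s$ and $k$ of the required form, Corollary~\ref{cor:sub_equal} yields $X' = X''$.

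Finally, for surjectivity, given a free $\hat{A}$-submodule $\Lambda$ of rank~$r$ of $(\AZ_{F'}^f)^{r'}$ (which, to be in the image of the assignment $b \mapsto b(\hat{A}^r)$, must also $\AZ_F^f$-generate $(\AZ_{F'}^f)^{r'}$, as $(\AZ_{F'}^f)^{r'}$ has $\AZ_F^f$-rank $r'[F':F] = r$), I would pick an $\hat{A}$-basis $e_1, \dots, e_r$ of $\Lambda$ and define $b: (\AZ_F^f)^r \to (\AZ_{F'}^f)^{r'}$ to be the $\AZ_F^f$-linear extension sending the standard basis vectors to $e_1,\ldots,e_r$. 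Then $b$ is an $\AZ_F^f$-linear isomorphism satisfying $b(\hat{A}^r) = \Lambda$, so $\incl(S_{F',{\cal K}'}^{r'})$ is a Drinfeld modular subvariety of $S$ with reflex field~$F'$ mapping to the orbit of $\Lambda$. The only mildly subtle point is this spanning remark, which is implicit in the phrase ``free $\hat{A}$-submodule of rank~$r$'' as used in the statement; all other steps are direct applications of Corollary~\ref{cor:sub_equal}.
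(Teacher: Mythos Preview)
Your proof is correct and follows exactly the route the paper intends: the statement is presented there as an immediate consequence of Corollary~\ref{cor:sub_equal} (the proof in the paper is literally a \qedsymbol), and you have simply unpacked the well-definedness, injectivity, and surjectivity that this entails when ${\cal K} = \GL_r(\hat{A})$. Your ``mildly subtle'' spanning remark is in fact automatic: since $\hat{A} = \prod_{\pp} A_{\pp}$, a free $\hat{A}$-submodule of rank~$r$ in $(\AZ_{F'}^f)^{r'}$ decomposes as a product of free $A_{\pp}$-submodules of rank~$r$ in $F_{\pp}^r$, each of which necessarily spans over the field~$F_{\pp}$.
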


\begin{satz} \label{prop:GalFDmsv}
The natural action of the absolute Galois group $\Gal(F^{\sep} / F)$
on the set of subvarieties of $S = S_{F,\cal K}^r$ which are defined 
over~$\overline{F}$ restricts
to an action on the set of Drinfeld modular subvarieties of $S$. For
$\sigma \in \Gal(F^{\sep} / F)$ and a Drinfeld modular subvariety 
$X = \incl(S_{F',{\cal K}'}^{r'})$, the
Galois conjugate $\sigma(X)$ is given by 
$\iota_{F,\,\sigma \circ b}^{\sigma(F')}(S_{\sigma(F'),\sigma \circ
  {\cal K}' \circ \sigma^{-1}}^{r'})$.
\end{satz}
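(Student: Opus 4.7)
My plan is to first observe that a Drinfeld modular subvariety $X = \iota_{F,\,b}^{F'}(S_{F',{\cal K}'}^{r'})$ is defined over $F' \subset \overline{F}$, since the inclusion morphism is defined over $F'$ by Theorem~\ref{th_inclusion}. Hence the absolute Galois group $\Gal(F^{\sep}/F) = \Aut_F(\overline{F})$ naturally acts on $X$, and the first assertion of the proposition will follow at once from the explicit formula for $\sigma(X)$, whose right-hand side is itself a Drinfeld modular subvariety.

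The main step is to establish commutativity of the diagram
\[ \xymatrix{
S_{F',{\cal K}'}^{r'} \ar[r]^-{\sigma_*}_-{\sim} \ar[d]_{\iota_{F,\,b}^{F'}} & S_{\sigma(F'),\,\sigma{\cal K}'\sigma^{-1}}^{r'} \ar[d]^{\iota_{F,\,\sigma \circ b}^{\sigma(F')}} \\
S \ar[r]_-{\sim}^-{\sigma} & S
} \]
in which the horizontal arrows are the $\sigma$-semilinear isomorphisms coming from the restrictions $\sigma|_{F'}: F' \stackrel{\sim}{\to} \sigma(F')$ and $\sigma|_F = \mathrm{id}$. Once this square commutes, the desired formula is immediate: $\sigma(X) = \sigma(\iota_{F,\,b}^{F'}(S_{F',{\cal K}'}^{r'})) = \iota_{F,\,\sigma \circ b}^{\sigma(F')}(S_{\sigma(F'),\,\sigma {\cal K}' \sigma^{-1}}^{r'})$.

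To verify the commutativity, I would reduce to the principal-congruence case of step~(i) of the proof of Theorem~\ref{th_inclusion}: there $\iota_{F,\,b}^{F'}$ has the explicit modular description~\eqref{eq:incl_modint} of restricting a Drinfeld $A'$-module $({\cal L},\psi)$ with $I'$-level structure $\alpha$ to the Drinfeld $A$-module $({\cal L},\psi|_A)$ with $I$-level structure $\alpha \circ b$, where $I' = IA'$. Applying $\sigma$ sends $({\cal L},\psi,\alpha)$ to $({\cal L}^\sigma,\psi^\sigma,\alpha^\sigma)$, a Drinfeld $\sigma(A')$-module with $\sigma(I')$-level structure; since $\sigma$ fixes $A$ and $I$, restriction to $A$ commutes with $\sigma$, so $(\psi|_A)^\sigma = \psi^\sigma|_A$, and both paths around the square yield $({\cal L}^\sigma, \psi^\sigma|_A, \alpha^\sigma \circ b)$. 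For general ${\cal K}$ and $b$, step~(ii) of the proof of Theorem~\ref{th_inclusion} builds $\iota_{F,\,b}^{F'}$ by composing a principal-congruence inclusion with projection morphisms $\pi_g$; since the $\pi_g$ are defined over $F$ by Theorem~\ref{th:proj} and therefore $\sigma$-equivariant, the general case follows from the principal-congruence case.

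The main obstacle is essentially bookkeeping: one must carefully track how $\sigma$ acts on all the auxiliary data (the ideal $I' = IA'$, the subgroup ${\cal K}' = (b{\cal K}b^{-1}) \cap \GL_{r'}(\AZ_{F'}^f)$, and the factorization used to define $\iota_{F,\,b}^{F'}$ in the general case) and verify that all the identifications are compatible. Once this is done, the statement is essentially the functoriality of the moduli-theoretic construction under Galois conjugation.
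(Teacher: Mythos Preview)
Your proposal is correct and follows essentially the same approach as the paper: reduce to the principal-congruence case where the inclusion has the explicit modular description~\eqref{eq:incl_modint}, verify the commutative square by checking that restriction of Drinfeld modules from $A'$ to $A$ commutes with the Galois twist (since $\sigma$ fixes $A$), and then pass to the general case using that the projection morphisms $\pi_g$ are defined over $F$. The paper carries this out on $\overline{F}$-valued points and invokes Zariski density to conclude about subvarieties, which is the concrete way to make your diagram rigorous; otherwise the two arguments are the same.
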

\textbf{Remark:} In the above formula for the Galois conjugate
$\sigma(X)$, the $\AZ_F^f$-linear isomorphism 
$(\AZ_{F'}^f)^{r'} \stackrel{\sim}{\rightarrow}
(\AZ_{\sigma(F')}^f)^{r'}$ obtained by tensoring $\sigma: F' 
\stackrel{\sim}{\rightarrow} \sigma(F')$ with $(\AZ_F^f)^{r'}$ over $F$ is also
denoted by $\sigma$.

\begin{proof}
As explained in Subsection~\ref{sec:conv}, we identify $\Gal(F^{\sep} / F)$ with
$\Aut_F(\overline{F})$ via the unique extension of the elements of
$\Gal(F^{\sep} / F)$ to $\overline{F}$.

\textbf{Case (i):} We first consider the case where $S = S_{F,{\cal
    K}(I)}^r$ for a proper ideal~$I$ of $A$ and $X = \incl(S_{F',{\cal
    K}'}^{r'})$ for an inclusion morphism $\incl$ associated to a
datum $(F',\,b)$ satisfying $b(\widehat{A}^r) = \widehat{A'}^{r'}$
with $A'$ the integral closure of $A'$ in $F'$. As
explained in the proof of Theorem~\ref{th_inclusion}, in
this case we have ${\cal K}' = {\cal K}(I')$ with $I' = I A'$
and $\incl$ is defined by the morphism~\eqref{eq:incl_modint} of functors from ${\cal
  F}_{F',I'}^{r'}$ to ${\cal
  F}_{F,I}^r$ (restricted to the subcategory of $F'$-schemes) using
the modular interpretation of $S_{F',{\cal K}(I')}^{r'}$ and $S_{F,{\cal
  K}(I)}^r$.

Note that, for any Drinfeld $A'$-module $\phi: A' \rightarrow
\overline{F}\{\tau\}$ over $\overline{F}$,
\[ \phi^{\sigma}: \begin{array}{ccc} 
\sigma(A') & \longrightarrow & \overline{F}\{\tau\} \\
\sigma(a') & \longmapsto & (\phi_{a'})^{\sigma} \end{array}, \]
where $(\phi_{a'})^{\sigma}$ is obtained from $\phi_{a'}$ by applying
$\sigma$ to its coefficients, is a Drinfeld $\sigma(A')$-module over
$\overline{F}$.  Furthermore, for any $I'$-level structure
$\alpha: (I'^{-1} / A')^{r'} \stackrel{\sim}{\rightarrow} \phi_{I'} \subset
{\overline{F}}$ on $\phi$, the composition
\[ (\sigma(I')^{-1} / \sigma(A'))^{r'} \stackrel{\sigma^{-1}}{\longrightarrow}
(I'^{-1} / A')^{r'} \stackrel{\alpha}{\longrightarrow} \phi_{I'}
\stackrel{\sigma}{\longrightarrow} (\phi^{\sigma})_{\sigma(I')} \]
is an $\sigma(I')$-level structure on $\phi^{\sigma}$. Using the
modular interpretation of $S_{F',{\cal K}(I')}^{r'}$ and
$S_{\sigma(F'),{\cal K}(\sigma(I'))}^{r'}$, the assignment
\[ (\phi,\,\alpha) \longmapsto (\phi^{\sigma},\,\sigma \circ \alpha
\circ \sigma^{-1}) \]
defines a map $g_{\sigma}: S_{F',{\cal K}(I')}^{r'}(\overline{F})
\rightarrow S_{\sigma(F'),{\cal
    K}(\sigma(I'))}^{r'}(\overline{F})$. By construction, the map
$g_{\sigma}$ is bijective with inverse $g_{\sigma^{-1}}$.

Note that we have $(\sigma \circ b)(\hat{A}^r) =
\widehat{\sigma(A')}^{r'}$. Hence
the datum~$(\sigma(F'), \sigma \circ b)$ defines an inclusion map
\[ \iota^{\sigma(F')}_{F,\,\sigma \circ b}: S_{\sigma(F'),{\cal
      K}(\sigma(I'))}^{r'} \longrightarrow S_{F,{\cal K}(I)}^r, \]
which is defined by a morphism of functors from ${\cal
  F}_{\sigma(F'),\sigma(I')}^{r'}$ to ${\cal
  F}_{F,I}^r$ (restricted to the subcategory of $\sigma(F')$-schemes).
A straightforward verification shows that the diagram
\[ \xymatrix{S_{F',{\cal K}(I')}^{r'}(\overline{F}) \ar[r]^{\incl}
  \ar[d]^{g_{\sigma}} & S_{F,{\cal K}(I)}^r(\overline{F})
  \ar[d]^{\sigma} \\ S_{\sigma(F'),{\cal
      K}(\sigma(I'))}^{r'}(\overline{F})
  \ar[r]^{\ \ \iota^{\sigma(F')}_{F,\,\sigma \circ b}} & S_{F,{\cal
        K}(I)}^r(\overline{F}) } \]
commutes, where the right vertical map is given by the natural action of
$\sigma$ on the closed points of $S_{F,{\cal K}(I)}^r$ defined over
$\overline{F}$.

Since, for any subvariety $Y \subset S$ defined over $\overline{F}$, 
the set $Y(\overline{F})$ of $\overline{F}$-valued points (viewed as a
subset of the closed points of $Y \subset S$) is Zariski dense in~$Y$
(see, e.g., Corollary AG. 13.3 in~\cite{Bo}), the commutativity of the
above diagram implies that
\[ \sigma(X) = \iota^{\sigma(F')}_{F,\,\sigma \circ
  b}(g_{\sigma}(S_{F',{\cal K}(I')}^{r'})) = 
  \iota^{\sigma(F')}_{F,\,\sigma \circ b}(S_{\sigma(F'),{\cal
      K}(\sigma(I'))}^{r'}). \]
for $X = \incl(S_{F',{\cal K}(I')}^{r'})$. Hence, $\sigma(X)$ is a Drinfeld
modular subvariety of $S$ and it is of the desired form because of $\sigma \circ {\cal K}' \circ
\sigma^{-1} = {\cal K}(\sigma(I'))$.

\textbf{Case (ii):} For a general $X = \incl(S_{F',{\cal K}'}^{r'}) \subset
S_{F,\cal K}^r$, by the construction in the proof of
Theorem~\ref{th_inclusion}, there is
\begin{itemize} 
\item a $g' \in \GL_{r'}(\AZ_{F'}^f)$ with $g'{\cal K}'g'^{-1} \subset
  \GL_{r'}(\widehat{A'})$,
\item
an $\AZ_F^f$-linear isomorphism $b': (\AZ_F^f)^r
\stackrel{\sim}{\rightarrow} (\AZ_{F'}^f)^{r'}$ with
$b'(\widehat{A}^r) = \widehat{A'}^{r'}$,
\item a proper ideal $I$ of $A$ with ${\cal K}(I) \subset g^{-1}{\cal K}g$,
  where $g := b^{-1} \circ
g'^{-1} \circ b' \in \GL_r(\AZ_F^f)$
\end{itemize}
such that the diagram
\[ \xymatrix{ S_{F',{\cal K}(I')}^{r'}
  \ar[r]^{\iota_{F,\,b'}^{F'}} \ar[d]^{\pi_{g'^{-1}}} & S_{F,{\cal
      K}(I)}^r \ar[d]^{\pi_g} \\
  S_{F',{\cal K}'}^{r'} \ar[r]^{\incl} & S_{F,\cal K}^r } \]
with $I':= IA'$ commutes 
where $\pi_g$ and $\pi_{g'^{-1}}$ are surjective and defined over
$F$. This implies together with case (i)
\begin{eqnarray*}
 \sigma(X) & = & \sigma(\incl(\pi_{g'^{-1}}(S_{F',{\cal K}(I')}^{r'}))) 
= \sigma(\pi_g(\iota_{F,\,b'}^{F'}(S_{F',{\cal K}(I')}^{r'})))\\
& = & \pi_g(\sigma(\iota_{F,\,b'}^{F'}(S_{F',{\cal K}(I')}^{r'}))) =
\pi_g(\iota_{F,\,\sigma \circ b'}^{\sigma(F')}(S_{\sigma(F'),{\cal K}(\sigma(I'))}^{r'})).
\end{eqnarray*}
By a similar commutative diagram, this is equal to
\[ \iota_{F,\,\sigma \circ b}^{\sigma(F')}(S_{\sigma(F'),\sigma \circ
  {\cal K}' \circ \sigma^{-1}}^{r'}), \]
hence a Drinfeld modular subvariety of $S$ of the desired form.
\end{proof}

\subsection{Determinant map and irreducible components}
For a general Drinfeld modular variety $S^r_{F,\cal K}$, we denote by
$\det {\cal K} \subset (\AZ_F^f)^*$ the image of ${\cal K} \subset
\GL_r(\AZ_F^f)$ under the determinant map. Since the determinant map
is a group homomorphism and maps principal congruence subgroups 
of $\GL_r(\AZ_F^f)$ to principal congruence subgroups
of~$(\AZ_F^f)^*$, the subgroup $\det {\cal K} \subset (\AZ_F^f)^*$ is open
and compact.

\begin{definition}
The map $S_{F,\cal K}^r(\C) \rightarrow S_{F,\det {\cal K}}^1(\C)$
given by
\begin{eqnarray*} 
   \GL_r(F) \setminus (\Omega_F^r \times \GL_r(\AZ_F^f)\, /\, \cal K) &
   \longrightarrow & F^* \setminus (\AZ_F^f)^* \, / \,\det {\cal K} \\ 
   \left[(\overline{\omega},h)\right] & \longmapsto & [\det h]. 
\end{eqnarray*}
is called \emph{determinant map} and is denoted by $\det$.
\end{definition}
\textbf{Remark:} The determinant map can be described in terms of the
modular interpretation, using the construction of exterior powers of
Drinfeld modules in~\cite[Theorem 3.3]{vaHe}. We refrain from doing so
because we do not need that.


\begin{satz} \label{prop:detsurjective}
The determinant map is surjective and its fibers are exactly the
irreducible components of $S_{F,\cal K}^r(\C)$. 
\end{satz}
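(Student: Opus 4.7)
The plan is to reduce the proposition to a bijectivity assertion about the induced map on adelic double cosets. By Proposition~\ref{theorem_irrcomp}, the set of irreducible components of $S_{F,\cal K}^r$ over $\C$ is in bijection with $\GL_r(F) \setminus \GL_r(\AZ_F^f) / {\cal K}$, and two points $[(\overline{\omega_1}, h_1)]$ and $[(\overline{\omega_2}, h_2)]$ of $S_{F,\cal K}^r(\C)$ lie in the same component if and only if $[h_1]=[h_2]$ there. By Subsection~\ref{sec:rank1} the target $S_{F,\det {\cal K}}^1(\C)$ is identified with $F^* \setminus (\AZ_F^f)^* / \det {\cal K}$. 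Since $\det(T h k) = \det T \cdot \det h \cdot \det k$ with $\det T \in F^*$ and $\det k \in \det {\cal K}$, the determinant map factors through the well-defined map
\[
\overline{\det}\colon\ \GL_r(F) \setminus \GL_r(\AZ_F^f) / {\cal K}\ \longrightarrow\ F^* \setminus (\AZ_F^f)^* / \det {\cal K},\quad [h] \mapsto [\det h],
\]
and the proposition reduces to showing that $\overline{\det}$ is a bijection.

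Surjectivity of $\overline{\det}$ is immediate: for any $x \in (\AZ_F^f)^*$, the diagonal matrix $\mathrm{diag}(x, 1, \ldots, 1) \in \GL_r(\AZ_F^f)$ has determinant $x$, and this simultaneously establishes surjectivity of $\det$ on $\C$-valued points.

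The main work is injectivity. Suppose $\det h_1 = T \cdot \det h_2 \cdot d$ with $T \in F^*$ and $d \in \det {\cal K}$. After choosing $k_0 \in {\cal K}$ with $\det k_0 = d$ and replacing $h_2$ by $\mathrm{diag}(T,1,\ldots,1) \cdot h_2 \cdot k_0$, which does not change its class modulo $\GL_r(F)$ on the left and ${\cal K}$ on the right, I may assume $\det h_1 = \det h_2$, so that $s := h_1 h_2^{-1}$ lies in $\SL_r(\AZ_F^f)$. The crucial input is strong approximation for the simply connected group $\SL_r$ with respect to the place~$\infty$, which is valid because $\SL_r(F_\infty)$ is non-compact and asserts that $\SL_r(F)$ is dense in $\SL_r(\AZ_F^f)$. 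Applying this to the open neighborhood $U := (h_2 {\cal K} h_2^{-1}) \cap \SL_r(\AZ_F^f)$ of the identity yields a decomposition $s = \gamma \cdot h_2 k h_2^{-1}$ with $\gamma \in \SL_r(F) \subset \GL_r(F)$ and $k \in {\cal K}$, hence $h_1 = \gamma h_2 k$ and $[h_1] = [h_2]$. Combined with Proposition~\ref{theorem_irrcomp}, the bijectivity of $\overline{\det}$ identifies the fibers of $\det$ with the irreducible components of $S_{F,\cal K}^r(\C)$. The single non-formal ingredient, and the main obstacle, is the appeal to strong approximation for $\SL_r$ over the function field $F$; the remainder of the argument is routine bookkeeping with adelic double cosets.
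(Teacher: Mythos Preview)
Your proof is correct and follows essentially the same approach as the paper: both reduce to showing that $\overline{\det}\colon \GL_r(F)\backslash \GL_r(\AZ_F^f)/{\cal K}\to F^*\backslash (\AZ_F^f)^*/\det{\cal K}$ is a bijection, dispatch surjectivity via diagonal matrices, and prove injectivity by adjusting representatives so that the quotient lies in $\SL_r(\AZ_F^f)$ and then invoking strong approximation for $\SL_r$ over $F$ (with respect to the place~$\infty$) applied to the open subgroup $(h{\cal K}h^{-1})\cap \SL_r(\AZ_F^f)$. The only cosmetic difference is that the paper works with $Tgkh^{-1}\in\SL_r(\AZ_F^f)$ directly rather than first normalizing $h_2$, but the substance is identical.
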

\begin{proof}
The surjectivity is immediate because $\det: \GL_r(\AZ_F^f)
\rightarrow (\AZ_F^f)^*$ is surjective.

We know by Proposition~\ref{theorem_irrcomp} that the irreducible
components of $S_{F,\cal K}^r(\C)$ are in bijective correspondence
with the double coset space $\GL_r(F) \setminus \GL_r(\AZ_F^f) / {\cal
  K}$. A point $[(\overline{\omega},h)] \in S_{F,\cal K}^r(\C)$ lies
in the irreducible component corresponding to a double coset $[g] \in 
\GL_r(F) \setminus \GL_r(\AZ_F^f) / {\cal K}$ if and only if $h \in
[g]$. 

We show that, for every $g \in \GL_r(\AZ_F^f)$, the
fiber of $[\det g] \in S_{F,\det {\cal K}}^1(\C)$ is equal to the
irreducible component corresponding to $[g] \in 
\GL_r(F) \setminus \GL_r(\AZ_F^f) / {\cal K}$. By the above remarks,
this is equivalent to
\[ h \in \GL_r(F) \cdot g \cdot {\cal K}  \Longleftrightarrow 
\det h \in F^* \cdot (\det g) \cdot (\det {\cal K})  \]
for all $h \in \GL_r(\AZ_F^f)$.

If $h \in \GL_r(F) \cdot g \cdot {\cal K}$, then we have $\det h \in
F^* \cdot (\det g) \cdot (\det {\cal K})$ by the multiplicativity of
the determinant. Conversely, assume that $\det h \in
F^* \cdot (\det g) \cdot (\det {\cal K})$. Then there are $T \in
\GL_r(F)$ and $k \in {\cal K}$ such that
\[ \det h = \det (T \cdot g \cdot k), \]
hence $Tgkh^{-1} \in \SL_r(\AZ_F^f)$. By the strong approximation
theorem~\cite{Pr} for semi-simple simply connected groups over function fields,
$\SL_r(F)$ is dense in $\SL_r(\AZ_F^f)$. Since $h{\cal K}h^{-1}$ is an
open subgroup of $\GL_r(\AZ_F^f)$, we therefore have 
\[ \SL_r(\AZ_F^f) = \SL_r(F) \cdot ((h{\cal K}h^{-1}) \cap
\SL_r(\AZ_F^f)). \]
So there are $T' \in \SL_r(F)$ and $k' \in {\cal
  K} \cap \SL_r(\AZ_F^f)$ such that $Tgkh^{-1} = T'hk'h^{-1}$. This implies
\[ h = T'^{-1}Tgkk'^{-1} \in \GL_r(F) \cdot g \cdot {\cal
  K}. \qedhere \]
\end{proof}

By Proposition~\ref{prop:detsurjective}, the determinant map induces a
bijection
\[ 
{\det}_*: \pi_0(S_{F,\cal K}^r) \stackrel{\sim}{\longrightarrow}
S_{F,\det{\cal K}}^1
\]
between the set $\pi_0(S_{F,\cal K}^r)$ of irreducible components 
of $S_{F,\cal K}^r$ over $\C$ and the set $S_{F,\det{\cal K}}^1$ 
(we identify the latter set with $S_{F,\det {\cal
    K}}^1(\C)$ as explained in Subsection~\ref{sec:rank1}). We now
consider the natural action 
of the absolute Galois group $G_F :=
\Gal(F^{\sep} / F)$ on these two sets.

\begin{satz} \label{prop:detequivariant}
The bijection $\det_*$ is $G_F$-equivariant.
\end{satz}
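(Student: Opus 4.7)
The plan is to exhibit the map $\det$ on $\C$-points as induced by a morphism of $F$-varieties $S_{F,\cal K}^r \to S_{F,\det\cal K}^1$; once this is done, $G_F$-equivariance of $\det_*$ on $\pi_0$ is automatic. Indeed, by Proposition~\ref{prop:Firred} the irreducible components of $S_{F,\cal K}^r$ are defined over $F^{\sep}$, and both $G_F$-actions (on $\pi_0$ of the source and on closed points of the target) arise solely from the underlying $F$-scheme structures, so any $F$-morphism automatically intertwines them.

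First I would reduce to the principal congruence case. By step (v) of the proof of Theorem~\ref{th:Dmodulischeme}, some conjugate $g\cal K g^{-1}$ lies in $\GL_r(\hat{A})$, and the resulting $F$-isomorphism $S_{F,\cal K}^r \cong S_{F,g\cal K g^{-1}}^r$, combined with the $F$-morphism $\pi_{\det g}: S_{F,\det\cal K}^1 \to S_{F,\det\cal K}^1$ on the rank-$1$ side, reduces the question to $\cal K \subset \GL_r(\hat{A})$. Then, for any $\cal K(I) \subset \cal K$, the canonical projections $\pi_1: S_{F,\cal K(I)}^r \to S_{F,\cal K}^r$ and $\pi_1: S_{F,\det\cal K(I)}^1 \to S_{F,\det\cal K}^1$ are surjective $F$-morphisms (Theorem~\ref{th:proj}) which clearly commute with the two determinant maps on $\C$-points via the formula $[(\omega,h)] \mapsto [\det h]$; since these vertical arrows are $G_F$-equivariant, equivariance at level $\cal K(I)$ implies it at level $\cal K$.

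For $\cal K = \cal K(I) \subset \GL_r(\hat{A})$, I would construct $\det$ as an $F$-morphism via the moduli interpretation. Taking the $r$-th exterior power of a Drinfeld $A$-module of rank $r$ with $I$-level structure in the sense of van der Heiden~\cite{vaHe} produces, functorially in $F$-families, a rank-$1$ Drinfeld $A$-module with an induced $I$-level structure, giving an $F$-morphism $\Delta: S_{F,\cal K(I)}^r \to S_{F,\cal K_1}^1$ for some compact open $\cal K_1 \subset \det \cal K(I)$. Composing with the canonical projection $\pi_1: S_{F,\cal K_1}^1 \to S_{F,\det\cal K(I)}^1$ (also an $F$-morphism) yields the candidate. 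An explicit analytic computation on $\C$-points, using that for $\Lambda = \omega(F^r \cap h\hat{A}^r)$ the rank-$1$ lattice $\bigwedge^r \Lambda$ represents the class $[\det h] \in F^* \backslash (\AZ_F^f)^*/\det\cal K(I)$, confirms that this $F$-morphism induces $\det$.

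The principal obstacle is the technical verification at the moduli level: that van der Heiden's exterior power descends to a morphism of the representing $F$-schemes, correctly transports $I$-level structures, and has the expected analytic shadow. An alternative that avoids~\cite{vaHe} would verify $G_F$-equivariance directly on a Zariski-dense locus in each component, combining Proposition~\ref{prop:GalFDmsv} (for the Galois action on Drinfeld modular subvarieties, in particular on special points) with Theorem~\ref{th:CFT} (for the rank-$1$ action). However, isolating the contribution to $\det h$ in the Galois twist $\sigma \circ b$ of the data defining a special point and matching it with the class-field-theoretic multiplication is more delicate than the moduli-theoretic route.
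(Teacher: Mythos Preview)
Your primary route via van der Heiden's exterior powers is sound in principle and is even acknowledged in the remark preceding Proposition~\ref{prop:detsurjective} as a possible description of $\det$; the paper simply refrains from it to avoid exactly the moduli-theoretic checks you flag. Note, incidentally, that the paper deduces the existence of $\det$ as an $F$-morphism \emph{from} Proposition~\ref{prop:detequivariant} (see the corollary immediately after), reversing your logical order.

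The route you dismiss as ``more delicate'' is precisely the one the paper takes, and the delicacy you anticipate is dissolved by a reduction you did not spot. Rather than track the effect of an arbitrary $\sigma \in G_F$ on a special point via Proposition~\ref{prop:GalFDmsv} (which would force you to follow $b \mapsto \sigma \circ b$), the paper first argues that the subgroups $\Gal(F^{\sep}/F')$, as $F'$ ranges over separable degree-$r$ extensions of $F$ with a unique place above $\infty$, generate $G_F$; this uses an Eisenstein/induction argument to show the intersection of all such $F'$ is $F$. For $\sigma$ fixing such an $F'$, the inclusion $\incl: S_{F',{\cal K}'}^1 \to S_{F,\cal K}^r$ is defined over $F'$ and hence commutes with $\sigma$; one then only computes $\sigma(p')$ on the rank-$1$ source via Theorem~\ref{th:CFT}, pushes through $\incl$ and $\det$, and matches with $\sigma(\det p)$ using the norm-compatibility of Artin maps together with the identity $\det(\phi^{-1}\circ h' \circ \phi) = N_{F'/F}(h')$ for $h' \in (\AZ_{F'}^f)^*$. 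No Zariski-density argument is needed: a single special point in the given component suffices, since $\det_*$ is a map on $\pi_0$.

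What each approach buys: yours is conceptually cleaner once the exterior-power functor with level structure is set up, and delivers the $F$-morphism directly; the paper's avoids any additional moduli input but pays with the generation-of-$G_F$ reduction and the class-field-theoretic norm diagram.
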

\begin{proof}
We consider separable extensions $F' \subset \C$ of $F$ of
degree~$r$ with only one place above $\infty$. 
The intersection~$F''$ of all these extensions is equal to $F$. This follows by
induction over $r$:

Assume by contradiction that $F'' \supsetneq F$ with $[F'' / F] = r' >
1$. By Eisenstein's Criterion (Proposition III.1.14
in~\cite{St}) we find a second extension $F''_2 \neq F''$ of $F$ of
degree $r'$ with only one place $\infty''_2$ above $\infty$. By
induction hypothesis, the intersection of all separable extensions of
$F''_2$ of degree $r / r'$ with only one place above $\infty''_2$ is
equal to $F''_2$. These extensions of $F''_2$ are all separable
extensions of $F$ of degree $r$ with only one place above $\infty$,
hence its intersection~$F''_2$ contains $F''$. This is not possible 
because $F''_2 \neq F''$ and $[F''_2 / F] = [F'' / F] = r'$.

The equality $F'' = F$ implies that the subgroups $\Gal(F^{\sep} / F')
\subset G_F$ where $F'$ runs over all separable extensions of $F$ of
degree $r$ with only one place above~$\infty$ generate the whole
absolute Galois group~$G_F$. Therefore it is enough to show
that $\det_*$ is $\Gal(F^{\sep} / F')$-equivariant for all
these extensions~$F'$.

Let now $F' / F$ be a fixed extension of the above form, $Y$ an
irreducible component of $S_{F,\cal K}^r$ and $\sigma \in
\Gal(F^{\sep} / F')$. We have to show that $\det_*(\sigma(Y)) =
\sigma(\det_*(Y))$. We assume that $Y$
corresponds to the class of $g \in \GL_r(\AZ_F^f)$ in $\GL_r(F) \setminus
\GL_r(\AZ_F^f) / {\cal K}$ via the bijective correspondence from
Proposition~\ref{theorem_irrcomp}. We choose an $F$-linear isomorphism
\[ \phi: F^r \stackrel{\sim}{\longrightarrow} F', \]
and define 
\[ b := \phi \circ g : (\AZ_F^f)^r \stackrel{\sim}{\longrightarrow}
\AZ_{F'}^f. \]
The datum $(F',\,b)$ defines an inclusion morphism $\incl: S_{F',\cal
  K'}^1 \rightarrow S_{F,\cal K}^r$. By its definition, the
point $p' := [1] \in S_{F',\cal
  K'}^1 = F'^* \setminus (\AZ_{F'}^f)^* / {\cal K}'$
is mapped to the closed point
\[ p := \incl([1]) = [(\overline{i \circ \phi},\,\phi^{-1} \circ 1 \circ
b)] = [(\overline{i \circ \phi},\,g)] \]
of $S_{F,\cal K}^r$, where $i$ denotes the canonical 
inclusion $F'_{\infty'} \hookrightarrow \C$. This point 
lies in the irreducible component $Y$, which corresponds to the class
of $g$ in $\GL_r(F) \setminus \GL_r(\AZ_F^f)\, /\, {\cal K}$.

By Proposition~\ref{prop:Firred}, the point $p' \in S_{F',\cal K'}^1$ 
is defined over $F'^{\sep} = F^{\sep}$. Since $\incl$ is defined over
$F'$, the closed point $p = \incl(p') \in S_{F,\cal K}^r(\C)$ is also 
defined over $F^{\sep}$ and we have
\[ \incl(\sigma(p')) = \sigma(p) \in \sigma(Y), \]
 i.e., $\sigma(Y)$ is the unique irreducible component of $S_{F,\cal
   K}^r$ containing $\incl(\sigma(p'))$. The equality
 $\det_*(\sigma(Y)) = \sigma(\det_*(Y))$ is therefore equivalent to
 \begin{equation} \label{eq:Gal}
\det(\incl(\sigma(p'))) = \sigma(\det p). \end{equation}
We use the description of the Galois action on $S_{F',{\cal K}'}^1$
and $S_{F,\det {\cal K}}^1$ given by Theorem~\ref{th:CFT} to calculate both sides
of~\eqref{eq:Gal}. For this, let $H / F$
resp. $H' / F'$ be the finite abelian extensions corresponding 
to the closed finite index subgroups 
$F^* \cdot \det {\cal K} \subset (\AZ_F^f)^*$ resp. 
${F'}^* \cdot {\cal K}' \subset (\AZ_{F'}^f)^*$ in class field
theory, and let $E$ be the compositum of $H$ and
$H'$. Then the diagram of Artin maps
\[ \xymatrix{
(\AZ_F^f)^* \ar[r]^{\!\!\!\!\psi_{H / F}} & \Gal(H / F) \\
(\AZ_{F'}^f)^* \ar[u]_{N_{F'/F}} \ar[r]^{\!\!\!\psi_{E / F'}}
\ar[dr]^{\psi_{H' / F'}}  & \Gal(E / F') \ar[u]_{r_{E /
    H}}\ar[d]^{r_{E / H'}} \\
& \Gal(H' / F')
}  \]
commutes with $N_{F'/F}$ the norm map and $r_{E / H}$, $r_{E / H'}$ the
restriction maps. Therefore, if $h' \in (\AZ_{F'}^f)^*$ is chosen such
that $\psi_{E / F'}(h') = \sigma|_E$, then we have 
\begin{eqnarray*} \psi_{H'/F'}(h') & = & \sigma|_{H'}, \\
\psi_{H / F}(N_{F' / F}(h')) & = & \sigma|_H.
\end{eqnarray*}
With Theorem~\ref{th:CFT} this implies 
\begin{eqnarray*} 
 \det(\incl(\sigma(p'))) & = & \det(\incl([h'^{-1}])) = \det(\phi^{-1}
\circ h'^{-1} \circ b) \\
& = & \det(\phi^{-1} \circ h'^{-1} \circ \phi) \cdot \det g 
= [N_{F' / F}(h')^{-1} \cdot \det g] \\
& = & \sigma([\det g]) = \sigma(\det p).\end{eqnarray*}
So we have shown~\eqref{eq:Gal}, which is equivalent 
to $\det_*(\sigma(Y)) = \sigma(\det_*(Y))$.
\end{proof}
\begin{korollar}
The determinant map is induced by a unique morphism $S_{F,\cal K}^r
\rightarrow S_{F,\det {\cal K}}^1$ defined over $F$.
\end{korollar}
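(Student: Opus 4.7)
The plan is to build the morphism componentwise and then descend the field of definition using the Galois equivariance proved in Proposition~\ref{prop:detequivariant}. First, Proposition~\ref{prop:detsurjective} tells us that the set-theoretic determinant map is constant on each irreducible component of $S_{F,\cal K}^r$ over $\C$, with value in the finite set $S_{F,\det {\cal K}}^1$. So, as a map of $\C$-schemes, the desired morphism $f: S_{F,\cal K}^r \to S_{F,\det {\cal K}}^1$ must send each irreducible component $Y$ to the closed point ${\det}_*(Y)$. Since $S_{F,\det {\cal K}}^1$ is zero-dimensional and $Y$ is reduced, there is in fact a unique such morphism $Y \to S_{F,\det {\cal K}}^1$ of $\C$-schemes: the constant one. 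Gluing these along the (disjoint) irreducible components produces a well-defined morphism $f$ of varieties over $\C$ inducing the determinant map on $\C$-valued points.

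Next I would argue that $f$ is defined over $F^{\sep}$. By Proposition~\ref{prop:Firred}, each irreducible component $Y$ of $S_{F,\cal K}^r$ is defined over $F^{\sep}$, and by Subsection~\ref{sec:rank1} every closed point of $S_{F,\det {\cal K}}^1$ is defined over $F^{\sep}$. A constant morphism from an $F^{\sep}$-variety to an $F^{\sep}$-rational point is automatically defined over $F^{\sep}$, and since $S_{F,\cal K}^r$ is the disjoint union of its irreducible components, $f$ is defined over $F^{\sep}$.

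To descend to $F$ I would invoke Proposition~\ref{prop:galois}: it suffices to show that $f$ is $G_F$-stable, i.e., $\sigma \circ f = f \circ \sigma$ on $\overline{F}$-points for every $\sigma \in G_F$. Both sides are constant on each irreducible component, so it is enough to check equality of these constants, which is exactly the statement ${\det}_*(\sigma(Y)) = \sigma({\det}_*(Y))$ proved in Proposition~\ref{prop:detequivariant}. Hence $f$ descends to a morphism defined over $F$.

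Finally, uniqueness is immediate: any two morphisms $S_{F,\cal K}^r \to S_{F,\det {\cal K}}^1$ over $F$ inducing the same map on $\C$-valued points must agree because $\C$ is algebraically closed, so their $\C$-scheme base changes coincide, and faithfully flat descent from $\C$ to $F$ then identifies them as $F$-morphisms. The only potential subtlety in this plan is verifying the $G_F$-equivariance cleanly enough to apply Proposition~\ref{prop:galois}, but this is handed to us by Proposition~\ref{prop:detequivariant}; the rest is formal.
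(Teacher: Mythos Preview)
Your argument follows essentially the same route as the paper: constancy on irreducible components (Proposition~\ref{prop:detsurjective}), definition over $F^{\sep}$ because the components and target points are $F^{\sep}$-rational, and descent to $F$ via the Galois equivariance established in Proposition~\ref{prop:detequivariant}. The only quibble is that Proposition~\ref{prop:galois} as stated concerns \emph{subvarieties}, not morphisms; the paper instead cites \cite[AG~14.3]{Bo} for the descent of a $G_F$-equivariant morphism, though your version can be salvaged by applying Proposition~\ref{prop:galois} to the graph of $f$ in $S_{F,\cal K}^r \times S_{F,\det{\cal K}}^1$.
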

\begin{proof}
By Proposition~\ref{prop:detsurjective}, the determinant map is constant on the
irreducible components of $S_{F,\cal K}^r(\C)$. Since these
irreducible components and all closed points of $S_{F,\det {\cal
    K}}^1$ are defined over~$F^{\sep}$, the determinant map is therefore induced by a
unique morphism defined over $F^{\sep}$.

By Proposition~\ref{prop:detequivariant}, this morphism over
$F^{\sep}$ is $G_F$-equivariant. Hence, by~\cite[AG 14.3]{Bo} it is defined over $F$. 
\end{proof}

\begin{korollar} \label{cor:irrcomp}
$S_{F,\cal K}^r$ is $F$-irreducible and has exactly
\[ |S_{F,\det \cal K}^1| = |F^* \setminus (\AZ_F^f)^*\, /\,
{\det \cal K}| = |\mathrm{Cl}(F)| \cdot |\hat{A}^* / (\FZ_q^* \cdot \det {\cal
  K})| \]
irreducible components over $\C$.
\end{korollar}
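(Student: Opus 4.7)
The plan is to combine the previously established Galois-equivariance of the determinant map with a standard adelic decomposition of the ideal class group.

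First I would dispatch $F$-irreducibility. By Proposition~\ref{prop:Firred}, $S_{F,\cal K}^r$ is $F$-irreducible if and only if $G_F$ acts transitively on the set $\pi_0(S_{F,\cal K}^r)$ of its geometrically irreducible components. Proposition~\ref{prop:detequivariant} provides the $G_F$-equivariant bijection
\[ {\det}_*: \pi_0(S_{F,\cal K}^r) \stackrel{\sim}{\longrightarrow} S_{F,\det {\cal K}}^1, \]
and Corollary~\ref{cor:transgal} asserts that $G_F$ acts transitively on $S_{F,\det {\cal K}}^1$. Transporting this through $\det_*$ gives transitivity on $\pi_0(S_{F,\cal K}^r)$, hence $F$-irreducibility.

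The first two equalities in the count follow immediately: the bijection $\det_*$ gives $|\pi_0(S_{F,\cal K}^r)| = |S_{F,\det{\cal K}}^1|$, and by the identification recalled in Subsection~\ref{sec:rank1} we have $|S_{F,\det{\cal K}}^1| = |F^* \setminus (\AZ_F^f)^*/ \det{\cal K}|$.

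The only genuine computation is the final equality. I would argue that $\det{\cal K}$, being a compact open subgroup of $(\AZ_F^f)^*$, is contained in the maximal compact subgroup $\hat A^*$ (any compact subgroup surjects to a finite, hence trivial, subgroup of the discrete free abelian group $(\AZ_F^f)^*/\hat A^*$ of fractional ideals of $A$). Then the short exact sequence
\[ 1 \longrightarrow \hat A^*/\det{\cal K} \longrightarrow (\AZ_F^f)^*/\det{\cal K} \longrightarrow (\AZ_F^f)^*/\hat A^* \longrightarrow 1 \]
becomes, after quotienting by the diagonal action of $F^*$,
\[ 1 \longrightarrow \hat A^*/\bigl((F^* \cap \hat A^*)\cdot \det{\cal K}\bigr) \longrightarrow F^* \setminus (\AZ_F^f)^*/\det{\cal K} \longrightarrow \mathrm{Cl}(F) \longrightarrow 1, \]
using that $F^* \setminus (\AZ_F^f)^*/\hat A^* \cong \mathrm{Cl}(F)$ is the ideal class group of $A$. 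The intersection $F^* \cap \hat A^*$ is $A^* = \FZ_q^*$, since any $a \in F^*$ with valuation $0$ at every finite place has principal divisor supported only at $\infty$, and the degree-zero condition forces $v_\infty(a) = 0$ as well, so $a$ is a nonzero constant. Taking cardinalities in the short exact sequence yields the formula
\[ |F^* \setminus (\AZ_F^f)^*/\det{\cal K}| = |\mathrm{Cl}(F)| \cdot |\hat A^*/(\FZ_q^* \cdot \det{\cal K})|. \]
The main (minor) subtlety is verifying that $\det{\cal K} \subset \hat A^*$ and that $F^* \cap \hat A^* = \FZ_q^*$; both reduce to elementary observations about the valuation structure of $F$ relative to the unique place~$\infty$.
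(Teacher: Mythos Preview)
Your argument is correct and matches the paper's proof essentially line for line: both deduce $F$-irreducibility from the $G_F$-equivariance of $\det_*$ together with Corollary~\ref{cor:transgal}, and both compute the cardinality via the identification $(\AZ_F^f)^*/(F^*\cdot\hat A^*)\cong\mathrm{Cl}(F)$ and the equality $F^*\cap\hat A^*=\FZ_q^*$. The only cosmetic difference is that you phrase the index computation as a short exact sequence and make the inclusion $\det{\cal K}\subset\hat A^*$ explicit, while the paper writes out the chain of quotients directly and uses that inclusion implicitly in the step $(F^*\cdot\det{\cal K})\cap\hat A^*=(F^*\cap\hat A^*)\cdot\det{\cal K}$.
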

\begin{proof} By Corollary~\ref{cor:transgal} and
  Proposition~\ref{prop:detequivariant}, it follows that 
the absolute Galois group $G_F$ 
acts transitively on the set of irreducible components of
$S_{F,\cal K}^r$ over $\C$. Hence, $S_{F,\cal K}^r$ is $F$-irreducible 
by Proposition~\ref{prop:Firred}.

It only remains to show the second equality. Note that
\[ (\AZ_F^f)^* / (F^* \cdot \hat{A}^*) \cong \mathrm{Cl}(F) \]
by the direct adaptation of Proposition VI.1.3 in~\cite{Ne} to
the function field case. Therefore we have
\[ |F^* \setminus (\AZ_F^f)^*\, /\, {\det \cal K}| = 
|\mathrm{Cl}(F)| \cdot |(F^* \cdot \hat{A}^*) / (F^* \cdot
   {\det \cal K})|. \] 

The claim now follows from
\[ (F^* \cdot \hat{A}^*) / (F^* \cdot {\det \cal K}) \cong \hat{A}^*\, /\,
((F^* \cdot \det {\cal K}) \cap \hat{A}^*) \]
and
\[ (F^* \cdot \det {\cal K}) \cap \hat{A}^* = (F^* \cap \hat{A}^*)
\cdot \det {\cal K} = \FZ_q^* \cdot \det \cal K. \qedhere \] 
\end{proof}

\begin{korollar} \label{cor:subirreducible}
Each Drinfeld modular subvariety of $S_{F,\cal K}^r$ with reflex
field~$F'$ is $F'$-irreducible.
\end{korollar}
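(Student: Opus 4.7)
The plan is to realize $X$ as the image of an $F'$-irreducible variety under a morphism defined over $F'$, and then invoke the fact that the scheme-theoretic image of an irreducible scheme under a morphism is irreducible. By definition, $X = \incl(S_{F',{\cal K}'}^{r'})$ for some inclusion morphism $\incl: S_{F',{\cal K}'}^{r'} \to S_{F,\cal K}^r$ associated to a datum $(F',\,b)$, and by Theorem~\ref{th_inclusion} this morphism is defined over $F'$ (this is why the reflex field is the right field to work with).

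First I would apply Corollary~\ref{cor:irrcomp} to the Drinfeld modular variety $S_{F',{\cal K}'}^{r'}$ itself to conclude that the underlying $F'$-scheme $S_{F',{\cal K}'}^{r'\!,0}$ is irreducible, i.e.\ $S_{F',{\cal K}'}^{r'}$ is $F'$-irreducible. Next I would view $\incl$ as arising, after base change to $\C$, from a morphism of $F'$-schemes
\[ f_0: S_{F',{\cal K}'}^{r'\!,0} \longrightarrow (S_{F,\cal K}^r)^0 \times_F F' \]
and consider its scheme-theoretic image $X_0$, which is a reduced closed subscheme of $(S_{F,\cal K}^r)^0 \times_F F'$. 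Since $f_0$ is finite (Theorem~\ref{th_inclusion}) and hence closed, $X_0$ coincides topologically with the image of $f_0$, so $X_0$ is irreducible as the continuous image of an irreducible topological space. Base-changing to $\C$ recovers the subvariety $X \subset S_{F,\cal K}^r$ together with an $F'$-structure whose underlying scheme is irreducible, which is precisely the definition of $F'$-irreducibility.

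The only point requiring a little care is the compatibility between the set-theoretic image used to define $X$ and the scheme-theoretic image that witnesses $F'$-irreducibility; this compatibility is automatic because $\incl$ is finite and hence a closed map, so no genuine obstacle arises.
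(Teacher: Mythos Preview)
Your proof is correct and follows essentially the same approach as the paper: both use that $\incl$ is defined over $F'$ (Theorem~\ref{th_inclusion}) and that $S_{F',{\cal K}'}^{r'}$ is $F'$-irreducible (Corollary~\ref{cor:irrcomp}), then conclude that the image $X$ is $F'$-irreducible. The paper states this conclusion in one sentence without further justification, whereas you spell out the passage through scheme-theoretic images and the finiteness/closedness of $\incl$; this extra care is fine but not strictly needed beyond what the paper's conventions already provide.
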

\begin{proof}
A Drinfeld modular subvariety $X$ of $S_{F,\cal K}^r$ with reflex
field~$F'$ is the image of an inclusion morphism $\incl: S_{F',{\cal
    K}'}^{r'} \rightarrow S_{F,\cal K}^r$. Since $\incl$ is defined
over $F'$ by Theorem~\ref{th_inclusion}, Corollary~\ref{cor:irrcomp}
immediately implies the $F'$-irreducibility of~$X$.
\end{proof}

\section{Degree of subvarieties} \label{ch:degree}
\subsection{Compactification
of Drinfeld modular varieties}
In~\cite{Pi} Pink constructs the 
\emph{Satake compactification}~$\overline{S}_{F,\cal K}^r$ of a
Drinfeld modular variety~$S_{F,\cal K}^r$ with ${\cal K} \subset
\GL_r(\hat{A})$. It is a normal projective variety which contains
$S_{F,\cal K}^r$ as an open dense subvariety and it is characterized up
to unique isomorphism by a certain universal property.

If $\cal K$ is amply small, $\overline{S}_{F,\cal K}^r$ is 
endowed with a natural ample invertible 
sheaf~${\cal L}_{F,\cal K}^r$. In~\cite{Pi}, the space of global sections of
its $k$-th power is defined to be the space of algebraic modular forms
of weight~$k$ on $S_{F,\cal K}^r$.

If ${\cal K} \subset \GL_r(\AZ_F^f)$ is arbitrary (not necessarily
contained in $\GL_r(\hat{A})$) and $g \in \GL_r(\AZ_F^f)$ is chosen 
such that $g{\cal K}g^{-1} \subset \GL_r(\hat{A})$, we define
\begin{equation} \label{eq:satake1}
  \overline{S}_{F,\cal K}^r := \overline{S}_{F,g{\cal
    K}g^{-1}}^r \end{equation}
and, if $\cal K$ is amply small,
\begin{equation} \label{eq:satake2}
 {\cal L}_{F,\cal K}^r := {\cal L}_{F,g{\cal K}g^{-1}}^r. 
\end{equation}
As in Step (v) of the proof of
Theorem~\ref{th:Dmodulischeme}, one can show, using part (i) of the following
proposition for ${\cal K} \subset \GL_r(\hat{A})$, that this defines
$\overline{S}_{F,\cal K}^r$ and ${\cal L}_{F,\cal K}^r$ up to isomorphism.


\begin{satz} \label{prop:satake}
\begin{properties}
\item
For $g \in \GL_r(\AZ_F^f)$ and a compact open subgroup ${\cal K'}
\subset g^{-1}{\cal K}g$ the morphism $\pi_g: S_{F,\cal K'}^r \rightarrow
S_{F,\cal K}^{r}$ defined in Subsection~\ref{subsec:Hecke} extends uniquely
to a finite morphism $\overline{\pi_g}: \overline{S}_{F,\cal
  K'}^r \rightarrow \overline{S}_{F,\cal K}^{r}$ defined over $F$. If
$\cal K$ is amply small, then there is a canonical isomorphism
\[ {\cal L}_{F,\cal K'}^r \cong \overline{\pi_g}^* {\cal L}_{F,\cal K}^r. \]

\item 
Any inclusion $\incl: S_{F',\cal K'}^{r'} \rightarrow S_{F,\cal K}^r$
of Drinfeld modular varieties extends uniquely to a finite morphism 
$\overline{\incl}: \overline{S}_{F',\cal K'}^{r'} \rightarrow 
\overline{S}_{F,\cal K}^r$ defined over $F'$. If $\cal K$ is
amply small, then there is a canonical isomorphism
\[ {\cal L}_{F',\cal K'}^{r'} \cong \overline{\incl}^*{\cal L}_{F,\cal
  K}^r. \]
\end{properties}

\end{satz}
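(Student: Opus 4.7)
The plan is to reduce both parts to Pink's construction in~\cite{Pi}, which treats precisely the case ${\cal K} \subset \GL_r(\hat{A})$, and then bootstrap to arbitrary levels via the definitions~\eqref{eq:satake1} and~\eqref{eq:satake2}.

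For part (i), I would first handle the case ${\cal K},\,{\cal K}' \subset \GL_r(\hat{A})$ and $g \in \GL_r(\AZ_F^f) \cap \Mat_r(\hat{A})$. In this setting Pink's construction provides the extension $\overline{\pi_g}$ to the Satake compactifications, together with finiteness and the canonical isomorphism ${\cal L}_{F,{\cal K}'}^r \cong \overline{\pi_g}^*{\cal L}_{F,{\cal K}}^r$ built in, since ${\cal L}$ is defined via spaces of algebraic modular forms and pullback of sections along $\pi_g$ sends modular forms to modular forms. An arbitrary $g \in \GL_r(\AZ_F^f)$ can be absorbed by a scalar $\lambda \in A \setminus \{0\}$ as in Step (iv) of the proof of Theorem~\ref{th:Dmodulischeme}, since $\pi_g = \pi_{\lambda g}$. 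To pass from ${\cal K} \subset \GL_r(\hat{A})$ to arbitrary ${\cal K} \subset \GL_r(\AZ_F^f)$, choose $s \in \GL_r(\AZ_F^f)$ such that both $s{\cal K}s^{-1}$ and $sg^{-1}{\cal K}gs^{-1}$ lie in $\GL_r(\hat{A})$; by~\eqref{eq:satake1} and~\eqref{eq:satake2} the compactifications and the sheaves on the two sides are by definition those of the conjugated subgroups, and independence of the choice of $s$ is checked exactly as in Step (v) of the proof of Theorem~\ref{th:Dmodulischeme}, the transition isomorphisms themselves being instances of the $\GL_r(\hat{A})$-case already treated.

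For part (ii), I would first dispose of the standard case $b(\hat{A}^r) = \widehat{A'}^{r'}$ and ${\cal K} = {\cal K}(I)$. Here the extension of $\incl$ is again provided directly by Pink, and the canonical isomorphism $\overline{\incl}^*{\cal L}_{F,{\cal K}}^r \cong {\cal L}_{F',{\cal K}'}^{r'}$ follows from the modular interpretation~\eqref{eq:incl_modint} of $\incl$: modular forms on $S_{F,{\cal K}}^r$ restrict along the functor $({\cal L},\psi,\alpha) \mapsto ({\cal L},\psi|_A,\alpha \circ b)$ to modular forms on $S_{F',{\cal K}'}^{r'}$. For general $b$ and ${\cal K}$, use the commutative square from Case (ii) of the proof of Theorem~\ref{th_inclusion}, which realizes $\incl$ as a composition of projection morphisms with a standard inclusion~$\iota_{F,\,b'}^{F'}$. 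Composing the extensions provided by part (i) with the standard-case extension yields the desired $\overline{\incl}$, and composing the canonical isomorphisms of ${\cal L}$-sheaves at each stage yields the isomorphism ${\cal L}_{F',{\cal K}'}^{r'} \cong \overline{\incl}^*{\cal L}_{F,{\cal K}}^r$.

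Uniqueness of $\overline{\pi_g}$ and $\overline{\incl}$ is automatic: $S_{F,{\cal K}}^r$ is open and dense in the separated variety $\overline{S}_{F,{\cal K}}^r$, so any two extensions agreeing on $S_{F,{\cal K}}^r$ coincide. Finiteness is preserved under composition. The main technical point will be verifying that the canonical line bundle isomorphism survives the bootstrapping unambiguously across the various choices of $s$, $b'$, $g'$ and $I$; this will follow from the fact that all intermediate transition maps are themselves projection or inclusion morphisms of the type already covered, and from Pink's characterization of ${\cal L}$ as the sheaf of modular forms of weight one, which forces the canonical isomorphisms at each stage to glue compatibly.
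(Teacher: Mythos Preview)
Your proposal is correct and follows essentially the same approach as the paper: cite Pink's results from~\cite{Pi} for levels contained in $\GL_r(\hat{A})$ (respectively $\GL_{r'}(\widehat{A'})$), then extend to arbitrary levels via the definitions~\eqref{eq:satake1} and~\eqref{eq:satake2}. The paper's own proof is a two-line citation of Propositions~4.11, 4.12 and Lemma~5.1 in~\cite{Pi} together with the remark that the passage to arbitrary levels is automatic from those definitions; you have simply spelled out the bootstrapping mechanism in more detail than the paper does.
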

\begin{proof} This follows from Proposition 4.11 and 4.12 and Lemma 5.1 
in~\cite{Pi}. Note that these statements automatically hold for 
arbitrary levels ${\cal K}$ and ${\cal K'}$ (not necessarily 
contained in $\GL_r(\hat{A})$ respectively $\GL_{r'}(\widehat{A'})$) 
because the equations~\eqref{eq:satake1} and
\eqref{eq:satake2} define the Satake compactification of a general Drinfeld
modular variety as the Satake compactification of a Drinfeld modular variety
with level contained in $\GL_r(\hat{A})$
resp. $\GL_{r'}(\widehat{A'})$.
\end{proof}

\subsection{Degree of subvarieties}
In this subsection, $S_{F,\cal K}^r$ always denotes a Drinfeld modular
variety with $\cal K$ amply small.

\begin{definition} \label{def:degree}
The \emph{degree} of an irreducible subvariety $X \subset S_{F,\cal
  K}^r$ is defined to be the degree of its Zariski
closure~$\overline{X}$ in $\overline{S}_{F,\cal K}^r$ with respect 
to ${\cal L}_{F,\cal K}^r$, i.e., the integer
\[ \deg X := \deg_{{\cal L}_{F,\cal K}^r} \overline{X} = 
\int_{\overline{S}_{F,\cal K}^r} c_1({\cal L}_{F,\cal K}^r)^{\dim X}
\cap [\overline{X}],
 \]
where $c_1({\cal L}_{F,\cal K}^r) \in A^1\overline{S}_{F,\cal K}^r$ 
denotes the first Chern class of ${\cal L}_{F,\cal K}^r$, the cycle class
of~$\overline{X}$ in $A_{\dim X}\overline{S}_{F,\cal K}^r$ is denoted
by $[\overline{X}]$ and $\cap$ is the cap-product between $A^{\dim
  X}\overline{S}_{F,\cal K}^r$ and $A_{\dim X}\overline{S}_{F,\cal K}^r$. 

The degree of a reducible subvariety $X \subset S_{F,\cal K}^r$ is 
the sum of the degrees of all irreducible components of $X$.
\end{definition}

\textbf{Remark:} Note that our definition of degree for reducible
subvarieties differs from the one used in many textbooks where only
the sum over the irreducible components of maximal dimension is taken.

\begin{lemma} \label{lemma:degirr}
The degree of a subvariety $X \subset S_{F,\cal K}^r$ is at least the
number of irreducible components of $X$.
\end{lemma}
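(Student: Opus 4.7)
The plan is to reduce to the irreducible case by the additivity built into Definition~\ref{def:degree}, then show that every irreducible subvariety has degree at least~$1$.

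First, by Definition~\ref{def:degree} we have
\[ \deg X = \sum_i \deg X_i, \]
where the $X_i$ range over the irreducible components of $X$. So it suffices to prove that $\deg X_i \geq 1$ for each irreducible $X_i$.

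Next, I would invoke ampleness: since $\cal K$ is amply small, ${\cal L}_{F,\cal K}^r$ is an ample invertible sheaf on the projective variety $\overline{S}_{F,\cal K}^r$ by Proposition~\ref{prop:satake} (and the construction of Pink recalled just before it). Ampleness is preserved under restriction to a closed subvariety, so the pullback of ${\cal L}_{F,\cal K}^r$ to the Zariski closure $\overline{X_i} \subset \overline{S}_{F,\cal K}^r$ is an ample line bundle on the irreducible projective variety $\overline{X_i}$.

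Finally, it is a standard fact that the degree of an irreducible projective variety with respect to an ample line bundle is a strictly positive integer: indeed, some positive power of an ample bundle is very ample and embeds $\overline{X_i}$ into projective space, where its degree (which differs from $\deg_{{\cal L}_{F,\cal K}^r}\overline{X_i}$ by a positive power of an integer) is a positive integer. Hence $\deg X_i \geq 1$, and summing yields the claim. The proof involves no real obstacle; the only point worth being careful about is to appeal to the amply small hypothesis so that the ample sheaf ${\cal L}_{F,\cal K}^r$ exists in the first place.
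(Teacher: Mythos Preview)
Your proposal is correct and follows essentially the same approach as the paper: reduce to the irreducible case by the additivity in Definition~\ref{def:degree}, then use ampleness of ${\cal L}_{F,\cal K}^r$ to conclude that each irreducible component has degree at least~$1$. The paper's own proof is even more terse, simply citing Lemma~12.1 in~\cite{Fu} for the fact that the degree of an irreducible projective subvariety with respect to an ample invertible sheaf is a positive integer.
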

\begin{proof}
This follows by our definition of degree
because ${\cal L}_{F,\cal K}^r$ is ample and the degree
of an irreducible subvariety of a projective variety with respect to
an ample invertible sheaf is a positive integer 
(see, e.g., Lemma 12.1 in~\cite{Fu}).
\end{proof}

\begin{satz} \label{prop:degree}
\begin{properties}
\item
Let $\pi_g: S_{F,\cal K'}^r \rightarrow S_{F,\cal K}^{r}$ be the
morphism defined in Subsection~\ref{subsec:Hecke} for 
$g \in \GL_r(\AZ_F^f)$ and ${\cal K'} \subset g^{-1}{\cal K}g$. Then
\begin{equation} \label{eq:deg1}
 \deg \pi_g^{-1}(X) = [g^{-1}{\cal K}g : {\cal K'}] \cdot \deg X \end{equation}
for subvarieties $X \subset S_{F,\cal K}^r$ and
\begin{equation} \label{eq:deg2} \deg \pi_g(X') \leq \deg X' 
\end{equation}
for subvarieties $X' \subset S_{F,\cal K'}^r$. In particular, we have
\begin{equation} \label{eq:deg3} \deg T_g(X) \leq [{\cal K} : 
{\cal K} \cap g^{-1}{\cal K}g] \cdot \deg X \end{equation}
for subvarieties $X \subset S_{F,\cal K}^r$.

\item
For any inclusion $\incl: S_{F',\cal K'}^{r'} \rightarrow S_{F,\cal K}^r$
of Drinfeld modular varieties and for any
 subvariety $X \subset S_{F',\cal K'}^{r'}$, we have
\begin{equation} \label{eq:deg4}
 \deg X = \deg \incl(X). 
\end{equation}
\end{properties}
\end{satz}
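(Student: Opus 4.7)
The plan is to prove both parts of Proposition~\ref{prop:degree} using the projection formula in intersection theory together with the compatibility of the ample sheaves ${\cal L}$ under pullback provided by Proposition~\ref{prop:satake}.

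For part (i), set $n := [g^{-1}{\cal K}g : {\cal K}']$. By Proposition~\ref{prop:projetale} the morphism $\pi_g$ is finite \'etale of degree $n$, and by Proposition~\ref{prop:satake}(i) it extends to a finite morphism $\overline{\pi_g}$ with $\overline{\pi_g}^*{\cal L}_{F,{\cal K}}^r \cong {\cal L}_{F,{\cal K}'}^r$. First I would establish the set-theoretic identity $\overline{\pi_g^{-1}(X)} = \overline{\pi_g}^{-1}(\overline{X})$ inside $\overline{S}_{F,{\cal K}'}^r$: the inclusion ``$\subseteq$'' is immediate from continuity, and the reverse follows because $\overline{\pi_g}$ is finite surjective, so each irreducible component of $\overline{\pi_g}^{-1}(\overline{X})$ maps onto a component of $\overline{X}$, which in turn meets the open dense subvariety $S_{F,{\cal K}}^r$. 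Reducing to the case where $X$ is irreducible, the restriction of $\overline{\pi_g}$ to $\overline{\pi_g}^{-1}(\overline{X}) \to \overline{X}$ is finite and generically of degree $n$, yielding the cycle identity $\overline{\pi_g}_*[\overline{\pi_g^{-1}(X)}] = n \cdot [\overline{X}]$. The projection formula $\overline{\pi_g}_*(\overline{\pi_g}^*\alpha \cap \beta) = \alpha \cap \overline{\pi_g}_*\beta$ applied with $\alpha = c_1({\cal L}_{F,{\cal K}}^r)^{\dim X}$ and $\beta = [\overline{\pi_g^{-1}(X)}]$ then gives~\eqref{eq:deg1}; the case of reducible $X$ follows by summing over irreducible components.

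For~\eqref{eq:deg2}, since $\overline{\pi_g}$ is proper we have $\overline{\pi_g(X')} = \overline{\pi_g}(\overline{X'})$, and the induced finite map $\overline{X'} \to \overline{\pi_g(X')}$ has some integer degree $m \geq 1$; hence $\overline{\pi_g}_*[\overline{X'}] = m \cdot [\overline{\pi_g(X')}]$, and the projection formula gives $\deg X' = m \cdot \deg \pi_g(X') \geq \deg \pi_g(X')$. For reducible $X'$ one decomposes into components and sums, noting that distinct components may have overlapping images (which only strengthens the inequality). Equation~\eqref{eq:deg3} then follows immediately from the definition $T_g(X) = \pi_g(\pi_1^{-1}(X))$ combined with~\eqref{eq:deg1} applied to $\pi_1: S_{F,{\cal K}_g}^r \to S_{F,\cal K}^r$ and~\eqref{eq:deg2} applied to $\pi_g$, since $\deg T_g(X) \leq \deg \pi_1^{-1}(X) = [{\cal K}:{\cal K} \cap g^{-1}{\cal K}g] \cdot \deg X$.

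For part (ii), Proposition~\ref{prop:satake}(ii) extends the inclusion to a finite morphism $\overline{\iota}$ with $\overline{\iota}^*{\cal L}_{F,{\cal K}}^r \cong {\cal L}_{F',{\cal K}'}^{r'}$. The crucial observation is that, since $\cal K$ is amply small, Proposition~\ref{prop:closedimm} shows $\iota$ is a closed immersion, so the restriction $\iota \colon X \stackrel{\sim}{\rightarrow} \iota(X)$ is an isomorphism of varieties. Consequently $\overline{\iota}$ restricts to a finite morphism $\overline{X} \to \overline{\iota(X)}$ which is an isomorphism over the dense open $X$, hence birational of degree $1$. Therefore $\overline{\iota}_*[\overline{X}] = [\overline{\iota(X)}]$, and a final application of the projection formula yields $\deg X = \deg \iota(X)$. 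The main technical obstacle throughout is the compatibility of scheme-theoretic preimage (or image) with Zariski closure along the boundary of the Satake compactification, specifically the identity $\overline{\pi_g^{-1}(X)} = \overline{\pi_g}^{-1}(\overline{X})$, which requires ruling out that any irreducible component of the preimage lies entirely in the boundary $\overline{S}_{F,{\cal K}'}^r \setminus S_{F,{\cal K}'}^r$; this hinges on the finiteness and surjectivity of $\overline{\pi_g}$.
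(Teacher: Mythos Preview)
Your approach is essentially the same as the paper's: reduce to the irreducible case, use the compatibility $\overline{\pi_g}^*{\cal L}_{F,\cal K}^r \cong {\cal L}_{F,{\cal K}'}^r$ (resp.\ $\overline{\incl}^*{\cal L}_{F,\cal K}^r \cong {\cal L}_{F',{\cal K}'}^{r'}$) from Proposition~\ref{prop:satake}, and apply the projection formula to the relevant cycle pushforward. The derivations of~\eqref{eq:deg2},~\eqref{eq:deg3}, and~\eqref{eq:deg4} match the paper's almost verbatim.

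There is one unnecessary detour with a gap in its justification. You argue for the set-theoretic identity $\overline{\pi_g^{-1}(X)} = \overline{\pi_g}^{-1}(\overline{X})$ by claiming that every irreducible component of $\overline{\pi_g}^{-1}(\overline{X})$ surjects onto a component of $\overline{X}$ because $\overline{\pi_g}$ is finite surjective. That implication is false for general finite morphisms (e.g.\ a disjoint union $\mathbb{A}^1 \sqcup \{\text{pt}\} \to \mathbb{A}^1$), so the argument as written does not establish the identity. Fortunately the identity is not needed: the paper simply observes that since $\pi_g$ is \'etale of degree $n$, the restriction $\pi_g^{-1}(X)\to X$ is finite of degree $n$, and hence so is its extension $\overline{\pi_g^{-1}(X)} \to \overline{X}$ between the Zariski closures (the degree being determined at the generic point, which lies in the open locus). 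This directly gives $\overline{\pi_g}_*[\overline{\pi_g^{-1}(X)}] = n\cdot[\overline{X}]$ without ever comparing $\overline{\pi_g^{-1}(X)}$ to $\overline{\pi_g}^{-1}(\overline{X})$. Dropping that paragraph and arguing as the paper does removes the gap; the rest of your proof then goes through.
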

\begin{proof} We use the
  \emph{projection formula} for Chern classes (see,
  e.g., Proposition 2.5~(c) in~\cite{Fu}):

\emph{If $f: X \rightarrow Y$ is a proper morphism of varieties and
  ${\cal L}$ is an invertible sheaf on $Y$, then, for all $k$-cycles $\alpha
  \in A_k(X)$, we have the equality
\begin{equation} \label{eq:projformula}
f_*(c_1(f^*{\cal L}) \cap \alpha) = c_1({\cal L}) \cap f_*(\alpha) \end{equation}
of $(k-1)$-cycles in $A_{k-1}(Y)$.}

For the proof of~\eqref{eq:deg1} and \eqref{eq:deg2}, 
we first assume that $X \subset S_{F,\cal K}^r$
  and $X' \subset S_{F,\cal K'}^r$ are irreducible. For this, 
note that $\pi_g: S_{F,\cal K'}^r \rightarrow S_{F,\cal K}^r$ is
finite of degree $[g^{-1}{\cal K}g : {\cal K}']$  
and \'etale by Proposition~\ref{prop:projetale} because ${\cal K}$ is
amply small. The latter implies that the restriction 
of $\pi_g$ to the subvariety 
$\pi_g^{-1}(X)$ is also finite of degree
$[g^{-1}{\cal K}g : {\cal K}']$ and we have the equality
\[ \overline{\pi_g}_*[\overline{\pi_g^{-1}(X)}] = [g^{-1}{\cal K}g : {\cal K}']
\cdot [\overline{X}] \]
of cycles on $\overline{S}_{F,\cal K}^r$. For $d := \dim X$, 
with Proposition~\ref{prop:satake} (i) and the above projection formula we get
\begin{eqnarray*} \deg \pi_g^{-1}(X) &=& \deg_{\overline{\pi_g}^*{\cal L}_{F,\cal K}^r}
\overline{\pi_g^{-1}(X)} = \int_{\overline{S}_{F,\cal K'}^r} 
c_1(\overline{\pi_g}^*{\cal L}_{F,\cal K}^r)^d
\cap [\overline{\pi_g^{-1}(X)}] \\
&=& \int_{\overline{S}_{F,\cal K}^r} \overline{\pi_g}_*\left(c_1(\overline{\pi_g}^*{\cal L}_{F,\cal K}^r)^d 
\cap [\overline{\pi_g^{-1}(X)}]\right) \\
&=& \int_{\overline{S}_{F,\cal K}^r} c_1({\cal L}_{F,\cal K}^r)^d \cap
\overline{\pi_g}_*[\overline{\pi_g^{-1}(X)}] \\
&=& [g^{-1}{\cal K}g : {\cal K}'] \cdot \int_{\overline{S}_{F,\cal K}^r}
c_1({\cal L}_{F,\cal K}^r)^d \cap [\overline{X}] = [g^{-1}{\cal K}g : {\cal K}']
\cdot \deg X.
\end{eqnarray*}
For the proof of~\eqref{eq:deg2}, we note that
\[ \overline{\pi_g}_*[\overline{X'}] = \deg (\pi_g|_{X'}) \cdot
[\overline{\pi_g(X')}] \]
as cycles on $\overline{S}_{F,\cal K}^r$. The same calculation as
above gives
\[ \deg X' = \deg (\pi_g|_{X'}) \cdot \deg \pi_g(X') \geq \deg
\pi_g(X'.) \]
If $X \subset S_{F,\cal K}^r$ is reducible with irreducible components
$X_1,\ldots,X_n$, we have
\[ \deg \pi_g^{-1}(X) = \sum_{i=1}^n \deg \pi_g^{-1}(X_i) \]
because the set of irreducible components of $\pi_g^{-1}(X)$ is the
disjoint union of the sets of irreducible components of the $\pi_g^{-1}(X_i)$.
Therefore, the formula~\eqref{eq:deg1} follows from the irreducible case.

If $X' \subset S_{F,\cal K'}^r$ is reducible with irreducible
components $X'_1,\ldots,X'_k$, then the set of irreducible components
of $\pi_g(X')$ is a subset of $\{\pi_g(X'_1),\ldots,\pi_g(X'_k)\}$,
hence we have
\[ \deg \pi_g(X') \leq \sum_{i=1}^k \deg \pi_g(X'_i) \]
and the inequality~\eqref{eq:deg2} follows from the irreducible case.

The inequality~\eqref{eq:deg3} immediately follows from
\eqref{eq:deg1} and \eqref{eq:deg2} because
\[ T_g(X) = \pi_g(\pi_1^{-1}(X)) \]
where $\pi_1$ and $\pi_g$ are projection morphisms 
$S_{F,{\cal K}_g}^r \rightarrow S_{F,\cal K}^r$ with ${\cal K}_g :=
 {\cal K} \cap g^{-1}{\cal K}g$ and
\[ \deg \pi_1 = [{\cal K} : {\cal K}_g] = [{\cal K} : {\cal K} \cap g^{-1}{\cal
  K}g]. \]

Finally, for the proof of~\eqref{eq:deg4} we use that $\incl:
S_{F',\cal K'}^{r'} \rightarrow S_{F,\cal K}^r$ is a closed immersion
by Proposition~\ref{prop:closedimm} because ${\cal K}$ is
amply small. For an irreducible subvariety $X \subset
S_{F',\cal K'}^{r'}$, we therefore have the equality
\[ \overline{\incl}_*[\overline{X}] = [\overline{\incl(X)}] \]
of cycles on $\overline{S}_{F,\cal K}^r$. The same calculation as in
the proof of~\eqref{eq:deg1} therefore gives
\[ \deg \incl(X) = \deg X \]
because $\overline{\incl}^*{\cal L}_{F,\cal K}^r \cong {\cal
  L}_{F',{\cal K}'}^{r'}$ by Proposition~\ref{prop:satake} (ii). 

If $X \subset S_{F',\cal K'}^{r'}$ is reducible with irreducible
components $X_1,\ldots,X_l$, then $\incl(X)$ has exactly the
irreducible components $\incl(X_1),\ldots,\incl(X_l)$ because $\incl$
is a closed immersion. Therefore, the
formula~\eqref{eq:deg4} for $X$ reducible follows from the irreducible case.
\end{proof}

We will use the following two consequences of B\'ezout's theorem to get
an upper bound for the degree of the intersection of two subvarieties
of $S_{F,\cal K}^r$:

\begin{lemma}
For subvarieties $V$, $W$ of a projective variety $U$ and an ample 
invertible sheaf ${\cal L}$ on $U$, we have
\[ \deg V \cap W \leq \deg V \cdot \deg W, \]
where $\deg$ denotes the degree with respect to ${\cal L}$.
\end{lemma}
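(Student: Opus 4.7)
The plan is to first reduce to the case of irreducible $V$ and $W$, and then to invoke a refined form of B\'ezout's theorem on the projective variety $U$ equipped with the ample line bundle ${\cal L}$.

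For the reduction, decompose $V = \bigcup_i V_i$ and $W = \bigcup_j W_j$ into irreducible components. Any irreducible component $Z$ of $V \cap W$ lies in some $V_i \cap W_j$; since $Z$ is a maximal irreducible closed subset of $V \cap W$ and $V_i \cap W_j \subset V \cap W$, it is in fact an irreducible component of that particular $V_i \cap W_j$. Using Definition~\ref{def:degree} on both sides, this gives
\[
\deg (V \cap W) \;\leq\; \sum_{i,j} \deg (V_i \cap W_j),
\]
so once the inequality is known in the irreducible case, summing over $(i,j)$ yields $\sum_{i,j} \deg V_i \cdot \deg W_j = \deg V \cdot \deg W$ by the additivity of degree for reducible subvarieties.

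For $V, W$ irreducible, I would embed $U$ into $\PZ^N$ via a sufficiently positive power of~${\cal L}$ and invoke the refined version of B\'ezout's theorem from intersection theory (e.g.\ Fulton, \emph{Intersection Theory}, Theorem~12.3 and Example~12.3.1). This theorem produces a cycle supported on $V \cap W$ representing the intersection class $V \cdot W$, and asserts that
\[
\sum_Z i(Z; V \cdot W) \, \deg_{\cal L}(Z) \;\leq\; \deg_{\cal L} V \cdot \deg_{\cal L} W,
\]
where $Z$ runs over the distinguished subvarieties of $V \cap W$ and the intersection multiplicities $i(Z; V \cdot W)$ are positive integers. Since every irreducible component of $V \cap W$ is distinguished, dropping the multiplicities gives $\sum_Z \deg_{\cal L}(Z) \leq \deg_{\cal L} V \cdot \deg_{\cal L} W$, which is the desired bound.

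The main technical point to watch is that Definition~\ref{def:degree} sums degrees over \emph{all} irreducible components of a subvariety, including those of non-maximal dimension that arise from excess intersection. The refined form of B\'ezout (rather than the classical statement for a proper intersection) is essential here, as it produces a sum over all distinguished components of any dimension and controls it by the product $\deg_{\cal L} V \cdot \deg_{\cal L} W$.
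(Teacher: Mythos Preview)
Your proposal is correct and follows essentially the same route as the paper: the reduction from the reducible to the irreducible case is identical (each component of $V\cap W$ is a component of some $V_i\cap W_j$, then sum and factor), and for the irreducible case both you and the paper invoke a refined B\'ezout inequality from Fulton---the paper cites Example~8.4.6, you cite Theorem~12.3/Example~12.3.1, which are closely related statements. Your explicit remark that the refined form (counting \emph{all} distinguished components, not just those of expected dimension) is needed because Definition~\ref{def:degree} sums over components of every dimension is a useful clarification that the paper leaves implicit.
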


\begin{proof} See Example 8.4.6 in~\cite{Fu} in the case
that $V$ and $W$ are irreducible.

If $V = V_1 \cup \cdots \cup V_k$ and $W = W_1 \cup \cdots \cup W_l$ are
decompositions into irreducible components, then
\[ V \cap W = \bigcup_{i,j} V_i \cap W_j. \]
Therefore, each irreducible component of $V \cap W$ is an irreducible
component of some $V_i \cap W_j$. By our definition of
degree for reducible varieties this implies
\[ \deg V \cap W \leq \sum_{i,j} \deg (V_i \cap W_j). \]
Hence by the case that $V$ and $W$ are irredubible, we get
\[ \deg V \cap W \leq \sum_{i,j} \deg V_i \cdot \deg W_j =
\left(\sum_i \deg V_i \right) \cdot \left( \sum_j \deg W_j \right) =
\deg V \cdot \deg W. \]
\end{proof}

\begin{lemma} \label{lemma:bezout}
For subvarieties $V$, $W$ of $S_{F,\cal K}^r$ we have
\[ \deg V \cap W \leq \deg V \cdot \deg W. \]
\end{lemma}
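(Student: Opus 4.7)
The plan is to reduce this to the previous lemma applied to the Satake compactification $\overline{S}_{F,\cal K}^r$, which is projective with ample sheaf ${\cal L}_{F,\cal K}^r$ (recall ${\cal K}$ is amply small, as standing assumption in this subsection). The only subtlety is that passing from $V,W \subset S_{F,\cal K}^r$ to their Zariski closures $\overline{V},\overline{W}$ in $\overline{S}_{F,\cal K}^r$ can introduce extra irreducible components of the intersection that lie entirely in the boundary $\overline{S}_{F,\cal K}^r \setminus S_{F,\cal K}^r$; but such components only \emph{increase} the degree on the compactified side, which is exactly the direction we need.

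First I would reduce to the irreducible case. Writing $V = V_1 \cup \cdots \cup V_k$ and $W = W_1 \cup \cdots \cup W_l$ with $V_i, W_j$ irreducible, every irreducible component of $V \cap W$ appears as an irreducible component of some $V_i \cap W_j$, so by Definition~\ref{def:degree} we have $\deg(V \cap W) \leq \sum_{i,j} \deg(V_i \cap W_j)$. If the inequality is known for irreducible $V_i$ and $W_j$, summing gives $\deg(V \cap W) \leq \bigl(\sum_i \deg V_i\bigr)\bigl(\sum_j \deg W_j\bigr) = \deg V \cdot \deg W$, exactly as in the proof of the previous lemma.

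So I may assume $V$ and $W$ are irreducible. Let $\overline{V}$, $\overline{W}$ denote their Zariski closures in $\overline{S}_{F,\cal K}^r$. By Definition~\ref{def:degree}, $\deg V = \deg_{{\cal L}_{F,\cal K}^r} \overline{V}$ and similarly for $W$. Applying the previous lemma to the projective variety $\overline{S}_{F,\cal K}^r$ with the ample invertible sheaf ${\cal L}_{F,\cal K}^r$ yields
\[ \deg_{{\cal L}_{F,\cal K}^r}(\overline{V} \cap \overline{W}) \leq \deg_{{\cal L}_{F,\cal K}^r}\overline{V} \cdot \deg_{{\cal L}_{F,\cal K}^r}\overline{W} = \deg V \cdot \deg W. \]

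It remains to compare $\deg(V \cap W)$ with $\deg_{{\cal L}_{F,\cal K}^r}(\overline{V} \cap \overline{W})$. Since $S_{F,\cal K}^r$ is open in $\overline{S}_{F,\cal K}^r$, the intersection $V \cap W$ is open in $\overline{V} \cap \overline{W}$. Each irreducible component $C$ of $V \cap W$ is therefore open in a unique irreducible component $\overline{C}$ of $\overline{V} \cap \overline{W}$ (namely, its closure), and $\overline{C}$ has the same dimension and the same degree with respect to ${\cal L}_{F,\cal K}^r$ as $C$ does by Definition~\ref{def:degree}. Thus the irreducible components of $V \cap W$ correspond bijectively (via taking closure) to a subset of the irreducible components of $\overline{V} \cap \overline{W}$, with matching degrees. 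Summing, $\deg(V \cap W) \leq \deg_{{\cal L}_{F,\cal K}^r}(\overline{V} \cap \overline{W})$, and combining with the bound from the previous lemma concludes the proof. The only conceptual point one must be careful about is the convention that the degree sums over \emph{all} irreducible components, not just the top-dimensional ones, which is what makes the set-theoretic containment of components translate into the desired numerical inequality.
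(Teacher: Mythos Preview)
Your proof is correct and follows essentially the same approach as the paper: both reduce to the projective B\'ezout bound on $\overline{S}_{F,\cal K}^r$ via the previous lemma, and both handle the passage from $V\cap W$ to $\overline V\cap\overline W$ by observing that $V\cap W=(\overline V\cap\overline W)\cap S_{F,\cal K}^r$ is open in $\overline V\cap\overline W$, so the closures of its irreducible components are among the irreducible components of $\overline V\cap\overline W$. Your preliminary reduction to irreducible $V$ and $W$ is harmless but unnecessary, since the openness argument already works for arbitrary $V,W$.
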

\begin{proof} 
In view of the previous lemma, it is
enough to show the following inequality of degrees of
Zariski closures in $\overline{S}_{F,\cal K}^r$ with respect 
to~${\cal L}_{F,\cal K}^r$:
\[ \deg \overline{V \cap W} \leq \deg \overline{V} \cap
\overline{W}. \]
For this, it suffices to show that each irreducible component of
$\overline{V \cap W}$ is an irreducible component of $\overline{V}
\cap \overline{W}$. Note that $\overline{V \cap W} \subset \overline{V} \cap \overline{W}$ and
\[ \overline{V \cap W} \cap S_{F,\cal
  K}^r = V \cap W = (\overline{V} \cap S_{F,\cal K}^r) \cap
(\overline{W} \cap S_{F,\cal K}^r) = (\overline{V} \cap
\overline{W}) \cap S_{F,\cal K}^r \] 
because $S_{F,\cal K}^r$ is Zariski open in $\overline{S}_{F,\cal K}^r$. Therefore
\begin{equation} \label{eq:VW}
 \overline{V} \cap \overline{W} = \overline{V \cap W}\ \cup\ (Y \cap (\overline{V} \cap
\overline{W})) 
\end{equation}
where $Y := \overline{S}_{F,\cal K}^r \setminus S_{F,\cal K}^r$
denotes the boundary of the compactification. Since the irreducible
components of $\overline{V \cap W}$ are the Zariski closures
of the irreducible components of $V \cap W$, they are all not
contained in $Y$ and therefore, by~\eqref{eq:VW}, irreducible
components of $\overline{V} \cap \overline{W}$.
\end{proof}

\subsection{Degree of Drinfeld modular subvarieties}
We let $S = S_{F,\cal K}^r$ be a Drinfeld modular
variety.

\begin{satz} \label{prop:degD}
If ${\cal K}$ is amply small, there is a constant $C > 0$  
only depending on $F$, ${\cal K}$ and $r$ such that 
\[ \deg(X) \geq C \cdot D(X) \]
for all Drinfeld modular subvarieties $X \subset S_{F,\cal K}^r$ with 
$D(X)$ the predegree of $X$ from Definition~\ref{def:reflex_index}.
\end{satz}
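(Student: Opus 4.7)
The plan is to reduce $\deg X$ to $\deg S_{F',{\cal K}'}^{r'}$, compare this to a principal congruence cover at a fixed auxiliary level, and then exploit the uniform boundedness of the index of the reference cover. Write $X = \incl(S_{F',{\cal K}'}^{r'})$ with ${\cal K}' = (b{\cal K}b^{-1}) \cap \GL_{r'}(\AZ_{F'}^f)$. Since ${\cal K}$ is amply small, Proposition~\ref{prop:closedimm} says that $\incl$ is a closed immersion and Lemma~\ref{lemma:kprime} that ${\cal K}'$ is amply small; equation~\eqref{eq:deg4} of Proposition~\ref{prop:degree} then gives $\deg X = \deg S_{F',{\cal K}'}^{r'}$. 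Using Corollary~\ref{cor:sub_equal} I replace $b$ by $sb$ for a suitable $s \in \GL_{r'}(\AZ_{F'}^f)$ so as to arrange ${\cal K}' \subset \GL_{r'}(\widehat{A'})$; this makes $i(X) = [\GL_{r'}(\widehat{A'}) : {\cal K}']$ on the nose.

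Since ${\cal K}$ is amply small, I fix once and for all a proper ideal $I_0 \subset A$, depending only on $F, {\cal K}, r$, such that a conjugate of ${\cal K}$ is contained in the principal congruence subgroup ${\cal K}(I_0) \subset \GL_r(\widehat{A})$. Set ${\cal K}_0 := {\cal K}' \cap {\cal K}(I_0 A')$; this is an amply small subgroup of both ${\cal K}'$ and ${\cal K}(I_0 A')$. Applying Proposition~\ref{prop:degree}(i) to the two finite \'etale projections $S_{F',{\cal K}_0}^{r'} \to S_{F',{\cal K}'}^{r'}$ and $S_{F',{\cal K}_0}^{r'} \to S_{F',{\cal K}(I_0 A')}^{r'}$ (\'etale by Proposition~\ref{prop:projetale}), and noting that the ratio of indices $[{\cal K}(I_0 A') : {\cal K}_0] / [{\cal K}' : {\cal K}_0]$ is just $\mathrm{vol}({\cal K}(I_0 A'))/\mathrm{vol}({\cal K}')$ in the Haar measure on $\GL_{r'}(\widehat{A'})$ normalized so that $\mathrm{vol}(\GL_{r'}(\widehat{A'})) = 1$, that is $i(X)/|\GL_{r'}(A'/I_0 A')|$, I obtain
\[ \deg S_{F',{\cal K}'}^{r'} \;=\; \frac{i(X)}{|\GL_{r'}(A'/I_0 A')|} \cdot \deg S_{F',{\cal K}(I_0 A')}^{r'}. \]

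Next I bound $\deg S_{F',{\cal K}(I_0 A')}^{r'}$ from below via Lemma~\ref{lemma:degirr} by the number of geometric irreducible components of $S_{F',{\cal K}(I_0 A')}^{r'}$, which by Corollary~\ref{cor:irrcomp} equals $|\mathrm{Cl}(F')| \cdot |(A'/I_0 A')^*/\FZ_{q'}^*| \geq |\mathrm{Cl}(F')|$. Substituting gives
\[ \deg X \;=\; \deg S_{F',{\cal K}'}^{r'} \;\geq\; \frac{|\mathrm{Cl}(F')| \cdot i(X)}{|\GL_{r'}(A'/I_0 A')|} \;=\; \frac{D(X)}{|\GL_{r'}(A'/I_0 A')|}. \]

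The crucial observation making the constant uniform in $X$ is that $|A'/I_0 A'| = |A/I_0|^{[F':F]}$ with $[F':F] = r/r' \leq r$, so $|\GL_{r'}(A'/I_0 A')| \leq |A'/I_0 A'|^{r'^2} \leq |A/I_0|^{r^3}$ is bounded by a constant depending only on $F, I_0, r$, equivalently on $F, {\cal K}, r$. Taking $C$ to be the reciprocal of this uniform upper bound gives $\deg X \geq C \cdot D(X)$. The subtlest point will be the index computation yielding the displayed identity together with the verification that ${\cal K}_0$ is amply small; both are elementary given the preliminaries developed in Section~\ref{ch:DMV} and Subsection~\ref{subsec:Hecke}, and no deeper input such as explicit modular-forms formulas from Pink's paper is needed.
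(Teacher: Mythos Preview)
Your proof is correct and follows essentially the same strategy as the paper: reduce to $\deg S_{F',{\cal K}'}^{r'}$ via \eqref{eq:deg4}, compare with the principal congruence level ${\cal K}(I_0 A')$, bound $\deg S_{F',{\cal K}(I_0A')}^{r'}$ below by $|\mathrm{Cl}(F')|$ via the component count (Corollary~\ref{cor:irrcomp} and Lemma~\ref{lemma:degirr}), and observe that $[\GL_{r'}(\widehat{A'}):{\cal K}(I_0A')]$ is bounded uniformly in $F'$. The only difference is cosmetic: the paper uses the containment ${\cal K}' \subset g'{\cal K}(I_0A')g'^{-1}$ established in the proof of Lemma~\ref{lemma:kprime}, so a single projection $S_{F',{\cal K}'}^{r'}\to S_{F',{\cal K}(I_0A')}^{r'}$ suffices, whereas you pass through the common refinement ${\cal K}_0 = {\cal K}' \cap {\cal K}(I_0A')$ and two projections; the resulting identity and constant are the same.
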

\textbf{Remark:} We expect that one could also prove an upper bound
for $\deg(X)$ of the form $\deg(X) \leq C' \cdot D(X)$ with a constant
$C'$ depending on $F$, $\cal K$ and $r$. Because of this expectation we call
$D(X)$ the predegree of $X$. We refrain from proving an upper 
bound because we only need a lower bound in the following.  
\begin{proof} 
Since ${\cal K}$ is amply small, there is a proper ideal $I$ of $A$
and a $g \in \GL_r(\AZ_F^f)$ such that $g{\cal K}g^{-1} \subset {\cal
  K}(I)$. As explained in the beginning of the proof of
Proposition~\ref{prop:closedimm}, for each Drinfeld modular subvariety
$X = \incl(S_{F',{\cal K}'}^{r'})$ of $S$, there is a $g' \in
\GL_{r'}(\AZ_{F'}^f)$ such that ${\cal K}' \subset g'{\cal
  K}(I')g'^{-1}$ where $I' := IA'$. Therefore, by
Proposition~\ref{prop:degree}, we have
\[ \deg(X) = \deg(S_{F',\level}^{r'}) = [g'{\cal K}(I')g'^{-1} : {\cal
  K}'] \cdot \deg(S_{F',{\cal K}(I')}^{r'}). \]
Since $S_{F',{\cal K}(I')}^{r'}$ has at least $|\mathrm{Cl}(F')|$
irreducible components over $\C$ by Corollary~\ref{cor:irrcomp}, we
have $\deg(S_{F',{\cal K}(I')}^{r'}) \geq |\mathrm{Cl}(F')|$. Using
$i(X) = [g'\GL_{r'}(\widehat{A'})g'^{-1} : {\cal K}']$ we therefore get
\[ \deg(X) \geq \frac{i(X)}{[\GL_{r'}(\widehat{A'}) : {\cal K}(I')]}
\cdot |\mathrm{Cl}(F')| = \frac{1}{[\GL_{r'}(\widehat{A'}) : {\cal
    K}(I')]} \cdot D(X). \]
Because of $[\GL_{r'}(\widehat{A'}) : {\cal
    K}(I')] \leq [\GL_r(A) : {\cal K}(I)]$ we conclude that $\deg(X)
\geq C \cdot D(X)$ for $C := \frac1{[\GL_r(A) : {\cal K}(I)]}$ only
depending on ${\cal K}$, $F$ and $r$.
\end{proof}


\begin{theorem} \label{th:Dunbounded}
For each sequence $(X_n)$ of pairwise distinct Drinfeld modular 
subvarieties of $S$, the sequence of predegrees $(D(X_n))$ is unbounded. In
particular, if ${\cal K}$ is amply small, the degrees $\deg(X_n)$ are
unbounded.
\end{theorem}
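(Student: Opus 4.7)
The ``in particular'' clause is immediate from Proposition~\ref{prop:degD}. For the main statement, the plan is to argue by contradiction: assuming $D(X_n) \leq C$ uniformly, I will pin down each of the three pieces of data classifying a Drinfeld modular subvariety---reflex field, level, parametrizing isomorphism---into a finite set, contradicting the pairwise distinctness of the $X_n$. Write $F_n' \subset \C$ for the reflex field of $X_n$, $r_n' := r/[F_n':F]$, and ${\cal K}_n' \subset \GL_{r_n'}(\AZ_{F_n'}^f)$ for the associated compact open subgroup, so that $D(X_n) = |\mathrm{Cl}(F_n')| \cdot i(X_n)$ forces $|\mathrm{Cl}(F_n')| \leq C$ and $i(X_n) \leq C$.

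First I would pin down the reflex field. Since $[F_n':F]$ divides $r$, the constant field of $F_n'$ lies in $\FZ_{q^r}$. The Weil lower bound $h_{F_n'} \geq c(\sqrt{q}-1)^{2g_{F_n'}}$ for the function-field class number, combined with the bounded discrepancy between $|\mathrm{Cl}(F_n')|$ and $h_{F_n'}$ (a factor at most $\deg(\infty_n') \leq r$), forces the genus of $F_n'$ to be bounded. Riemann--Hurwitz then bounds the degree of the discriminant of $F_n'/F$, and the function-field analogue of Hermite's theorem---finitely many extensions of a function field of bounded degree and bounded discriminant---yields finitely many isomorphism classes of $F_n'/F$, each with finitely many $F$-embeddings into $\C$. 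After passing to a subsequence, I may thus assume $F_n' = F'$ is constant with common $r' := r_n'$. Next I would pin down the level: $i(X_n) \leq C/|\mathrm{Cl}(F')|$ is uniformly bounded, so each ${\cal K}_n'$ is, up to $\GL_{r'}(\AZ_{F'}^f)$-conjugation, an open subgroup of $\GL_{r'}(\widehat{A'})$ of bounded index, and a profinite group has only finitely many such. Adjusting the representing $b_n$ by an element of $\GL_{r'}(\AZ_{F'}^f)$ (which preserves $X_n$ by Corollary~\ref{cor:sub_equal}) and passing to a further subsequence, I may assume ${\cal K}_n' = {\cal K}'$ is fixed.

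The remaining task is to show that for fixed $(F', {\cal K}')$, only finitely many Drinfeld modular subvarieties of $S$ arise. The plan is to use the lattice description of Corollary~\ref{prop:sub_lattice}: in the case ${\cal K} = \GL_r(\hat{A})$, such subvarieties correspond to $\GL_{r'}(\AZ_{F'}^f)$-orbits of free $\hat{A}$-submodules $M = b(\hat{A}^r)$ of $\hat{A}$-rank $r$ in $(\AZ_{F'}^f)^{r'}$; the stabilizer of $M$ is ${\cal K}'$, so the bound on $i(X)$ translates to a uniform bound on the $\hat{A}$-index $[\widehat{A'} \cdot M : M]$. Since $\GL_{r'}(\AZ_{F'}^f)$ acts transitively on full-rank $\widehat{A'}$-lattices in $(\AZ_{F'}^f)^{r'}$, I can normalize $\widehat{A'} \cdot M = \widehat{A'}^{r'}$, after which $M$ is one of finitely many $\hat{A}$-submodules of $\widehat{A'}^{r'}$ of bounded index, a finite set even modulo $\GL_{r'}(\widehat{A'})$. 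For general ${\cal K}$, I would reduce to the maximal case by covering $S$ with a Drinfeld modular variety of level contained in $\GL_r(\hat{A})$ and invoking Lemma~\ref{lemma:changeK}. The hard part will be this final lattice-counting step, together with the bookkeeping required to carry the bound on $i(X_n)$ through the reduction from non-maximal ${\cal K}$ to ${\cal K} = \GL_r(\hat{A})$; the finitenesses for the reflex field and for the level are relatively routine applications of function-field arithmetic and the profinite structure of $\GL_{r'}(\widehat{A'})$.
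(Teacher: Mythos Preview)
Your approach is essentially the paper's: bound the genus of the reflex field via a class-number lower bound, hence reduce to finitely many reflex fields; then for fixed $F'$ reduce to ${\cal K}=\GL_r(\hat A)$ and count $\hat A$-lattices in $(\AZ_{F'}^f)^{r'}$ modulo $\GL_{r'}(\AZ_{F'}^f)$. One step of your outline, however, does not work as written and is in any case dispensable. The claim in your ``pin down the level'' paragraph---that a profinite group has only finitely many open subgroups of bounded index---is false in general and false for $\GL_{r'}(\widehat{A'})$: when $\mathrm{char}(F)\neq 2$, every prime $\pp'$ of $F'$ yields a surjection $\GL_{r'}(\widehat{A'}) \xrightarrow{\det} \widehat{A'}^{\,*} \to k(\pp')^* \to \{\pm 1\}$, hence an open index-$2$ subgroup, and these are pairwise non-conjugate in $\GL_{r'}(\AZ_{F'}^f)$ since conjugation preserves the image under~$\det$.

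The paper simply omits that step. With $F'$ fixed and $i(X_n)$ bounded, it normalizes $\widehat{A'}\cdot\Lambda_n=\widehat{A'}^{\,r'}$ and proves the crucial local estimate (Proposition~\ref{prop_gitter}): for each prime $\pp$ of $F$ one has $[\GL_{r'}(A'_\pp):\mathrm{Stab}_{\GL_{r'}(F'_\pp)}(\Lambda_{n,\pp})]\ge C\cdot[{A'_\pp}^{r'}:\Lambda_{n,\pp}]^{1/r}$ with $C$ depending only on $q$ and $r$. This is exactly the ``translation'' you assert without argument, and it is where the real work lies. Once $[{A'_\pp}^{r'}:\Lambda_{n,\pp}]\le D$ uniformly, one gets $\Lambda_{n,\pp}={A'_\pp}^{r'}$ whenever $|k(\pp)|>D$ and $\Lambda_{n,\pp}\supset\pp^N{A'_\pp}^{r'}$ at the finitely many remaining primes, leaving only finitely many $\Lambda_n$. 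So drop the level-pinning and make Proposition~\ref{prop_gitter} the centrepiece; the rest of your plan is correct and matches the paper.
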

\begin{proof} By Proposition~\ref{prop:degD}, it is enough to show
that the sequence 
\[ D(X_n) = i(X_n) \cdot |\mathrm{Cl}(F_n)| \] 
where $F_n$ is the reflex field of $X_n$ is unbounded.

The following two propositions imply that there are only finitely many
extensions $F'$ of $F$ of degree dividing $r$ and bounded class
number:

\begin{satz}
There are only finitely many finite extensions $F' \subset \C$ of $F$ of
fixed genus $g'$ and bounded degree.
\end{satz}
\begin{proof} By Hurwitz genus formula (see e.g. Theorem III.4.12
  in~\cite{St}) the degree of the different of $F' / F$ and therefore
  also the degree of the discriminant divisor of $F' / F$ is bounded
  as $F'$ runs over all finite separable extensions of $F$ of fixed
  genus and bounded degree. Hence, Theorem 8.23.5 in~\cite{Go} implies
  that there are only finitely many separable extensions of $F$ with
  fixed genus and bounded degree. Since each finite extension of $F$
  can be decomposed into a separable and a totally inseparable
  extension and each global function field has at most one totally
  inseparable extension of a given degree, the proposition follows.
\end{proof}

\begin{satz} \label{prop:Clg}
Let $F'$ be a function field of genus $g'$ with field of constants
$\FZ_{q'}$. Then
\[ |\mathrm{Cl}(F')| \geq \frac{(q' - 1)({q'}^{2g'} - 2g'{q'}^{g'} +
  1)}{2g'({q'}^{g' + 1} - 1)}.\]
\end{satz}
\begin{proof} Proposition 3.1 in \cite{Br1}.\end{proof}

Therefore, the sequence $D(X_n)$ is unbounded if the set of
reflex fields~$F_n$ is infinite. So it suffices to show unboundedness
of the predegree~$D(X_n)$ in a sequence of pairwise distinct Drinfeld modular 
subvarieties of $S$ with fixed reflex field. This follows from the 
next theorem.
\end{proof}

\begin{theorem}
For each sequence $(X_n)$ of pairwise distinct Drinfeld modular 
subvarieties of $S$ with fixed reflex field $F'$, the indices~$i(X_n)$
are unbounded.
\end{theorem}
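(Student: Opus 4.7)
The plan is to argue by contradiction: assume $i(X_n) \leq M$ for all $n$ in an infinite sequence of pairwise distinct Drinfeld modular subvarieties with reflex field $F'$, and deduce finiteness of such a set. First I would reduce to the case ${\cal K} = \GL_r(\hat A)$. After conjugating we may assume ${\cal K} \subset \GL_r(\hat A)$, and consider the canonical projection $\pi_1: S_{F,{\cal K}}^r \to S_{F,\GL_r(\hat A)}^r$. By Lemma~\ref{lemma:changeK} it sends Drinfeld modular subvarieties to Drinfeld modular subvarieties, and part (ii) of that lemma shows the preimage of each target Drinfeld modular subvariety is a union of at most $[\GL_r(\hat A):{\cal K}]$ Drinfeld modular subvarieties. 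Moreover, the inclusion ${\cal K} \subset \GL_r(\hat A)$ implies that the stabilizer ${\cal K}'_n = (b_n {\cal K} b_n^{-1}) \cap \GL_{r'}(\AZ_{F'}^f)$ is contained in its $\GL_r(\hat A)$-counterpart, so $i(X_n) \geq i(\pi_1(X_n))$. Hence it suffices to prove the theorem for ${\cal K} = \GL_r(\hat A)$.

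For this level Corollary~\ref{prop:sub_lattice} identifies $X_n$ with a $\GL_{r'}(\AZ_{F'}^f)$-orbit of free rank-$r$ $\hat A$-submodules of $(\AZ_{F'}^f)^{r'}$. I would pick representatives $\Lambda_n$ so that $\widehat{A'} \cdot \Lambda_n = \widehat{A'}^{r'}$, which is possible because all free rank-$r'$ $\widehat{A'}$-submodules of $(\AZ_{F'}^f)^{r'}$ lie in a single $\GL_{r'}(\AZ_{F'}^f)$-orbit. The stabilizer ${\cal K}'_n$ then sits inside the maximal compact $\GL_{r'}(\widehat{A'}) = \prod_{\pp'} \GL_{r'}(A'_{\pp'})$, and $i(X_n) = [\GL_{r'}(\widehat{A'}) : {\cal K}'_n]$, so the problem is to bound, for given $M$, the number of $\GL_{r'}(\AZ_{F'}^f)$-orbits of such $\Lambda$ with $[\GL_{r'}(\widehat{A'}) : \mathrm{Stab}(\Lambda)] \leq M$.

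Next I would localize at each prime $\pp$ of $F$: $\Lambda_{n,\pp} \subset (A'_\pp)^{r'}$ is a free $A_\pp$-submodule of rank $r$ satisfying $A'_\pp \cdot \Lambda_{n,\pp} = (A'_\pp)^{r'}$, and its stabilizer ${\cal K}'_{n,\pp}$ in $\prod_{\pp'|\pp} \GL_{r'}(A'_{\pp'})$ contributes the local factor to $i(X_n)$. The crucial local claim is: for each prime $\pp$ of $F$ and each $M$ there are only finitely many $\prod_{\pp'|\pp} \GL_{r'}(A'_{\pp'})$-orbits of such $\Lambda_{n,\pp}$ whose stabilizer has index $\leq M$; and there is a constant $C = C(M, r')$ such that for primes $\pp$ with $\min_{\pp'|\pp} |k(\pp')| > C$ the only such orbit is the standard one $\Lambda_{n,\pp} = (A'_\pp)^{r'}$. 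Granting this, the set of primes $\pp$ with $\min_{\pp'|\pp} |k(\pp')| \leq C$ is finite (only finitely many primes of a function field have bounded residue field size), at each of them there are finitely many local choices, and a global lattice in $(\AZ_{F'}^f)^{r'}$ is determined up to $\GL_{r'}(\AZ_{F'}^f)$-equivalence by its localizations; this yields only finitely many orbits, contradicting the existence of infinitely many distinct $X_n$.

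The hard part will be the local claim. The lower bound on the local index for non-standard $\Lambda_{n,\pp}$ should follow by reducing modulo the maximal ideals at the primes $\pp'|\pp$: the stabilizer of a non-standard $\Lambda_{n,\pp}$ maps to a proper subgroup of $\prod_{\pp'|\pp} \GL_{r'}(k(\pp'))$, whose index grows at least as a positive power of $|k(\pp')|$ (for $r' \geq 2$ this uses that the minimal index of a proper subgroup of $\mathrm{PGL}_{r'}(k(\pp'))$ is of order $|k(\pp')|^{r'-1}$; for $r' = 1$ one analyses proper orders $\mathcal{O} \subsetneq A'_{\pp'}$ and their unit groups, obtaining a lower bound of order $(|k(\pp')|-1)/(|k(\pp)|-1)$ already for the largest non-maximal order). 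Finiteness of orbits with bounded local index at a fixed prime then follows from the fact that the compact topologically finitely generated group $\prod_{\pp'|\pp}\GL_{r'}(A'_{\pp'})$ has only finitely many open subgroups of bounded index, combined with a descent argument distinguishing orbits sharing the same stabilizer.
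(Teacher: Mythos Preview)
Your global architecture matches the paper's: reduce to ${\cal K}=\GL_r(\hat A)$, invoke Corollary~\ref{prop:sub_lattice} to pass to $\GL_{r'}(\AZ_{F'}^f)$-orbits of $\hat A$-lattices $\Lambda_n\subset(\AZ_{F'}^f)^{r'}$, normalise so that $\widehat{A'}\cdot\Lambda_n=\widehat{A'}^{r'}$, localise, and show that bounded index forces non-standard local behaviour at only finitely many primes with finitely many options at each. The divergence is entirely in the local step, and there your argument has a real gap.

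Your lower bound for the local index rests on the claim that for non-standard $\Lambda_{n,\pp}$ the stabiliser maps to a \emph{proper} subgroup of $\prod_{\pp'|\pp}\GL_{r'}(k(\pp'))$ whose index is at least a positive power of $|k(\pp')|$, justified for $r'\ge 2$ by the minimal index of proper subgroups of $\mathrm{PGL}_{r'}(k(\pp'))$. But the stabiliser lives in $\GL_{r'}$, not $\mathrm{PGL}_{r'}$, and $\GL_{r'}(k(\pp'))$ has proper subgroups of index as small as $2$ (any nontrivial character of the determinant). So ``proper image'' alone does not give growth in $|k(\pp')|$; you would need to know much more about \emph{which} proper subgroup arises, and you have not argued this. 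Even the preliminary assertion that the image is proper for every non-standard $\Lambda_{n,\pp}$ is not justified. Similarly, your finiteness at a fixed prime (``finitely many open subgroups of bounded index plus a descent argument'') is incomplete: distinct orbits can share a stabiliser, and you give no mechanism to bound the number of orbits with a given stabiliser.

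The paper handles both issues at once with Proposition~\ref{prop_gitter}, which proves the uniform bound
\[
[\GL_{r'}(A'_\pp):\mathrm{Stab}_{\GL_{r'}(F'_\pp)}(\Lambda_{n,\pp})]\ \ge\ C\cdot[(A'_\pp)^{r'}:\Lambda_{n,\pp}]^{1/r}
\]
with $C=(1-1/q)^r$ depending only on $q$ and $r$. The proof compares $[\GL_{r'}(R'):H^*]$ with $[\Mat_{r'}(R'):H]$ for $H=\{T:T\Lambda\subset\Lambda\}$ via a reduction modulo $\mathfrak m^k$ and a counting lemma for $|\GL_{r'}|/|\Mat_{r'}|$, and then bounds $[\Mat_{r'}(R'):H]$ below by $[R'^{r'}:\Lambda]^{1/r}$ using the surjection $\Mat_{r'}(R')^r\to R'^{r'}/\Lambda$, $(T_i)\mapsto\sum T_ig_i$. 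This single inequality immediately gives (a) if $\Lambda_{n,\pp}\ne(A'_\pp)^{r'}$ then the local index is $\ge C\,|k(\pp)|^{1/r}$, so only finitely many primes can be non-standard, and (b) at each such prime the lattice index $[(A'_\pp)^{r'}:\Lambda_{n,\pp}]$ is bounded, hence $\Lambda_{n,\pp}\supset\pp^N(A'_\pp)^{r'}$ for bounded $N$, leaving finitely many choices. Your proposal would be repaired by replacing the residue-field subgroup argument with this lattice-index estimate.
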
 

\begin{proof}  We first note that we can assume w.l.o.g. that
the given compact subgroup ${\cal K}$ equals $\GL_r(\hat{A})$. Indeed, 
if ${\cal K}$ is
replaced by a compact open subgroup ${\cal L} \supset {\cal K}$ and
the $X_n$ by their images under the canonical projection
$\pi_1: S_{F,\cal K}^r \rightarrow S_{F,\cal L}^r$, the indices~$i(X_n)$
decrease by Definition~\ref{def:reflex_index}. 
Hence, we can assume that $\cal K$ is a
maximal compact open subgroup and therefore some conjugate
$h\GL_r(\hat{A})h^{-1}$ of $\GL_r(\hat{A})$. If we further replace the~$X_n$
by their images under the isomorphism $\pi_{h^{-1}}:
S_{F,\,h\GL_r(\hat{A})h^{-1}}^r \rightarrow S_{F,\,\GL_r(\hat{A})}^r$,
then the $i(X_n)$ do obviously not change because the $X_n$ are
the image of an inclusion from the
same $S_{F',\level}^{r'}(\C)$. Therefore, we can w.l.o.g. assume
${\cal K} = \GL_r(\hat{A})$.

For the following considerations, we
assume that $X_n = \iota_{F,\,b_n}^{F'}(S_{F'\!,{{\cal
      K}_{n}'}}^{r'}(\C))$ 
with $\AZ_F^f$-linear
isomorphisms $b_n: (\AZ_F^f)^r \rightarrow (\AZ_{F'}^f)^{r'}$. 
We denote by $\Lambda_n$
the $\hat{A}$-lattices~$b_n(\hat{A}^r)$ in~$(\AZ_{F'}^f)^{r'}$. 
By Corollary~\ref{prop:sub_lattice}, they are
determined up to and only up to the action of $\GL_{r'}(\AZ_{F'}^f)$ and
their orbits under the action of $\GL_{r'}(\AZ_{F'}^f)$ are pairwise
distinct.

We have the product decomposition 
$\Lambda_n = \prod_{\pp \neq \infty} \Lambda_{n,\pp} :=
\prod_{\pp \neq \infty} b_{n,\pp}(A_{\pp}^r)$, where
$\Lambda_{n,\pp} \subset {F'_{\pp}}^{r'}$ are free $A_{\pp}$-submodules
of rank $r$.  
The $A'_{\pp}$-modules $A'_{\pp} \cdot \Lambda_{n,\pp}$ are
finitely generated submodules of ${F'_{\pp}}^{r'}$ with ${F'_{\pp}}
\cdot \Lambda_{n,\pp} = {F'_{\pp}}^{r'}$, hence free of rank~$r'$ because
$A'_{\pp}$ is a direct product of principal ideal domains.
This implies 
that $\hat{A'} \cdot \Lambda_n$ is a free
$\hat{A'}$-submodule of $(\AZ_{F'}^f)^{r'}$ of rank $r'$. Since the
$\Lambda_n$ are determined up to and only up to the action of
$\GL_{r'}(\AZ_{F'}^f)$, we may therefore assume w.l.o.g. that $\hat{A'} \cdot
\Lambda_n = \hat{A'}^{r'}$ for all $n$.

Note that we have
\[ {\cal K}_{n}' = (b_n^{-1}\GL_r(\hat{A})b_n) \cap \GL_{r'}(\AZ_{F'}^f) =
 \mathrm{Stab}_{\GL_{r'}(\AZ_{F'}^f)} \Lambda_n. \]
Since $\hat{A'} \cdot \Lambda_n = \hat{A'}^{r'}$, these compact open
  subgroups of $\GL_{r'}(\AZ_{F'}^f)$ are all contained in the maximal
  compact subgroup $\GL_{r'}(\hat{A'}) =
  \mathrm{Stab}_{\GL_{r'}(\AZ_{F'}^f)}\hat{A'}^{r'}$. Hence, we
  can write the indices $i(X_n)$ as
\[ i(X_n) = [\GL_{r'}(\hat{A'}) :
  \mathrm{Stab}_{\GL_{r'}(\AZ_{F'}^f)}\Lambda_n] \]
and, using the above product decompositions, as $i(X_n) = \prod_{\pp \neq
  \infty}i_{n,\pp}$, where
\[ i_{n,\pp} = [\GL_{r'}(A'_{\pp}) : \mathrm{Stab}_{\GL_{r'}(F'_{\pp})} \Lambda_{n,\pp}].\]
For each $n$, almost all factors of this product are $1$ because
$\Lambda_{n,\pp} = {A'_{\pp}}^{r'}$ for almost all~$\pp$.

Since we assumed that $A'_{\pp} \cdot \Lambda_{n,\pp} =
{A'_{\pp}}^{r'}$, by the Proposition~\ref{prop_gitter} below, 
we get the estimates $i_{n,\pp} \geq C \cdot [{A'_{\pp}}^{r'}
 : \Lambda_{n,\pp}]^{1/r}$, where the constant $C$ is independent of
 $n$ and~$\pp$.

We now finish the proof by
assuming (by contradiction) that the sequence $(i(X_n))$ is bounded.
This implies by the above product decomposition of $i(X_n)$
and estimates of $i_{n,\pp}$ that \linebreak$[{A'_{\pp}}^{r'} :
  \Lambda_{n,\pp}] \leq D$ for all $n$ and $\pp$ for some uniform constant $D$.  

But, note that, as finite
$A_{\pp}$-module, ${A_{\pp}'}^{r'} / \Lambda_{n,\pp}$ is isomorphic to some product
\[ A_{\pp} / \pp^{m_1}A_{\pp} \times \cdots \times A_{\pp} /
\pp^{m_l}A_{\pp}.\]
If $\Lambda_{n,\pp} \not \supseteq \pp^N \cdot {A_{\pp}'}^{r'}$, we
have $m_i \geq N+1$ for some $i$ and therefore 
\[ |k(\pp)|^{N+1} \leq [{A_{\pp}'}^{r'} : \Lambda_{n,\pp}] \leq D.\]
In particular, we have $|k(\pp)| \leq D$ whenever $\Lambda_{n,\pp} \neq {A_{\pp}'}^{r'}$. Since
there are only finitely many primes~$\pp$ with $|k(\pp)| \leq D$, we conclude:
\begin{itemize} 
\item
There are finitely many primes $\pp_1,\ldots,\pp_k$ such that $\Lambda_{n,\pp} = {A_{\pp}'}^{r'}$
for all $n$ and $\pp \neq \pp_1,\ldots,\pp_k$.

\item
There is a $N \in \NZ$ such that, for all $\pp$ and $n$, the $A_{\pp}$-lattice
$\Lambda_{n,\pp}$ contains $\pp^N{A_{\pp}'}^{r'}$.
\end{itemize}
Since the quotients ${A_{\pp}'}^{r'} / \pp^N{A_{\pp}'}^{r'}$ are
finite, the second statement implies that for all $1 \leq i \leq k$ there are
only finitely many possibilities for $\Lambda_{n,\pp_i}$. As for $\pp
\neq \pp_1,\ldots,\pp_k$ the lattices $\Lambda_{n,\pp}$ are
independent of $n$, this implies that only finitely many
$\hat{A}$-lattices $\Lambda_n \subset (\AZ_{F'}^f)^{r'}$ occur, a
contradiction to our assumptions.
\end{proof}

\begin{satz}\label{prop_gitter}
Let $K$ be a complete field with respect to a discrete valuation $v$
with finite residue field containing $\FZ_q$ and let $R$ be the
corresponding discrete valuation ring with maximal ideal $\mm$. 
Let $K' := L_1 \times \cdots \times L_m$ with $L_i$ finite field extensions 
of~$K$ and $R' := S_1 \times \cdots \times S_m$ with $S_i \subset L_i$
the discrete valuation ring associated
to the unique extension of $v$ to $L_i$. Suppose that $r' \geq 1$ and set 
$r := r' \cdot \sum_{i=1}^m [L_i : K]$.

There is a constant $C > 0$ only depending on $q$ and $r$ such that, for
any free $R$-submodule $\Lambda \subset K'^{r'}$ of rank $r$ with $R'
\cdot \Lambda = {R'}^{r'}$, we have
\[ [\GL_{r'}(R') : 
    \mathrm{Stab}_{\GL_{r'}(K')}(\Lambda)] \geq C \cdot  [{R'}^{r'}
 : \Lambda]^{1/r}. \]

\end{satz}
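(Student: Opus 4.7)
The plan is to produce enough elements in the $\GL_{r'}(R')$-orbit of $\Lambda$. First I observe that any element of $\mathrm{Stab}_{\GL_{r'}(K')}(\Lambda)$ must preserve the $R'$-module $R' \cdot \Lambda = R'^{r'}$, and hence lies in $\GL_{r'}(R')$. The index in the statement therefore equals the cardinality of the orbit $\GL_{r'}(R') \cdot \Lambda$ in the set of $R$-sublattices of $R'^{r'}$, and the task reduces to bounding this orbit below by $C \cdot [R'^{r'}:\Lambda]^{1/r}$.

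By the elementary divisor theorem over the PID $R$, applied to the inclusion $\Lambda \subset R'^{r'}$ of free $R$-modules of common rank $r$, I obtain an $R$-basis $f_1, \ldots, f_r$ of $R'^{r'}$ and exponents $0 \leq a_1 \leq a_2 \leq \cdots \leq a_r$ such that $\pi^{a_i} f_i$ is an $R$-basis of $\Lambda$, where $\pi$ is a uniformizer of $R$. The hypothesis $R'\Lambda = R'^{r'}$ forces $a_1 = 0$, since otherwise $\Lambda \subset \pi R'^{r'}$ would yield $R'\Lambda \subsetneq R'^{r'}$. Setting $N := |R/\mm| \geq q$, we have $[R'^{r'}:\Lambda] = N^a$ with $a := \sum a_i$, and by pigeonhole the largest elementary divisor satisfies $a_r \geq a/r$.

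Next, since $\pi^{a_r} R'^{r'} \subset \Lambda$, the level-$a_r$ congruence kernel $\ker(\GL_{r'}(R') \to \GL_{r'}(R'/\pi^{a_r}R'))$ is contained in $\mathrm{Stab}(\Lambda)$. Consequently the orbit equals the $\GL_{r'}(\bar R')$-orbit of $\bar \Lambda := \Lambda/\pi^{a_r}R'^{r'}$ in $R'^{r'}/\pi^{a_r}R'^{r'}$, where $\bar R' := R'/\pi^{a_r}R'$. This reduces the proposition to a combinatorial estimate over the finite Artinian ring $\bar R'$, to which one can apply direct counting.

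For the final estimate, I produce $\gg N^{a/r}$ distinct orbit elements by perturbing $\bar\Lambda$ via matrices of the form $g = I + \pi^k X$ with $0 \leq k < a_r$ and $X \in \Mat_{r'}(\bar R')$. A direct computation in the $R$-basis $(f_i)$ characterizes the stabilization condition $g\bar\Lambda = \bar\Lambda$ as a system of linear congruences $c_{ij} \in \pi^{a_j - a_i - k} \bar R$ on the matrix coefficients of $X$ in this basis (imposed for each pair $(i,j)$ with $a_j > a_i + k$), which must also respect the $R'$-linearity constraint encoded by the inclusion $\Mat_{r'}(\bar R') \hookrightarrow \Mat_r(\bar R)$. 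The main obstacle is the combinatorial bookkeeping: by carefully choosing the level $k$ and exploiting the transversality between the congruence conditions and the $R'$-linear subspace, one verifies that the index of the stabilizer in the resulting group of perturbations is at least $C \cdot N^{a/r}$ for a constant $C$ depending only on $q$ and $r$ and not on the fine arithmetic structure of the extensions $L_i/K$.
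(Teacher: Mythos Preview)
Your reduction steps are sound: the stabilizer does lie in $\GL_{r'}(R')$, the elementary-divisor decomposition of $\Lambda \subset R'^{r'}$ over $R$ is available, $a_1 = 0$ follows from $R'\Lambda = R'^{r'}$, and the passage to the finite quotient modulo $\pi^{a_r}$ is legitimate. The gap is entirely in the last paragraph. You assert that ``by carefully choosing the level $k$ and exploiting the transversality between the congruence conditions and the $R'$-linear subspace, one verifies'' the required bound---but this transversality is precisely the content of the proposition, and you have not established it. The congruence conditions on the $r \times r$ matrix of $X$ are stated in the $R$-basis $(f_i)$, while the $R'$-linearity constraint cuts out an $R$-submodule of $\Mat_r(\bar R)$ of rank $r'r$ whose position relative to $(f_i)$ depends on both the lattice $\Lambda$ and the ramification of $K'/K$. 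There is no a priori reason these two submodules should be transverse in any useful sense, and even if some weak form of this holds, extracting a constant depending only on $q$ and $r$ (and not on $m$, the $[L_i:K]$, or the particular $\Lambda$) requires a genuine argument that you have not supplied. As it stands, the phrase ``one verifies'' is a placeholder for the entire proof.

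The paper's proof circumvents the transversality question entirely. It introduces the $R$-order $H := \{T \in \Mat_{r'}(R') : T\Lambda \subseteq \Lambda\}$ with $H^* = \mathrm{Stab}_{\GL_{r'}(R')}(\Lambda)$, reduces modulo $\mm^k$ for $k$ with $\mm^k R'^{r'} \subseteq \Lambda$, and uses the elementary bound $|\GL_{r'}(R'/\mm^k R')| \geq (1-1/q)^r\,|\Mat_{r'}(R'/\mm^k R')|$ to obtain $[\GL_{r'}(R'):H^*] \geq C \cdot [\Mat_{r'}(R'):H]$. The decisive second step is the inequality $[\Mat_{r'}(R'):H]^r \geq [R'^{r'}:\Lambda]$, proved by noting that the $R$-module map
\[
\Mat_{r'}(R')^r \longrightarrow R'^{r'}/\Lambda,\qquad (T_1,\ldots,T_r)\longmapsto \textstyle\sum_i T_i g_i \bmod \Lambda
\]
(with $g_1,\ldots,g_r$ an $R$-basis of $\Lambda$) is surjective---because $R'\Lambda = R'^{r'}$ means the $g_i$ generate $R'^{r'}$ over $R'$---and has kernel containing $H^r$. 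This surjection is the idea your argument lacks: it bounds the additive index $[\Mat_{r'}(R'):H]$ directly against $[R'^{r'}:\Lambda]$ without ever choosing an $R$-basis of $R'^{r'}$ or comparing two incompatible coordinate systems.
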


\begin{proof}
We introduce the notation
\[ H := \{ T \in \mathrm{Mat}_{r'}(R') : T \cdot \Lambda \subseteq
\Lambda \}. \]
This set of matrices is an $R$-subalgebra of
$\mathrm{Mat}_{r'}(R')$ with $H^{*} =
\mathrm{Stab}_{\GL_{r'}(R')}(\Lambda)$.

Note that, if $g_1,\ldots,g_r$ is a $R$-basis of $\Lambda$, then $\Lambda = \xi(R^r) \subset {K'}^{r'}$ for
\[ \xi: \begin{array}{ccc} K^r & \longrightarrow & {K'}^{r'} \\
   (x_1,\ldots,x_r) & \longmapsto & x_1g_1 + \cdots + x_rg_r\end{array}. \]
Since $K$ is complete, $\xi$ is a homeomorphism (cf. Proposition 4.9 in
\cite{Ne}). This implies that $\Lambda \subset {R'}^{r'}$ is open.

Hence, there is a $k \in \NZ$ such that $\mm^k{R'}^{r'} \subseteq
\Lambda$. Therefore $\mathrm{Mat}_{r'}(\mm^kR') \subseteq H$
and 
\[ H / \mathrm{Mat}_{r'}(\mm^kR') = \{ \overline{T} \in \Mat_{r'}(R' / \mm^kR') :
\overline{T} \cdot (\Lambda / \mm^k {R'}^{r'}) \subseteq \Lambda / \mm^k {R'}^{r'} \}\]
if we identify $\Mat_{r'}(R' / \mm^kR')$ with $\Mat_{r'}(R') /
\Mat_{r'}(\mm^kR')$. For the stabilizer of $\Lambda / \mm^k {R'}^{r'}$
under the action of $\GL_{r'}(R' / \mm^kR')$, this means that
\[ (H / \mathrm{Mat}_{r'}(\mm^kR'))^* = \mathrm{Stab}_{\GL_{r'}(R' /
  \mm^kR')}(\Lambda / \mm^k{R'}^{r'}).\]
The orbit of $\Lambda$ under $\GL_{r'}(R')$ is in bijective
correspondence with the orbit of $\Lambda / \mm^k{R'}^{r'}$ under
$\GL_{r'}(R' / \mm^kR')$ via
\[ T \cdot \Lambda \longmapsto (T \cdot \Lambda) / \mm^k{R'}^{r'}. \]
Therefore the above formulas for the corresponding stabilizers
give us the following estimate:
\[ [\GL_{r'}(R') : H^*] = [\GL_{r'}(R' / \mm^k R'):(H / \mathrm{Mat}_{r'}(\mm^k
R'))^*] \geq \frac{|\GL_{r'}(R' / \mm^k R')|}{[H : \Mat_{r'}(\mm^k
    R')]} \]
\begin{lemma}\label{lemma_mat} There is a constant $C$ only depending on $q$ and $r$ (namely $C = (1-\frac{1}{q})^r$)
  such that 
 \[ |\GL_{r'}(R' / \mm^k R')| \geq C \cdot |\Mat_{r'}(R' /
  \mm^k R')|.\]
\end{lemma}
\begin{proof} By definition of $R'$ the quotient $R' / \mm^k R'$ is
isomorphic to $S_1 / \mm^k S_1 \times \cdots \times S_m / \mm^k S_m$,
hence
\begin{eqnarray*} \GL_{r'}(R' / \mm^k R') & \cong & \GL_{r'}(S_1 / \mm^k S_1) \times
\cdots \times \GL_{r'}(S_m / \mm^k S_m) \\
\Mat_{r'}(R' / \mm^k R') & \cong & \Mat_{r'}(S_1 / \mm^k S_1) \times
\cdots \times \Mat_{r'}(S_m / \mm^k S_m). 
\end{eqnarray*}
Now note that, for any $l \geq 1$ and any discrete valuation ring $U$ 
with maximal ideal~$\nn$ and residue field $\FZ_{q'}$ containing $\FZ_q$, a matrix $T \in \Mat_{r'}(U)$ is invertible if and only if its
reduction modulo $\nn^l$ is invertible in $\Mat_{r'}(U / \nn^l)$. In
particular, $\GL_{r'}(U / \nn^l)$ exactly consists of the matrices
with reduction modulo $\nn$ lying in $\GL_{r'}(U / \nn)$. As the
fibers of the projection $\Mat_{r'}(U / \nn^l) \rightarrow \Mat_{r'}(U
/ \nn)$ have all cardinality $|\nn / \nn^l|^{{r'}^2} = {q'}^{(l-1){r'}^2}$, we
get
\begin{eqnarray*} 
|\GL_{r'}(U / \nn^l)| & = & {q'}^{(l-1){r'}^2}|\GL_{r'}(\FZ_{q'})| \\
                      & = &
                      {q'}^{(l-1){r'}^2}({q'}^{r'}-1)({q'}^{r'}-q')
                      \cdots ({q'}^{r'} - {q'}^{r'-1}) \\
                      & \geq & {q'}^{l{r'}^2}\left(1 - \frac{1}{q}\right)^{r'} \\
                      & = & \left(1 - \frac{1}{q}\right)^{r'}|\Mat_{r'}(U / \nn^l)|.
\end{eqnarray*}
Since $m \leq r / r'$, we have in total
\[ |\GL_{r'}(R' / \mm^k R')| \geq \left(1 - \frac{1}{q}\right)^{mr'}|\Mat_{r'}(R'
/ \mm^k R')| \geq C \cdot |\Mat_{r'}(R' / \mm^k R')| \]
with $C = \left(1 - \frac{1}{q}\right)^{r}$. 
\end{proof}

By Lemma~\ref{lemma_mat} and the estimate before, we have
\[ [\GL_{r'}(R') : H^*] \geq C \cdot \frac{[\Mat_{r'}(R') :
  \Mat_{r'}(\mm^k R')]}{[H : \Mat_{r'}(\mm^k R')]} = C \cdot
  [\Mat_{r'}(R') : H]. \]
To finish the proof of Proposition~\ref{prop_gitter}, we consider
  a $R$-basis $g_1,\ldots,g_r$ of $\Lambda$ and the $R$-module homomorphism
\[ \begin{array}{ccc} \Mat_{r'}(R')^r & \longrightarrow & {R'}^{r'} / \Lambda \\
  (T_1,\ldots,T_r) & \longmapsto & (T_1 \cdot g_1 + \cdots + T_r \cdot
  g_r) \mod \Lambda. \end{array} \]
It is surjective and its kernel contains $H^r$. Therefore, we have
\[ [\Mat_{r'}(R') : H]^r = [\Mat_{r'}(R')^r : H^r] \geq [{R'}^{r'} : \Lambda] \]
and in total
\[ [\GL_{r'}(R') : \mathrm{Stab}_{\GL_{r'}(K')}(\Lambda)] 
   \geq C \cdot [\Mat_{r'}(R') : H] \geq C \cdot
   [{R'}^{r'} : \Lambda]^{1/r}. \qedhere \] 
\end{proof}

\section{Zariski density of Hecke orbits}\label{ch:density}
In the whole section, $S = S_{F,\cal K}^r$ denotes a Drinfeld modular
variety and $C$ a set of representatives in $\GL_r(\AZ_F^f)$ for
$\GL_r(F) \setminus \GL_r(\AZ_F^f) / {\cal K}$. We use the description
of the irreducible components of $S$ over $\C$ given in
Proposition~\ref{theorem_irrcomp}: We let $Y_h$ be the
irreducible component of $S$ over $\C$ corresponding to $h \in C$ and
identify its $\C$-valued points $Y_h(\C) \subset \GL_r(F) \setminus
(\Omega_F^r \times \GL_r(\AZ_F^f) / {\cal K})$ with $\Gamma_h \setminus
\Omega_F^r$ where $\Gamma_h := h{\cal K}h^{-1} \cap \GL_r(F)$
via the isomorphism from Proposition~\ref{theorem_irrcomp}. 
\subsection{$T_g + T_{g^{-1}}$-orbits}
For $g \in \GL_r(\AZ_F^f)$ and closed subvarieties $Z \subset
S$ we define
\[ (T_g + T_{g^{-1}})(Z) := T_g(Z) \cup T_{g^{-1}}(Z), \]
and recursively
\begin{eqnarray*}
 (T_g + T_{g^{-1}})^0(Z) & := & Z \\
 (T_g + T_{g^{-1}})^n(Z) & := & (T_g + T_{g^{-1}})\big((T_g +
    T_{g^{-1}})^{n-1}(Z)\big),\ n \geq 1. 
\end{eqnarray*}

\begin{definition}
For a geometric point $x \in S(\C)$ and $g \in \GL_r(\AZ_F^f)$, the union
\[ T_g^\infty(x) := \bigcup_{n \geq 0}(T_g + T_{g^{-1}})^n(x)
 \subset S(\C) \]
is called the $T_g + T_{g^{-1}}$-\emph{orbit} of $x$.
\end{definition}
Note that $T_g^\infty(x)$ is the smallest subset of $S(\C)$
containing $x$ which is mapped into itself under $T_g$ and
$T_{g^{-1}}$.

We now give an explicit description of~the intersection of
$T_g^\infty(x)$ with the irreducible components of $S$ over $\C$ for
$x \in S(\C)$ and $g \in \GL_r(\AZ_F^f)$.

\begin{satz} \label{satz:hecke_explizit}
Let $h_1,h_2 \in C$ and assume that $x \in Y_{h_1}(\C)$
with $x = [\omega] \in \Gamma_{h_1} \setminus \Omega_F^r$. Then the
intersection of $T_g^\infty(x)$ with $Y_{h_2}(\C)$ is given by
\[ T_g^\infty(x) \cap Y_{h_2}(\C) = \{ [T\omega] \in \Gamma_{h_2} \setminus \Omega_F^r \,:\,T \in
h_2\langle{\cal K}g{\cal K}\rangle h_1^{-1} \cap \GL_r(F) \}, \]
where $\langle{\cal K}g{\cal K}\rangle$ denotes the subgroup of
$\GL_r(\AZ_F^f)$ generated by the double coset ${\cal K}g{\cal K}$.
\end{satz}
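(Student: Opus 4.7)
The plan is to reduce everything to a statement about right multiplication of $h_1$ by the subgroup $\langle{\cal K}g{\cal K}\rangle$ on the adelic factor, and then translate back to the description of $Y_{h_2}(\C)$ as $\Gamma_{h_2}\setminus\Omega_F^r$ given by Proposition~\ref{theorem_irrcomp}.

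First I would compute $T_g(y)$ for a single point $y=[(\omega,h)]\in S(\C)$. Using the description $\pi_1\colon[(\omega,h')]\mapsto[(\omega,h')]$ and $\pi_g\colon[(\omega,h')]\mapsto[(\omega,h'g^{-1})]$ on $S_{F,{\cal K}_g}^r(\C)$ (with ${\cal K}_g = {\cal K}\cap g^{-1}{\cal K}g$), and choosing coset representatives of ${\cal K}/{\cal K}_g$, one obtains
\[ T_g(y) = \{[(\omega,h')]: h' \in h\cdot{\cal K}g^{-1}{\cal K}\}, \]
and symmetrically $T_{g^{-1}}(y)$ corresponds to $h\cdot{\cal K}g{\cal K}$. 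So $(T_g+T_{g^{-1}})(y)$ is indexed by $h\cdot M$ with $M := {\cal K}g{\cal K}\cup{\cal K}g^{-1}{\cal K}$.

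Next I would iterate, noting that $M$ is invariant under left and right multiplication by ${\cal K}$, so the composition with another step of $T_g + T_{g^{-1}}$ multiplies on the right by $M$ again. An induction on $n$ gives
\[ (T_g+T_{g^{-1}})^n(x) = \{[(\omega,h_1\alpha)]:\alpha\in M^n\}, \]
and therefore $T_g^\infty(x)$ corresponds to $h_1\cdot\bigcup_{n\ge 0}M^n$. Since $M^{-1}=M$ (because $({\cal K}g{\cal K})^{-1}={\cal K}g^{-1}{\cal K}$) and $1\in({\cal K}g{\cal K})({\cal K}g^{-1}{\cal K})\subset M^2$, the union $\bigcup_{n\ge 0}M^n$ is precisely the subgroup $\langle{\cal K}g{\cal K}\rangle$ of $\GL_r(\AZ_F^f)$, and it already absorbs ${\cal K}$ on both sides.

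Finally I would intersect with $Y_{h_2}(\C)$. By Proposition~\ref{theorem_irrcomp}, a point $[(\omega,h_1\alpha)]$ lies in $Y_{h_2}(\C)$ precisely when $h_1\alpha\in\GL_r(F)\cdot h_2\cdot{\cal K}$, i.e.\ when $Th_2k=h_1\alpha$ for some $T\in\GL_r(F)$ and $k\in{\cal K}$; solving for $T$ gives $T=h_2k^{-1}\alpha^{-1}h_1^{-1}$, and under the identification of Proposition~\ref{theorem_irrcomp} this point corresponds to $[T^{-1}\omega'']=\cdots$, or equivalently, setting $T':=T^{-1}$ and using that the $\GL_r(F)$-action is $T\cdot\overline\omega = \overline{\omega\circ T^{-1}}$, the point corresponds to $[T\omega]\in\Gamma_{h_2}\setminus\Omega_F^r$ with $T=h_2k^{-1}\alpha^{-1}h_1^{-1}$. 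As $\alpha$ ranges over $\langle{\cal K}g{\cal K}\rangle$ and $k$ over ${\cal K}$, the element $T$ ranges over $h_2\cdot{\cal K}\cdot\langle{\cal K}g{\cal K}\rangle\cdot h_1^{-1}=h_2\langle{\cal K}g{\cal K}\rangle h_1^{-1}$, yielding the stated description. The only point requiring care is the bookkeeping of left/right conventions (the $\GL_r(F)$-action on $\Omega_F^r$ is by $\omega\circ T^{-1}$, while on $\GL_r(\AZ_F^f)$ by left multiplication), together with the verification that $\langle{\cal K}g{\cal K}\rangle$ is symmetric and contains ${\cal K}$; no serious obstacle appears.
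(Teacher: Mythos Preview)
Your approach is essentially the same as the paper's: both describe $T_g^\infty(x)$ on the adelic side as the set of $[(\omega,h_1s)]$ with $s\in\langle{\cal K}g{\cal K}\rangle$, then translate through the identification of Proposition~\ref{theorem_irrcomp}. One bookkeeping slip: from $Th_2k=h_1\alpha$ you get $T=h_1\alpha k^{-1}h_2^{-1}$, not $h_2k^{-1}\alpha^{-1}h_1^{-1}$; the point in $\Gamma_{h_2}\setminus\Omega_F^r$ is then $[T^{-1}\omega]$ with $T^{-1}=h_2k\alpha^{-1}h_1^{-1}$, which still lands in $h_2\langle{\cal K}g{\cal K}\rangle h_1^{-1}$ since $k,\alpha^{-1}\in\langle{\cal K}g{\cal K}\rangle$, so your conclusion is unaffected.
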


\begin{proof} By assumption, we have $x = [(\omega, h_1)] \in
\GL_r(F) \setminus \big( \Omega_F^r \times \GL_r(\AZ_F^f) / {\cal K}
\big)$. Hence, by Definition~\ref{def:hecke} and the recursive definition of
$(T_g + T_{g^{-1}})^n(x)$, the elements of
  $T_g^\infty(x)$ are exactly those of the form $[(\omega, 
h_1k_1g_1k_2g_2\cdots k_ng_n)]$
with $n \geq 0$, $k_i \in {\cal K}$ and $g_i \in \{g,g^{-1}\}$.
Hence, an element $y \in T_g^\infty(x) \cap Y_{h_2}(\C)$ can be written
as $y=[(\omega,h_1s)]$ with $s \in \langle {\cal K}g{\cal K} \rangle $.
Since $y$ lies in $Y_{h_2}$, there exist $T \in \GL_r(F)$ and $k \in 
\cal K$ with $Th_1sk = h_2$. Therefore 
\[ y = [(\omega, h_1s)] = [(T\omega, Th_1sk)] = [(T\omega, h_2)] \] 
is equal to $[T\omega] \in \Gamma_{h_2} \setminus
\Omega_F^r$, where $T \in h_2\langle{\cal K}g{\cal K}\rangle
h_1^{-1} \cap \GL_r(F)$.

Conversely, an element $[T\omega] \in \Gamma_{h_2} \setminus \Omega_F^r$ with
$T=h_2sh_1^{-1} \in h_2\langle {\cal K}g{\cal K} \rangle h_1^{-1} \cap
\GL_r(F)$ is equal to
\[ [(T\omega, h_2)] = [(\omega,T^{-1}h_2)] = [(\omega,
h_1s^{-1}h_2^{-1}h_2)] = [(\omega, h_1s^{-1})] \]
with $s^{-1} \in \langle {\cal K}g{\cal K} \rangle$, hence lies in
$T_g^\infty(x) \cap Y_{h_2}(\C)$.
\end{proof}

\subsection{Zariski density}
We give a sufficient condition for a subset $M \subset S(\C)$ to be
Zariski dense in one irreducible component $Y_h$ of $S$ over $\C$. 
Recall that, for a place
$\pp \neq \infty$ of~$F$, by $\AZ_F^{f,\,\pp}$ we denote the adeles
 outside $\infty$ and $\pp$.

\begin{satz} \label{satz:zardensity_subsets}
Let $M$ be a subset of $S(\C)$ contained in an irreducible component
$Y_h$ of $S$ over $\C$ for $h \in C$ and suppose that $M$ contains an element 
$x=[\omega] \in Y_h(\C) = \Gamma_h \setminus \Omega_F^r$ such that
there exists a place~$\pp \neq \infty$ of $F$ and an open subgroup
${\cal K'} \subset \GL_r(\AZ_F^{f,\,\pp})$ with
\[ M' := \{[T\omega] \in  \Gamma_h \setminus \Omega_F^r\ :\ T \in (\mathrm{SL}_r(F_{\pp}) \times {\cal K'})
\cap \mathrm{GL}_r(F) \} \subset M. \]
Then $M$ is Zariski dense in $Y_h$.
\end{satz}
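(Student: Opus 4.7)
For $r=1$ both $\Omega_F^r$ and $Y_h$ reduce to a single point and the statement is vacuous, so I assume $r\geq 2$ and show the stronger statement that already $M'\subset M$ is Zariski dense in $Y_h$. The first step is to rewrite the group $\Lambda:=(\SL_r(F_\pp)\times{\cal K}')\cap\GL_r(F)$: an element $T\in\GL_r(F)$ lies in $\Lambda$ iff $\det T=1$ (forced by $T\in\SL_r(F_\pp)$ together with the injection $F\hookrightarrow F_\pp$) and the image of $T$ in $\GL_r(\AZ_F^{f,\,\pp})$ lies in~${\cal K}'$, so $\Lambda$ is the diagonal intersection $\SL_r(F)\cap(\SL_r(F_\infty)\times{\cal K}'')$ with ${\cal K}'':={\cal K}'\cap\SL_r(\AZ_F^{f,\,\pp})$ open in $\SL_r(\AZ_F^{f,\,\pp})$. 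Strong approximation for $\SL_r$ away from the place $\pp$ (cf.~\cite{Pr}) says $\SL_r(F)$ is dense in $\SL_r(F_\infty)\times\SL_r(\AZ_F^{f,\,\pp})$, whence the projection of $\Lambda$ to $\SL_r(F_\infty)$ is dense. Since the orbit map $\SL_r(F_\infty)\to\Omega_F^r$, $g\mapsto g\omega$, is the restriction of the algebraic action of $\SL_r(\C)$ on $\PZ^{r-1}(\C)$ and is in particular continuous in the rigid analytic topology, $\Lambda\omega$ is then rigid-analytically dense in $\SL_r(F_\infty)\cdot\omega\subset\Omega_F^r$.

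Next I would show that $\SL_r(F_\infty)\cdot\omega$ is Zariski dense in $\PZ^{r-1}(\C)$. Because $F_\infty$ is an infinite subfield of $\C$ and $\SL_r$ is geometrically irreducible, $\SL_r(F_\infty)$ is Zariski dense in $\SL_r(\C)$; the orbit map $\phi:\SL_r(\C)\twoheadrightarrow\PZ^{r-1}(\C)$, $g\mapsto g\omega$, is surjective by transitivity, so $\phi^{-1}\bigl(\overline{\phi(\SL_r(F_\infty))}^{\mathrm{Zar}}\bigr)$ is a Zariski closed subset of $\SL_r(\C)$ containing the Zariski dense set $\SL_r(F_\infty)$ and must therefore equal $\SL_r(\C)$, forcing $\overline{\phi(\SL_r(F_\infty))}^{\mathrm{Zar}}=\PZ^{r-1}(\C)$.

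For the final step I assume, towards a contradiction, that $M'$ is not Zariski dense in $Y_h$, set $Z:=\overline{M'}^{\mathrm{Zar}}\subsetneq Y_h$, and let $d:=\dim Z<r-1$. Since the uniformization $q:\Omega_F^r\to\Gamma_h\setminus\Omega_F^r=Y_h(\C)$ is a local rigid-analytic isomorphism, $\tilde Z:=q^{-1}(Z)\subsetneq\Omega_F^r$ is a closed rigid-analytic subvariety of pure dimension~$d$. It contains $\Lambda\omega$ and, being closed, also its rigid-analytic closure, which by the first paragraph contains $\SL_r(F_\infty)\cdot\omega$. Taking Zariski closures in $\PZ^{r-1}(\C)$ then yields $\overline{\tilde Z}^{\mathrm{Zar}}\supset\overline{\SL_r(F_\infty)\cdot\omega}^{\mathrm{Zar}}=\PZ^{r-1}(\C)$, whereas by a rigid GAGA-type fact the Zariski closure in $\PZ^{r-1}(\C)$ of a pure-dimensional closed rigid-analytic subvariety of the admissible open $\Omega_F^r$ has the same dimension as the subvariety itself. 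This gives $r-1=\dim\PZ^{r-1}(\C)\leq d<r-1$, a contradiction.

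\textbf{The main obstacle} will be precisely this last dimension comparison: a general analytic subset of an algebraic variety can be Zariski dense without having full analytic dimension (for example an infinite discrete subset of $\mathbb{A}^1$), so one genuinely needs that $\tilde Z$ is a pure-dimensional closed analytic subvariety, a property inherited from $Z$ via the \'etale-like nature of $q$, together with the rigid GAGA statement that dimension is preserved under Zariski closure for such subvarieties.
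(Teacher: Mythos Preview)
Your argument is correct up to and including the inclusion $\SL_r(F_\infty)\cdot\omega\subset\tilde Z$, but the final step fails: the ``rigid GAGA-type fact'' you invoke is simply false, and your own example already refutes your proposed fix. An infinite discrete subset of $\Omega_F^r$ \emph{is} a pure $0$-dimensional closed rigid-analytic subvariety, and its Zariski closure in $\PZ^{r-1}(\C)$ can perfectly well be all of $\PZ^{r-1}(\C)$. Positive dimension does not help: the graph of any transcendental entire rigid-analytic function on a disk is a pure $1$-dimensional closed analytic subvariety with $2$-dimensional Zariski closure. In your situation $\tilde Z=q^{-1}(Z)$ is $\Gamma_h$-invariant and typically consists of infinitely many $d$-dimensional sheets; there is no reason their union should have $d$-dimensional Zariski closure in $\PZ^{r-1}(\C)$, so the contradiction $r-1\le d$ does not follow.

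The paper closes the argument by a local tangent-space computation that avoids any global comparison of analytic and Zariski closures. Writing $Y$ for the Zariski closure of $M'$ in $Y_h$, one first uses that $(\SL_r(F_\pp)\times{\cal K}')\cap\GL_r(F)$ is a group to translate and assume $x=[\omega]$ is a smooth point of $Y$. Then, rather than invoking strong approximation, one writes down explicit elementary matrices $A_{ik}=\id+(N/\pi^k)E_{r,i}\in\SL_r(F)$, where $\pp^l=(\pi)$ and $N\in A\setminus\pp$ is chosen so that the principal congruence subgroup $K'(N)$ lies in ${\cal K}'$; then $A_{ik}\in\SL_r(F_\pp)\times{\cal K}'$, so $[A_{ik}\omega]\in M'\subset Y(\C)$. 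In affine coordinates $A_{ik}\omega$ differs from $\omega$ only in the $i$-th coordinate, by $-N/\pi^k\to 0$. Thus on each of the $r-1$ coordinate lines through $\omega$, the analytic variety $Y^{\mathrm{an}}$ contains a sequence accumulating at $\omega$; the rigid-analytic identity theorem on a disk forces a whole neighbourhood of $\omega$ on each such line to lie in $Y^{\mathrm{an}}$, whence $\dim_{\C}T_xY=r-1$ and $Y=Y_h$. Your strong-approximation step actually proves more than is needed, and one can indeed finish from $\SL_r(F_\infty)\cdot\omega\subset\tilde Z$ --- but only via this same local tangent-space reasoning, not via the global dimension-preservation claim.
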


\begin{proof} We denote the Zariski closure of $M'$ by $Y$.
It is enough to show that $Y(\C) = Y_h(\C)$. As the nonsingular locus 
$Y^{\mathrm{ns}}$ of $Y$ over $\C$ is Zariski open and dense in
$Y$ (Theorem I.5.3 in~\cite{Ha}), the intersection 
$Y^{\mathrm{ns}}(\C) \cap M'$ is non-empty. Since 
$(\mathrm{SL}_r(F_{\pp}) \times {\cal K'}) \cap \mathrm{GL}_r(F)$ is 
a subgroup of $\mathrm{GL}_r(F)$,
we can therefore assume that $x = [\omega]$
lies in $Y^{\mathrm{ns}}(\C)$. Hence it is enough to show that the tangent
space $T_xY$ of $Y$ at $x$ is of dimension $r-1 = \dim S$.

Since ${\cal K'}$ is open in $\GL_r(\AZ_F^{f,\,\pp})$, there is an $N \in
A$ with $N \not\in \pp$ such that $K'(N) \subset {\cal K'}$, where
$K'(N)$ denotes the principal congruence subgroup modulo~$N$ of
$\GL_r(\AZ_F^{f,\,\pp})$. Now let $l \geq 1$ such that 
$\pp^l = (\pi)$ is a principal
ideal of $A$ and consider for $1 \leq i \leq r-1$ and $k \geq 1$ the matrices
\[ A_{ik} := \left( \begin{array}{ccccc} 1 &&&& \\ &\ddots &&& \\ &&1&&\\
   &&&\ddots& \\ &&\frac{N}{\pi^k}&&1 \end{array} \right) \ \ \in \mathrm{SL}_r(F), \]
with the entry $\frac{N}{\pi^k}$ in the ith column.
As elements of 
$\GL_r(\AZ_F^f)$ (diagonally embedded) they lie in
$\mathrm{SL}_r(F_{\pp}) \times K'(N) \subset \mathrm{SL}_r(F_{\pp})
\times {\cal K'}$. Hence, for all $1 \leq i \leq r-1$ and $k \geq 1$, 
$[A_{ik}\omega]$ lies in $M' \subset Y(\C)$.

We now view $\Omega_F^r$ as a subset of $\AZ^{r-1}(\C)$ by identifying
$[\omega_1:\cdots:\omega_{r-1}:1]$ with $(\omega_1,\ldots,\omega_{r-1})$ 
(note that the $r$-th projective coordinate $\omega_r$ of an 
arbitrary element of $\Omega_F^r$ can be assumed to be $1$ because 
the $F_{\infty}$-rational hyperplane $\omega_r = 0$ does not belong to
$\Omega_F^r$). Assume that we have $\omega =
(\omega_1,\ldots,\omega_{r-1})$
in this identification. Then, using~\eqref{eq:actionGLrF}, we see that
\[ A_{ik}\omega = (\omega_1, \ldots,\omega_i -
\frac{N}{\pi^k},\ldots,\omega_{r-1}) \]
for all $1 \leq i \leq r-1$ and $k \geq 1$. Note that 
$\omega_i - \frac{N}{\pi^k}$ converges to $\omega_i$ in $\C$ for $k
\rightarrow \infty$ and that $\{[A_{ik}\omega]\}_{k \geq 1} \subset Y(\C)$  
for all $1 \leq i \leq r-1$. Since $Y(\C) \subset 
Y_h(\C) = \Gamma_h \setminus \Omega_F^r$ is closed
in the rigid analytic topology, it follows that
there is an $\varepsilon > 0$ such that for all $1 \leq i \leq r-1$
and $c \in \C$ with $|c|_{\infty} < \varepsilon$
\[ [(\omega_1, \ldots,\omega_i + c, \ldots, \omega_{r-1})] \in Y(\C). \] 
This implies $\dim T_xY = r-1$ and $Y(\C) = Y_h(\C)$.
\end{proof}

Now let $\pp \neq
\infty$ be a place of $F$ and $g \in \GL_r(\AZ_F^f)$ trivial outside
$\pp$, i.e., $g := (1,\ldots,g_{\pp},\ldots,1)$ for some 
$g_{\pp} \in \GL_r(F_{\pp})$. Using
Proposition~\ref{satz:zardensity_subsets},
we prove a sufficient condition
for the $T_g + T_{g^{-1}}$-orbit $T_g^\infty(x)$ to be Zariski dense in the
irreducible component of $S$ over $\C$ containing $x$. This result is a
generalization of Theorem~4.11 in~$\cite{Br2}$.

\begin{theorem} \label{th:heckedensity}
Assume that the image of the cyclic subgroup $\langle g_{\pp} \rangle
\subset \GL_r(F_{\pp})$
in $\mathrm{PGL}_r(F_{\pp})$ is unbounded and, for $x \in S(\C)$, let
$Y_x$ be the irreducible component of $S$ over $\C$ containing $x$.
Then, for all $x \in S(\C)$, the intersection of the
$T_g + T_{g^{-1}}$-orbit $T_g^\infty(x)$ with 
$Y_x(\C)$ is Zariski dense in $Y_x$.
\end{theorem}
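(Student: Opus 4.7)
Given $x \in S(\C)$, I would pick a representative $x = [(\omega, h)]$ with $h \in C$, so $Y_x = Y_h$ and $x = [\omega] \in \Gamma_h \setminus \Omega_F^r$. The first step is to observe that $\langle {\cal K}g{\cal K}\rangle$ coincides with the subgroup $\langle {\cal K}, g\rangle$ of $\GL_r(\AZ_F^f)$ generated by ${\cal K}$ and $g$, since $g \in {\cal K}g{\cal K}$ and the product of $k_1 g k_2 \in {\cal K}g{\cal K}$ with $k_2^{-1} g^{-1} k_1^{-1} \in {\cal K}g^{-1}{\cal K}$ realises arbitrary elements of ${\cal K}$. Proposition~\ref{satz:hecke_explizit} then identifies
\[
T_g^\infty(x) \cap Y_h(\C) = \{\,[T\omega] : T \in h\langle {\cal K}, g\rangle h^{-1} \cap \GL_r(F)\,\}.
\]
By Proposition~\ref{satz:zardensity_subsets}, it will suffice to produce an open subgroup ${\cal K}' \subset \GL_r(\AZ_F^{f,\pp})$ such that $(\SL_r(F_\pp) \times {\cal K}') \cap \GL_r(F) \subset h\langle {\cal K}, g\rangle h^{-1}$.

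To reduce to a local problem, I would use that ${\cal K}$, being compact open in the product topology on $\GL_r(\AZ_F^f) = \GL_r(F_\pp) \times \GL_r(\AZ_F^{f,\pp})$, contains a product $U_\pp \times U^{(\pp)}$ of compact open subgroups. The special form $g = (g_\pp, 1)$ then yields
\[
\langle {\cal K}, g\rangle \supset \langle U_\pp, g_\pp\rangle \times U^{(\pp)}.
\]
Normality of $\SL_r(F_\pp)$ in $\GL_r(F_\pp)$ gives $h^{-1}(\SL_r(F_\pp) \times {\cal K}')h = \SL_r(F_\pp) \times h^{(\pp)-1}{\cal K}' h^{(\pp)}$, so the whole argument reduces to the local algebraic claim
\[
(\ast)\qquad \langle U_\pp, g_\pp\rangle \supset \SL_r(F_\pp).
\]
Granting $(\ast)$, the choice ${\cal K}' := h^{(\pp)} U^{(\pp)} h^{(\pp)-1}$ satisfies the required inclusion.

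The claim $(\ast)$ will be the \textbf{main obstacle}. The subgroup $H_\pp := \langle U_\pp, g_\pp\rangle$ is open in $\GL_r(F_\pp)$ (it contains the compact open $U_\pp$), and its image in $\mathrm{PGL}_r(F_\pp)$ contains the unbounded cyclic group generated by $g_\pp$. My approach would be to conjugate and shrink $U_\pp$ so that it is a principal congruence subgroup of some maximal compact $K \subset \GL_r(F_\pp)$, and then exploit the Cartan decomposition $\GL_r(F_\pp) = K T^+ K$: unboundedness of $\langle g_\pp\rangle$ modulo center guarantees that powers $g_\pp^n$ produce Cartan parts of arbitrarily large magnitude. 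Conjugating unipotent elements $I + \pi^n a E_{ij} \in U_\pp$ by such elements should then yield root-subgroup elements $I + a' E_{ij}$ with $a' \in F_\pp$ of arbitrarily negative valuation; combined with Weyl-group representatives lying in $K$ to permute root subgroups, and the fact that $\SL_r(F_\pp)$ is generated by its root subgroups, this would give $H_\pp \supset \SL_r(F_\pp)$. This step genuinely needs both the openness (from $U_\pp$) and the unboundedness modulo center (of $g_\pp$), and generalises Breuer's Theorem~4.11 in~\cite{Br2}, where only specific diagonal $g_\pp$ were treated.
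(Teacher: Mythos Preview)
Your reduction to the local claim $(\ast)$ is correct and coincides with the paper's argument: the paper likewise lands on the statement that the open subgroup $\langle {\cal K}_{\pp}, h_\pp g_\pp h_\pp^{-1}\rangle \cap \SL_r(F_\pp)$ of $\SL_r(F_\pp)$ equals $\SL_r(F_\pp)$, and then applies Proposition~\ref{satz:zardensity_subsets}. The only difference is in how $(\ast)$ is justified. The paper does not attempt a direct argument; it observes that this open subgroup of $\SL_r(F_\pp)$ is normalized by $h_\pp g_\pp h_\pp^{-1}$, whose image in $\mathrm{PGL}_r(F_\pp)$ generates an unbounded subgroup, and then invokes Theorem~2.2 of~\cite{Pi9} (an instance of Pink's strong approximation results: an open subgroup of $\tilde G(F_\pp)$ normalized by an unbounded subgroup of the adjoint group $G(F_\pp)$ is all of $\tilde G(F_\pp)$).

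Your proposed direct proof of $(\ast)$ has a real gap. Writing $g_\pp^{\,n} = k_1^{(n)} t_n k_2^{(n)}$ in Cartan form, conjugating $I + \pi^N a E_{ij}\in U_\pp$ by $g_\pp^{\,n}$ gives
\[
k_1^{(n)}\, t_n\bigl(I + \pi^N a\, k_2^{(n)} E_{ij} (k_2^{(n)})^{-1}\bigr) t_n^{-1}\,(k_1^{(n)})^{-1},
\]
and since $k_2^{(n)} E_{ij} (k_2^{(n)})^{-1}$ spreads over all matrix entries, this has no reason to lie in a single root subgroup, let alone the one you started from. Equally, the Weyl-group representatives you want to use lie in $K$, not in $U_\pp$, and you have no inclusion $K\subset H_\pp$; so you cannot permute root subgroups inside $H_\pp$. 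Repairing this amounts to reproving the relevant case of~\cite[Theorem~2.2]{Pi9}, which is genuinely non-trivial; the clean fix is simply to cite that theorem, as the paper does.
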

\begin{proof} We assume that $Y_x = Y_h$ for some $h \in C$. Then, by
Proposition~\ref{satz:hecke_explizit}, we have
\[ T_g^\infty(x) \cap Y_x(\C) = \{[T\omega] \in \Gamma_h \setminus
\Omega_F^r\ :\ T \in h\langle {\cal K}g{\cal K} \rangle h^{-1} \cap
\GL_r(F) \}. \]
We can find compact open subgroups ${\cal K}_{\pp} \subset \GL_r(F_{\pp})$ and 
${\cal K}' \subset \GL_r(\AZ_F^{f,\,\pp})$ such that
\[ \langle {\cal K}_{\pp}h_{\pp}g_{\pp}h_{\pp}^{-1}{\cal K}_{\pp} \rangle \times 
{\cal K}' \subset h\langle {\cal K}g{\cal K} \rangle h^{-1}. \]
We now consider the open subgroup $U_{\pp} := \langle K_{\pp}h_{\pp}g_{\pp}h_{\pp}^{-1}K_{\pp} \rangle
\cap \SL_r(F_{\pp})$ of $\SL_r(F_{\pp})$. It is normalized by the image of $\langle g_{\pp} 
\rangle$ in $\mathrm{PGL}_r(F_{\pp})$, which is unbounded by assumption. Since
$\mathrm{PGL}_r$ is a connected adjoint absolutely simple linear algebraic group 
over the local field~$F_{\pp}$ and 
$\mathrm{SL}_r \hookrightarrow \GL_r \rightarrow
\mathrm{PGL}_r$ is its universal covering, we conclude by Theorem~2.2
 of~\cite{Pi9} that $U_\pp$ is equal to 
$\mathrm{SL}_r(F_{\pp})$.

Hence, $\mathrm{SL}_r(F_{\pp})$ is contained in $\langle K_{\pp}h_{\pp}g_{\pp}h_{\pp}^{-1}K_{\pp} \rangle$ and we have
\[ \{[T\omega] \in \Gamma_h \setminus
\Omega_F^r\ :\ T \in (\mathrm{SL}_r(F_{\pp}) \times {\cal K'}) \cap
\GL_r(F) \} \subset T_g^\infty(x) \cap Y_x(\C). \]
Therefore, we can apply Proposition~\ref{satz:zardensity_subsets} to
the subset $T_g^\infty(x) \cap Y_x(\C)$ of $S(\C)$ and conclude
that $T_g^\infty(x) \cap Y_x(\C)$ is Zariski dense in $Y_x$.
\end{proof}

\section{Geometric criterion for being a Drinfeld modular
  subvariety} \label{ch:geom}    
\setcounter{subsection}{1}
\setcounter{theorem}{0}
\begin{satz} \label{prop:geomcrit_irred}
Let $S = S_{F,\cal K}^r$ be a Drinfeld modular variety
and $Z \subset S$ an irreducible subvariety over $\C$ 
such that $Z = T_gZ = T_{g^{-1}}Z$ for some 
$g = (1,\ldots,g_{\pp},\ldots,1)$ with $g_{\pp} \in
\GL_r(F_{\pp})$. If the cyclic subgroup of $\mathrm{PGL}_r(F_{\pp})$ generated
by the image of $g_{\pp}$ is unbounded, then $Z$ is an irreducible
component of $S$ over $\C$. 
\end{satz}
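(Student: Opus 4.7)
The plan is to derive Proposition 6.1.1 as a direct combination of Theorem~\ref{th:heckedensity} (Zariski density of $T_g + T_{g^{-1}}$-orbits) with the hypothesis that $Z$ is stable under both $T_g$ and $T_{g^{-1}}$. The idea is that any single orbit $T_g^\infty(x)$ for $x \in Z(\C)$ must remain trapped inside $Z(\C)$, but on the other hand it is Zariski dense in the ambient irreducible component of $S$ containing $x$. Since $Z$ is already irreducible and closed, these two facts force $Z$ to be the whole component.

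More precisely, I would proceed as follows. Pick any closed point $x \in Z(\C)$ and let $Y_x$ be the (unique) irreducible component of $S$ over $\C$ containing $x$; since $Z$ is irreducible and closed and meets $Y_x$, we have $Z \subset Y_x$. The stability hypothesis $Z = T_g Z = T_{g^{-1}} Z$ applied to the singleton $\{x\} \subset Z$ gives, via $T_g(\{x\}) = \pi_g(\pi_1^{-1}(\{x\})) \subset \pi_g(\pi_1^{-1}(Z)) = T_g(Z) = Z$ and similarly for $T_{g^{-1}}$, the inclusion $(T_g + T_{g^{-1}})(\{x\}) \subset Z(\C)$. By an immediate induction on $n$ applied to every point obtained in the previous step, $(T_g + T_{g^{-1}})^n(\{x\}) \subset Z(\C)$ for all $n \geq 0$, hence the entire orbit $T_g^\infty(x)$ lies in $Z(\C)$.

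Now the unboundedness hypothesis on the image of $\langle g_{\pp}\rangle$ in $\mathrm{PGL}_r(F_{\pp})$ is exactly the one in Theorem~\ref{th:heckedensity}, so that theorem applies and tells us that $T_g^\infty(x) \cap Y_x(\C)$ is Zariski dense in $Y_x$. Combining this with the inclusion of the previous paragraph, we find that $Z(\C) \supset T_g^\infty(x) \cap Y_x(\C)$ is a Zariski dense subset of $Y_x$. Since $Z$ is closed in $S$ and contained in $Y_x$, taking Zariski closures yields $Z = Y_x$, which is the desired conclusion.

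I do not expect any genuine obstacle here: all the real work has been done in Section~\ref{ch:density} (Theorem~\ref{th:heckedensity}), whose proof relies on Pink's simplicity result to identify $\SL_r(F_\pp)$ inside the group generated by $\mathcal{K}_\pp h_\pp g_\pp h_\pp^{-1} \mathcal{K}_\pp$. The only points requiring a line of care are the functorial inclusion $T_g(\{x\}) \subset T_g(Z)$ (immediate from the definition $T_g(Z) = \pi_g(\pi_1^{-1}(Z))$ applied to $\{x\} \subset Z$) and the observation that an irreducible subvariety of $S$ lies in a single irreducible component of $S$, both of which are formal.
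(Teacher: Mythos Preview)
Your proposal is correct and follows essentially the same argument as the paper: show that the $T_g + T_{g^{-1}}$-orbit of any point of $Z$ stays inside $Z$ by the stability hypothesis, then invoke Theorem~\ref{th:heckedensity} to conclude that this orbit is Zariski dense in the ambient irreducible component, forcing $Z$ to equal that component. The only cosmetic difference is that you establish $Z \subset Y_x$ explicitly before applying density, whereas the paper phrases the same containment as ``$T_g^\infty(x)$ is contained in one irreducible component of $S$ because $Z$ is irreducible.''
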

\begin{proof} Let $x \in Z(\C)$ be a geometric point of $Z$. By
assumption we have\linebreak $T_g(x) \subset T_gZ = Z$ and $T_{g^{-1}}(x)
\subset T_{g^{-1}}Z = Z$, hence
\[ (T_g + T_{g^{-1}})(x) \subset Z. \]
Iterating we get for all $n \geq 1$
\[ (T_g + T_{g^{-1}})^n(x) \subset Z, \]
so the $(T_g + T_{g^{-1}})$-orbit $T_g^\infty(x)$ of $x$ is
contained in $Z$. Since $Z$ is irreducible over~$\C$, the orbit
$T_g^\infty(x)$ is contained in one irreducible component $Y$ of
$S$ over~$\C$. So $T_g^\infty(x)$ is Zariski dense in $Y$
 by Theorem~\ref{th:heckedensity}. Since $Z$ is Zariski
closed in~$S$, it follows that $Z = Y$ is an irreducible component of $S$
over $\C$. 
\end{proof}

\begin{definition} \label{def:Hodge_generic}
A subvariety $X$ defined over $\overline{F}$ of a Drinfeld modular subvariety
$S_{F,\cal K}^r$ is called \emph{Hodge-generic} if none of its
irreducible components over $\C$ is contained
in a proper Drinfeld modular subvariety of $S_{F,\cal K}^r$.
\end{definition}

\begin{theorem} \label{th:geomcrit}
Let $S = S_{F,\cal K}^r$ be a Drinfeld modular variety with 
${\cal K} = {\cal K}_{\pp} \times {\cal K}^{(\pp)}$ amply small
where ${\cal K}_{\pp} \subset \GL_r(F_{\pp})$ and ${\cal K}^{(\pp)}
\subset \GL_r(\AZ_F^{f,\,\pp})$. Suppose that $Z \subset S$ is an
$F$-irreducible Hodge-generic subvariety with $\dim Z \geq 1$ such
that $Z \subset T_gZ$ for some 
$g = (1,\ldots,g_{\pp},\ldots,1)$ with
$g_{\pp} \in \GL_r(F_{\pp})$. If, for all $k_1,k_2 \in {\cal
  K}_{\pp}$, the cyclic subgroup of $\mathrm{PGL}_r(F_{\pp})$ 
generated by the image of $k_1\cdot g_{\pp} \cdot k_2$ is unbounded,
then $Z = S$.
\end{theorem}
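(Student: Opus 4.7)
The strategy is to enlarge $Z$ to a subvariety of $S$ which is genuinely stable under both $T_g$ and $T_{g^{-1}}$, and then to adapt the proof of Proposition~\ref{prop:geomcrit_irred} to that larger subvariety. Define the Hecke saturation
\[ Z^{*} \;:=\; \bigcup_{n \geq 0}\, (T_g + T_{g^{-1}})^{n}(Z). \]
Because $Z \subset T_g Z$, the chain $(T_g + T_{g^{-1}})^{n}(Z)$ is increasing in $S$, and by noetherianity it stabilizes: $Z^{*} = (T_g + T_{g^{-1}})^{N}(Z)$ for some $N$. Since $T_g$ and $T_{g^{-1}}$ are given by finite correspondences (Section~\ref{subsec:Hecke}), $Z^{*}$ is an $F$-defined subvariety of $S$, pure of dimension $\dim Z$.

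The first key step is to promote the inclusions $T_g Z^{*} \cup T_{g^{-1}} Z^{*} \subset Z^{*}$, immediate from stabilization, to equalities. I use the adjointness identity $T_{g^{-1}}T_g(W) \supset W$ for every subvariety $W$, which on $\C$-points is visible from the explicit formula~\eqref{eq:pig2}: a point $x = [(\omega,h)]$ always recurs in $T_{g^{-1}}T_g(x)$ by choosing the trivial coset representative for the second correspondence. Applied with $W = Z^{*}$, and combined with $T_g Z^{*} \subset Z^{*}$, this yields
\[ Z^{*} \;\subset\; T_{g^{-1}}T_g Z^{*} \;\subset\; T_{g^{-1}} Z^{*} \;\subset\; Z^{*}, \]
so $T_{g^{-1}} Z^{*} = Z^{*}$, and symmetrically $T_g Z^{*} = Z^{*}$.

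Now I repeat the argument of Proposition~\ref{prop:geomcrit_irred} with $Z^{*}$ in place of $Z$. For any $x \in Z^{*}(\C)$, the invariance of $Z^{*}$ under $T_g$ and $T_{g^{-1}}$ gives $T_g^{\infty}(x) \subset Z^{*}$. The element $g_\pp$ itself (the special case $k_1 = k_2 = 1$ of the hypothesis) has unbounded image in $\mathrm{PGL}_r(F_\pp)$, so Theorem~\ref{th:heckedensity} applies and $T_g^{\infty}(x) \cap Y_x(\C)$ is Zariski dense in $Y_x$, the $\C$-irreducible component of $S$ containing $x$. Since $Z^{*}$ is Zariski closed, $Y_x \subset Z^{*}$, and therefore $\dim Z = \dim Z^{*} \geq \dim Y_x = r-1 = \dim S$.

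To conclude: $Z$ is an $F$-irreducible $F$-closed subvariety of the $F$-irreducible variety $S$ (Corollary~\ref{cor:irrcomp}) of full dimension, and any proper $F$-closed subvariety of an $F$-irreducible scheme has strictly smaller dimension; hence $Z = S$. The main obstacle in this plan is the equality $T_g Z^{*} = T_{g^{-1}} Z^{*} = Z^{*}$: this is the single step at which the one-sided hypothesis $Z \subset T_g Z$ is bootstrapped to genuine two-sided Hecke invariance, and it rests on the adjoint-correspondence identity above.
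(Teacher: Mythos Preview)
Your argument has a fatal gap at the stabilization step. Noetherianity gives the descending chain condition on closed subsets, not the ascending one; an \emph{increasing} chain of closed subsets such as $\bigl((T_g+T_{g^{-1}})^n(Z)\bigr)_{n\ge 0}$ has no reason to stabilize. Each application of $T_g+T_{g^{-1}}$ can multiply the number of $\C$-irreducible components (all of dimension $\dim Z$) by a bounded factor, so the union $Z^{*}$ is in general a countable, non-closed union of $\dim Z$-dimensional subvarieties. If you pass to the Zariski closure $\overline{Z^{*}}$, the Hecke invariance and the orbit-density argument still go through and yield $\overline{Z^{*}}=S$; but then the crucial equality $\dim Z^{*}=\dim Z$ is lost, and you have only shown that the Hecke orbit of $Z$ is dense in $S$, which is far weaker than $Z=S$.

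That this cannot be repaired by a minor fix is visible from what your proof omits: you never use that $Z$ is Hodge-generic, and you use the unboundedness hypothesis only for $k_1=k_2=1$. But Proposition~\ref{prop:Heckecontain} together with Theorem~\ref{th:exHecke} produce, for any proper Drinfeld modular subvariety $X\subsetneq S$ and a good prime $\pp$, an element $g$ localized at $\pp$ with $X\subset T_gX$ and such that the \emph{full} unboundedness condition (for all $k_1,k_2\in{\cal K}_\pp$) holds. Your argument, applied verbatim to $Z=X$, would force $\dim X=r-1$, which is false. So the stabilization claim genuinely fails in these examples.

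The paper's proof proceeds very differently and uses both extra hypotheses in an essential way. It fixes a $\C$-irreducible component $Z_1$ of $Z$, and invokes the Hodge-generic assumption through the monodromy result of Breuer--Pink (Theorem~4 in~\cite{BrPi}) to find a smaller level ${\cal K}'={\cal K}'_\pp\times{\cal K}^{(\pp)}$ and a lift $Z_1'$ of $Z_1$ such that every Hecke translate $T_{h_\pp}Z_1'$ is $E$-irreducible over a suitable field $E$. From $Z\subset T_gZ$ and the Galois action one then extracts an equality $Z_1'=T_{h_\pp^{\,n}}Z_1'=T_{h_\pp^{-n}}Z_1'$ for some $h_\pp=k_i^{-1}g_\pp k_j$ with $k_i,k_j\in{\cal K}_\pp$; this is precisely where the hypothesis for \emph{all} $k_1,k_2$ is needed, so that Proposition~\ref{prop:geomcrit_irred} applies and $Z_1'$ is a full component. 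Your saturation idea bypasses both of these ingredients, and that is why it overreaches.
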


\textbf{Remark:} Note that the unboundedness condition in this theorem is
stronger than the one in Proposition~\ref{prop:geomcrit_irred}. For example, 
for $r = 2$, ${\cal K}_{\pp} = \GL_2(A_{\pp})$ and a uniformizer $\pi_{\pp} \in F_{\pp}$, the image of 
$g_{\pp} = \begin{pmatrix}\pi_{\pp} & 0 \\ 0 & 1\end{pmatrix}$ 
generates an unbounded subgroup of $\mathrm{PGL}_2(F_{\pp})$, but for $k_1 = \begin{pmatrix}0 & 1 \\ 1 & 0\end{pmatrix}
\in {\cal K}_{\pp}$, 
the image of $k_1g_{\pp}$ generates a bounded subgroup of $\mathrm{PGL}_2(F_{\pp})$
because $(k_1g_{\pp})^2$ is a scalar matrix.
\begin{proof} In this proof, for simplicity of notation, 
we identify $\GL_r(F_{\pp})$ as a subgroup of $\GL_r(\AZ_F^f)$ via the inclusion
\[ h_{\pp} \in \GL_r(F_{\pp}) \longmapsto (1,\ldots,h_{\pp},\ldots,1)
\in \GL_r(\AZ_F^f). \]

Let $Z = Z_1 \cup \cdots \cup Z_s$ be a decomposition of $Z$ into
irreducible components over $\C$. Since $Z$ is
defined over $F$, the irreducible component $Z_1$ is defined over some 
finite, separable extension $E$ of $F$. By the $F$-irreducibility
of $S$ and~$Z$, it is enough to show that
$Z_1$ is an irreducible component of $S$ over $\C$.
We divide the proof into two steps:

\begin{properties}
\item
We show that there is an open subgroup ${\cal K'} \subset {\cal
  K}$ with associated canonical projection $\pi: S_{F,\cal
  K'}^r \rightarrow S_{F,\cal K}^r$ and an $E$-irreducible component 
$Z'_1$ of $\pi^{-1}(Z_1)$ which is also irreducible over $\C$ 
such that $T_{h_{\pp}}Z'_1$ is $E$-irreducible for all $h_{\pp} \in \GL_r(F_{\pp})$.

\item
Using Proposition~\ref{prop:geomcrit_irred}, we prove that $Z'_1$ is
an irreducible component of $S_{F,\cal K'}^r$ over $\C$.
\end{properties}

Steps (i) and (ii) imply that $Z_1 = \pi(Z'_1)$ is an
irreducible component of $S = S_{F,\cal K}^r$ over $\C$. 

\textbf{Step (i):} Note that, by Proposition~\ref{prop:projetale},
the canonical projections
\[ \pi_{U_{\pp}}: S^r_{F,\,U_{\pp} \times {\cal
  K}^{(\pp)}} \longrightarrow S \]
where $U_{\pp}$ runs over all open normal subgroups of ${\cal
  K}_{\pp}$ form a projective system of finite \'etale Galois covers
defined over $F$ with Galois groups ${\cal K}_{\pp} / U_{\pp}$. Hence,
by Proposition~\ref{prop:projetale}
\[ \pi_{\pp}: S^{(\pp)} := \lim_{\stackrel{\longleftarrow}{U_{\pp}}} 
   S^r_{F,U_{\pp} \times {\cal K}^{(\pp)}} \longrightarrow S \]
is a pro-\'etale Galois cover with group
$\lim_{\stackrel{\longleftarrow}{U_{\pp}}} {\cal K}_{\pp} / U_{\pp}$. 
Since ${\cal K}_{\pp}$ is a profinite group, this group is isomorphic
to ${\cal K}_{\pp}$ and we have the following isomorphisms of rigid-analytic spaces:
\begin{eqnarray*} S^{(\pp)}(\C) & \cong & 
\lim_{\stackrel{\longleftarrow}{U_{\pp}}} \GL_r(F)
\setminus (\Omega_F^r \times \GL_r(\AZ_F^f) / (U_{\pp} \times {\cal
  K}^{(\pp)})) \\ 
& \cong & 
\GL_r(F) \setminus (\Omega_F^r \times \GL_r(\AZ_F^f) / {\cal
  K}^{(\pp)}). 
\end{eqnarray*}
By Proposition~\ref{prop:projetale} and these identifications, 
the automorphism of the ${\cal K}_{\pp}$-cover $\pi_{\pp}$
corresponding 
to a $k_{\pp} \in {\cal K}_{\pp}$ is given by
\[ \lim_{\stackrel{\longleftarrow}{U_{\pp}}} \pi_{k_{\pp}}: [(\overline{\omega},h)] \mapsto
[(\overline{\omega},hk_{\pp}^{-1})] \]
on $\C$-valued points of $S^{(\pp)}$. 

We now denote by $Y$ the nonsingular locus of the variety $Z_1$ over
$\C$. By Theorem I.5.3. in~\cite{Ha}, $Y$ is a non-empty open subset
of $Z_1$ and $Y$ is also defined over $E$. 

Let $y \in Y(\C) \subset S(\C)$ be a geometric point of $Y$. We
denote by $\pi_1^{\mathrm{arithm}}(Y,y)$ the arithmetic fundamental group of the
variety $Y$ over $E$, i.e., $\pi_1^{\mathrm{arithm}}(Y,y) :=
\pi_1(Y_0,y)$ if $Y = (Y_0)_{\C}$ for a scheme $Y_0$ over $E$.
Furthermore we fix a geometric point $x = [(\overline{\omega},h)] \in
S^{(\pp)}(\C)$ with $\pi_{\pp}(x) = y$ and consider the monodromy
representation
\[ \rho: \pi_1^{\mathrm{arithm}}(Y,y) \longrightarrow {\cal
  K}_{\pp} \]
associated to $x \in S^{(\pp)}(\C)$ and the ${\cal K}_{\pp}$-cover~$\pi_{\pp}$.

By Theorem 4 in~\cite{BrPi} the image of $\rho$ is open in
$\GL_r(F_{\pp})$ under the assumptions
\begin{itemize}
\item
 ${\cal K}$ is amply small,

\item
$Y$ is a smooth irreducible locally closed subvariety of $S$ with
$\dim Y \geq 1$,

\item
The Zariski closure of $Y$ in $S$ is Hodge-generic.
\end{itemize}
These assumptions are satisfied in our case, hence ${\cal K}'_{\pp} :=
\rho(\pi_1^{\mathrm{arithm}}(Y,y))$ is open in~${\cal K}_{\pp}$.

Now we set ${\cal K'} := {\cal K}'_{\pp} \times {\cal K}^{(\pp)}$ and
consider the canonical projection 
\[ \pi: S_{F,\cal
  K'}^r \rightarrow S_{F,\cal K}^r.\]
The orbit of the point $x' := [(\overline{\omega},h)] \in S_{F,\cal
  K'}^r(\C)$ lying between our base points $x \in S^{(\pp)}(\C)$ and
$y \in S_{F,\cal K}^r(\C)$ under the action of $\pi_1^{\mathrm{arithm}}(Y,y)$ on
the fiber $\pi^{-1}(y)$ equals
\[ \{[(\overline{\omega},h{k'_{\pp}}^{-1})] \in S_{F,\cal
  K'}^r(\C)\ :\ k'_{\pp} \in \rho(\pi_1^{\mathrm{arithm}}(Y,y)) = {\cal
  K}'_{\pp}\} \]
and is therefore of cardinality~$1$. Hence, the $E$-irreducible
component~$Y'$ of $\pi^{-1}(Y)$ containing $x'$ is
mapped isomorphically onto $Y$ by $\pi$. Since $Y$ is
irreducible over~$\C$, it follows that $Y'$ is also irreducible over
$\C$.

Note furthermore, for any open subgroup $\tilde{{\cal K}'_{\pp}} \subset
{\cal K}'_{\pp}$ and $\tilde{\cal K'} := \tilde{\cal K'_{\pp}} \times
{\cal K}^{(\pp)}$ with canonical projection $\pi': 
S_{F,\tilde{\cal K'}}^r \rightarrow S_{F,{\cal K}'}^r$ that
\[ {\pi'}^{-1}(x') =
\{[(\overline{\omega},hk'_{\pp})] \in S_{F,\tilde{\cal K'}}^r(\C) \ :\
k_{\pp}' \in {\cal K}'_{\pp} \} \]
is exactly one orbit under the action of
$\pi_1^{\mathrm{arithm}}(Y,y)$ on $\pi'^{-1}(\pi^{-1}(y))$. 
Therefore, ${\pi'}^{-1}(Y')$ is $E$-irreducible. Since this holds for
every open subgroup $\tilde{{\cal K}'_{\pp}} \subset {\cal K}_{\pp}'$, this
implies that $T_{h_{\pp}}Y'$ is $E$-irreducible for all $h_{\pp} \in
\GL_r(F_{\pp})$.

We now define $Z_1'$ to be the Zariski closure of $Y'$ in $S_{F,\cal
  K'}^r$. Since $Y'$ is irreducible over~$\C$, its Zariski closure
$Z_1'$ is also irreducible over $\C$ and moreover, by dimension
reasons, an irreducible component of $\pi^{-1}(Z_1)$ over $\C$. Since
$Y'$ is also $E$-irreducible, we similarly conclude that $Z'_1$ is an
$E$-irreducible component of $\pi^{-1}(Z_1)$.

Note that, for all $h_{\pp} \in \GL_r(F_{\pp})$, the projections
$\pi_1$ and $\pi_{h_{\pp}}$ in the definition of the Hecke
correspondence $T_{h_{\pp}}$ on $S_{F,{\cal K'}}^r$ are open and
closed because they are finite and \'etale. By the $E$-irreducibility
of $T_{h_{\pp}}Y'$ this implies that
\[ T_{h_{\pp}}Z'_1 = \pi_{h_{\pp}}(\pi_1^{-1}(\overline{Y'})) 
= \overline{\pi_{h_{\pp}}(\pi_1^{-1}(Y'))} =
\overline{T_{h_{\pp}}Y'} \]
is $E$-irreducible and concludes step (i). 

\textbf{Step (ii):} By the assumption $Z \subset T_gZ$, the
irreducible component $Z_1$ of $Z$ is contained in $T_gZ_i$ for some
$i$. Since $Z$ is $F$-irreducible, there is an element $\sigma \in
\Gal (F^{\sep} / F)$ with $Z_i = \sigma(Z_1)$. This gives for $Z'_1
\subset S_{F,\cal K'}^r$
\begin{equation} \label{eq:containHecke}
 Z'_1 \subset \pi^{-1}(Z_1) \subset \pi^{-1}(T_g
\sigma(Z_1)) = \sigma(\pi^{-1}(T_gZ_1)), 
\end{equation}
where the last equality holds because all our projection morphisms are
defined over~$F$.

A direct computation shows that
\begin{equation} \label{lemma:Hecke} \pi^{-1}(T_gZ_1) = \bigcup_{i,j=1}^l
T_{k_i^{-1}g_{\pp}k_j}Z'_1 \end{equation}
where $\{k_1,\ldots,k_l\}$ is a set of representatives for the left cosets
in ${\cal K}_{\pp} / {\cal K}'_{\pp}$. By (i), all $T_{k_i^{-1}g_{\pp}k_j}Z'_1$ 
are $E$-irreducible.




Since $Z'_1$ is $E$-irreducible, the relations \eqref{eq:containHecke} and \eqref{lemma:Hecke} imply 
the existence of indices $i$ and~$j$ such that for $h_{\pp} := k_i^{-1}g_{\pp}k_j$
\[ Z'_1 = \sigma(T_{h_{\pp}}Z'_1). \]
Iterating this gives the inclusion
\[ Z'_1 =
\sigma(T_{h_{\pp}}\sigma(T_{h_{\pp}}Z'_1)) =
\sigma^2(T_{h_{\pp}}(T_{h_{\pp}}Z'_1)) \supset
\sigma^2(T_{h_{\pp}^2}Z'_1), \]
which must be an equality because both sides are of the same
dimension and $Z'_1$ is $E$-irreducible. Repeating the same argument
gives
\[ Z'_1 = \sigma^i(T_{h_{\pp}^i}Z'_1) \]
for all $i \geq 1$. There is an $n \geq 1$ with $\sigma^n \in
\Gal(F^{\sep}/E)$. Since $T_{h_{\pp}^n}Z'_1$ is defined over~$E$, we
conclude the relations
\begin{eqnarray*} Z'_1 & = & \sigma^n(T_{h_{\pp}^n}Z'_1) = T_{h_{\pp}^n}Z'_1, \\ 
 T_{h_{\pp}^{-n}}Z'_1 & = & T_{h_{\pp}^{-n}}(T_{h_{\pp}^n}Z'_1) \supset Z'_1.
\end{eqnarray*}
Again, the latter relation must be an equality because
$T_{h_{\pp}^{-n}}Z'_1$ is $E$-irreducible and of the same dimension as
$Z'_1$. 
Note that the cyclic subgroup of $\mathrm{PGL}_r(F_{\pp})$ generated
by the image of $h_{\pp}^n = (k_i^{-1}g_{\pp}k_j)^n$ is unbounded by
our assumption. So we can apply Proposition~\ref{prop:geomcrit_irred}
and conclude that $Z'_1$ is an irreducible component of $S_{F,\cal
  K'}^r$ over $\C$. 
\end{proof}

\section{Existence of good primes and suitable Hecke operators} \label{ch:choice}
\subsection{Good primes}
In this subsection, $X = \incl(S_{F',\cal K'}^{r'})$ denotes a
Drinfeld modular subvariety of a Drinfeld modular variety $S_{F,\cal
  K}^r$ associated to the datum $(F',\,b)$.

\begin{definition}
For a prime~$\pp$ of $F$, a free $A_{\pp}$-submodule $\Lambda_{\pp} \subset 
F_{\pp}^r$ of rank $r$ is called an \emph{$A_{\pp}$-lattice}.
\end{definition}

\begin{definition} \label{def:goodprime}
A prime~$\pp$ is called \emph{good} for $X \subset S_{F,\cal K}^r$ if
there exists an $A_{\pp}$-lattice $\Lambda_{\pp} \subset F_{\pp}^r$
such that
\begin{properties}
\item ${\cal K} = {\cal K_{\pp}} \times {\cal K^{(\pp)}}$ with ${\cal
    K}_{\pp}$ the kernel of the natural map
  \[\mathrm{Stab}_{\GL_r(F_{\pp})}(\Lambda_{\pp}) \rightarrow
  \mathrm{Aut}_{k(\pp)}(\Lambda_{\pp} / \pp \cdot
  \Lambda_{\pp})\] for a ${\cal K}^{(\pp)} \subset \GL_r(\AZ_F^{f,\,\pp})$,

\item there is a prime $\pp'$ of $F'$ above $\pp$ with local degree
  $[F'_{\pp'} / F_{\pp}] = 1$,

\item $b_{\pp}(\Lambda_{\pp})$ is an
  $A'_{\pp}$-submodule of ${F'_{\pp}}^{r'}$.
\end{properties}
\end{definition}
\newpage
\textbf{Remarks:} 
\begin{itemize}
\item
The definition is independent of the datum $(F',\,b)$ describing $X$
because $F'$ is uniquely determined by $X$ and $b'_{\pp} = s_{\pp}
\circ b_{\pp} \circ k_{\pp}$ with $s_{\pp} \in \GL_{r'}(F'_{\pp})$ and
$k_{\pp} \in {\cal K}_{\pp} \subset
\mathrm{Stab}_{\GL_r(F_{\pp})}(\Lambda_{\pp})$ for a second datum
$(F',\,b')$ describing $X$ by Corollary~\ref{cor:sub_equal}.

\item
The existence of a good prime $\pp$ for $X$ implies
that the reflex field $F'$ of $X$ is separable over~$F$ because there
exists a prime $\pp'$ of $F'$ which is unramified over $F$.

\item
If $\Lambda_{\pp} = s_{\pp}A_{\pp}^r$ for an $s_{\pp} \in
\GL_r(F_{\pp})$, then condition (i) is equivalent to
\[ {\cal K} = s_{\pp}{\cal K}(\pp)s_{\pp}^{-1} \times {\cal
  K}^{(\pp)}, \]
where ${\cal K}(\pp) \subset \GL_r(A_{\pp})$ is the principal
congruence subgroup modulo $\pp$.

\item
Condition (i) implies that ${\cal K}' = (b{\cal K}b^{-1}) \cap \GL_{r'}(\AZ_{F'}^f)= 
{\cal K}'_{\pp} \times {\cal K}'^{(\pp)}$ with
${\cal K}'_{\pp}$ the kernel of the natural map
\[\mathrm{Stab}_{\GL_{r'}(F'_{\pp})}(b_{\pp}(\Lambda_{\pp})) \rightarrow
  \mathrm{Aut}_{k(\pp)}(b_{\pp}(\Lambda_{\pp}) / \pp \cdot
  b_{\pp}(\Lambda_{\pp})).\]
Since $b_{\pp}(\Lambda_{\pp})$ is an $A'_{\pp}$-submodule of ${F'_{\pp'}}^{r'}$ by (iii), this
means that ${\cal K}'_{\pp}$ is conjugate to the principal congruence subgroup
modulo~$\pp$ of $\GL_{r'}(A'_{\pp})$.
\end{itemize}



\begin{satz} \label{prop:reduceHodge}
Let $\pp$ be a good prime for $X$. Suppose that $X$
is contained in a Drinfeld modular subvariety $X' = 
\inclz(S_{F'',\cal K''}^{r''}) \subset S_{F,\cal K}^r$. 

Then $X'' := (\inclz)^{-1}(X)$ is a Drinfeld modular subvariety of $S_{F'',\cal
  K''}^{r''}$ and there is a prime $\pp''$ of $F''$ above $\pp$ with
$k(\pp) = k(\pp'')$ such
that $\pp''$ is good for $X'' \subset S_{F'',\cal K''}^{r''}$. 
\end{satz}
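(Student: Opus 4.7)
The plan is that the first assertion is immediate from Corollary~\ref{cor:sub_sub}, and that the prime $\pp''$ and the $A''_{\pp''}$-lattice witnessing goodness will come straight from $\pp$ and $\Lambda_\pp$ through the relation between the defining data of $X$, $X'$, and $X''$. Concretely: Corollary~\ref{cor:sub_sub} produces $X''$ as a Drinfeld modular subvariety of $S_{F'',\mathcal{K}''}^{r''}$, and Proposition~\ref{prop:sub_contain} applied to $X \subset X'$ gives a factorization $b = c \circ b' \circ k$ with $c:(\AZ_{F''}^f)^{r''}\stackrel{\sim}{\to}(\AZ_{F'}^f)^{r'}$ an $\AZ_{F''}^f$-linear isomorphism and $k\in\mathcal{K}$, where $X'' = \iota_{F'',c}^{F'}(S_{F',\mathcal{K}'}^{r'})$. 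So checking that $\pp''$ is good for $X''$ means verifying the three conditions of Definition~\ref{def:goodprime} with respect to the datum $(F',c)$.

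I take $\pp''$ to be the prime of $F''$ below $\pp'$. From $1=[F'_{\pp'}/F_\pp]=[F'_{\pp'}/F''_{\pp''}]\cdot[F''_{\pp''}/F_\pp]$, both local degrees are $1$; this gives condition (ii) for $X''$ at $\pp''$, and also yields $k(\pp'')=k(\pp)$ and $\pp A''_{\pp''}=\pp''$. For the lattice, the key observation is that $k\in\mathcal{K}_\pp\times\mathcal{K}^{(\pp)}$ forces $k_\pp\in\mathcal{K}_\pp\subset\mathrm{Stab}_{\GL_r(F_\pp)}(\Lambda_\pp)$, so the identity $b=c\circ b'\circ k$ localizes to
\[ c_\pp\circ b'_\pp(\Lambda_\pp) \;=\; b_\pp(\Lambda_\pp). \]
By condition (iii) for $\pp$ this is an $A'_\pp$-submodule of ${F'_\pp}^{r'}$, hence in particular $A''_\pp$-stable; since $c_\pp$ is $F''_\pp$-linear (as $c$ is $\AZ_{F''}^f$-linear) and bijective, $b'_\pp(\Lambda_\pp)\subset{F''_\pp}^{r''}$ is an $A''_\pp$-submodule. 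Because $A''_\pp=\prod_{\pp_1|\pp}A''_{\pp_1}$ is a product of DVRs and $b'_\pp(\Lambda_\pp)$ is finitely generated and spans ${F''_\pp}^{r''}$ over $F''_\pp$, it splits as a product of free $A''_{\pp_1}$-lattices of rank $r''$; I let $\Lambda_{\pp''}$ be the factor at our chosen $\pp''$.

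It remains to verify conditions (i) and (iii) for $X''$ at $\pp''$. For (iii), projecting the key identity to the $\pp''$-factor identifies $c_{\pp''}(\Lambda_{\pp''})$ with the component of $b_\pp(\Lambda_\pp)$ in the decomposition $A'_\pp=\prod_{\pp_1|\pp}(A'\otimes_{A''}A''_{\pp_1})$; hence it is an $A'\otimes_{A''}A''_{\pp''}$-submodule of $(F'\otimes_{F''}F''_{\pp''})^{r'}$, which is exactly what Definition~\ref{def:goodprime}(iii) for $X''$ demands. For (i), start from $\mathcal{K}''=(b'\mathcal{K}b'^{-1})\cap\GL_{r''}(\AZ_{F''}^f)$; unwinding at the $\pp$-place shows that $(b'_\pp\mathcal{K}_\pp b'^{-1}_\pp)\cap\GL_{r''}(F''_\pp)$ is the kernel of $\mathrm{Stab}_{\GL_{r''}(F''_\pp)}(b'_\pp\Lambda_\pp)\to\mathrm{Aut}_{k(\pp)}(b'_\pp\Lambda_\pp/\pp\,b'_\pp\Lambda_\pp)$, because the stabilizer condition and the congruence condition transport freely across conjugation by the $F_\pp$-linear isomorphism $b'_\pp$. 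Breaking this up over the primes of $F''$ above $\pp$ and using $\pp A''_{\pp''}=\pp''$, $k(\pp)=k(\pp'')$ gives $\mathcal{K}''_{\pp''}$ of the shape required by (i) and lumps the remaining primes above $\pp$ together with the $(\pp)$-part into a $\mathcal{K}''^{(\pp'')}\subset\GL_{r''}(\AZ_{F''}^{f,\,\pp''})$.

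The only point requiring real care is the bookkeeping comparing the two decompositions $A'_\pp=\prod_{\pp'|\pp}A'_{\pp'}$ and $A''_\pp=\prod_{\pp''|\pp}A''_{\pp''}$ through the tower $F\subset F''\subset F'$: one must recognize $A'_\pp$ as $\prod_{\pp''|\pp}(A'\otimes_{A''}A''_{\pp''})$ so that the $\pp''$-component of $b_\pp(\Lambda_\pp)$ is intrinsically the object that Definition~\ref{def:goodprime}(iii) refers to for $X''\subset S_{F'',\mathcal{K}''}^{r''}$. Once this compatibility is in place, everything else is formal manipulation with $b=c\circ b'\circ k$ and with the fact that $k_\pp$ stabilizes $\Lambda_\pp$.
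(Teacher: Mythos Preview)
Your proposal is correct and follows essentially the same approach as the paper's own proof: both use Corollary~\ref{cor:sub_sub} and the factorization $b=c\circ b'\circ k$ from Proposition~\ref{prop:sub_contain}, take $\pp''$ to be the prime of $F''$ between $\pp$ and $\pp'$, observe that $k_\pp$ stabilizes $\Lambda_\pp$ so that $b'_\pp(\Lambda_\pp)=c_\pp^{-1}(b_\pp(\Lambda_\pp))$, and then define the witnessing lattice at $\pp''$ as the $\pp''$-component of this $A''_\pp$-module. The only cosmetic difference is that the paper phrases the lattice as $\Lambda''_{\pp''}:=c_{\pp''}^{-1}(\Lambda'_{\pp''})$ with $\Lambda'_{\pp''}$ the $\pp''$-factor of $b_\pp(\Lambda_\pp)$, which is the same object you describe.
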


\begin{proof} By Corollary~\ref{cor:sub_sub}, $X'' = (\inclz)^{-1}(X)$
  is a Drinfeld modular subvariety of $S_{F'',\cal
  K''}^{r''}$. In the proof of Corollary~\ref{cor:sub_sub} we
saw that $F \subset F'' \subset F'$ and 
there are an $\AZ_{F''}^f$-linear isomorphism $c: (\AZ_{F''}^f)^{r''}
\stackrel{\sim}{\rightarrow} (\AZ_{F'}^f)^{r'}$ and a $k \in {\cal K}$ such that
\begin{equation} \label{eq:phipsi}
 b = c \circ b' \circ k 
\end{equation}
and $X'' = \iota^{F'}_{F''\!,\,c}(S_{F',{\cal
    K'}}^{r'})$. The situation is summarized in the following
commutative diagram
where all arrows are bijections on $\C$-valued points:
\[ \xymatrix{&X & \subset & X' & \subset & S_{F,\cal K}^r \\
             S_{F',\cal K'}^{r'} \ar[ur]^{\incl} \ar[dr]_
{\iota^{F'}_{F''\!,\,c}} &&&&& \\
&X'' \ar[uu]_{\left.\inclz\right|_{X''}} & \subset & S_{F'',\cal K''}^{r''}
\ar[uu]_{\inclz} &&  }\]

Let $\Lambda_{\pp}$ be an $A_{\pp}$-lattice and $\pp'$ a prime of $F'$ above $\pp$ for
which the conditions (i)-(iii) of Definition~\ref{def:goodprime} 
are satisfied. We define $\pp''$ to be the prime of $F''$ lying
between $\pp$ and $\pp'$. Since $\pp'$ is of local degree~$1$ over $F$, we have $k(\pp) = k(\pp') = k(\pp'')$.
We now show that $\pp''$ is a good prime for
$X'' = \iota^{F'}_{F''\!,\,c}(S_{F',{\cal K'}}^{r'}) \subset S_{F'',\cal K''}^{r''}$.

By construction, $\pp'$ is also of local degree $1$ over $F''$, i.e., 
(ii) in Definition~\ref{def:goodprime} is satisfied for~$\pp''$.

By (iii), $b_{\pp}(\Lambda_{\pp})$ is an
$A_{\pp}'$-submodule of ${F_{\pp}'}^{r'}$. So we can write
\[ b_{\pp}(\Lambda_{\pp}) = \Lambda'_{\pp''} \times
\Lambda'^{(\pp'')} \]
with $\Lambda'_{\pp''} \subset {F'_{\pp''}}^{r'}$ an
$A'_{\pp''}$-submodule (recall that $A'_{\pp''} = A' 
\otimes_{A''} A''_{\pp''}$ by our conventions). Since 
$c$ is $\AZ_{F''}^f$-linear and $A''
\subset A'$, it follows that $\Lambda''_{\pp''}:= c_{\pp''}^{-1}(\Lambda'_{\pp''})$
is an $A''_{\pp''}$-lattice in ${F''_{\pp''}}^{r''}$. By construction, condition (iii) in
Definition~\ref{def:goodprime} holds for $\pp''$ and $\Lambda''_{\pp''}$.

We note that (i) implies ${\cal K}'' = (b'{\cal K}{b'}^{-1}) 
\cap \GL_{r''}(\AZ_{F''}^f) = {\cal K}''_{\pp} \times {\cal
  K''}^{(\pp)}$ with ${\cal K}''_{\pp}$ the kernel of the natural map
$\mathrm{Stab}_{\GL_{r''}(F''_{\pp})}(b'_{\pp}(\Lambda_{\pp})) \rightarrow
  \mathrm{Aut}_{k(\pp)}(b'_{\pp}(\Lambda_{\pp}) / \pp \cdot
  b'_{\pp}(\Lambda_{\pp})). $
Note that
\[ b'_{\pp}(\Lambda_{\pp}) = b'_{\pp}(k_{\pp}\Lambda_{\pp}) 
  = c_{\pp}^{-1}(b_{\pp}(\Lambda_{\pp})) = c_{\pp}^{-1}(\Lambda'_{\pp''} \times
\Lambda'^{(\pp'')}) = \Lambda''_{\pp''} \times
{\Lambda''}^{(\pp'')}. \]
Since $k(\pp) = k(\pp'')$ and $\pp A''_{\pp''} = \pp''A''_{\pp''}$,
we therefore see that (i) is also satisfied for $\pp''$ and $\Lambda''_{\pp''}$.
\end{proof}

\subsection{Suitable Hecke correspondences} \label{sec:exHecke}
\begin{satz} \label{prop:Heckecontain}
Let $X = \incl(S_{F',\cal K'}^{r'}) \subset S_{F,{\cal K}}^r$ be a
Drinfeld modular subvariety and $g' \in \GL_{r'}(\AZ_{F'}^f)$. Then,
we have 
\[ X \subset T_gX \]
for $g := b^{-1} \circ g' \circ b \in \GL_r(\AZ_F^f)$.
\end{satz}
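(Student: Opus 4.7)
The plan is a direct unwinding of definitions on $\C$-valued points, using the explicit formulas for inclusion and projection morphisms established in Section~\ref{ch:morphisms}.

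Given an arbitrary $p \in X(\C)$, I would first use Theorem~\ref{th_inclusion} to write $p = \incl([(\overline{\omega'}, h')])$ for some $(\overline{\omega'}, h') \in \Omega_{F'}^{r'} \times \GL_{r'}(\AZ_{F'}^f)$. After choosing an $F$-linear isomorphism $\phi: F^r \stackrel{\sim}{\rightarrow} {F'}^{r'}$ this gives the representative
\[ p = [(\overline{\omega' \circ \phi}, \phi^{-1} \circ h' \circ b)] \]
in $S(\C)$. Set $\omega := \omega' \circ \phi$ and $h := \phi^{-1} \circ h' \circ b$, so that $p = [(\overline{\omega}, h)]$.

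The key idea is to produce a point of $X$ whose $T_g$-image contains $p$ by twisting on the lower-rank side. I would take $q := \incl([(\overline{\omega'}, h'g')]) \in X(\C)$. Applying Theorem~\ref{th_inclusion} once more and using the defining relation $g = b^{-1} \circ g' \circ b$, a short computation yields
\[ q = [(\overline{\omega}, \phi^{-1} \circ h' \circ g' \circ b)] = [(\overline{\omega}, \phi^{-1} \circ h' \circ b \circ b^{-1} \circ g' \circ b)] = [(\overline{\omega}, hg)] \]
in $S(\C)$. To finish, I would consider the lift $\tilde{q} := [(\overline{\omega}, hg)] \in S_{F,{\cal K}_g}^r(\C)$ with ${\cal K}_g := {\cal K} \cap g^{-1}{\cal K}g$. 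By the description~\eqref{eq:pig2} of projection morphisms on $\C$-valued points, $\pi_1(\tilde{q}) = q$ tautologically, while
\[ \pi_g(\tilde{q}) = [(\overline{\omega}, hg \cdot g^{-1})] = [(\overline{\omega}, h)] = p, \]
so $p \in \pi_g(\pi_1^{-1}(q)) \subset T_g(X)$, as required.

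Since every step is a direct application of formulas already established in Section~\ref{ch:morphisms}, there is no genuine obstacle; the content of the proposition is simply that the action of $g'$ on the $F'$-side corresponds, via the inclusion morphism, to the action of $g = b^{-1} \circ g' \circ b$ on the $F$-side, and that the Hecke correspondence $T_g$ folds this twist back onto the original point.
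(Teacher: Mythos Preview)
Your proof is correct and follows essentially the same approach as the paper: both take an arbitrary $p = \incl([(\overline{\omega'}, h')])$, introduce the twisted point $\incl([(\overline{\omega'}, h'g')]) \in X$, and use the relation $g' \circ b = b \circ g$ to see that $p$ lies in its $T_g$-image. The paper compresses your lifting-and-projecting step into the single observation $\phi^{-1} \circ h' \circ b = \phi^{-1} \circ h'g' \circ b \circ g^{-1}$, but the content is identical.
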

\begin{proof} Let $p = \incl([(\overline{\omega'},h')]) \in X(\C)$
for some $\overline{\omega'} \in \Omega_{F'}^{r'}$ and $h' \in
\GL_{r'}(\AZ_{F'}^f)$. Then we have
\[ p = [(\overline{\omega' \circ \phi}, \phi^{-1} \circ h' \circ
b)] = [(\overline{\omega' \circ \phi}, (\phi^{-1} \circ h'g' \circ
b \circ g^{-1})] \]
for an $F$-linear isomorphism $\phi: F^r \stackrel{\sim}{\rightarrow}
{F'}^{r'}$,  
therefore $p$ lies in
$T_g\left(\incl([(\overline{\omega'},h'g')])\right)$ and therefore in 
$T_gX(\C)$. Since $p \in X(\C)$ was arbitrary, we conclude $X \subset
T_gX$. 
\end{proof}

\begin{theorem} \label{th:exHecke}
Let $\pp$ be a good prime for a Drinfeld modular 
subvariety~$X = \incl(S_{F',\cal K'}^{r'}) \subset
S_{F,\cal K}^r$ and let $\pp'$ be a prime of $F'$ above $\pp$
with local degree~$1$ over $F$.
Then there is a
\[ g' = (1,\ldots,1,g'_{\pp'},1,\ldots,1) \in \GL_{r'}(\AZ_{F'}^f) \]
with $g'_{\pp'} \in \GL_{r'}(F'_{\pp'})$ such that the following holds for
$g := b^{-1} \circ g' \circ b \in \GL_r(\AZ_F^f)$:
\begin{properties}
\item $X \subset T_gX$,
\item $\deg T_g = [{\cal K} : {\cal K} \cap g^{-1}{\cal
    K}g] = |k(\pp)|^{r-1}$,
\item For all $k_1,k_2 \in {\cal K}_{\pp}$, the cyclic subgroup 
  of $\mathrm{PGL}_r(F_{\pp})$ generated by
  the image of $k_1\cdot g_{\pp} \cdot k_2$ is unbounded.
\end{properties}
\end{theorem}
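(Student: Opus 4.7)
The plan is to construct $g'_{\pp'}$ as a suitable conjugate at the distinguished prime $\pp'$ of the elementary dilation $\mathrm{diag}(\pi_{\pp}^{-1},1,\ldots,1)$, and then verify the three properties by reducing everything to a concrete computation inside the principal congruence subgroup of $\GL_r(A_{\pp})$ mod~$\pp$.

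To begin, I would unpack the good prime data. Condition~(iii) of Definition~\ref{def:goodprime} says that $b_{\pp}(\Lambda_{\pp})$ is an $A'_{\pp} = \prod_{\pp''\mid\pp} A'_{\pp''}$-submodule of $(F'_{\pp})^{r'}$, and so it decomposes as $b_{\pp}(\Lambda_{\pp}) = \prod_{\pp''\mid\pp} \Lambda'_{\pp''}$ with each $\Lambda'_{\pp''}$ a free $A'_{\pp''}$-module of rank $r'$ (the rank is forced by comparing $F_{\pp}$-dimensions on both sides of $b_{\pp}$). Since $\pp'$ has local degree $1$ over $F$, one has $A'_{\pp'} = A_{\pp}$, so $\Lambda'_{\pp'} = t_{\pp'} A_{\pp}^{r'}$ for some $t_{\pp'} \in \GL_{r'}(F_{\pp})$. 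I then set
\[ g'_{\pp'} := t_{\pp'}\,\mathrm{diag}(\pi_{\pp}^{-1},1,\ldots,1)\,t_{\pp'}^{-1},\qquad g' := (1,\ldots,g'_{\pp'},\ldots,1),\qquad g := b^{-1}\circ g'\circ b. \]
Concatenating the $A_{\pp}$-basis $t_{\pp'}e_1,\ldots,t_{\pp'}e_{r'}$ of $\Lambda'_{\pp'}$ (with $e_1$ placed first) with arbitrary $A_{\pp}$-bases of the remaining $\Lambda'_{\pp''}$, $\pp''\neq\pp'$, produces $s_{\pp} \in \GL_r(F_{\pp})$ with $\Lambda_{\pp} = s_{\pp} A_{\pp}^r$, $\mathcal{K}_{\pp} = s_{\pp}\,\mathcal{K}(\pp)\,s_{\pp}^{-1}$, and $g_{\pp} = s_{\pp}\,\mathrm{diag}(\pi_{\pp}^{-1},1,\ldots,1)\,s_{\pp}^{-1}$, where $\mathcal{K}(\pp) \subset \GL_r(A_{\pp})$ is the principal congruence subgroup mod $\pp$.

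Property~(i) is then immediate from Proposition~\ref{prop:Heckecontain} applied to $g'$. For property~(ii), conjugation by $s_{\pp}^{-1}$ reduces the index to $[\mathcal{K}(\pp) : \mathcal{K}(\pp) \cap h^{-1}\mathcal{K}(\pp)h]$ with $h := \mathrm{diag}(\pi_{\pp}^{-1},1,\ldots,1)$; writing a general element of $\mathcal{K}(\pp)$ as $I + \pi_{\pp} M$ with $M \in \Mat_r(A_{\pp})$, the condition $h(I+\pi_{\pp} M)h^{-1} \in \mathcal{K}(\pp)$ is equivalent to $M_{1j} \in \pi_{\pp} A_{\pp}$ for $j=2,\ldots,r$, cutting out exactly $|k(\pp)|^{r-1}$ cosets.

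Property~(iii) is the essential one, and the point where the good prime condition really bites. For $k_1,k_2 \in \mathcal{K}_{\pp}$, set $y := s_{\pp}^{-1}(k_1 g_{\pp} k_2) s_{\pp} = \tilde{k}_1 h \tilde{k}_2$ with $\tilde{k}_i = I + \pi_{\pp} N_i \in \mathcal{K}(\pp)$, $N_i \in \Mat_r(A_{\pp})$. A straightforward entry-by-entry expansion gives $y_{11} \in \pi_{\pp}^{-1} + A_{\pp}$, $y_{jj} \in 1 + \pi_{\pp} A_{\pp}$ for $j>1$, and $y_{ij} \in A_{\pp}$ (in fact $\in \pi_{\pp} A_{\pp}$ for $i,j>1$, $i\ne j$) in the remaining positions. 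Consequently $v_{\pp}(\mathrm{tr}\,y) = v_{\pp}(\det y) = -1$. If all eigenvalues of $y$ in $\overline{F}_{\pp}$ had a common valuation $v_0$, then $r v_0 = v_{\pp}(\det y) = -1$ would force $v_0 = -1/r$, but the ultrametric inequality would then give $v_{\pp}(\mathrm{tr}\,y) \geq v_0 = -1/r > -1$ once $r \geq 2$, contradicting $v_{\pp}(\mathrm{tr}\,y) = -1$. Equivalently, the Newton polygon of the characteristic polynomial of $y$ has a horizontal segment of length $r-1$ and a segment of slope $1$, so $y$ has $r-1$ eigenvalues of valuation $0$ and one of valuation $-1$. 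With distinct valuations, a standard comparison of Cartan invariants of $y^n$ (asymptotically $\sim n \cdot v(\alpha_i)$) shows that the image $\bar y \in \mathrm{PGL}_r(F_{\pp})$ acts hyperbolically on the Bruhat--Tits building, so $\langle \bar y\rangle = \langle \overline{k_1 g_{\pp} k_2}\rangle$ is unbounded. The main obstacle — and the reason condition~(a) of Definition~\ref{def:goodprime} is imposed — is precisely the identity $v_{\pp}(\mathrm{tr}\,y) = -1$: if $\tilde{k}_i$ were only required to lie in $\GL_r(A_{\pp})$ rather than in the principal congruence subgroup, the trace could suffer cancellation (as in the $r=2$ example of the introduction, where $(k_1 g_{\pp})^2$ is a scalar), allowing the eigenvalues to collapse to a common valuation and destroying the unboundedness.
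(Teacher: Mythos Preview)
Your proposal is correct and follows essentially the same approach as the paper. The paper takes $g'_{\pp'}=\mathrm{diag}(\pi_{\pp'},1,\ldots,1)$ rather than your $\mathrm{diag}(\pi_{\pp}^{-1},1,\ldots,1)$ and, for (iii), computes $v_{\pp}(a_0)=v_{\pp}(a_{r-1})=-1$ for the characteristic polynomial of $(k_1g_{\pp}k_2)^{-1}$ to force at least two Newton segments; these differences are purely cosmetic, as your direct entry computation yields exactly $v_{\pp}(\mathrm{tr}\,y)=v_{\pp}(\det y)=-1$ for $y=k_1g_{\pp}k_2$, which is the same information.
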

\begin{proof} Suppose that the conditions (i)-(iii) in 
Definition~\ref{def:goodprime} are satisfied for the $A_{\pp}$-lattice
$\Lambda_{\pp} \subset F_{\pp}^r$. 

By (iii) in Definition~\ref{def:goodprime}, $b_{\pp}(\Lambda_{\pp})$ is an
$A_{\pp}'$-submodule of ${F_{\pp}'}^{r'}$. Hence we can write
\[ b_{\pp}(\Lambda_{\pp}) = \Lambda'_{\pp'} \times {\Lambda'_{\pp}}^{(\pp')} \]
with $\Lambda'_{\pp'} \subset {F'_{\pp'}}^{r'}$ a 
free $A'_{\pp'}$-submodule of rank~$r'$.
Let $g'_{\pp'}: {F'_{\pp'}}^{r'} \rightarrow {F'_{\pp'}}^{r'}$ be given by
\[ \mathrm{diag}(\pi_{\pp'},1,\ldots,1) \]
for a uniformizer $\pi_{\pp'} \in A'_{\pp'}$ 
with respect to an $A'_{\pp'}$-basis of $\Lambda'_{\pp'}$.

We now check the conditions (i)-(iii) for $g'_{\pp'}$. 
Statement (i) follows by Proposition~\ref{prop:Heckecontain}.

For (ii) and (iii), note that each
$A'_{\pp'}$-basis of $\Lambda'_{\pp'}$ is also an $A_{\pp}$-basis 
of $\Lambda'_{\pp'}$ and
can be extended to an $A_{\pp}$-basis of
$b_{\pp}(\Lambda_{\pp})$ because the local degree $[F'_{\pp'} / F_{\pp}]$
is equal to~$1$. In particular, 
$g'_{\pp} = b_{\pp} \circ g_{\pp} \circ b_{\pp}^{-1}:
{F'_{\pp}}^{r'} \rightarrow {F'_{\pp}}^{r'}$ is given by the diagonal matrix
\[ D_{\pp} := \mathrm{diag}(\pi_{\pp},1,\ldots,1) \]
with respect to some $A_{\pp}$-basis~$B'$ of 
$b_{\pp}(\Lambda_{\pp})$ for a uniformizer $\pi_{\pp} \in
A_{\pp}$. It follows that $g_{\pp}: F_{\pp}^r \rightarrow F_{\pp}^r$
is also given by $D_{\pp}$ with respect to the 
$A_{\pp}$-basis~$b_{\pp}^{-1}(B')$ of~$\Lambda_{\pp}$. Hence,
there is a $s_{\pp} \in \GL_r(F_{\pp})$ such that
\begin{eqnarray*}
g_{\pp} & = & s_{\pp}D_{\pp}s_{\pp}^{-1}, \\
\Lambda_{\pp} & = & s_{\pp}A_{\pp}^r.
\end{eqnarray*}
By the remark after Definition~\ref{def:goodprime}, we therefore have
\[ {\cal K}_{\pp} = s_{\pp}{\cal K}(\pp)s_{\pp}^{-1} \]
with ${\cal K}(\pp)$ the principal congruence subgroup of $\GL_r(A_{\pp})$
modulo~$\pp$.

Hence, we can and do assume ${\cal K}_{\pp} = 
{\cal K}(\pp)$ and $g_{\pp} =
D_{\pp}$ because (ii) and (iii) are invariant under conjugation.

For the proof of (ii) consider the map
\[ \alpha: \begin{array}{ccc}
{\cal K}(\pp) & \longrightarrow & (A_{\pp} / (\pi_{\pp}))^{r-1} \\
h & \longmapsto & ([\pi_{\pp}^{-1} \cdot h_{21}],\ldots,
[\pi_{\pp}^{-1} \cdot h_{r1}])
\end{array}. \]
For $h, h' \in {\cal K}(\pp)$, we have for $2 \leq i \leq r$
\begin{eqnarray*} \pi_{\pp}^{-1} \cdot (hh')_{i1} & = &
(\pi_{\pp}^{-1}h_{i1})h'_{11} +
h_{ii}(\pi_{\pp}^{-1}h'_{i1}) + \sum_{j \neq
  i,1}(\pi_{\pp}^{-1}h_{ij})h'_{j1}  \\
& \equiv & \pi_{\pp}^{-1}h_{i1} + \pi_{\pp}^{-1}h'_{i1} + 0\
(\textrm{mod}\ \pp),
\end{eqnarray*}
therefore $\alpha$ is a homomorphism of groups. It is furthermore
surjective and its kernel is exactly equal to ${\cal K}(\pp) \cap D_{\pp}{\cal
    K}(\pp)D_{\pp}^{-1}$. Hence, we have
\[ [{\cal K} : {\cal K} \cap g^{-1}{\cal
    K}g] = [{\cal K}_{\pp} : {\cal K}_{\pp} \cap g_{\pp}^{-1}{\cal
    K}_{\pp}g_{\pp}] = |k(\pp)|^{r-1}. \]

For (iii), let $k_1,k_2 \in {\cal K}_{\pp} = {\cal K}(\pp)$ be
arbitrary. We prove that the eigenvalues of $(k_1g_{\pp}k_2)^{-1} =
k_2^{-1}D_{\pp}^{-1}k_1^{-1}$ do not all have the same $\pp$-valuation
by showing that the Newton polygon of the characteristic polynomial 
\[ \chi(\lambda) = \lambda^r + a_{r-1}\lambda^{r-1} + \cdots + a_1\lambda + a_0 \]
of $k_2^{-1}D_{\pp}^{-1}k_1^{-1}$ 
consists at least of two line segments. This implies
that the cyclic subgroup of $\mathrm{PGL}_r(F_{\pp})$ generated by the
image of $k_1g_{\pp}k_2$ is unbounded.

Since $k_1,k_2$ are elements of $\GL_r(A_{\pp})$, we have
$\det(k_1),\det(k_2) \in A_{\pp}^*$ and hence
\[ v_{\pp}(a_0) = v_{\pp}(\det(k_2^{-1}D_{\pp}^{-1}k_1^{-1})) = 0 - v_{\pp}(\det(D_{\pp}))
+ 0 = -1. \]
The coefficient $a_{r-1}$ can be expressed as
\[ a_{r-1} = -\mathrm{tr}(k_2^{-1}D_{\pp}^{-1}k_1^{-1}) = -\sum_i
(k_2^{-1})_{i1}\pi_{\pp}^{-1}(k_1^{-1})_{1i} - \sum_i \sum_{j \neq 1} 
(k_2^{-1})_{ij}(k_1^{-1})_{ji}.  \]
Because of $k_1,k_2 \in {\cal K}(\pp)$ we have 
$v_{\pp}((k_1^{-1})_{ij}), v_{\pp}((k_2^{-1})_{ij}) \geq 0$ with equality exactly
for $i = j$. Therefore, in the above expression for $a_{r-1}$, the
summand for $i = 1$ in the first sum has $\pp$-valuation $-1$ and all
the other summands have $\pp$-valuation at least $0$. We conclude
\[ v_{\pp}(a_{r-1}) = -1. \]
Hence, the point $(r-1,v_{\pp}(a_{r-1}))$ lies below the line through
$(0,v_{\pp}(a_0))$ and $(r,0)$. This implies that the Newton polygon
of $\chi$ consists at least of two line segments. 
\end{proof}

\subsection{Existence of good primes} \label{sec:exgoodprimes}
\begin{satz} \label{prop:goodprime}
Let $X = \incl(S_{F',\cal K'}^{r'}) \subset S_{F,\cal K}^r$ be a
Drinfeld modular subvariety and $\pp$ a prime of $F$ such that the
following holds:

\begin{properties}
\item
there is a prime $\pp'$ of $F'$ above $\pp$ with local degree
  $[F'_{\pp'} / F_{\pp}] = 1$,

\item
${\cal K} = {\cal K_{\pp}} \times {\cal K}^{(\pp)}$ with ${\cal
  K}_{\pp} \subset \GL_r(F_{\pp})$ a maximal compact subgroup and ${\cal
  K}^{(\pp)} \subset \GL_r(\AZ_F^{f,\,\pp})$,

\item
${\cal K}_{\pp}' := (b_{\pp}{\cal K}_{\pp}b_{\pp}^{-1}) 
\cap \GL_{r'}(F'_{\pp})$ is a maximal compact subgroup of
$\GL_{r'}(F'_{\pp})$.

\end{properties}

Then there is
a subgroup $\tilde{\cal K} \subset {\cal K}$ and a Drinfeld modular
subvariety $\tilde{X} \subset S_{F,\tilde{\cal K}}^r$ such that 

\begin{properties}
\item
$\pi_1(\tilde{X}) = X$ for the canonical projection $\pi_1:
S_{F,\tilde{\cal K}}^r \rightarrow S_{F,\cal K}^r$,

\item
$\pp$ is good for $\tilde{X} \subset S_{F,\tilde{\cal K}}^r$,

\item
$[{\cal K} : \tilde{\cal K}] < |k(\pp)|^{r^2}$.
\end{properties}

\end{satz}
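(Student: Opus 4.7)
The plan is to enlarge $b_{\pp}(\Lambda_{\pp}^0)$ to the smallest $A'_{\pp}$-submodule containing it, take $\tilde{\mathcal{K}}_{\pp}$ to be the principal congruence subgroup attached to the corresponding enlarged $A_{\pp}$-lattice, and use the same datum $b$ to produce $\tilde{X}$. By (ii), there is an $A_{\pp}$-lattice $\Lambda_{\pp}^0 \subset F_{\pp}^r$ with $\mathcal{K}_{\pp} = \mathrm{Stab}_{\GL_r(F_{\pp})}(\Lambda_{\pp}^0)$. Set $M := b_{\pp}(\Lambda_{\pp}^0) \subset (F'_{\pp})^{r'}$ and let $N := A'_{\pp} \cdot M$ be the smallest $A'_{\pp}$-submodule containing $M$. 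Since $\mathcal{K}'_{\pp}$ is maximal compact by (iii), it contains the scalars $(A'_{\pp})^* \cdot I_{r'}$, so $M$ is stable under multiplication by $(A'_{\pp})^*$.

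The key technical step is showing $\pp N \subset M$. For $x = \sum_i a_i m_i \in N$ with $a_i \in A'_{\pp}$, $m_i \in M$, and a uniformizer $\pi \in \pp$, each element $\pi a_i \in \pp A'_{\pp}$ lies in the Jacobson radical $\prod_{\pp' \mid \pp} \pp' A'_{\pp'}$ of $A'_{\pp}$. Hence $1 + \pi a_i \in (A'_{\pp})^*$, so $(1 + \pi a_i) m_i \in M$ by $(A'_{\pp})^*$-stability of $M$, and therefore $\pi a_i m_i = (1 + \pi a_i) m_i - m_i \in M$; summing yields $\pi x \in M$. Setting $\tilde{\Lambda}_{\pp} := b_{\pp}^{-1}(N)$ then gives $\Lambda_{\pp}^0 \subseteq \tilde{\Lambda}_{\pp} \subseteq \pp^{-1}\Lambda_{\pp}^0$. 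Define $\tilde{\mathcal{K}}_{\pp}$ as the kernel of the reduction map $\mathrm{Stab}_{\GL_r(F_{\pp})}(\tilde{\Lambda}_{\pp}) \rightarrow \mathrm{Aut}_{k(\pp)}(\tilde{\Lambda}_{\pp}/\pp\tilde{\Lambda}_{\pp})$, set $\tilde{\mathcal{K}} := \tilde{\mathcal{K}}_{\pp} \times \mathcal{K}^{(\pp)}$, and take $\tilde{X} := \iota_{F,b}^{F'}(S_{F',\tilde{\mathcal{K}}'}^{r'})$ with $\tilde{\mathcal{K}}' := (b\tilde{\mathcal{K}}b^{-1}) \cap \GL_{r'}(\AZ_{F'}^f)$. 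For $g \in \tilde{\mathcal{K}}_{\pp}$ and $x \in \Lambda_{\pp}^0 \subseteq \tilde{\Lambda}_{\pp}$ one has $gx - x \in \pp \tilde{\Lambda}_{\pp} \subseteq \Lambda_{\pp}^0$, so $g$ preserves $\Lambda_{\pp}^0$; applying the same argument to $g^{-1}$ yields $\tilde{\mathcal{K}} \subset \mathcal{K}$.

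It then remains to verify the three conclusions. Condition (i), $\pi_1(\tilde{X}) = X$, follows from the commutative square of inclusion and projection morphisms used in the proof of Lemma~\ref{lemma:changeK}(i). Condition (ii), that $\pp$ is good for $\tilde{X} \subset S_{F,\tilde{\mathcal{K}}}^r$ with $\tilde{\Lambda}_{\pp}$ as witness, holds by construction: (a) is immediate from the definition of $\tilde{\mathcal{K}}_{\pp}$, (c) since $b_{\pp}(\tilde{\Lambda}_{\pp}) = N$ is $A'_{\pp}$-stable by construction, and (b) is hypothesis (i) of the proposition. For the index bound (iii), put $d := \dim_{k(\pp)}(\Lambda_{\pp}^0/\pp\tilde{\Lambda}_{\pp})$, so that $\tilde{\Lambda}_{\pp}/\Lambda_{\pp}^0 \subset \pp^{-1}\Lambda_{\pp}^0/\Lambda_{\pp}^0 \cong k(\pp)^r$ has dimension $r-d$. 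The action of $\mathcal{K}_{\pp}$ on $\pp^{-1}\Lambda_{\pp}^0/\Lambda_{\pp}^0$ factors through the surjection $\mathcal{K}_{\pp} \twoheadrightarrow \GL_r(k(\pp))$, giving $[\mathcal{K}_{\pp}:\mathrm{Stab}_{\mathcal{K}_{\pp}}(\tilde{\Lambda}_{\pp})] = |\GL_r(k(\pp))|/|P_{r-d}|$, where $P_{r-d}$ is the parabolic fixing an $(r-d)$-dimensional subspace; and the reduction action on $\tilde{\Lambda}_{\pp}/\pp\tilde{\Lambda}_{\pp}$ produces $[\mathrm{Stab}_{\mathcal{K}_{\pp}}(\tilde{\Lambda}_{\pp}) : \tilde{\mathcal{K}}_{\pp}] = |P_d|$, with $P_d$ the parabolic fixing the $d$-dimensional subspace $\Lambda_{\pp}^0/\pp\tilde{\Lambda}_{\pp}$. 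Since $|P_d| = |P_{r-d}|$, the product telescopes to $[\mathcal{K}_{\pp}:\tilde{\mathcal{K}}_{\pp}] = |\GL_r(k(\pp))| = |k(\pp)|^{r^2}\prod_{i=1}^r(1 - |k(\pp)|^{-i}) < |k(\pp)|^{r^2}$.

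The main obstacle is establishing the inclusion $\pp N \subset M$; without it the reduction kernel $\tilde{\mathcal{K}}_{\pp}$ attached to $\tilde{\Lambda}_{\pp}$ need not lie in $\mathcal{K}_{\pp}$, and the required containment $\tilde{\mathcal{K}} \subset \mathcal{K}$ fails. The Jacobson radical argument, leveraging the $(A'_{\pp})^*$-stability of $M$ furnished by the maximality of $\mathcal{K}'_{\pp}$ in hypothesis (iii), is precisely what makes this step go through.
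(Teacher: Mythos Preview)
Your proof is correct but takes a more elaborate route than the paper's. The paper keeps the \emph{same} lattice throughout: it sets $\tilde{\mathcal K}_{\pp}$ equal to the principal congruence kernel attached to $\Lambda_{\pp}$ itself (your $\Lambda_{\pp}^0$), and argues that $M=b_{\pp}(\Lambda_{\pp})$ is already $A'_{\pp}$-stable. Indeed, maximality of $\mathcal K'_{\pp}$ forces $\mathcal K'_{\pp}\cap(F'_{\pp})^{*}=(A'_{\pp})^{*}$, and the paper then invokes that $(A'_{\pp})^{*}$ generates $A'_{\pp}$ as a ring; granting this, your $N$ equals $M$, your $\tilde\Lambda_{\pp}$ coincides with $\Lambda_{\pp}^0$, and the index is immediately $[\mathcal K_{\pp}:\tilde{\mathcal K}_{\pp}]=|\GL_r(k(\pp))|<|k(\pp)|^{r^2}$ with no parabolic bookkeeping. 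Your approach never appeals to this ring-generation fact: the Jacobson-radical step $\pp N\subset M$ and the two-stage index computation via the parabolics $P_d$ and $P_{r-d}$ (with $|P_d|=|P_{r-d}|$) work unconditionally and still yield the same final index $|\GL_r(k(\pp))|$. This makes your argument slightly longer but more robust --- the ring-generation assertion is delicate when every residue field $k(\pp')$ above $\pp$ has two elements (e.g.\ for $A'_{\pp}\cong\FZ_2[[t]]\times\FZ_2[[t]]$ all units reduce to the diagonal of $\FZ_2\times\FZ_2$, so they do not generate the full ring).
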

\begin{proof} As ${\cal K}_{\pp}$ is a maximal compact subgroup of
$\GL_r(F_{\pp})$, there is an $s_{\pp} \in \GL_r(F_{\pp})$ with ${\cal
    K}_{\pp} = s_{\pp}\GL_r(A_{\pp})s_{\pp}^{-1}$. We define
  $\Lambda_{\pp}$ to be the lattice $s_{\pp} \cdot A_{\pp}^r$, for
  which we have
\[ {\cal K}_{\pp} = \mathrm{Stab}_{\GL_r(F_{\pp})}(\Lambda_{\pp}). \]
Now, we let $\tilde{{\cal K}_{\pp}}$ be the kernel of the natural map
\[ \mathrm{Stab}_{\GL_r(F_{\pp})}(\Lambda_{\pp}) \rightarrow
  \mathrm{Aut}_{k(\pp)}(\Lambda_{\pp} / \pp \cdot
  \Lambda_{\pp})\] 
and define $\tilde{\cal K} :=
\tilde{{\cal K}_{\pp}} \times {\cal K}^{(\pp)}$.

By construction, we get the upper bound (iii) for the index 
of $\tilde{\cal K}$ in ${\cal K}$:
\[ [{\cal K} : \tilde{\cal K}] = [{\cal K}_{\pp} : \tilde{{\cal
    K}_{\pp}}] = \left|\mathrm{Aut}_{k(\pp)}(\Lambda_{\pp} / 
    \pp \cdot \Lambda_{\pp})\right| = \left| \GL_r(k(\pp)) \right| 
  < |k(\pp)|^{r^2}. \]

We denote by $\tilde{\iota}_{F,\,b}^{F'}$ the inclusion
$S_{F',\tilde{\cal K'}}^{r'} \rightarrow S_{F,\tilde{\cal K}}^r$ 
associated to the same datum $(F',\,b)$ as $\incl$ and set 
$\tilde{X} := \incl(S_{F',\tilde{\cal K'}}^{r'})$. The proof
of Lemma~\ref{lemma:changeK} (i) shows that $\tilde{X}$ is a Drinfeld modular
subvariety of $S_{F,\tilde{\cal K}}^r$ with $\pi_1(\tilde{X}) = X$.

It remains to show that $\pp$ is good for $\tilde{X} \subset
S_{F,\tilde{\cal K}}^r$. Condition (i) in Definition~\ref{def:goodprime}
is satisfied by construction of $\tilde{K}$ and (ii) by assumption.
So we only have
to check that $\Lambda'_{\pp} := b_{\pp}(\Lambda_{\pp})$ is
an $A'_{\pp}$-submodule of ${F'_{\pp}}^{r'}$. Since ${\cal K}_{\pp}$ is the 
stabilizer of $\Lambda_{\pp}$ in $\GL_r(F_{\pp})$, the stabilizer of 
$\Lambda'_{\pp}$ in $\GL_{r'}(F'_{\pp})$ is exactly
\[ {\cal K}_{\pp}' := (b_{\pp}{\cal K}_{\pp}b_{\pp}^{-1}) 
\cap \GL_{r'}(F'_{\pp}), \]
which is a maximal compact subgroup of $\GL_{r'}(F'_{\pp})$ by
assumption. Therefore we have
\[ \mathrm{Stab}_{{F'_{\pp}}^*}(\Lambda'_{\pp}) = {\cal K}_{\pp}'
\cap {F'_{\pp}}^* = {A'_{\pp}}^* \]
because ${A'_{\pp}}^*$ is the unique maximal compact subgroup of
${F'_{\pp}}^*$. Since ${A'_{\pp}}^*$ generates $A'_{\pp}$ as a ring, we
conclude that $\Lambda'_{\pp}$ is an $A'_{\pp}$-submodule of 
of ${F'_{\pp}}^{r'}$. 
\end{proof}

\begin{theorem} \label{th:primeexists}
Let $S = S_{F,\cal K}^r$ be a Drinfeld modular variety and $N >
0$. For every prime~$\qq$ of $F$, denote by ${\cal K}_{\qq}$ the
projection of ${\cal K}$ to $\GL_r(F_{\qq})$. Then, for 
almost all Drinfeld modular
subvarieties $X = \incl(S_{F',\cal K'}^{r'})$ with separable reflex
field over $F$, there is a prime $\pp$
with the following properties:
\begin{properties}
\item
there is a prime $\pp'$ of $F'$ above $\pp$ with local degree
  $[F'_{\pp'} / F_{\pp}] = 1$,

\item
${\cal K}_{\pp} \subset \GL_r(F_{\pp})$ is a maximal compact subgroup and 
${\cal K} = {\cal K}_{\pp} \times {\cal K}^{(\pp)}$ with 
${\cal K}^{(\pp)} \subset \GL_r(\AZ_F^{f,\,\pp})$,

\item
${\cal K}'_{\pp} := (b_{\pp}{\cal K}_{\pp}b_{\pp}^{-1}) 
\cap \GL_{r'}(F'_{\pp})$ is a maximal compact
subgroup of $\GL_{r'}(F'_{\pp})$,

\item
$|k(\pp)|^N < D(X)$ where $D(X)$ denotes the predegree of $X$ from Definition~\ref{def:reflex_index}.
\end{properties}
\end{theorem}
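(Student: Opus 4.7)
The plan is to produce $\pp$ via an unconditional effective Chebotarev density theorem for function fields, applied to the Galois closure $\tilde{F} \subset F^{\sep}$ of $F'/F$. The separability hypothesis guarantees that $\tilde{F}$ exists and satisfies $[\tilde{F} : F] \leq r!$, a bound independent of $X$. A prime $\pp$ of $F$ unramified in $\tilde{F}$ whose Frobenius class in $\mathrm{Gal}(\tilde{F}/F)$ is trivial splits completely in $\tilde{F}$, and therefore has a prime $\pp'$ above it in $F'$ of local degree $1$; this takes care of condition (i).

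Next I would collect the primes excluded by (ii) and (iii) into a single finite set $S(X)$. Condition (ii) fails only at a fixed finite set $S_0$ depending on $\cal K$ alone (the primes at which ${\cal K}_{\qq}$ is not maximal compact or where the product decomposition fails). For (iii), fix any $\AZ_F^f$-linear isomorphism $b_0 \colon (\AZ_F^f)^r \stackrel{\sim}{\rightarrow} (\AZ_{F'}^f)^{r'}$ and set $g := b_0^{-1} \circ b \in \GL_r(\AZ_F^f)$. At every $\pp \notin S_0$ outside the ``denominators'' of $b_0$ and at which $g_{\pp} \in \GL_r(A_{\pp})$, the lattice $\Lambda_{\pp}$ attached to the maximal compact ${\cal K}_{\pp}$ satisfies that $b_{\pp}(\Lambda_{\pp})$ is a maximal $A'_{\pp}$-submodule of $(F'_{\pp})^{r'}$, so the Remark after Definition~\ref{def:goodprime} yields (iii). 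The set $S_1(X)$ of primes failing this is finite; using the elementary divisors of $g$ one obtains $\sum_{\pp \in S_1(X)} \log |k(\pp)| = O(\log i(X)) = O(\log D(X))$. Adjoining the ramification locus of $\tilde{F}/F$, controlled by $g_{F'}$ via Hurwitz, produces $|S(X)| = O(\log D(X))$.

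The final step is the unconditional effective Chebotarev estimate for function fields (Weil's Riemann hypothesis, as formulated for example by Murty--Scherk):
\[
\left| \pi_n^{\mathrm{spl}}(\tilde{F}/F) - \frac{1}{[\tilde{F}:F]} \cdot \frac{q^n}{n} \right| \leq C_0 \cdot \frac{(g_{\tilde{F}} + 1)\, q^{n/2}}{n},
\]
where $\pi_n^{\mathrm{spl}}$ denotes the number of degree-$n$ primes of $F$ that split completely in $\tilde{F}$ and $C_0$ is absolute. Since $[\tilde{F}:F'] \leq (r-1)!$, Hurwitz bounds $g_{\tilde{F}}$ linearly in $g_{F'}$, and Proposition~\ref{prop:Clg} yields $g_{F'} \leq C_1 \log |\mathrm{Cl}(F')| \leq C_1 \log D(X)$. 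Choosing $n$ so that $q^n$ lies just below $D(X)^{1/N}$, the main term dominates both the error and $|S(X)|$ as soon as $D(X)^{1/(2N)}$ exceeds a suitable polynomial in $\log D(X)$. This condition holds for all but finitely many Drinfeld modular subvarieties by the predegree unboundedness Theorem~\ref{th:Dunbounded}, and any prime $\pp \notin S(X)$ of norm at most $D(X)^{1/N}$ splitting in $\tilde{F}$ then satisfies (i)--(iv).

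\textbf{Main obstacle.} The crux is the quantitative matching: the effective Chebotarev error $(g_{\tilde{F}}+1) q^{n/2}$ must be absorbed into $q^n/[\tilde{F}:F]$ for $q^n$ as small as $D(X)^{1/N}$, while the forbidden set $S(X)$, which grows with $X$, must simultaneously be outcounted. This rests on the subexponential bound $g_{\tilde{F}} \ll \log D(X)$ obtained from Hurwitz together with Proposition~\ref{prop:Clg}, and on the refined estimate $|S_1(X)| \ll \log D(X)$ rather than mere finiteness.
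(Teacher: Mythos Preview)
Your approach is essentially the same as the paper's: both count primes satisfying (i) and (iv) via effective \v{C}ebotarev applied to the Galois closure of $F'/F$, bound its genus in terms of $g_{F'}$ and then in terms of $\log|\mathrm{Cl}(F')|$ via Proposition~\ref{prop:Clg}, and show that the number of primes failing (ii) or (iii) is $O(\log i(X))$ so that it is eventually outcounted.

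Two small differences are worth noting. First, for the genus bound on the Galois closure the paper uses Castelnuovo's inequality iteratively over the conjugates of $F'$ (Lemma~\ref{lemma:gnormalclosure}), which gives a linear bound $g(E') \ll_r g(F')$ with no ramification analysis; your Hurwitz argument also works but, in positive characteristic, needs the extra observation that the ramification locus of $\tilde{F}/F$ lies over that of $F'/F$ and that the wild part of the different is bounded in terms of $r!$ alone, which you do not make explicit. Second, for the bound on $|S_1(X)|$ the paper invokes Proposition~\ref{prop_gitter} directly: at each prime where (ii) holds but (iii) fails, the local index $[\GL_{r'}(A'_{\pp}):\mathrm{Stab}\,\Lambda'_{\pp}]$ is at least $C\cdot|k(\pp)|^{1/r}$, so the number of such primes is $O(\log i(X))$. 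Your elementary-divisor formulation is equivalent once one unwinds it, but the link between the support of $g$ and $i(X)$ is exactly what Proposition~\ref{prop_gitter} provides.
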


Before giving the proof of this theorem, we show two lemmas.
\begin{lemma} \label{lemma:gCl}
There are absolute constants $C_1,\,C_2 > 0$ such that for all global function
fields~$F'$ with field of constants containing $\FZ_q$
\[ g(F') \leq C_1 + C_2 \cdot \log_q(|\mathrm{Cl}(F')|) \]
where $g(F')$ denotes the genus of $F'$ and $|\mathrm{Cl}(F')|$ the
class number of $F'$.
\end{lemma}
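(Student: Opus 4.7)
The plan is to invert Proposition~\ref{prop:Clg}. The case $g(F') = 0$ is immediate, since then any $C_1 > 0$ suffices. So I assume $g' := g(F') \geq 1$ and denote by $q' \geq q \geq 2$ the size of the field of constants of $F'$. Proposition~\ref{prop:Clg} then reads
\[ |\mathrm{Cl}(F')| \geq \frac{(q'-1)(q'^{2g'} - 2g'q'^{g'} + 1)}{2g'(q'^{g'+1} - 1)}. \]

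First I would distill from this a cleaner inequality of the form $|\mathrm{Cl}(F')| \geq q^{g'}/(8g')$, valid once $g'$ exceeds a small absolute threshold. The elementary inequality $2^{g'} \geq 4g'$, which holds for $g' \geq 5$, ensures $q'^{g'} \geq 4g'$ whenever $q' \geq 2$, whence $q'^{2g'} - 2g'q'^{g'} + 1 \geq \tfrac{1}{2}\, q'^{2g'}$. Combining this with $(q'-1)/q' \geq 1/2$ and $q'^{g'+1} \geq q'^{g'+1} - 1$ yields
\[ |\mathrm{Cl}(F')| \geq \frac{q'^{g'}}{8g'} \geq \frac{q^{g'}}{8g'}. \]

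Next I would take $\log_q$ and use $q \geq 2$ to obtain $\log_q|\mathrm{Cl}(F')| \geq g' - \log_q(8g') \geq g' - 3 - \log_2 g'$. For $g'$ beyond a second absolute threshold chosen so that $3 + \log_2 g' \leq g'/2$, this rearranges to $g' \leq 2\log_q|\mathrm{Cl}(F')|$, giving the desired inequality with $C_2 = 2$. The finitely many values of $g'$ below the two thresholds contribute only a bounded amount, which is absorbed into a sufficiently large $C_1$.

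The argument is purely numerical and no serious obstacle is expected. The one point meriting mild care is uniformity in $q'$: this is ensured by using only the bound $q' \geq q \geq 2$ throughout, since the right-hand side of Proposition~\ref{prop:Clg}, after the simplifications above, is essentially monotonically increasing in $q'$, so nothing is lost in replacing $q'$ by its minimal value.
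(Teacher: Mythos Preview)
Your proposal is correct and follows essentially the same approach as the paper: both invert the lower bound of Proposition~\ref{prop:Clg} by elementary manipulations to obtain $|\mathrm{Cl}(F')| \gtrsim q^{g'}/g'$, then take logarithms and use that $\log_q g'$ is eventually dominated by $g'/2$. The only cosmetic difference is that the paper carries out the algebra uniformly for all $g'$ (obtaining the explicit constants $C_1=8$, $C_2=2$), whereas you introduce a threshold for $g'$ and absorb the finitely many small genera into $C_1$; both are equally valid.
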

\begin{proof} Let $F'$ be a global function field with field of
constants $\FZ_{q'} \supset \FZ_q$. Then, with
Proposition~\ref{prop:Clg} we get the estimate
\[
 |\mathrm{Cl}(F')| \geq \frac{(q'-1)(q'^{2g(F')} - 2g(F')q'^{g(F')}
  + 1)}{2g(F')(q'^{g(F') + 1} - 1)} \geq (q - 1) \cdot \left( 
  \frac{q^{g(F')-1}}{2g(F')} - \frac{1}{q} \right), 
\]
which implies
\[ \frac{q^{g(F')-1}}{g(F')} \leq \frac{2|\mathrm{Cl}(F')|}{q-1} +
\frac{2}{q} \leq 4|\mathrm{Cl}(F')|, \]
and because of $x / 2 \geq \log_qx - 1$
\[ \frac{g(F')}2 - 2 \leq g(F')-1-\log_qg(F') \leq \log_q(4|\mathrm{Cl}(F')|). \]
So the desired estimate holds for the absolute constants $C_1:=8$ and $C_2:=2$.
\end{proof}

\begin{lemma} \label{lemma:gnormalclosure}
There are constants $C_3,\,C_4 > 0$ only depending on $r$ such that
for all finite separable extensions $F' / F$ of global function fields
with $[F' / F] \leq r$
\[ g(E') \leq C_3 + C_4 \cdot g(F')\]
where $E'$ denotes the normal closure of the extension $F' / F$.
\end{lemma}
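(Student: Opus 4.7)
The approach is based on iterated application of Castelnuovo's genus inequality. Set $n := [F':F] \leq r$. Since $F'/F$ is separable, its Galois closure $E'/F$ has Galois group $G$ embedding into the symmetric group $S_n$, so $[E':F] \leq r!$. Embedding $F' \hookrightarrow \overline{F}$ in each of the $n$ possible ways yields $F$-conjugate subfields $F' = F'_1, F'_2, \ldots, F'_n$ of $E'$, each $F$-isomorphic to $F'$ and hence of the same genus $g(F')$; their compositum equals $E'$, because $G$ acts faithfully on $\{F'_i\}$.

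The plan is to build $E'$ step by step, setting $L_1 := F'_1$ and $L_{j+1} := L_j \cdot F'_{j+1}$, so that $L_n = E'$. At each step the degrees satisfy $[L_{j+1}:L_j] \leq [F'_{j+1}:F] \leq r$ and $[L_{j+1}:F'_{j+1}] \leq [E':F] \leq r!$. Castelnuovo's inequality (\cite[Theorem 3.11.3]{St}) will then yield
\[ g(L_{j+1}) \leq r \cdot g(L_j) + r! \cdot g(F') + (r-1)(r!-1). \]
Unrolling this linear recursion at most $n - 1 \leq r - 1$ times produces a bound of the form $g(E') = g(L_n) \leq C_4 \cdot g(F') + C_3$ with explicit constants $C_3, C_4 > 0$ depending only on $r$ (for instance $C_4 = r^{r-1}(1 + r!)$ and $C_3 = (r!-1)\, r^{r-1}$).

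The main technical obstacle is the full-constant-field hypothesis in Castelnuovo's inequality: the statement in \cite{St} requires all three involved function fields to share the same full constant field. Since $F'/F$ (and $E'/F$) may enlarge the full constant field of $F$ from $\FZ_q$ to some larger finite field $\FZ_{q''}$, this hypothesis is not automatic. I would circumvent this by first base-changing everything to the full constant field of $E'$ (or, equivalently, to $\overline{\FZ_q}$), relying on the standard fact that the genus of a function field with full constant field $k$ is preserved under extension of scalars to any larger field of constants. After this reduction all the $L_j$, the $F'_i$, and $F$ share a common full constant field, and Castelnuovo's inequality applies uniformly at each step of the iteration, so the recursion above goes through and delivers the desired bound.
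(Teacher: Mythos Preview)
Your proposal is correct and follows essentially the same route as the paper: write the normal closure $E'$ as the compositum of the Galois conjugates $F'_1,\dots,F'_n$ of $F'$, build up the compositum step by step, and apply Castelnuovo's inequality at each step to obtain a linear recursion for the genus that unrolls to the desired bound. The paper uses the sharper degree estimate $[L_{j+1}:F'_{j+1}]\le [L_j:F]\le r'^{\,j}$ in place of your blanket $r!$, yielding tighter constants $C_3=(r-1)r^{r}$, $C_4=r^{r}$, but since only the existence of constants depending on $r$ is at stake this makes no difference; the paper also does not discuss the constant-field hypothesis, so your base-change remark is extra caution rather than a divergence in method.
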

\begin{proof} Let $F' / F$ be a finite separable extension of global
function fields of degree $r' \leq r$. 
Its normal closure $E'$ is the compositum of all
Galois conjugates $F'_1,\ldots, F'_{r'}$ of $F'$ over $F$. We
use Castelnuovo's inequality (Theorem III.10.3 in~\cite{St}) to bound its genus:

\emph{If a global function field~$K$ is the compositum of two
  subfields $K_1$ and $K_2$ with\linebreak $n_i := [K / K_i] < \infty$ for $i =
  1,2$, then}
\[ g(K) \leq n_1 \cdot g(K_1) + n_2 \cdot g(K_2) + (n_1 - 1)(n_2 -
1). \]

For $K_1 = F'_1$ and $K_2 = F'_2$ this gives
\[ g(F'_1 F'_2) \leq r'\cdot g(F') + r'\cdot
g(F') + (r'-1)^2 \leq 2r' \cdot g(F') + r'^2 \]
because all Galois conjugates of $F'$ over $F$ have the same genus and
$[F'_1F'_2 / F'_1] \leq [F'_2 / F] = r'$ and $[F'_1F'_2 / F'_2] \leq [F'_1 / F] =
r'$. With induction over $k$ we get
\[ g(F'_1 \cdots F'_k) \leq kr'^{k-1} \cdot
g(F') + (k-1)r'^{k} \]
and with $k = r'$
\[ g(E') \leq r'^{r'} \cdot g(F') + (r'-1) \cdot r'^{r'} \leq (r-1)r^r
+ r^r \cdot g(F'). \qedhere \]
\end{proof}

\begin{proof}[Proof of Theorem~\ref{th:primeexists}] For a Drinfeld modular
subvariety $X = \incl(S_{F',\cal K'}^{r'})$ with separable reflex
field over $F$, we denote by $n(X)$ the
number of primes of $F$ for which (ii) and (iii) do not both hold
and by $m(X,N)$ the number of primes of $F$ with (i) and (iv). We
show the following statements for Drinfeld modular
subvarieties~$X$ of $S$ with separable reflex field:

\begin{propertiesabc}
\item
$n(X) \leq C_5 + C_6 \cdot \log_q(i(X))$ for constants $C_5,\,C_6$ 
independent of $X$ where $i(X)$ denotes the index of $X$ as defined in
Definition~\ref{def:reflex_index},

\item
there is an $M > 0$ such that $m(X,N) > n(X)$ for all $X$ with $D(X) > M$.
\end{propertiesabc}

Statement (b) implies the theorem because $D(X) > M$ for almost all
Drinfeld modular subvarieties $X$ of $S$ by
Theorem~\ref{th:Dunbounded}. 

\textbf{Proof of (a):} For a Drinfeld modular subvariety $X =
\incl(S_{F',\cal K'}^{r'})$ of $S$ we have 
\[ {\cal K'} = (b{\cal K}b^{-1}) \cap
\GL_{r'}(\AZ_{F'}^f) \]
and the index $i(X)$ is the index of ${\cal K'}$ in a maximal compact
subgroup of $\GL_{r'}(\AZ_{F'}^f)$. 

For a prime $\pp$ for which (ii) holds we can write ${\cal K}_{\pp} =
\mathrm{Stab}_{\GL_r(F_{\pp})}(\Lambda_{\pp})$ for some
$A_{\pp}$-lattice $\Lambda_{\pp} \subset F_{\pp}^r$ and
\[ {\cal K}'_{\pp} = (b_{\pp}{\cal K}_{\pp}b_{\pp}^{-1}) \cap \GL_{r'}(F'_{\pp}) =
\mathrm{Stab}_{\GL_{r'}(F'_{\pp})}(\Lambda'_{\pp}) \]
with $\Lambda'_{\pp} := b_{\pp}(\Lambda_{\pp})$. Note that
$A'_{\pp} \cdot \Lambda'_{\pp}$ is a free $A'_{\pp}$-submodule of
rank~$r'$ because $A'_{\pp}$ is a direct product of principal ideal domains.
Therefore with Proposition~\ref{prop_gitter} we get the estimate 
\[ [\mathrm{Stab}_{\GL_{r'}(F'_{\pp})}(A'_{\pp} \cdot
\Lambda'_{\pp}) : {\cal K}'_{\pp}] \geq C \cdot
[A'_{\pp} \cdot \Lambda'_{\pp} : \Lambda'_{\pp}]^{1/r} \]
for some constant $C > 0$ only depending on $q$ and $r$. 
If ${\cal K}'_{\pp}$ is not a
maximal compact subgroup of $\GL_{r'}(F'_{\pp})$ (i.e., (iii) does not
hold for $\pp$), then $\Lambda'_{\pp}$ cannot be an $A'_{\pp}$-submodule of
${F'_\pp}^{r'}$, i.e., we have $\Lambda'_{\pp}
\subsetneq A'_{\pp} \cdot \Lambda'_{\pp}$ and
\[ [\mathrm{Stab}_{\GL_{r'}(F'_{\pp})}(A'_{\pp} \cdot
\Lambda'_{\pp}) : {\cal K}'_{\pp}] \geq C \cdot |k(\pp)|^{1/r} \]
because each finite non-trivial $A_{\pp}$-module has at least
$|k(\pp)|$ elements.

Since, for each prime $\pp$ satisfying (ii), we have 
${\cal K}' = {\cal K}'_{\pp} \times {\cal K}'^{(\pp)}$ for some subgroup 
${\cal K}'^{(\pp)} \subset \GL_{r'}(F' \otimes \AZ_F^{f,\,\pp})$, we
conclude that
\[ i(X) \geq C \cdot |k(\pp)|^{n_3(X)/r} \geq C \cdot q^{n_3(X)/r}, \]
where $n_3(X)$ is the number of primes of $F$ for which (ii) holds,
but (iii) does not hold. If $n_2$ is the number of primes of $F$, for
which (ii) does not hold, then we conclude
\[ n(X) = n_2 + n_3(X) \leq n_2 - r \cdot \log_q(C) + r \cdot \log_q(i(X)).\]
This finishes the proof of (a) because $n_2$ is independent of $X$.

\textbf{Proof of (b):} Let $X$ be a Drinfeld modular subvariety of $S$
with separable reflex field $F'$ over $F$. We denote the normal
closure of the extension $F'/F$ by~$E'$. To give a lower bound for
$m(X,N)$ we note that all primes $\pp$ of $F$ which completely split
in $E'$ satisfy condition (i). We bound the number of such primes
with fixed degree using an effective version of \v{C}ebotarev's theorem.

For the application of \v{C}ebotarev's theorem we fix some notations. 
We denote the constant extension degree of $E' / F$  by $n$ and its
geometric extension degree by $k$. Since we assumed $F$ to have field
of constants $\FZ_q$, the field of constants of $E'$ is $\FZ_{q^n}$ and
$k = [E' / \FZ_{q^n} \cdot F]$.
We furthermore fix a separating transcendence element~$\theta$ 
of $F / \FZ_q$ (i.e., an element $\theta$ of $F$ such that $F /
\FZ_q(\theta)$ is finite and separable) and set $d := [F / \FZ_q(\theta)]$.   

The effective version of \v{C}ebotarev's theorem in~\cite{FrJa}
(Proposition 6.4.8) says that for all $i \geq 1$ with $n | i$
\[ \left||C_i(E'/F)| - \frac{q^i}{ik} \right| < \frac{2}{ik} \left((k +
  g(E'))q^{i/2} + k(2g(F) + 1)q^{i/4} + g(E') + dk \right) \]
where 
\begin{eqnarray*}
 C_i(E'/F) & := & \{ \pp\ \text{place of}\ F \, | \, k(\pp) = \FZ_{q^i},\ 
\pp\ \text{completely splits in}\ E'\ \text{and}\\
 & & \ \, \pp\ \text{is unramified over}\ \FZ_q(\theta) \}. 
\end{eqnarray*}
We apply this for all $X$ with predegree $D(X) \geq q^{4Nr!}$. Because of $n
\leq [E' / F] \leq r!$, for these $X$ we have $q^n \leq
D(X)^{\frac1{4N}}$. Therefore there are $j \geq 1$ with $n | j$ and
$q^j < D(X)^{\frac1{N}}$ and we can define
\[ i := \max \{ j \geq 1 \, :\, n|j,\, q^j < D(X)^{\frac1{N}} \}. \] 
Our choice of $i$ ensures that
\[ m(X,N) \geq |C_i(E'/F)|. \]
By our choice of $i$ and $X$ we have $q^i < D(X)^{\frac1{N}}$,
$q^{n+i} \geq D(X)^{\frac1{N}}$ and $q^n \leq
D(X)^{\frac1{4N}}$. Hence we have the bounds
\[  q^i < D(X)^{\frac1{N}},\ \ q^i = \frac{q^{n+i}}{q^n} \geq D(X)^{\frac{3}{4N}}. \]
Furthermore Lemma~\ref{lemma:gCl} and \ref{lemma:gnormalclosure} imply
\begin{eqnarray*}
g(F') & \leq & C_1 + C_2 \cdot \log_q(D(X)), \\
g(E') & \leq & C_3 + C_4 \cdot g(F'). 
\end{eqnarray*}
Since $d$ is independent of $X$ and $1 \leq n,k \leq r!$ for all $X$, 
the above conclusion of \v{C}ebotarev's theorem and these
bounds imply
\[ m(X,N) \geq \frac{C_1' \cdot
  D(X)^{\frac{3}{4N}}}{\log_q(D(X))} - \frac{C_2' + C_3'\log_q(D(X))}{\log_q(D(X))}
\left( D(X)^{\frac{1}{2N}} + D(X)^{\frac{1}{4N}} + 1 \right) \]
with $C_1',\,C_2',\,C_3' > 0$ independent of $X$. On
the other hand, our statement (a) gives the bound
\[ n(X) \leq C_5 + C_6 \cdot \log_q(D(X)) \]
with $C_5,\,C_6$ independent of $X$. Since
$x^{\frac{1}{2N}}(\log_q(x))^2 = o(x^{\frac{3}{4N}})$ for $x
  \rightarrow \infty$, these bounds imply the existence of
 an $M > 0$ such that $m(X,N) > n(X)$ for all $X$ with $D(X) > M$.
\end{proof}

\section{The Andr\'e-Oort Conjecture for Drinfeld modular varieties} \label{ch:induction}
\subsection{Statement and first reduction} \label{sec:firstred}

\begin{conjecture}[(Andr\'e-Oort Conjecture for Drinfeld modular
  varieties)] \label{conj:AO}
Let $S$ be a Drinfeld modular variety and $\Sigma$
a set of special points of $S$. Then each irreducible
component over $\C$ of the Zariski closure of $\Sigma$ is a special
subvariety of $S$.
\end{conjecture}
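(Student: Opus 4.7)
The plan is to follow the overall inductive strategy already established in the paper, reducing Conjecture~\ref{conj:AO} to the crucial statement Theorem*:reduced by dimension induction, and then to extend the methods outlined in the introduction so that they apply regardless of whether the reflex fields of the points in $\Sigma$ are separable over $F$. In outline, starting from a Zariski-dense set $\Sigma$ of special points in an irreducible component $Z$ of the Zariski closure, I apply Theorem*:reduced to the set of zero-dimensional Drinfeld modular subvarieties formed by the $F$-Galois orbits of elements of $\Sigma$; each application enlarges almost all $X \in \Sigma$ to some $X' \subsetneq Z$, and iterating eventually forces $Z$ itself to be a Drinfeld modular subvariety. The separable instance of Theorem*:reduced has already been achieved via (a) the degree unboundedness Theorem*:unbounded, (b) the geometric criterion Theorem*:geom, (c) the Hecke induction Theorem*:induction, and (d) the effective existence of a good prime (Theorem~\ref{th:primeexists}); the task is therefore to generalise (c) and (d) to cover subvarieties with inseparable reflex field.

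The main obstacle, as noted after Theorem~\ref{th*:Hecke}, is that for a Drinfeld modular subvariety $X$ whose reflex field $F'/F$ has an inseparable part, every prime $\pp$ of $F$ ramifies in $F'$, so condition~(c) of the good-prime definition fails. More seriously, in the extreme case where $F'/F$ is purely inseparable of degree~$r$, any $g_\pp = b^{-1}\circ g'_{\pp'}\circ b$ built as in Theorem*:induction has $g_\pp^r$ scalar, so its image in $\mathrm{PGL}_r(F_\pp)$ is bounded and Theorem*:geom cannot be invoked. My plan is to replace the single Hecke correspondence by a composite $T := T_{g_{\pp_1}}\circ\cdots\circ T_{g_{\pp_k}}$ localised at a small tuple of primes, chosen so that the combined action at some factor is unbounded in $\mathrm{PGL}_r(F_{\pp_i})$ in spite of the ramification. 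This requires a refined geometric criterion generalising Theorem*:geom to products of local Hecke correspondences, together with a compatible strengthening of Theorem*:induction that tracks degrees through iterated intersections $Z \cap T_{g_{\pp_1}}Z \cap \cdots$ using B\'ezout (Lemma~\ref{lemma:bezout}).

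For the effective-prime step, Theorem~\ref{th:primeexists} must be upgraded to produce such a tuple $(\pp_1,\ldots,\pp_k)$ whose combined residue-field contribution $\prod |k(\pp_i)|$ is controlled by the predegree $D(X)$. Since the inseparable part of $F'/F$ is a purely inseparable extension of degree dividing $r$ with a unique completion, the strategy is to let one component of the tuple handle the separable layer via the existing \v{C}ebotarev argument, while the remaining components absorb the ramification of the inseparable layer by pairing primes whose local behaviour in the different factors composes to an unbounded element. Verifying the unboundedness reduces to a local Newton-polygon computation analogous to the end of the proof of Theorem~\ref{th:exHecke}, but now performed on products of block matrices of the ramified shape $\bigl(\begin{smallmatrix}&\pi_\pp\\ I&\end{smallmatrix}\bigr)^k$ of the introductory discussion; the key input is that a product of such matrices at distinct primes typically has a characteristic polynomial whose Newton polygon has more than one slope.

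The hardest step will be the refined geometric criterion: Theorem*:geom relies on the openness of the monodromy image in $\GL_r(F_\pp)$ at a single prime via Theorem~4 of~\cite{BrPi}, and an analogous statement at a tuple of primes requires controlling the Zariski closure of the $(T_{g_{\pp_1}} + T_{g_{\pp_1}^{-1}})\cdots(T_{g_{\pp_k}} + T_{g_{\pp_k}^{-1}})$-orbit of a point and showing it still fills an irreducible component of $S$. As a fall-back, one could attempt to adapt the equidistribution results of Clozel--Ullmo~\cite{ClUl} to the Drinfeld setting, which the author explicitly avoids in the separable case; however no such positive-characteristic equidistribution theorem is presently available, so producing one would itself be a substantial intermediate project and, in my assessment, the real bottleneck to unconditionally completing Conjecture~\ref{conj:AO}.
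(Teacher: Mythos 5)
There is a fundamental mismatch here: the statement you are proving is stated in the paper as a \emph{conjecture}, and the paper itself does not prove it. What the paper proves is Theorem~\ref{th:AO}, i.e.\ Conjecture~\ref{conj:AO} under the additional hypothesis that the reflex fields of all special points in $\Sigma$ are separable over $F$; this is deduced from Theorem~\ref{th:separable} by the induction of Proposition~\ref{prop:reduction}, which your first paragraph correctly reproduces. Everything after that in your proposal is a research programme, not a proof, and its central device does not meet the actual obstruction, which the paper spells out in the passage on difficulties in the inseparable case.

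Concretely: the Zariski-density input (Theorem~\ref{th:heckedensity}, via Pink's strong approximation theorem) and the geometric criterion (Theorem~\ref{th:geomcrit}, via the open-monodromy theorem of Breuer--Pink) both require the cyclic subgroup generated by the image of the local component $k_1 g_{\pp} k_2$ to be unbounded in $\mathrm{PGL}_r(F_{\pp})$ at a \emph{single} prime $\pp$. To have $X \subset T_gX$ (Proposition~\ref{prop:Heckecontain}), the element $g$ must be of the form $b^{-1}\circ g'\circ b$ with $g'$ coming from the reflex field; when $F'/F$ is purely inseparable of degree $r$, the paper's computation shows that at \emph{every} prime $\pp$ the resulting $g_{\pp}$ is conjugate to a power of the companion-type matrix whose $r$-th power is scalar, so its image in $\mathrm{PGL}_r(F_{\pp})$ is bounded. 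Passing to a tuple of primes does not help: a composite $T_{g_{\pp_1}}\circ\cdots\circ T_{g_{\pp_k}}$ is given by an adelic element whose local components lie in distinct groups $\GL_r(F_{\pp_i})$ and are bounded in each $\mathrm{PGL}_r(F_{\pp_i})$ separately; there is no single local field in which your proposed ``product of block matrices of the ramified shape'' lives, so the Newton-polygon computation you invoke is not defined, and the orbit of a point under such a composite remains bounded place by place, so no analogue of Theorem~\ref{th:heckedensity} follows. Abandoning the constraint $g=b^{-1}\circ g'\circ b$ would restore unboundedness but destroys $X\subset T_gX$, on which the inclusion $X\subset Z\cap T_gZ$ and hence the whole induction rests. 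Your fall-back, a positive-characteristic analogue of the Clozel--Ullmo equidistribution theorem, is, as you yourself concede, not available. So the proposal does not prove the statement; the inseparable case of Conjecture~\ref{conj:AO} remains open, which is exactly why the paper formulates it as a conjecture and proves only the separable case.
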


Our main result is the following theorem:

\begin{theorem} \label{th:AO}
Conjecture~\ref{conj:AO} is true if the reflex fields of all special
points in $\Sigma$ are separable over $F$.
\end{theorem}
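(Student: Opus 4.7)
The plan is to follow the strategy sketched in the introduction, building upon the machinery developed in Sections~\ref{ch:DMV}--\ref{ch:choice}. I begin with a standard induction on dimension that reduces Theorem~\ref{th:AO} to Theorem~\ref{th*:reduced}: if $\Sigma$ is a set of Drinfeld modular subvarieties of $S$ of dimension $d$ whose union is Zariski dense in a subvariety $Z \subset S$ of dimension $>d$ which is $F$-irreducible, then for almost all $X \in \Sigma$ there is a Drinfeld modular subvariety $X'$ with $X \subsetneq X' \subset Z$. Iterating starting from $d=0$ enlarges the modular subvarieties contained in $Z$ until they exhaust $Z$ itself. During this reduction I may freely assume that $\cal K$ is amply small (so degrees are defined via Section~\ref{ch:degree}) and that $Z$ is Hodge-generic, by replacing $S$ with the smallest Drinfeld modular subvariety of $S$ containing an irreducible component of $Z$ and invoking Corollary~\ref{cor:sub_sub} together with Proposition~\ref{prop:closedimm}. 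The hypothesis on separable reflex fields is preserved throughout because the reflex field of every special point contained in $Z$ remains separable.

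To prove Theorem~\ref{th*:reduced}, the central tool is Theorem~\ref{th*:induction}: whenever a Drinfeld modular subvariety $X \subset Z$ admits a Hecke correspondence $T_{g_\pp}$ localized at a prime $\pp$ and satisfying (i)--(iv) of that theorem, a strictly larger $X' \subsetneq Z$ containing $X$ must exist. Theorem~\ref{th*:induction} itself is proved by induction on $s := \dim Z - \dim X$. Property (i) forces $X \subset Z \cap T_{g_\pp}Z$, and the degree lower bound (iv), combined with the B\'ezout inequality of Lemma~\ref{lemma:bezout} and Proposition~\ref{prop:degree}, prevents $X$ from being a union of geometrically irreducible components of this intersection; so one finds an $F$-irreducible component $Z' \subset Z \cap T_{g_\pp}Z$ with $X \subset Z'$ and $\dim Z' > \dim X$. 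Either $Z' = Z$, in which case $Z \subset T_{g_\pp}Z$ and the geometric criterion Theorem~\ref{th:geomcrit} (whose unboundedness hypothesis is precisely (ii)) yields $Z = S$ and the conclusion with $X' = S$; or $Z' \subsetneq Z$, and one reapplies the induction hypothesis to the pair $(S, Z')$, possibly after passing to a smaller ambient Drinfeld modular variety to restore Hodge-genericity, exploiting property (iii) to transport the hypotheses (i)--(iii) to this smaller ambient.

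It remains to produce, for almost all $X \in \Sigma$, a Hecke correspondence satisfying (i)--(iv). For this I combine the three main results of Section~\ref{ch:choice}. By Theorem~\ref{th:primeexists} (resting on the effective \v{C}ebotarev estimate of \cite{FrJa}, itself equivalent to GRH over function fields), for almost all $X = \incl(S_{F',{\cal K}'}^{r'}) \in \Sigma$ there exists a prime $\pp$ of $F$ satisfying conditions (i)--(iv) of that theorem, in particular $|k(\pp)|^N < D(X)$ for any preassigned $N$ depending only on $r$ and $\dim Z$. Proposition~\ref{prop:goodprime} then produces a finite cover $\tilde S = S_{F,\tilde{\cal K}}^r$ of $S$ together with a Drinfeld modular subvariety $\tilde X \subset \tilde S$ lying over $X$, such that $\pp$ is a good prime for $\tilde X \subset \tilde S$ and $[{\cal K} : \tilde{\cal K}] < |k(\pp)|^{r^2}$. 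Finally, Theorem~\ref{th:exHecke} yields the desired Hecke correspondence $T_{g_\pp}$ on $\tilde S$, of degree $|k(\pp)|^{r-1}$, satisfying (i)--(iii) of Theorem~\ref{th*:induction}. Using Proposition~\ref{prop:degD} and Theorem~\ref{th:Dunbounded} (so that $D(X)$, and hence $\deg \tilde X$, is unbounded as $X$ ranges over $\Sigma$), one checks that the bound $|k(\pp)|^N < D(X)$ for sufficiently large $N$ absorbs the cover index $[{\cal K}:\tilde{\cal K}]$ and the factor $(\deg T_{g_\pp})^{2^s-1}(\deg \tilde Z)^{2^s}$, verifying (iv) on $\tilde S$. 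Theorem~\ref{th*:induction} then supplies $\tilde{X'}$ with $\tilde X \subsetneq \tilde{X'} \subset \tilde Z$, and by Lemma~\ref{lemma:changeK}(i) its image $X' := \pi_1(\tilde{X'}) \subset Z$ is a Drinfeld modular subvariety strictly containing $X$.

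The main obstacle I expect is the careful bookkeeping in the inductive step of Theorem~\ref{th*:induction} when one descends to $(S', Z')$ after finding a component $Z' \subsetneq Z$ with $Z'$ no longer Hodge-generic: one must shrink $S$ to the smallest Drinfeld modular subvariety containing an irreducible component of $Z'$, verify via the ``compatibility at the inclusion'' guaranteed by property (iii) that all of (i)--(iv) persist on this smaller ambient variety, and track degrees through the closed immersions (which preserve degree by Proposition~\ref{prop:degree}(ii)). A secondary technical point is calibrating the integer $N$ in the application of Theorem~\ref{th:primeexists} so that both the cover-induced factor $|k(\pp)|^{r^2}$ and the geometric factor $(\deg T_{g_\pp})^{2^s-1}(\deg Z)^{2^s}$ are dominated by the gap between $|k(\pp)|^N$ and $D(X)$; since all exponents depend only on $r$, this is achievable once $\deg X$ (equivalently $D(X)$) is large enough, and Theorem~\ref{th:Dunbounded} guarantees this for cofinitely many $X \in \Sigma$.
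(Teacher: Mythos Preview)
Your proposal is correct and follows essentially the same approach as the paper: reduce Theorem~\ref{th:AO} to Theorem~\ref{th:separable} via the dimension-raising iteration of Proposition~\ref{prop:reduction}, then prove Theorem~\ref{th:separable} by combining Theorem~\ref{th:primeexists}, Proposition~\ref{prop:goodprime}, and Theorem~\ref{th:exHecke} to feed into the inductive engine Theorem~\ref{th:indsep}, whose proof in turn rests on B\'ezout (Lemma~\ref{lemma:bezout}) and the geometric criterion (Theorem~\ref{th:geomcrit}). The only organizational difference is that the paper does \emph{not} assume $Z$ Hodge-generic in the statement of Theorem~\ref{th:indsep} but instead absorbs the reduction to the Hodge-generic case into the induction itself (as part~(i) of the proof), passing to $\tilde Z = (\inclz)^{-1}(Z'')$ inside the smallest ambient $S_{F'',{\cal K}''}^{r''}$ and invoking Proposition~\ref{prop:reduceHodge} to transport the good prime; you anticipate exactly this bookkeeping in your final paragraph.
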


Since the reflex field of a special point in $S_{F,\cal K}^r$ is of degree~$r$
over $F$, special points with inseparable reflex field over $F$ can only occur if $r$
is divisible by $p = \mathrm{char}(F)$. Hence, Theorem~\ref{th:AO} implies
\begin{korollar}
Conjecture~\ref{conj:AO} is true if $r$ is not a multiple of $p = \mathrm{char}(F)$.
\end{korollar}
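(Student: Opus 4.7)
The plan is a direct reduction to Theorem~\ref{th:AO}. It suffices to show that whenever $r$ is not divisible by $p = \mathrm{char}(F)$, the reflex field of every special point in $S = S_{F,\cal K}^r$ is automatically separable over $F$. Once this is known, for any set $\Sigma$ of special points in $S$ the hypothesis of Theorem~\ref{th:AO} is satisfied, and the corollary follows at once.

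To verify this separability, I would argue in two short steps. First, every special point is $0$-dimensional, so by the definition of special subvariety and by Theorem~\ref{th:Dmodulischeme} (which gives $\dim S_{F',\cal K'}^{r'} = r'-1$), the corresponding Drinfeld modular subvariety $X = \incl(S_{F',\cal K'}^{r'})$ must satisfy $r' = 1$. By Definition~\ref{def:reflex_index}, the reflex field $F'$ of such a point therefore has $[F':F] = r/r' = r$. Second, for any finite extension $F'/F$ of global function fields of characteristic $p$, the degree factors as $[F':F] = [F':F]_s \cdot [F':F]_i$ with the inseparable degree $[F':F]_i$ a power of $p$ (see, e.g., Proposition V.6.6 in Lang's \emph{Algebra}, adapted to the function field setting). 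If $p \nmid r = [F':F]$, then the $p$-power $[F':F]_i$ divides $r$ and hence must equal $1$, so $F'/F$ is separable.

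Combining the two observations, the assumption $p \nmid r$ forces the reflex fields of all special points in $\Sigma$ to be separable over $F$, so Theorem~\ref{th:AO} applies and delivers the conclusion of Conjecture~\ref{conj:AO}. There is no genuine obstacle; the statement is a bookkeeping corollary of the main theorem, which is why the author phrases the separability hypothesis in Theorem~\ref{th:AO} as the substantive condition and isolates this divisibility consequence as a corollary.
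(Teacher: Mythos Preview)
Your proof is correct and follows essentially the same approach as the paper: the paper observes in the sentence immediately preceding the corollary that the reflex field of a special point has degree $r$ over $F$, so that inseparability forces $p \mid r$, and then invokes Theorem~\ref{th:AO}. Your version simply makes the degree-$r$ claim explicit by noting that $r'=1$ for a $0$-dimensional Drinfeld modular subvariety.
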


Theorem~\ref{th:AO} follows from the following crucial statement whose
proof we give in the next subsection:
\begin{theorem} \label{th:separable}
Let $S$ be a Drinfeld modular variety and $Z \subset
S$ an $F$-irreducible subvariety.
Suppose that $\Sigma$ is a set of Drinfeld modular subvarieties of~$S$,
all of the same dimension $d < \dim Z$ and with separable reflex field
over $F$, whose union is Zariski dense in $Z$.
Then, for almost all $X \in \Sigma$, there is a Drinfeld modular 
subvariety~$X'$ of $S$ with $X \subsetneq X' \subset Z$.
\end{theorem}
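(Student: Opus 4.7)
\textbf{Proof plan for Theorem~\ref{th:separable}.} The plan is to follow the strategy laid out in the introduction and assemble the ingredients developed in Sections~\ref{ch:DMV}--\ref{ch:choice}. First I would make the standard preliminary reductions: replacing $\cal K$ by a suitable amply small open subgroup (so that degrees are defined via the Satake compactification of Section~\ref{ch:degree}) and lifting the situation to this cover, and then reducing to the case where $Z$ is Hodge-generic in the sense of Definition~\ref{def:Hodge_generic}. If $Z$ were contained in a proper Drinfeld modular subvariety $S' \subsetneq S$, Corollary~\ref{cor:sub_sub} would let me work inside $S'$ instead, and an induction on $\dim S$ takes care of this.

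The main engine is a statement corresponding to Theorem~\ref{th*:induction} of the introduction: given $X \in \Sigma$ and a Hecke correspondence $T_{g_{\pp}}$ localized at a prime $\pp$ satisfying properties (i)--(iv) of that theorem, there exists a Drinfeld modular subvariety $X'$ with $X \subsetneq X' \subset Z$. I would prove this by induction on $s := \dim Z - \dim X$. Property (i) together with Proposition~\ref{prop:Heckecontain} yields $X \subset Z \cap T_{g_{\pp}}Z$. The degree bound (iv), combined with Lemma~\ref{lemma:bezout} (B\'ezout) and Proposition~\ref{prop:degree} (degree behavior under $\pi_g$), forces the existence of an $F$-irreducible component $Z'$ of $Z \cap T_{g_{\pp}}Z$ with $X \subset Z'$ and $\dim Z' > \dim X$. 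If $Z' = Z$, then $Z \subset T_{g_{\pp}}Z$ and the geometric criterion Theorem~\ref{th:geomcrit} forces $Z = S$, concluding the argument with $X' = S$. Otherwise $\dim Z' < \dim Z$ by $F$-irreducibility, and the induction hypothesis applies to $Z'$ in place of $Z$, after possibly descending via Corollary~\ref{cor:sub_sub} to a smaller Drinfeld modular variety~$S'$ if $Z'$ is no longer Hodge-generic. Crucially, property~(iii) ensures that the Hecke correspondence induced on $S'$ still satisfies all of (i)--(iv) there.

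To finish, I need to produce, for almost all $X \in \Sigma$, a Hecke correspondence satisfying (i)--(iv). For a fixed $X = \incl(S_{F',{\cal K}'}^{r'})$ with separable reflex field, Theorem~\ref{th:primeexists} supplies a prime $\pp$ of $F$ satisfying conditions (i)--(iii) of Proposition~\ref{prop:goodprime} together with the upper bound $|k(\pp)|^N < D(X)$ for a fixed $N$ to be chosen large enough to force (iv). Applying Proposition~\ref{prop:goodprime}, I pass to a finite cover $\tilde{S} = S_{F,\tilde{\cal K}}^r$ of $S$ on which there is a Drinfeld modular subvariety $\tilde{X}$ lying over $X$ for which $\pp$ is good, with controlled index $[{\cal K}:\tilde{\cal K}] < |k(\pp)|^{r^2}$. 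Theorem~\ref{th:exHecke} then delivers $T_{g_{\pp}}$ on $\tilde{S}$ satisfying (i)--(iii) of Theorem~\ref{th*:induction} with $\deg T_{g_{\pp}} = |k(\pp)|^{r-1}$; taking $N$ large and invoking the unboundedness of $D(X)$ (Theorem~\ref{th:Dunbounded}), which implies the unboundedness of $\deg \tilde{X}$ via Proposition~\ref{prop:degree}, yields (iv). Applying the inductive statement on $\tilde{S}$ produces $\tilde{X'}$ with $\tilde{X} \subsetneq \tilde{X'} \subset \tilde{Z}$, where $\tilde{Z}$ is an $F$-irreducible component of the preimage of $Z$ containing $\tilde{X}$; projecting back to $S$ via Lemma~\ref{lemma:changeK}(i) gives a Drinfeld modular subvariety $X' = \pi_1(\tilde{X'})$ with $X \subsetneq X' \subset Z$.

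The hard part is the induction step itself: verifying that, when $Z'$ fails to be Hodge-generic and one is forced to descend to a smaller ambient $S'$, all four conditions (i)--(iv) on the Hecke correspondence genuinely transfer. Property~(iii) is designed precisely to make this transfer work, but one has to check that $g_{\pp}$ restricts correctly via the inclusion morphism—this is where the assumption that $g_{\pp}$ comes from $g'_{\pp'}$ on the reflex side is essential. The other delicate point is tracking the degree bound (iv) through the finite cover $\tilde{S} \to S$: the factor $[{\cal K}:\tilde{\cal K}]$ introduced by Proposition~\ref{prop:goodprime} must be absorbed by choosing $N$ sufficiently large, using $\deg \tilde{Z} \leq [{\cal K}:\tilde{\cal K}] \cdot \deg Z$ from Proposition~\ref{prop:degree} and $\deg \tilde{X} \geq \deg X$.
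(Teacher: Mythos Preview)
Your proposal is correct and follows essentially the same approach as the paper: the paper packages the main inductive engine as Theorem~\ref{th:indsep} (fixing a good prime $\pp$ rather than a Hecke correspondence, with the correspondence produced inside the proof via Theorem~\ref{th:exHecke}), and handles the Hodge-generic reduction entirely within that induction using Proposition~\ref{prop:reduceHodge} rather than as a preliminary step, but the logical content and the key tools invoked are the same. One minor point: to convert the predegree bound $|k(\pp)|^N < D(X)$ into the needed degree inequality you also need Proposition~\ref{prop:degD} (giving $\deg X \geq C\cdot D(X)$), not just Proposition~\ref{prop:degree}.
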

\textbf{Remark:}
By Proposition~\ref{prop:sub_contain}, the proper inclusion $X \subsetneq X'$ implies
that $\dim X < \dim X'$ because the reflex field of $X'$ is properly contained in the
reflex field of $X$.

\begin{satz} \label{prop:reduction}
Theorem~\ref{th:separable} implies Theorem~\ref{th:AO}.
\end{satz}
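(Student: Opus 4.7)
The plan is a Noetherian induction on dimension that repeatedly feeds Theorem~\ref{th:separable} into itself, starting from special points (viewed as $0$-dimensional Drinfeld modular subvarieties) and climbing up through the dimensions until we match $\dim Z$. First I would reduce to a single $F$-irreducible piece: enlarging $\Sigma$ to its $G_F$-orbit leaves $\overline{\Sigma}$ unchanged, so $\overline{\Sigma} = Z_1 \cup \cdots \cup Z_n$ decomposes into $F$-irreducible components defined over $F$ (Proposition~\ref{prop:Firred}). A standard maximality argument in the poset of $F$-irreducible closed subvarieties of $\overline{\Sigma}$ shows that $\Sigma \cap Z_i$ is Zariski dense in $Z_i$ for every $i$. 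Since the $\C$-irreducible components of $\overline{\Sigma}$ are the $\C$-components of the various $Z_i$, it suffices to prove, for a single $F$-irreducible $Z := Z_i$ with $\Sigma \cap Z$ Zariski dense in $Z$, that every $\C$-irreducible component of $Z$ is a special subvariety.

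I would then proceed by Noetherian induction on $\dim Z$. If $\dim Z = 0$, $Z$ is a finite set of special points, each a special subvariety tautologically. For $\dim Z > 0$, every $x \in \Sigma \cap Z$ is a $\C$-irreducible component of a unique $0$-dimensional Drinfeld modular subvariety $X(x)$, which is automatically contained in the $G_F$-stable $Z$ (its $\C$-components being the Galois conjugates of $x$). Hence $\tilde{\Sigma}_0 := \{X(x) : x \in \Sigma \cap Z\}$ is a family of Drinfeld modular subvarieties, all of dimension $0 < \dim Z$, with separable reflex fields, whose union is Zariski dense in $Z$. Theorem~\ref{th:separable} now applies and supplies, for all but finitely many $X \in \tilde{\Sigma}_0$, a strictly larger Drinfeld modular subvariety $X' \subset Z$; by Proposition~\ref{prop:sub_contain} the reflex field of $X'$ is a subfield of that of $X$, hence separable over $F$.

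To iterate, for each $0 \leq d \leq \dim Z$ I would define $Z_{(d)}$ to be the union of all Drinfeld modular subvarieties of $S$ with separable reflex field over $F$, dimension $d$, and contained in $Z$. Since $S$ is Noetherian, an ascending-chain argument shows any such union is in fact a \emph{finite} union of closed subvarieties, hence closed; by Proposition~\ref{prop:GalFDmsv} the defining family is $G_F$-stable, so $Z_{(d)}$ is $G_F$-stable and defined over $F^{\sep}$, thus defined over $F$ by Proposition~\ref{prop:galois}. The preceding paragraph gives $\bigcup_d Z_{(d)} \supseteq \bigcup_{X \in \tilde{\Sigma}_0} X$ minus finitely many pieces, so $\bigcup_d Z_{(d)} = Z$, and the $F$-irreducibility of $Z$ forces $Z_{(d)} = Z$ for some $d \geq 1$. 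Let $d^*$ be the maximum such $d$; I claim $d^* = \dim Z$. If not, the family $\mathcal{F}_{d^*}$ of dimension-$d^*$ Drinfeld modular subvarieties in $Z$ has dense union in $Z$, so Theorem~\ref{th:separable} yields for almost every $X \in \mathcal{F}_{d^*}$ a strictly larger $X'(X) \subset Z$ of dimension $> d^*$. Partitioning these $X'(X)$ by dimension and invoking $F$-irreducibility of $Z$ produces some $d > d^*$ with $Z_{(d)} = Z$, contradicting maximality.

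Thus $Z_{(\dim Z)} = Z$, i.e.\ $Z = X_1 \cup \cdots \cup X_m$ for finitely many Drinfeld modular subvarieties $X_j \subset Z$ of dimension $\dim Z$. Each $X_j$ is then a union of certain $\C$-irreducible components of $Z$, and their total union exhausts the (finitely many) $\C$-components of $Z$; hence every $\C$-irreducible component of $Z$ is a $\C$-irreducible component of some $X_j$, which is by definition a special subvariety. The main delicate point is verifying the $F$-rationality of the auxiliary $Z_{(d)}$ — combining Noetherianness of $S$, the Galois description of Drinfeld modular subvarieties in Proposition~\ref{prop:GalFDmsv}, and the descent criterion of Proposition~\ref{prop:galois}; once this is in hand, the dimension pigeonhole and the iteration of Theorem~\ref{th:separable} are routine.
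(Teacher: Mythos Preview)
Your overall strategy --- reduce to an $F$-irreducible $Z$, then iterate Theorem~\ref{th:separable} by climbing through dimensions until you hit $\dim Z$ --- is exactly what the paper does. The difference is only in bookkeeping, and your version contains one real error.

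The claim that ``an ascending-chain argument shows any such union is in fact a \emph{finite} union of closed subvarieties, hence closed'' is false. For $d<\dim Z$ there may well be infinitely many Drinfeld modular subvarieties of dimension $d$ inside $Z$ --- for $d=0$ this is precisely the situation you begin with --- and Noetherianness of $S$ says nothing about arbitrary unions of closed subsets being closed (think of the closed points of $\AZ^1$). So $Z_{(d)}$ as you define it need not be closed, and you cannot apply $F$-irreducibility of $Z$ to the decomposition $Z=\bigcup_d Z_{(d)}$. The repair is easy: replace each $Z_{(d)}$ by its Zariski closure (there are only finitely many values of $d$, so this \emph{is} a finite closed cover of $Z$), or do what the paper does, namely keep only the concrete family $\Sigma'$ produced at each stage, partition it by dimension, pick a $d'$ whose piece has Zariski closure of codimension~$0$ in $Z$, and then pass to the $G_F$-orbit of that piece to get a Zariski-dense family of dimension-$d'$ Drinfeld modular subvarieties. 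Note that for $d=\dim Z$ your finiteness claim \emph{is} correct, by Corollary~\ref{cor:sub_equal}, so your final step goes through unchanged.

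A smaller slip: enlarging $\Sigma$ to its $G_F$-orbit does not in general leave $\overline{\Sigma}$ unchanged (take $\Sigma$ a single special point not defined over $F$). What is true, and sufficient, is that the $\C$-components of $\overline{G_F\cdot\Sigma}$ are exactly the $G_F$-conjugates of the $\C$-components of $\overline{\Sigma}$, so specialness of the former implies specialness of the latter. The paper sidesteps this by fixing a single $\C$-irreducible component $Y$ of $\overline{\Sigma}$ from the start and working with $Z:=G_F\cdot Y$.
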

\begin{proof}[Proof of Proposition~\ref{prop:reduction}] 
We can assume w.l.o.g. that the Zariski closure $Y$ of $\Sigma$ is irreducible
over $\C$. Since each special point in $\Sigma$ is defined over $F^{\sep}$,
the Zariski closure $Y$ of $\Sigma$ is also defined over $F^{\sep}$. Hence,
we can consider the subvariety $Z := \Gal(F^{\sep} / F) \cdot Y$, which is
$F$-irreducible by Proposition~\ref{prop:Firred}. The union~$\Sigma'$ of
all $\Gal(F^{\sep} / F)$-conjugates of the elements of~$\Sigma$ 
is Zariski dense in $Z$. Proposition~\ref{prop:GalFDmsv} implies that
$\Sigma'$ is a union of Drinfeld modular subvarieties of dimension~$0$ 
with separable reflex field over $F$.

Hence, we can apply Theorem~\ref{th:separable} with $d=0$ and find
a finite subset $\tilde{\Sigma} \subset \Sigma$ such that for all
$X \in \Sigma \setminus \tilde{\Sigma}$, there is a Drinfeld modular
subvariety $X'$ with $X \subsetneq X' \subset Z$. We denote the set of these
Drinfeld modular subvarieties $X'$ by $\Sigma'$. Since $\tilde{\Sigma}$ is
finite, the union of all subvarieties in $\Sigma'$ is Zariski dense in $Z$.

Note that Proposition~\ref{prop:sub_contain} implies that all
elements $X'$ of $\Sigma'$ are of positive dimension. Therefore there
is a $d' > 0$ with $d' \leq \dim Z$ such that the Zariski closure of
the union of all subvarieties of dimension~$d'$ in $\Sigma'$ is of
codimension~$0$ in $Z$. We let $\Sigma''$ be the set of all
$\Gal(F^{\sep}/F)$-conjugates of the subvarieties of dimension~$d'$ 
in~$\Sigma'$. Since $Z$ is $F$-irreducible, this is a set of Drinfeld
modular subvarieties of $S$, all of the same dimension~$d' > 0$, whose
union is Zariski dense in $Z$.

If $d' = \dim Z$, then $Y$ is an irreducible component over $\C$ of
an element in $\Sigma''$ and therefore special. If $d' < \dim Z$, we
apply Theorem~\ref{th:separable} with $d = d' > 0$ another time to get
a set of Drinfeld modular subvarieties of dimension $d'' > d'$ whose union
is Zariski dense in $Z$. We iterate this process until we eventually get
such a set with $d'' = \dim Z$, which implies that $Y$ is special.
\end{proof}

\subsection{Inductive proof in the separable case}
The proof of Theorem~\ref{th:separable} requires the results from
Subsection~\ref{sec:exgoodprimes} about the existence of good primes and 
the following theorem. We first give an inductive proof of the latter
theorem using our results about existence of suitable Hecke
correspondences from Subsection~\ref{sec:exHecke} and our geometric
criterion in Theorem~\ref{th:geomcrit}.  

\begin{theorem} \label{th:indsep}
Let $S = S_{F,\cal K}^r$ be a Drinfeld modular variety and $X \subset S$ a
Drinfeld modular subvariety over
$F$ which is contained in an $F$-irreducible subvariety $Z \subset
S$ with $\dim Z > \dim X$. Suppose that $\pp$ is a good prime 
for $X \subset S$ and
\[ \deg(X) > |k(\pp)|^{(r-1)\cdot(2^s-1)} \cdot \deg(Z)^{2^s} \]
for $s := \dim Z - \dim X$. Then there is a Drinfeld modular
subvariety $X'$ of $S$ with $X \subsetneq X' \subset Z$.
\end{theorem}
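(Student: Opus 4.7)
The plan is to argue by induction on $s := \dim Z - \dim X \geq 1$. Using the good prime hypothesis, Theorem~\ref{th:exHecke} produces $g = b^{-1} \circ g' \circ b \in \GL_r(\AZ_F^f)$ with $g'$ localized at a prime $\pp'$ of $F'$ above $\pp$ such that $X \subset T_g X$, $\deg T_g = |k(\pp)|^{r-1}$, and for all $k_1, k_2 \in \cal K_{\pp}$ the image of $k_1 g_{\pp} k_2$ generates an unbounded cyclic subgroup of $\mathrm{PGL}_r(F_{\pp})$. Combined with $X \subset Z$, this yields $X \subset Z \cap T_g Z$. By Lemma~\ref{lemma:bezout} and Proposition~\ref{prop:degree}(i), $\deg(Z \cap T_g Z) \leq \deg Z \cdot \deg T_g \cdot \deg Z = |k(\pp)|^{r-1} \deg(Z)^2$, which (since $s \geq 1$) is at most $|k(\pp)|^{(r-1)(2^s-1)} \deg(Z)^{2^s} < \deg X$ by the hypothesis. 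By Lemma~\ref{lemma:degirr}, $X$ therefore cannot be a union of $\C$-irreducible components of $Z \cap T_g Z$, and so there is an $F$-irreducible component $Z'$ of $Z \cap T_g Z$ (obtained as the Galois orbit of a $\C$-component properly containing $X$) with $X \subsetneq Z'$, $\dim Z' > \dim X$, and $\deg Z' \leq |k(\pp)|^{r-1} \deg(Z)^2$.

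The argument then splits into two cases. If $Z' = Z$, so $Z \subset T_g Z$, I want to invoke the geometric criterion (Theorem~\ref{th:geomcrit}) to conclude $Z = S$, and then set $X' := S$. This criterion demands that $Z$ be Hodge-generic in $S$; if it is not, let $\tilde{S}$ be the smallest Drinfeld modular subvariety of $S$ containing $Z$. If $Z = \tilde{S}$ then $Z$ is itself a Drinfeld modular subvariety and we take $X' = Z$. Otherwise $Z \subsetneq \tilde{S}$ and $Z$ is Hodge-generic in $\tilde{S}$; Proposition~\ref{prop:reduceHodge} supplies a prime $\tilde{\pp}$ of the reflex field of $\tilde{S}$ above $\pp$ with $|k(\tilde{\pp})| = |k(\pp)|$ that is good for the preimage $\tilde{X}$ of $X$ in $\tilde{S}$. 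Since inclusion morphisms preserve degrees (Proposition~\ref{prop:degree}(ii)) and the rank $\tilde{r}$ of $\tilde{S}$ satisfies $\tilde{r} \leq r$, the degree hypothesis descends to $\tilde{S}$ (the exponents weaken), so we may replace $(S,X,\pp,r)$ by $(\tilde{S},\tilde{X},\tilde{\pp},\tilde{r})$ and then apply Theorem~\ref{th:geomcrit} to conclude $Z = \tilde{S}$, giving $X' := \tilde{S}$, which pushes forward to a Drinfeld modular subvariety of $S$ with $X \subsetneq X' \subset Z$.

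If instead $Z' \subsetneq Z$, then $F$-irreducibility of $Z$ forces $\dim Z' < \dim Z$, so $s' := \dim Z' - \dim X$ satisfies $1 \leq s' < s$. Using $\deg Z' \leq |k(\pp)|^{r-1} \deg(Z)^2$, the identity $(2^{s'}-1) + 2^{s'} = 2^{s'+1}-1$ gives
\[ |k(\pp)|^{(r-1)(2^{s'}-1)} \deg(Z')^{2^{s'}} \leq |k(\pp)|^{(r-1)(2^{s'+1}-1)} \deg(Z)^{2^{s'+1}} \leq |k(\pp)|^{(r-1)(2^s-1)} \deg(Z)^{2^s} < \deg X, \]
so the induction hypothesis applies to $(X,Z',\pp,s')$ and furnishes a Drinfeld modular subvariety $X'$ with $X \subsetneq X' \subset Z' \subset Z$. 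The base case $s=1$ is subsumed by the branch $Z'=Z$, because $\dim Z' > \dim X = \dim Z - 1$ and $Z' \subset Z$ together with $F$-irreducibility of $Z$ force $Z' = Z$. The main obstacle is the bookkeeping in the Hodge-generic reduction: one must track how the rank, residue field size, and the degree inequality propagate under Proposition~\ref{prop:reduceHodge}, and verify that the doubling $2^{s'} \mapsto 2^{s'+1}$ of the exponents matches the squaring of $\deg Z$ produced by Bezout in the inductive step.
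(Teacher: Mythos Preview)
Your overall strategy---induction on $s$, the Hecke element from Theorem~\ref{th:exHecke}, B\'ezout to force $\deg X > \deg(Z \cap T_g Z)$, and the dichotomy $Z' = Z$ versus $Z' \subsetneq Z$---is exactly the paper's, and your exponent bookkeeping in the inductive step is correct. The gap is in your Hodge-generic reduction.

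You write: ``let $\tilde{S}$ be the smallest Drinfeld modular subvariety of $S$ containing $Z$,'' and then claim that if $Z \subsetneq \tilde{S}$ then $Z$ is Hodge-generic in $\tilde{S}$. Neither assertion is justified. An $F$-irreducible $Z$ that is not Hodge-generic has each $\C$-component $Z_i$ contained in some proper Drinfeld modular subvariety, but by Proposition~\ref{prop:GalFDmsv} these subvarieties are Galois conjugates of one another and need not coincide (their reflex fields can be distinct conjugate subfields of $\C$). Hence there may be \emph{no} proper Drinfeld modular subvariety containing all of $Z$, so your $\tilde{S}$ is forced to be $S$ while $Z$ remains non-Hodge-generic---and your case analysis breaks down. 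Even when $\tilde S \subsetneq S$, ``smallest containing $Z$'' does not prevent individual $\C$-components of $Z$ from lying in proper Drinfeld modular subvarieties of $\tilde S$. The paper avoids this by working componentwise: choose a $\C$-component $Z'$ of $Z$ that contains a $\C$-component of $X$, pick a \emph{minimal} Drinfeld modular subvariety $Y = \inclz(S_{F'',\mathcal K''}^{r''})$ with $Z' \subset Y$, and replace $Z$ not by its full preimage but by the $F''$-irreducible piece $Z'' := \Gal(F^{\sep}/F'')\cdot Z' \subset Y$, which still contains $X$ because $F'' \subset F'$ and $X$ is $F'$-irreducible. Minimality of $Y$ then guarantees Hodge-genericity of $(\inclz)^{-1}(Z'')$ via Corollary~\ref{cor:sub_sub}.

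A second, related point: after descending to $\tilde S$ you cannot simply ``apply Theorem~\ref{th:geomcrit} to conclude $Z = \tilde S$.'' The criterion needs $\tilde Z \subset T_{\tilde g}\tilde Z$ for some $\tilde g$ satisfying the unboundedness condition inside $\GL_{\tilde r}(F''_{\pp''})$; the relation $Z \subset T_g Z$ established in $S$ does not automatically descend. You must re-run Theorem~\ref{th:exHecke} inside $\tilde S$ using the good prime $\pp''$ furnished by Proposition~\ref{prop:reduceHodge}, and then split into the two cases again. This is why the paper organizes the argument as a pair of mutually recursive statements: (i) reduce the theorem for fixed $s$ to the Hodge-generic case; (ii) prove the Hodge-generic case for $s$ using the theorem for all $s' < s$. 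Placing the reduction \emph{before} the Hecke step makes the recursion clean and avoids tracking the correspondence through the descent.
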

\textbf{Remark:} 
The degree $\deg(X)$ makes sense here because 
$\cal{K}$ is amply small by condition (i) in 
Definition~\ref{def:goodprime}.
 
\begin{proof} In this proof, by ``irreducible component'' we always mean 
an irreducible component over~$\C$. We assume that $X =
\incl(S_{F',\cal K'}^{r'})$. Note that $F'$ is separable over $F$
by the remark after
Definition~\ref{def:goodprime}.

We prove the following statements for all $n \geq 1$:

\begin{properties}
\item
If the theorem is true for $s = n$ and $Z$ Hodge-generic 
(i.e., no irreducible component of $Z$ lies in a
proper Drinfeld modular subvariety of $S$, see 
Definition~\ref{def:Hodge_generic}), then it is true for $s = n$ and
general $Z$.

\item
If the theorem is true for all $s$ with $1 \leq s < n$ and general
$Z$, then it is true for $s = n$ and $Z$ Hodge-generic.
\end{properties}

These two statements imply the theorem by induction over $s$.

\textbf{Proof of (i):} We assume that the theorem is true for $s = n$
and $Z$ Hodge-generic and have to show that it is true for $s = n$ if
$Z$ is not Hodge-generic. In this case, there is an irreducible component of
$Z$ which is contained in a proper Drinfeld modular subvariety of
$S$. Since $\Gal (F^{\sep} / F)$ acts transitively on the irreducible
components of $Z$ (Proposition~\ref{prop:Firred}) and $\Gal (F^{\sep} / F)$ 
acts on the set of Drinfeld modular subvarieties of~$S$ 
(Proposition \ref{prop:GalFDmsv}), also the other
irreducible components of $Z$ are contained in a proper
Drinfeld modular subvariety of $S$. In particular, this is the case
for some chosen irreducible component $Z'$ of $Z$ which contains an
irreducible component $V$ of~$X$. 

We now consider a minimal Drinfeld modular
subvariety $Y = \inclz(S_{F'',\cal K''}^{r''})$ of $S$ with $Z' \subset 
Y \subsetneq S$. By Proposition~\ref{prop:sub_contain}, the reflex
field $F''$ of $Y$ is contained in $F'$ and is therefore also separable
over $F$. Since $Y$ is defined over~$F''$, the
$F''$-irreducible component $Z'' := \Gal (F^{\sep} / F'') \cdot Z'$ of
$Z$ is contained in~$Y$. Furthermore, the $F'$-irreducibility of $X$ (see
Corollary~\ref{cor:subirreducible}) implies 
\[ X = \Gal (F^{\sep} /
F') \cdot V \subset \Gal (F^{\sep} / F'') \cdot V \subset \Gal
(F^{\sep} / F'') \cdot Z' = Z'' \subset Y. \]
We now set $\tilde{X} := (\inclz)^{-1}(X)$ and $\tilde{Z} :=
(\inclz)^{-1}(Z'')$. These are subvarieties of $S_{F'',\cal
  K''}^{r''}$ with
\[ \tilde{X} \subset \tilde{Z} \subset S_{F'',\cal K''}^{r''}. \]
and 
\[ \dim \tilde{Z} - \dim \tilde{X} = \dim Z - \dim X = n. \]
The subvariety $\tilde{Z} = (\inclz)^{-1}(Z'')$ is $F''$-irreducible 
because $Z'' \subset \inclz(S_{F'',\cal K''}^{r''})$ is
$F''$-irreducible and $\inclz$ is a closed immersion defined over
$F''$ by Proposition~\ref{prop:closedimm}. 
 
By Corollary~\ref{cor:sub_sub} and minimality of $Y$, the subvariety
$\tilde{Z} \subset S_{F'',\cal K''}^{r''}$ 
is Hodge-generic and $\tilde{X}$ is a Drinfeld modular subvariety 
of $S_{F'',\cal K''}^{r''}$ with separable reflex field~$F'$ over~$F''$. 
Furthermore, by Proposition~\ref{prop:reduceHodge}, there is a
prime~$\pp''$ of $F''$ above $\pp$ with $k(\pp) = k(\pp'')$ such
that $\pp''$ is good for $\tilde{X} \subset S_{F'',\cal K''}^{r''}$.

Proposition~\ref{prop:degree} (ii) implies
\begin{eqnarray*} \deg \tilde{X} & = & \deg X, \\
\deg \tilde{Z} & = & \deg Z'' \leq \deg Z.
\end{eqnarray*}
Because of $k(\pp) = k(\pp'')$ and $r'' < r$ the assumption
\[ \deg(\tilde{X}) > |k(\pp'')|^{(r''-1)\cdot (2^n - 1)} \cdot
\deg(\tilde{Z})^{2^n} \]
is satisfied. So if Theorem~\ref{th:indsep} is true for $Z$
Hodge-generic and $s = n$ then 
there is a Drinfeld modular subvariety $\tilde{X'}$
of $S_{F'',\cal K''}^{r''}$ with $\tilde{X} \subsetneq \tilde{X'}
\subset \tilde{Z}$ and $X' := \inclz(\tilde{X'})$ is the desired
Drinfeld modular subvariety of $S$ with $X \subsetneq X' \subset
Z$. This concludes the proof of (i).

\textbf{Proof of (ii)}: We assume that the theorem is true for all $s$
with $1 \leq s < n$ and have to show that it is true for $Z$
Hodge-generic and $\dim Z - \dim X = n$. Since $\pp$ is a good prime
for $X$, we
can apply Theorem~\ref{th:exHecke} and find a $g \in \GL_r(\AZ_F^f)$
with the following properties:
\begin{propertiesabc}
\item $X \subset T_gX$,
\item $\deg T_g = [{\cal K} : {\cal K} \cap g^{-1}{\cal
    K}g] = |k(\pp)|^{r-1}$,
\item For all $k_1,k_2 \in {\cal K}_{\pp}$, the cyclic subgroup 
  of $\mathrm{PGL}_r(F_{\pp})$ generated by
  the image of $k_1\cdot g_{\pp} \cdot k_2$ is unbounded.
\end{propertiesabc}
Because of (a) and $X \subset Z$ we have
\[ X \subset Z \cap T_gZ. \]

Lemma~\ref{lemma:bezout} together with Proposition~\ref{prop:degree} and 
property~(b) of our $g \in \GL_r(\AZ_F^f)$ give us the upper bound
\[ \deg (Z \cap T_gZ) \leq \deg Z \cdot \deg T_gZ \leq (\deg Z)^2 \cdot \deg T_g
= (\deg Z)^2 \cdot |k(\pp)|^{r-1}. \]
With the assumption on $\deg X$ and $n = \dim Z - \dim X \geq 1$ we conclude
\[ \deg X > |k(\pp)|^{(r-1)\cdot (2^n-1)} \cdot \deg(Z)^{2^n} \geq \deg (Z \cap T_gZ).\]
Therefore $X$ cannot be a union of irreducible components of 
$Z \cap T_gZ$. Note that $Z \cap T_gZ$ is defined over $F$, hence also 
over the reflex field~$F'$ of $X$. Since $X$ is $F'$-irreducible, 
there is an $F'$-irreducible component $Y'$ of $Z \cap T_gZ$ with 
$X \subset Y'$. We have $X \subsetneq Y'$ because $X$ is not a union
of irreducible components (over $\C$) of $Z \cap T_gZ$.

Now we set $Y := \Gal(F^{\sep} / F) \cdot Y'$. This is an $F$-irreducible
component of $Z \cap T_gZ$ which contains $X$ with $\dim X < \dim Y$. 
We distinguish two cases:

\textbf{Case 1:} $Y = Z$\\
Because of $Y \subset Z \cap T_gZ$ this is only possible if 
$Z \subset T_gZ$. Since $Z$ is $F$-irreducible and Hodge-generic, 
property (c) from above holds and $\cal K$ is amply small, we can
apply our geometric criterion (Theorem~\ref{th:geomcrit}) 
and conclude that $Z = S$. So $X':= Z = S$ satisfies the conclusion of
the theorem.

\textbf{Case 2:} $Y \subsetneq Z$\\
Set $s' := \dim Y - \dim X$. Since $Y$ and $Z$ are $F$-irreducible, 
we have $1 \leq s' < n = \dim Z - \dim X$. Hence, by our assumption,
we can apply the theorem to $X \subset Y \subset S$ and the prime~$\pp$
provided that the inequality of degrees
\[ \deg X > |k(\pp)|^{(r-1) \cdot (2^{s'}-1)} \cdot \deg(Y)^{2^{s'}} \]
holds.

To check the latter, note that $Y$ is a union of irreducible components 
(over $\C$) of $Z \cap T_gZ$ because it is an $F$-irreducible component 
of $Z \cap T_gZ$, whence
\[ \deg Y \leq \deg (Z \cap T_gZ) \leq |k(\pp)|^{r-1} \cdot (\deg Z)^2. \]
Therefore we indeed have
\begin{eqnarray*}
 |k(\pp)|^{(r-1) \cdot (2^{s'}-1)} \cdot \deg(Y)^{2^{s'}} & \leq &
 |k(\pp)|^{(r-1) \cdot (2^{n-1}-1)} \cdot \deg(Y)^{2^{n-1}} \\
& \leq & |k(\pp)|^{(r-1) \cdot (2^{n-1}-1)} \cdot |k(\pp)|^{(r-1) \cdot 2^{n-1}} 
\cdot (\deg Z)^{2^n} \\
& = & |k(\pp)|^{(r-1) \cdot (2^n - 1)} \cdot (\deg Z)^{2^n} < \deg X.
\end{eqnarray*}
So we find a Drinfeld modular subvariety $X'$ of $S$ with 
$X \subsetneq X' \subset Y \subset Z$ as desired.
\end{proof}

\begin{proof}[Proof of Theorem~\ref{th:separable}] We first reduce
ourselves to the case $S=S_{F,\cal K}^r$ with ${\cal K}$ amply small. If
${\cal K}$ is not amply small, there is an amply small open subgroup
${\cal L} \subset {\cal K}$ with corresponding canonical 
projection $\pi_1:
S_{F,{\cal L}}^r \rightarrow S_{F,\cal K}^r$. We choose an
$F$-irreducible component~$\tilde{Z}$ of $\pi_1^{-1}(Z)$ with $\dim Z
= \dim \tilde{Z}$ and set
\[ \tilde{\Sigma} := \{\tilde{X} \subset \tilde{Z}\ F'\text{-irreducible component of }\pi_1^{-1}(X)\, |\, X \in \Sigma \text{ with 
reflex field }F' \}. \]
Since Drinfeld modular subvarieties with reflex field $F'$ are
$F'$-irreducible by Corollary~\ref{cor:subirreducible}, all $\tilde{X}
\in \tilde{\Sigma}$ are Drinfeld
modular subvarieties of $S_{F,{\cal L}}^r$ by
Lemma \ref{lemma:changeK}. They are all
contained in $\tilde{Z}$ and their union is Zariski dense in
$\tilde{Z}$ by our assumption on~$\Sigma$. If
Theorem~\ref{th:separable} is true for $\cal K$ amply small, we
conclude that for almost all $\tilde{X} \in \tilde{\Sigma}$, there is
a Drinfeld modular subvariety $\tilde{X'}$ of $S_{F,{\cal L}}^r$
with $\tilde{X} \subsetneq \tilde{X'} \subset \tilde{Z}$. For such an
$\tilde{X'}$, again by
Lemma~\ref{lemma:changeK}, $X' := \pi_1(\tilde{X'})$ is
a Drinfeld modular subvariety of $S_{F,\cal K}^r$. 
Hence for almost all $X \in \Sigma$, there is a
Drinfeld modular subvariety $X'$ with
$X \subsetneq X' \subset Z$.

So we now assume that $\cal K$ is amply small. By 
Theorem~\ref{th:primeexists} with $N = 2(r-1)\cdot (2^s-1) + r^2\cdot
2^{s+1}$ for $s := \dim Z
- d$, for
almost all $X = \incl(S_{F',\cal K'}^{r'}) \in \Sigma$, there exists a
prime~$\pp$ of $F$ with the properties

\begin{properties}
\item
there is a prime $\pp'$ of $F'$ above $\pp$ with local degree
  $[F'_{\pp'} / F_{\pp}] = 1$,

\item
${\cal K} = {\cal K}_{\pp} \times {\cal K}^{(\pp)}$ with ${\cal
  K}_{\pp} \subset \GL_r(F_{\pp})$ a maximal compact subgroup 
and ${\cal K}^{(\pp)} \subset \GL_r(\AZ_F^{f,\,\pp})$,

\item
${\cal K}'_{\pp} := (b_{\pp}{\cal K}_{\pp}b_{\pp}^{-1}) 
\cap \GL_{r'}(F'_{\pp})$ is a maximal compact
subgroup of $\GL_{r'}(F'_{\pp})$,

\item
$|k(\pp)|^{2(r-1)\cdot (2^s-1) + r^2\cdot 2^{s+1}} < D(X)$ for $s := \dim Z
- d$.
\end{properties}
Furthermore, by Theorem~\ref{th:Dunbounded} we have
\begin{list}{(\roman{ctr})}{\usecounter{ctr}\setlength{\topsep}{0mm}
\setlength{\leftmargin}{12mm}\setlength{\labelsep}{5mm}
\setlength{\labelwidth}{10mm}}
\setcounter{ctr}{4}
\item
$ D(X) > \frac{\deg(Z)^{2^{s+1}}}{C^2} $
\end{list}
for almost all $X \in \Sigma$ with $C$ the constant 
from Proposition~\ref{prop:degD}. 

By Proposition~\ref{prop:goodprime}, for all $X = \incl(S_{F',\cal
  K'}^{r'})$ and $\pp$ with (i)-(v) there is a subgroup 
$\tilde{\cal K} \subset {\cal K}$ and a Drinfeld modular
subvariety $\tilde{X} \subset S_{F,\tilde{\cal K}}^r$ such that 

\begin{properties}
\setcounter{ctr}{5}
\item
$\pi_1(\tilde{X}) = X$ for the canonical projection $\pi_1:
S_{F,\tilde{\cal K}}^r \rightarrow S_{F,\cal K}^r$,

\item
$\pp$ is good for $\tilde{X} \subset S_{F,\tilde{\cal K}}^r$,

\item
$[{\cal K} : \tilde{\cal K}] < |k(\pp)|^{r^2}$.
\end{properties}

Furthermore, for such an $\tilde{X} \subset S_{F,\tilde{\cal K}}^r$,
we choose an $F$-irreducible component~$\tilde{Z}$ of $\pi_1^{-1}(Z)$
with $\tilde{X} \subset \tilde{Z}$. Since $\pi_1$ is finite of degree~$[{\cal K} : {\cal
  \tilde{K}}]$ by
Theorem~\ref{prop:projetale}, we have $\dim \tilde{Z} = \dim Z >
\dim X = \dim \tilde{X}$ and
\begin{eqnarray*}
  \deg \tilde{Z} & \leq & \deg \pi_1^{-1}Z = [{\cal K} : {\cal
  \tilde{K}}]\cdot \deg Z < |k(\pp)|^{r^2} \cdot \deg Z, \\ 
  \deg \tilde{X} & \geq & \deg \pi_1(\tilde{X}) = \deg X
\end{eqnarray*}
by Proposition~\ref{prop:degree}. Therefore, using
Proposition~\ref{prop:degD}, we get the inequality
\begin{eqnarray*} 
 \deg \tilde{X} & \geq & \deg X \geq C \cdot D(X) = D(X)^{1/2} \cdot (C
 \cdot D(X)^{1/2})\\
& \stackrel{(iv),(v)}{>} & {|k(\pp)|^{(r-1)\cdot (2^s-1) + r^2\cdot 2^s} \cdot
  \deg(Z)^{2^{s}}}  \geq |k(\pp)|^{(r-1)\cdot (2^s-1)} \cdot
\deg(\tilde{Z})^{2^s}. 
\end{eqnarray*}
Therefore $\tilde{X} \subset \tilde{Z}\subset S_{F,\tilde{\cal K}}^r$
together with $\pp$ satisfy the assumptions of
Theorem~\ref{th:indsep}. So we find a Drinfeld modular subvariety
$\tilde{X'}$ of $S_{F,\tilde{\cal K}}^r$ with $\tilde{X} \subsetneq
\tilde{X'} \subset \tilde{Z}$ and $X' := \pi_1(\tilde{X'})$ is a
Drinfeld modular subvariety of $S_{F,\cal K}^r$ with $X \subsetneq X'
\subset Z$.
\end{proof} 

\begin{acknowledgements}
The results presented in this article made up my PhD thesis at ETH Z\"urich
in Switzerland. I would like to express 
my deepest gratitude to my advisor Richard Pink for guiding and supporting me 
in course of my PhD studies and carefully revising early drafts of this work. 
In particular I am indebted to him for providing some crucial ideas
which ensured the success of this article. I also thank Gebhard B\"ockle for
going through the manuscript and providing me with useful comments.
\end{acknowledgements}


\begin{thebibliography}{Ande86}

\bibitem[And98]{An}Y. Andr\'{e}, \emph{Finitude des couples
    d'invariants modulaires singuliers sur une courbe alg\'ebrique
    plane non modulaire}, J. Reine Angew. Math. {\bf 505} (1998), 203--208.
\bibitem[Bor91]{Bo}A. Borel, \emph{Linear Algebraic Groups}, Graduate
  Texts in Mathematics, {\bf 126} (Second edition), Springer-Verlag, 1991.
\bibitem[BP05]{BrPi}F. Breuer, R. Pink, \emph{Monodromy groups associated to non-isotrivial Drinfeld modules in generic characteristic}, in \emph{Number fields and function fields--two parallel worlds}, Progr. Math., 239 (Birkh\"auser, Boston, MA, 2005), 61--69.
\bibitem[Bre05]{Br1}F. Breuer, \emph{The Andr\'{e}-Oort conjecture for
    products of Drinfeld modular curves}, J. reine angew. Math. 
{\bf 579} (2005), 115--144.
\bibitem[Bre11]{Br2}F. Breuer, \emph{Special subvarieties of Drinfeld modular
    varieties}, J. Reine Angew. Math., to appear.
\bibitem[Con99]{Co}B. Conrad, \emph{Irreducible components of rigid
    spaces}, Ann. Inst. Fourier {\bf 49} (1999), no. 2, 473--541.
\bibitem[CU05]{ClUl}L. Clozel, L. Ullmo, \emph{Equidistribution de
      sous-vari\'{e}t\'{e}s sp\'{e}ciales}, Ann. of Math. {\bf 161} 
(2005), no. 3, 1571--1588.
\bibitem[DH87]{DeHu}P. Deligne, D. Husem\"{o}ller, \emph{Survey of
    Drinfeld modules} in \emph{Current trends in arithmetical
    algebraic geometry} (Arcata, California, 1985),
Contemp. Math., 67 (American Mathematical Society, Providence, RI,
1987), 25--91.
\bibitem[Dri74]{Dr1}V. G. Drinfeld. \emph{Elliptic modules} (Russian), 
Mat. Sbornik {\bf 94(136)} (1974), 594--627, 656.
\bibitem[Edi01]{Ed1}B. Edixhoven, \emph{On the Andr\'{e}-Oort
    conjecture for Hilbert modular surfaces} in \emph{Moduli of
    abelian varieties} (Texel Island, 1999), Progr. Math., 195
  (Birkh\"{a}user, Basel, 2001), 133--155.
\bibitem[Edi05]{Ed2}B. Edixhoven, \emph{Special points on products of
    modular curves}, Duke Math. J. {\bf 126} (2005), no. 2, 325--348.
\bibitem[EY03]{EdYa}B. Edixhoven, A. Yafaev, \emph{Subvarieties of
    Shimura varieties}, Ann. of Math. {\bf 157} (2003), no. 2, 621--645.
\bibitem[FJ05]{FrJa}D. Fried, M. Jarden, \emph{Field Arithmetic}, Second
  edition, Springer-Verlag, 2005.
\bibitem[Ful98]{Fu}W. Fulton, \emph{Intersection theory}, Second
  edition, Springer-Verlag, 1998.
\bibitem[Gek89]{Ge1}E.-U. Gekeler, \emph{On the de Rham isomorphism for
    Drinfeld modules.} J. reine angew. Math. {\bf 401} (1989), 188--208.
\bibitem[Gek90]{Ge2}E.-U. Gekeler, \emph{De Rham cohomology and the
    Gauss-Manin connection for Drinfeld modules} in \emph{p-adic
    analysis} (Trento, 1989), Lecture Notes in Math., 1454 (Springer,
  Berlin, 1990), 223--255.
\bibitem[Gos98]{Go}D. Goss, \emph{Basic structures of function field
    arithmetic}, Springer-Verlag, 1998.
\bibitem[Gro64]{EGAIV1}A. Grothendieck, \emph{\'El\'ements de g\'eom\'etrie
  alg\'ebrique. IV. \'Etude locale des sch\'emas et des morphismes de
  sch\'emas. I.} Inst. Hautes \'Etudes Sci. Publ. Math., no. 20, 1964.
\bibitem[Gro66]{EGAIV3}A. Grothendieck, \emph{\'El\'ements de g\'eom\'etrie
  alg\'ebrique. IV. \'Etude locale des sch\'emas et des morphismes de
  sch\'emas. III.} Inst. Hautes \'Etudes Sci. Publ. Math., no. 28, 1966.
\bibitem[GW10]{GoWe}U. G\"ortz, T. Wedhorn, \emph{Algebraic Geometry 1:
    Schemes, With Examples and Exercises}, Vieweg+Teubner Verlag, 2010.
\bibitem[Har77]{Ha}R. Hartshorne, \emph{Algebraic geometry}, Graduate
  Texts in Mathematics, {\bf 52}, Springer-Verlag, 1977.
\bibitem[Hei04]{vaHe}G. J. van der Heiden, \emph{Weil pairing for Drinfeld
    modules}, Monatsh. Math. {\bf 143} (2004), no. 2, 115--143.
\bibitem[Koh11]{Ko}J. Kohlhaase, \emph{Lubin-Tate and Drinfeld
    bundles}, Tohoku Math. J. {\bf 63} (2011), no. 2, 217--254.
\bibitem[KY10]{KlYa}B. Klingler, A. Yafaev, \emph{The Andr\'{e}-Oort
    conjecture}, Preprint (2010), available 
at \textsf{http://www.math.jussieu.fr/\~{}klingler/papers.html}.
\bibitem[Le09]{Le}T. Lehmkuhl, \emph{Compactification of the Drinfeld modular surfaces},
Mem. Amer. Math. Soc. {\bf 197} (2009), no. 921.
\bibitem[Mum70]{Mu}D. Mumford, \emph{Abelian varieties}, Tata Institute of
  Fundamental Research Studies in Mathematics, no. 5, 1970.
\bibitem[Neu07]{Ne}J. Neukirch, \emph{Algebraische Zahlentheorie}, 
Springer-Verlag, 2007.
\bibitem[Noo04]{No}R. Noot, \emph{Correspondances de Hecke, action 
de Galois et la conjecture de Andr\'{e}-Oort 
[d'apr\`{e}s Edixhoven et Yafaev],} S\'{e}minaire Bourbaki,
Ast\'erisque no. {\bf 307} (2006), exp. no. 942, 165--197.
\bibitem[Pin97]{Pi3}R. Pink, \emph{The Mumford-Tate conjecture for
    Drinfeld-modules,} Publ. Res. Inst. Math. Sci. {\bf 33} (1997), 
no. 3, 393--425.
\bibitem[Pin00]{Pi9}R. Pink, \emph{Strong approximation for Zariski dense
  subgroups over arbitrary global fields}, Comment. Math. Helv. {\bf 75}
  (2000), no. 4, 608--643.
\bibitem[Pin10]{Pi}R. Pink, \emph{Compactification of Drinfeld
    modular varieties and Drinfeld modular forms of arbitrary
    rank}, Preprint (2010), \textsf{arXiv:1008.0013v2 [math.AG]}.
\bibitem[Pra77]{Pr}G. Prasad, \emph{Strong approximation for semi-simple
    groups over function fields}, Ann. of Math. {\bf 105} (1977), 553--572.
\bibitem[Ser88]{Se}J-P. Serre, \emph{Algebraic groups and class
    fields}, Graduate Texts in Mathematics, {\bf 117}, Springer-Verlag, 1988.
\bibitem[Sti93]{St}H. Stichtenoth, \emph{Algebraic Function Fields and
    Codes}, Springer-Verlag, 1993.
\bibitem[UY06]{UlYa}E. Ullmo, A. Yafaev, \emph{Galois orbits of
    special subvarieties of Shimura varieties}, Preprint (2006), available at 
\textsf{http://www.math.u-psud.fr/\~{}ullmo/liste-prepub.html}.
\bibitem[Yaf06]{Ya}A. Yafaev, \emph{A conjecture of Yves Andr\'{e}'s}, 
Duke Math. J. {\bf 132} (2006), no. 3, 393--407.
\end{thebibliography}
\end{document}